\documentclass{amsart}
\usepackage{geometry}
\usepackage{a4wide}
\usepackage{graphicx}
\usepackage{float}
\usepackage{caption}
\usepackage{subcaption}
\graphicspath{ {images/} }
\usepackage{amsmath,mathrsfs,amssymb,amsthm,amscd}
\usepackage{amsfonts}
\usepackage{mathtools}
\usepackage[]{fontenc}
\usepackage[all]{xy}
\usepackage{hyperref}
\usepackage{enumerate}
\usepackage{tikz}
\usepackage{tikz-cd}
\usepackage{tikz-3dplot}
\usepackage{mathtools}
\usepackage{afterpage}
\usepackage{hyperref}
\usepackage{tensor}
\usepackage{xr}
\usepackage{enumitem}
\usepackage{xcolor}
\usepackage{orcidlink}
\usepackage{mathrsfs}
\usepackage{bbm}
\usepackage{orcidlink}
\usetikzlibrary{patterns}
\usetikzlibrary{3d}
\usetikzlibrary{arrows,automata,backgrounds}
\tikzstyle pt=[fill,black]
\tikzstyle ptbl=[fill,black]

\setlist[enumerate,1]{label=(\roman*)}
\setlist[enumerate,2]{label=(\alph*)}

\theoremstyle{plain}
\newtheorem{thm-int}{Theorem}
\newtheorem{thm}{Theorem}[subsection]
\newtheorem{prop}[thm]{Proposition}
\newtheorem{cor}[thm]{Corollary}
\newtheorem{lem}[thm]{Lemma}

\theoremstyle{definition}
\newtheorem{dfn}[thm]{Definition}
\theoremstyle{remark}
\newtheorem{ex}[thm]{Example}
\newtheorem{rem}[thm]{Remark}
\newtheorem{cons}[thm]{Construction}
\newtheorem{desc}[thm]{Description}

\begin{document}
	
	\title{Heights on toric varieties for singular metrics: Global theory}
	\author[G. Peralta]{Gari Y. Peralta Alvarez \orcidlink{0000-0002-1362-8126}}
	\thanks{The author acknowledges support from the Deutsche Forschungsgemeinschaft (DFG, German Research Foundation) under Germany’s Excellence Strategy – The Berlin Mathematics Research Center MATH+ (EXC-2046/1, project ID: 390685689). The author was also supported by the collaborative research center SFB 1085 Higher Invariants - Interactions between Arithmetic Geometry and Global Analysis, funded by the DFG}
	\email{gari.peralta@mathematik.uni-regensburg.de}
	\date{March 9, 2026}
	\subjclass[2020]{14G40 (Primary), 14M25 (Secondary), 32U05, 44A15}
	\begin{abstract}
		In this paper, we develop a toric analog of the theory of adelic divisors on quasi-projective arithmetic varieties introduced by Yuan and Zhang, and extend the convex-analytic descriptions of the Arakelov geometry of projective toric arithmetic varieties given by Burgos, Philippon, and Sombra. Our main result is that the arithmetic self-intersection number of a semipositive toric adelic divisor is given by the integral of a concave function on a compact convex set. These generalized arithmetic intersection numbers coincide with the ones introduced by Burgos and Kramer in 2024, and therefore, can be used to compute heights of toric arithmetic varieties with respect to line bundles equipped with toric singular metrics.
	\end{abstract}
	
	\maketitle
	\setcounter{tocdepth}{1}
	\tableofcontents
	
	\section{Introduction} 
	In this paper, we extend the convex-analytic descriptions of the Arakelov geometry of projective toric varieties developed in~\cite{BPS} and~\cite{BMPS} to the quasi-projective setting of Yuan and Zhang~\cite{Y-Z}, as further generalized by Burgos and Kramer~\cite{BK24}. This article is a sequel to~\cite{Per26}, where we treated the geometric and local arithmetic cases; here, we develop the global theory. In the following, we explain the motivation for our work and outline the results.
	
	\subsection{Heights and singular metrics}\label{1-1}
	The height of a polarized smooth projective variety $X$ over $\mathbb{Q}$ measures its arithmetic complexity, analogous to the way the intersection-theoretic degree measures algebraic complexity. Arakelov \cite{Ar}, and later Gillet and Soulé \cite{GS,BGS}, formalized this by interpreting the height of $X$ as the arithmetic self-intersection of a hermitian line bundle $\overline{\mathcal{L}}$ on a flat and projective integral model $\mathcal{X}$ of $X$. Here $\overline{\mathcal{L}}$ is a line bundle $\mathcal{L}$ on $\mathcal{X}$, together with a smooth hermitian metric $\| \cdot \|$ on the line bundle $L_{\mathbb{C}} = \mathcal{L} \otimes \mathbb{C}$ over the complex manifold $X(\mathbb{C})$. 
	
	The smoothness requirement on the metric is quite restrictive: in many natural situations the relevant metrics are singular, e.g. for the Hodge bundle $\overline{\omega}$ on a toroidal compactification $\overline{\mathcal{A}}_g$ of the moduli stack $\mathcal{A}_g$ of principally polarized abelian schemes of dimension $g$ over $\mathbb{Z}$, and for the line bundle of Siegel–Jacobi forms $\overline{\mathcal{J}}$ on a toroidal compactification $\overline{\mathcal{B}}_g$ of the universal abelian scheme $\mathcal{B}_g$ over $\mathcal{A}_g$. Investigations of these examples led to generalizations of Gillet–Soulé’s arithmetic intersection theory that incorporate singular metrics, which we mention below.
	
	For the Hodge bundle $\overline{\omega}$ on $\overline{\mathcal{A}}_g$, Bost~\cite{B98} and Kühn~\cite{K01} independently computed its arithmetic self-intersection for $g=1$, thereby motivating the arithmetic intersection theory with logarithmically singular metrics developed by Burgos, Kramer, and Kühn~\cite{BKK5,BKK7}. This theory, in turn, enabled the subsequent computations for $g=2$~\cite{JvP22, BBK07} and has further applications in the context of the Kudla program~\cite{KRY}.
	
	For the line bundle of Siegel–Jacobi forms $\overline{\mathcal{J}}$ on $\overline{\mathcal{B}}_g$, Burgos–Kramer–Kühn showed in~\cite{BKK16} that, for $g = 1$, the natural invariant metric has singularities worse than logarithmic, so their theory does not apply. They nevertheless computed the geometric self-intersection of $\mathcal{J}$ using b-divisors, expressing the degree of $\mathcal{J}$ as the limit of intersection numbers of divisors on blow-ups along the boundary of $\mathcal{B}_g$, where the metric is singular. This was later generalized to all $g \geq 1$ in~\cite{BBHdJ21}. Moving to arithmetic intersections~\cite{BK24}, Burgos and Kramer used Yuan–Zhang’s theory of adelic divisors on quasi-projective varieties~\cite{Y-Z}, viewed as an arithmetic analogue of b-divisors. The main idea of this theory is to fix a quasi-projective variety $\mathcal{U}$ where the metric under consideration is smooth. Then, \textit{the group of adelic divisors of }$\mathcal{U}$ \textit{over} $\mathbb{Z}$ is defined as the completion $\overline{\textup{Div}}(\mathcal{U}/\mathbb{Z})$, with respect to a suitable boundary norm, of the direct limit
	\begin{displaymath}
		\varinjlim_{\mathcal{X}} \overline{\textup{Div}}(\mathcal{X})_{\mathbb{Q}},
	\end{displaymath}
	where $\mathcal{X}$ runs over all projective models of $\mathcal{U}$. In this setting, arithmetic intersection numbers of arithmetically nef adelic divisors arise as limits of classical arithmetic intersection numbers. To compute the arithmetic self-intersection of $\overline{\mathcal{J}}$, Burgos and Kramer needed a further extension of this arithmetic intersection product compatible with weaker positivity conditions than nef, and they constructed it using the relative energy introduced by Darvas, Di Nezza, and Lu~\cite{Darvas}.
	
	Besides the height computation discussed above, it is worth noting that Yuan–Zhang’s formalism of adelic line bundles has already found significant applications. For instance, in~\cite{Yua26}, Yuan used this theory to give a new approach to the uniform version of the Bogomolov and Mordell–Lang conjectures, originally established by Dimitrov–Gao–Habegger~\cite{DGH21} and Kühne~\cite{Kuh21}. These striking results highlight the effectiveness of this theory. Consequently, to test new conjectures, it is important to have concrete examples of quasi-projective varieties $\mathcal{U}$ for which explicit computations are feasible. 
	
	However, the group $\overline{\textup{Div}}(\mathcal{U}/\mathbb{Z})$ is a large and intricate object: an adelic divisor $\overline{\mathcal{D}}$ is represented by an equivalence class of Cauchy sequences $\lbrace \overline{\mathcal{D}}_n \rbrace_{n \in \mathbb{N}}$, where each $\overline{\mathcal{D}}_n$ is an arithmetic divisor on some compactification $\mathcal{X}_n$ of $\mathcal{U}$. Therefore, computations on $\overline{\textup{Div}}(\mathcal{U}/\mathbb{Z})$ require a good understanding of both the category of projective models of $\mathcal{U}$ and the Arakelov theory on each model. To address this difficulty, we propose a toric version of Yuan–Zhang’s theory, in which we restrict to quasi-projective toric arithmetic varieties $\mathcal{U}$, toric projective models $\mathcal{X}$ of $\mathcal{U}$, and toric arithmetic divisors $\overline{\mathcal{D}}$ on these models $\mathcal{X}$. In this setting, we can exploit the explicit convex-analytic description of the Arakelov theory of projective toric varieties described below.
	
	\subsection{The global Arakelov theory of projective toric varieties}\label{1-2}
	An alternative approach to Gillet and Soulé’s arithmetic intersection theory, considered by Zhang \cite{Z95} and Gubler \cite{Gub98}, is to define heights \textit{adelically}, meaning that a global arithmetic object is fully described by its local pieces. For instance, a hermitian line bundle $\overline{\mathcal{L}}$ on $\mathcal{X}$ induces a metrized line bundle $\overline{L}$ on $X$: a line bundle $L$ on the generic fiber $X$ of $\mathcal{X}$ and a family of continuous $v$-adic metrics $\lbrace \| \cdot \|_v \rbrace_{v \in \mathfrak{M}(\mathbb{Q})}$ on the line bundles $L_v \coloneqq L \otimes \mathbb{Q}_v$ over $X_v \coloneqq X \times_{\mathbb{Q}} \textup{Spec}(\mathbb{Q}_v)$, where $v$ runs over all places $\mathfrak{M}(\mathbb{Q})$ of $\mathbb{Q}$. One then defines a local height $\textup{h}(X_v; \overline{L}_v)$, and the global height $\textup{h}(X; \overline{L})$ of Gillet and Soulé is recovered by summing over all places.
	
	Moving to the toric setting, we fix a positive integer $d$ and denote by $M$ and $N$ the groups of characters and cocharacters of a $d$-dimensional split torus $U \cong \mathbb{G}_{m}^{d}$ over $\mathbb{Q}$. Given a smooth projective fan $\Sigma$ in $N_{\mathbb{R}}$, there is an associated smooth projective toric variety $X_{\Sigma}/\mathbb{Q}$. A well-known fact in the theory of toric varieties is that a nef toric line bundle $L$ on $X_{\Sigma}$ corresponds to a rational convex polytope $\Delta$ in the euclidean space $M_{\mathbb{R}}$. Then, using the properties of the tropicalization map $\textup{Trop}_v \colon X_{\Sigma,v}^{\textup{an}} \rightarrow N_{\Sigma}$ (\ref{trop-functor}), Burgos, Philippon and Sombra \cite{BPS} give a bijection between:
	\begin{enumerate}
		\item The set of semipositive toric continuous $v$-adic metrics $\lbrace \| \cdot \|_v \rbrace_{v \in \mathfrak{M}(\mathbb{Q})}$ on $L$,
		\item The set of $v$-tuples $(\vartheta_v)$ of concave continuous functions $\vartheta_{v} \colon \Delta \rightarrow \mathbb{R}$ such that $\vartheta_v$ is the constant $0$ for almost all places $v$. 
	\end{enumerate}
	In particular, the second condition implies that the function $\vartheta \colon \Delta \rightarrow \mathbb{R}$ given by
	\begin{displaymath}
		\vartheta \coloneqq \sum_{v \in \mathfrak{M}(\mathbb{Q})} \vartheta_v
	\end{displaymath}
	is well-defined, continuous, and concave. It is known as the \textit{global roof function} associated to $\overline{L}$. Many important arithmetic properties of the semipositive toric metrized line bundle~$\overline{L}$ are encoded by its global roof function. For instance, it was shown in \cite{BMPS} that $\overline{L}$ is arithmetically nef if and only if the global roof function $\vartheta$ does not attain negative values. Most importantly, the height of $X_{\Sigma}$ with respect to $\overline{L}$ can be computed using the global roof function $\vartheta$. Indeed, Theorem~5.2.5~of~\cite{BPS} gives the following integral formula:
	\begin{displaymath}
		\textup{h}(X_{\Sigma}; \overline{L}) = (d+1)! \int_{\Delta} \vartheta = (d+1)! \sum_{v \in \mathfrak{M}(\mathbb{Q})} \int_{\Delta} \vartheta_v.
	\end{displaymath}
	This is an adelic formula: for each place $v$, the integral of $\vartheta_v$ computes the local toric height $\textup{h}^{\textup{tor}}(X_{\Sigma,v};\overline{L}_v)$ introduced by Burgos-Philippon-Sombra. In~\cite{Per26}, we extended the local analogues of these results to the toric and quasi-projective local setting. Furthermore, we showed that, at archimedean places, the local toric height is a special case of the relative energy studied by Burgos–Kramer in~\cite{BK24}. This naturally suggests that the convex-analytic descriptions above should extend to our proposed formalism of toric adelic divisors on quasi-projective toric arithmetic varieties. In this paper, we prove that this is indeed the case.
	
	\subsection{Main results}
	We begin by describing our toric analogue of Yuan–Zhang’s adelic divisors, as defined in~\ref{adelic-dfn}. We restrict to the case of a $d$-dimensional split torus $\mathcal{U}_S$ over $\mathbb{Z} \left[ 1/S \right]$, where $S$ is a positive integer, viewed as a quasi-projective variety over $\mathbb{Z}$. This situation is the most relevant, because for any projective toric arithmetic variety $\mathcal{X}$ over $\mathbb{Z}$, there exists some $S$ such that a torus $\mathcal{U}_S$ embeds into $\mathcal{X}$. Then, the group $\overline{\textup{Div}}_{\mathbb{T}} (\mathcal{U}_{S} /\mathbb{Z})$ of toric adelic divisors on $\mathcal{U}_S$ over $\mathbb{Z}$ is defined as the completion, with respect to a suitable boundary norm, of the direct limit 
	\begin{displaymath}
		\varinjlim_{\mathcal{X}} \overline{\textup{Div}}_{\mathbb{T}}(\mathcal{X})_{\mathbb{Q}},
	\end{displaymath}
	where $\mathcal{X}$ ranges over all toric projective models of $\mathcal{U}_S$, and $ \overline{\textup{Div}}_{\mathbb{T}}(\mathcal{X})_{\mathbb{Q}}$ denotes the group of torus-invariant arithmetic divisors on $\mathcal{X}$. We point out that this construction is an example of an ``\textit{abstract divisorial space}'' in the sense of Cai and Gubler~\cite{CGub24}. 
	
	As mentioned earlier, the results of Burgos–Philippon–Sombra provide an explicit description of the Arakelov geometry on projective toric arithmetic varieties. Therefore, to understand the group $\overline{\textup{Div}}_{\mathbb{T}} (\mathcal{U}_{S} /\mathbb{Z})$, in Section~\ref{3} we systematically study the category of projective toric arithmetic varieties with proper toric morphisms and give a combinatorial description of it in terms of \textit{arithmetic fans}. This is an analogue over $\mathbb{Z}$ of the results of Kempf, Knudsen, Mumford, and Saint-Donat on toric schemes over a discrete valuation ring~\cite{KKMS}. Although our descriptions appear to be folklore, we could not find a reference that goes beyond the so-called \textit{canonical models}~\ref{canonical-OK-prop}, which appear, for instance, in the work of Demazure~\cite{Dmz70} and Rohrer~\cite{Roh10}.
	
	After a careful study of the boundary topology on $\overline{\textup{Div}}_{\mathbb{T}} (\mathcal{U}_{S} /\mathbb{Z})$ (\ref{4--2}), our first main result is a convex-analytic characterization of the cone of semipositive toric adelic divisors on $\mathcal{U}_S$~(\ref{semipositive-OKS-roofs}).
	\begin{thm-int}\label{thm-1}
		There is a bijection between the cone of semipositive toric adelic divisors $\overline{\mathcal{D}}$ of $\mathcal{U}_S$ over $\mathbb{Z}$ and the cone of $v$-tuples $(\vartheta_v)$ of closed concave functions $\vartheta_v \colon M_{\mathbb{R}} \rightarrow \mathbb{R}_{-\infty}$ satisfying the following conditions:
		\begin{enumerate}
			\item For each finite place $v$ such that $|S|_v =1$, the supremum $\sup \vartheta_v$ is a rational number.
			\item There is a compact convex set $\Delta \subset M_{\mathbb{R}}$ such that, for each $v \in \mathfrak{M}(\mathbb{Q})$, the closure of the effective domain $\textup{dom}(\vartheta_v)\coloneqq \lbrace y \in M_{\mathbb{R}} \, \vert \, \vartheta_v (y) > - \infty \rbrace$ coincides with $\Delta$.
			\item For each $\varepsilon >0$ there is a positive integer $S_{\varepsilon}$ dividing $S$ such that for all finite places $v$ with $|S_{\varepsilon} |_v = 1$ we have $ \sup \vartheta_v = 0$ and $\vartheta_v \boxplus \iota_{\overline{\textup{B}}(0,\varepsilon)} \geq \iota_{\Delta}$. Here, $\boxplus$ is the sup-convolution of concave functions, $\iota_C$ is the indicator function of the set $C$, and $\overline{\textup{B}}(0,\varepsilon)$ is the closed ball of radius $\varepsilon$ centred at $0$.
		\end{enumerate}
	\end{thm-int}
	As in the projective case, we introduce the global roof function associated to a semipositive toric adelic divisor $\overline{\mathcal{D}}$ on $\mathcal{U}_S$, which plays a fundamental role in the computation of arithmetic intersection numbers. This is the closed concave function $\vartheta_{\overline{\mathcal{D}}} \colon \Delta_D \rightarrow \mathbb{R}_{-\infty}$ given by
	\begin{displaymath}
		\vartheta_{\overline{\mathcal{D}}} \coloneqq \sum_{v \in \mathfrak{M}(\mathbb{Q})} \vartheta_{\overline{\mathcal{D}}_v},
	\end{displaymath}
	where $(\vartheta_{\overline{\mathcal{D}}_v})$ is the $v$-tuple of concave functions corresponding to $\overline{\mathcal{D}}$ by Theorem~\ref{thm-1}. In \ref{global-roof-OKS}, we show that for each point $y$ in the interior of the convex set $\Delta_D$ associated to $\overline{\mathcal{D}}$, the above series is actually a finite sum. In contrast with Theorem~6.1~of~\cite{BMPS}, the number of non-zero terms $\vartheta_{\overline{\mathcal{D}}_v}(y)$ may increase as $y$ approaches the boundary of $\Delta_D$. Moreover, the function $\vartheta_{\overline{\mathcal{D}}}$ characterizes the nef property in the global arithmetic setting (\ref{nef-torus-OKS}).
	\begin{thm-int}
		A semipositive toric adelic divisor $\overline{\mathcal{D}}$ is nef if and only if its global roof function $\vartheta_{\overline{\mathcal{D}}}$ is non-negative.
	\end{thm-int}
	Finally, in Section~\ref{6}, we study arithmetic intersection numbers. Building on observations from the toric local setting (Remark~4.3.6 and Definition~4.3.7 of~\cite{Per26}), we generalize the arithmetic intersection numbers defined by Yuan–Zhang and Burgos–Kramer (\ref{global-intersection-numbers}). This compatibility is established in Remarks~\ref{almost-finale}~and~\ref{finale}, where we also highlight our improvements over their respective formalisms. We illustrate these improvements through a series of examples culminating in~\ref{worst-case}, where we demonstrate the full strength of our theory by constructing a semipositive toric adelic divisor whose associated $v$-adic Green functions are all highly singular along the boundary of the torus $\mathbb{G}_{m}^{2}/ \mathbb{Z}$ and its arithmetic self-intersection number is finite. We stress that our generalization is specific to the toric setting, as it relies on properties of the global roof function and on explicit approximations of semipositive toric adelic divisors constructed in Subsection~\ref{5-3}. Within this framework, however, we obtain the main result of this article: an integral formula for the arithmetic self-intersection number of a semipositive toric adelic divisor $\overline{\mathcal{D}}$ on $\mathcal{U}_S$ (\ref{global-toric-height-OKS}), generalizing the formula from Subsection~\ref{1-2}.
	\begin{thm-int}
		Let $\overline{\mathcal{D}}$ be semipositive and suppose that $\vartheta_{\overline{\mathcal{D}}}$ is not the constant $-\infty$. Then, the arithmetic self-intersection number of $\overline{\mathcal{D}}$ is given by the formula
		\begin{align*}
			\overline{\mathcal{D}}^{d+1} = (d+1)!  \int_{\Delta_{D}} \vartheta_{\overline{\mathcal{D}}} = (d+1)! \sum_{v \in \mathfrak{M}(\mathbb{Q})} \int_{\Delta_{D}} \vartheta_{\overline{\mathcal{D}}_v}.
		\end{align*}
		This number can be finite or $-\infty$. It is finite if and only if $\vartheta_{\overline{\mathcal{D}}} \in L^1(\Delta_D)$.
	\end{thm-int}
	
	\subsection{Acknowledgments}
	We sincerely thank José I. Burgos Gil and Jürg Kramer for introducing the author to this subject and for the numerous discussions regarding the results presented here. We also thank Marco Flores, Walter Gubler, and Klaus Künnemann for their helpful observations. 
	
	\subsection{Notation and conventions}\label{notation}
	We introduce the notations and conventions that will appear throughout this manuscript. The natural numbers $\mathbb{N}$ include $0$. For each $a \in \mathbb{R}$ denote:
	\begin{itemize}
		\item The set $\mathbb{R}_{>a} \coloneqq \lbrace x \in \mathbb{R} \, \vert \, x > a \rbrace.$ When $a=0$, these are the positive reals.
		\item The set $\mathbb{R}_{\geq a} \coloneqq \lbrace x \in \mathbb{R} \, \vert \, x \geq a \rbrace.$ When $a=0$, these are the non-negative reals.
		\item The set $\mathbb{R}_{- \infty} \coloneqq \lbrace - \infty \rbrace \cup \mathbb{R}$ with the conventions $-\infty < x$ and $-\infty + x = -\infty$ for all $x \in \mathbb{R}$. We equip it with the topology that makes the map $\log : \mathbb{R}_{\geq 0} \rightarrow \mathbb{R}_{- \infty}$ a homeomorphism. We define $\mathbb{R}_{\infty} \coloneqq \mathbb{R} \cup \lbrace \infty \rbrace$ and $\mathbb{R}_{\pm \infty} \coloneqq \lbrace \pm \infty \rbrace \cup \mathbb{R} $ analogously.
	\end{itemize}
	A ring $R$ is always assumed to be commutative, with multiplicative unit $1$. Denote by $R^{\times}$ the group of invertible elements. If $A$ is a finitely generated Abelian group, we write $A_{R} \coloneqq A \otimes_{\mathbb{Z}} R$, the extension of scalars to $R$.
	
	By a \textit{lattice} $N$, we mean a free Abelian group of finite rank $\textup{rk}(N)=d$. Denote by $ M \coloneqq N^{\vee} = \textup{Hom}(N,\mathbb{Z})$ its dual lattice. The natural pairing $\langle \cdot , \cdot \rangle \colon M \times N \rightarrow \mathbb{Z}$ given by evaluation
	\begin{displaymath}
		\langle m , n \rangle \coloneqq m(n).
	\end{displaymath}
	To ease notation, we write $\langle \cdot, \cdot \rangle$ for the induced pairing $\langle \cdot, \cdot \rangle_{R}$ obtained by extending scalars to $R$. When $R = \mathbb{R}$, we equip $N_{\mathbb{R}}$ with the Euclidean topology induced by a norm $\| \cdot \|$. Let $x \in N_{\mathbb{R}}$ and $r \geq 0$, we denote by $\textup{B}(x,r)$ and $\overline{\textup{B}}(x,r)$ the open and closed balls of radius $r$ centred at $x$ respectively. Given a closed subset $C \subset N_{\mathbb{R}}$, the distance from $x$ to $C$ is denoted by $\textup{dist}(x, C) \in \mathbb{R}_{\geq 0}$. We abuse notation slightly by using the same symbols for the corresponding objects on $M_{\mathbb{R}}$, which we equip with the dual norm. Finally, let $\textup{Vol}_M$ be the Haar measure on $M_{\mathbb{R}}$, normalized so that $M$ has covolume $1$.
	
	If $X$ is a topological space, we denote by $C^0 (X)$ the space of continuous real-valued functions and by $C^{0}_{\textup{bd}} (X) \subset C^0 (X)$ the subspace of bounded functions. The space $C^{0}_{\textup{bd}} (X)$ is a Banach space with the uniform norm $\| \cdot \|_{\infty}$. If $X$ is compact, then $C^{0}_{\textup{bd}} (X) = C^0 (X)$. If $X$ admits a differentiable structure, denote by $C^k (X)$ the space of $k$-continuously differentiable real-valued functions on $X$, with $k=\infty$ for smooth functions.
	
	By a local field $K_v = (K,| \cdot |_v)$, we mean either $(\mathbb{R}, | \cdot |_{\infty})$, $(\mathbb{C}, |\cdot |_{\infty})$ with the archimedean absolute value $|\cdot |_{\infty}$, or a field $K$ complete with respect to a discrete absolute value $| \cdot |_v$. In the non-archimedean case, we denote by $v \coloneqq - \log | \cdot |_v$ the associated discrete valuation, $\mathcal{O}_{K_v}$ its valuation ring, and $\mathfrak{m}_v$ the unique prime ideal of $\mathcal{O}_{K_v}$. We let $k_v \coloneqq \mathcal{O}_{K_v} / \mathfrak{m}_v$ be its residue field. If the valuation $v$ is clear, we may omit it from the notation.
	
	By a number field $K$ we mean a finite extension of $\mathbb{Q}$. Let $\mathcal{O}_K$ be its ring of integers. Denote by $\mathfrak{M}(K)$ the set of all places of $K$; it is the union of the sets $\mathfrak{M}(K)_{\textup{fin}}$ and $\mathfrak{M}(K)_{\infty}$ of finite and infinite places, respectively. For each place $v$, we let $K_v$ be the local field given by completion of $K$ with respect to $v$. There is a correspondence between the finite places $\mathfrak{M}(K)_{\textup{fin}}$ and the maximal ideals of $\mathcal{O}_K$. Indeed, for each finite place $v$,
	\begin{displaymath}
		\mathfrak{p} \coloneqq \lbrace f \in K \, \vert \, |f|_{v} <1 \rbrace, \quad \mathcal{O}_{K,\mathfrak{p}} \coloneqq \lbrace f \in K \, \vert \, |f|_{v} \leq 1 \rbrace, \quad \textup{and} \quad k(\mathfrak{p}) \coloneqq \mathcal{O}_{K}/\mathfrak{p}
	\end{displaymath}
	determine a maximal ideal of $\mathcal{O}_K$, the localization of $\mathcal{O}_K$ at $\mathfrak{p}$, and the residue field $k(\mathfrak{p})$, respectively. On the other hand, there is a correspondence between infinite places $\mathfrak{M}(K)_{\infty}$ and the set $\Lambda \coloneqq \textup{Hom}(K,\mathbb{C})$ of complex embeddings, modulo complex conjugation. We may use these notations interchangeably, depending on the context. Given a finite subset $S \subset \mathfrak{M}(K)_{\textup{fin}}$, the ring $\mathcal{O}_{K,S}$ of $S$-integers of $K$ is the set of elements $f \in K$ satisfying $|f|_v \leq 1$ for all places $v  \in \mathfrak{M}(K)_{\textup{fin}} \setminus S$.
	
	All of our schemes are assumed to be Noetherian. We denote by $\mathcal{O}_{\mathcal{X}}$ and $K(\mathcal{X})$ the structure sheaf and function field of the (integral) scheme $\mathcal{X}$, respectively.  By a $d$-dimensional \textit{variety} $\mathcal{X}$ over a base scheme $\mathcal{S}$, denoted $\mathcal{X}/\mathcal{S}$, we mean an integral scheme $\mathcal{X}$ which is separated, flat and of finite type over $\mathcal{S}$, and has relative dimension $d = \textup{dim}_{\mathcal{S}} (\mathcal{X})$. We say that the variety $\mathcal{X}/\mathcal{S}$ is projective (resp. quasi-projective, smooth) if the structural morphism $\mathcal{X} \rightarrow \mathcal{S}$ is projective (resp. quasi-projective, smooth). A morphism of varieties over $\mathcal{S}$ (or $\mathcal{S}$-morphism) is a morphism $\mathcal{X}_1 \rightarrow \mathcal{X}_2$ compatible with the respective structural morphisms.  We reserve plain Roman letters for schemes over a field, e.g., $X \rightarrow \textup{Spec}(K)$.
	
	Let $\mathcal{X}/\mathcal{S}$ be a variety and $\textup{Div}(\mathcal{X})$ be the group of Cartier divisors on $\mathcal{X}$. We have a morphism $\textup{div}\colon K(\mathcal{X})^{\times} \rightarrow \textup{Div}(\mathcal{X})$ given by the assignment $f \mapsto \textup{div}(f)$. The image of the morphism $\textup{div}$ is the subgroup $\textup{Pr}(\mathcal{X})$ of \textit{principal} Cartier divisors on $\mathcal{X}$. The \textit{Cartier class group} of $\mathcal{X}$ is the quotient $\textup{Cl}(\mathcal{X})\coloneqq \textup{Div}(\mathcal{X}) / \textup{Pr}(\mathcal{X})$. Denote by $\textup{Pic}(\mathcal{X})$ the Picard group of $\mathcal{X}$, i.e., the group of isomorphism classes of line bundles over $\mathcal{X}$. Let $\mathcal{L}$ be a line bundle over $\mathcal{X}$ and $s \neq 0$ be a rational section of $\mathcal{L}$. The variety $\mathcal{X}$ is integral, therefore the assignment $s \mapsto \textup{div}(s)$ induces an isomorphism $\textup{Pic}(\mathcal{X}) \cong \textup{Cl} (\mathcal{X})$. Since $\mathbb{Q}$ is flat over $\mathbb{Z}$, this isomorphism is preserved after scalar extension. Given $\mathcal{D} \in \textup{Div}(\mathcal{X})$, we denote by $\mathcal{O}_{\mathcal{X}}(\mathcal{D})$ the associated line bundle.
	
	\section{Review of Yuan-Zhang's theory of adelic divisors}\label{global-arith-theory}
	In this section, we briefly recall basic definitions and properties of the global theory of adelic divisors, introduced by Yuan and Zhang in~Chapter~2~of~\cite{Y-Z}. We will use the notation in Subsection~\ref{notation} freely. We also assume some familiarity with the local theory of adelic divisors, which was introduced in~Chapter~3~of~\cite{Y-Z}, and summarized in~Section~2~of~\cite{Per26}. We stick to the notation in the latter; our definitions differ slightly, as we avoid the use of $(\mathbb{Q},\mathbb{Z})$-divisors.
	
	\subsection{Adelic divisors}\label{2-1} We first recall basic notions in Arakelov theory. Let $K$ be a number field, $\mathcal{O}_K$ its ring of integers, and $\Lambda$ its set of complex embeddings. By an \textit{arithmetic variety} we mean a normal quasi-projective variety $\mathcal{X} / \mathcal{O}_K$ with smooth generic fiber. Given such an object, we consider its base change
	\begin{displaymath}
		\mathcal{X}_{\Lambda} \coloneqq \coprod_{\lambda \in \Lambda}  \mathcal{X}_{\lambda}, \quad  \mathcal{X}_{\lambda} \coloneqq   \mathcal{X} \times_{\lambda} \textup{Spec}(\mathbb{C}).
	\end{displaymath}
	Its set of complex points $\mathcal{X}_{\Lambda}(\mathbb{C}) = \coprod_{\lambda} \mathcal{X}_{\lambda}(\mathbb{C})$ comes with the structure of a complex manifold via the analytification functor, it is compact if $\mathcal{X}$ is projective, and complex conjugation induces an involution $\zeta$ of $\mathcal{X}_{\Lambda}(\mathbb{C})$. A function $f$ on $\mathcal{X}_{\Lambda}(\mathbb{C})$ is \textit{invariant under complex conjugation} if, for each $\lambda \in \Lambda$, it satisfies $f_{\overline{\lambda}} = f_{\lambda} \circ \zeta$, where $f_{\lambda} \coloneqq f|_{\mathcal{X}_{\lambda}(\mathbb{C})}$. 
	
	Given a $\mathbb{Q}$-Cartier divisor $\mathcal{D}$ on $\mathcal{X}$, we let $\mathcal{D}_{\Lambda}$ be the $\mathbb{Q}$-divisor on $\mathcal{X}_{\Lambda}$ induced by base change. If $P$ is a property of functions (continuous, smooth, etc), a \textit{Green's function for }$\mathcal{D}_{\Lambda}$ \textit{of} $P$ \textit{type} is a function $g_{\mathcal{D},\Lambda} \coloneqq \mathcal{X}_{\Lambda}(\mathbb{C}) \rightarrow \mathbb{R}_{\pm \infty}$ invariant under complex conjugation such that, for each Zariski open $V \subset \mathcal{X}_{\Lambda}$ and each local equation $f$ of $\mathcal{D}_{\Lambda}$ on $V$, the function $g_{\mathcal{D},\Lambda} + \log |f|^2$ is finite and satisfies the property $P$ on $V(\mathbb{C})$. The pair $\overline{\mathcal{D}} \coloneqq (\mathcal{D}, g_{\mathcal{D},\Lambda})$ is called an  \textit{arithmetic divisor on} $\mathcal{X}$ \textit{of} $P$ \textit{type}. For simplicity, we often write $g_{\mathcal{D}}$ instead of $g_{\mathcal{D},\Lambda}$. We denote by $\overline{\textup{Div}}(\mathcal{X})_{\mathbb{Q}}$ the group of arithmetic divisors of continuous type on $\mathcal{X}$. 
	
	Now, we fix an arithmetic variety $\mathcal{U}/\mathcal{O}_K$. A \textit{projective model of} $\mathcal{U}$ \textit{over} $\mathcal{O}_K$ is an open immersion of $\mathcal{U}$ into a projective variety $\mathcal{X}$ over $\mathcal{O}_K$. We write $\textup{PM}(\mathcal{X}/\mathcal{O}_K)$ for the category of projective models of $\mathcal{U}$ over $\mathcal{O}_K$, where the arrows in this category are proper morphisms $\mathcal{X}_1 \rightarrow \mathcal{X}_2$ which commute with the open immersions $\mathcal{U} \rightarrow \mathcal{X}_i$. In this case, we say that $\mathcal{X}_1$ \textit{dominates} $\mathcal{X}_2$. This category is cofiltered, i.e., for any pair of models $\mathcal{X}_1$ and $\mathcal{X}_2$, there exists a third model $\mathcal{X}$ that dominates both. Given a morphism of projective models $f \colon \mathcal{X}_1 \rightarrow \mathcal{X}_2$, there is an induced pullback morphism
	\begin{displaymath}
		f^{\ast} \colon \overline{\textup{Div}}(\mathcal{X}_2)_{\mathbb{Q}} \longrightarrow \overline{\textup{Div}}(\mathcal{X}_1)_{\mathbb{Q}}.
	\end{displaymath}
	Then, the group of \textit{model arithmetic divisors of }$\mathcal{U}$ \textit{over} $\mathcal{O}_K$ is the direct limit
	\begin{displaymath}
		\overline{\textup{Div}}(\mathcal{U}/\mathcal{O}_K)_{\textup{mod}} \coloneqq \varinjlim_{\mathcal{X} \in \textup{PM}(\mathcal{U}/\mathcal{O}_K)} \overline{\textup{Div}}(\mathcal{X})_{\mathbb{Q}}.
	\end{displaymath}
	Elements of this group are called \textit{model divisors}, and they can be identified with equivalence classes of pairs $(\overline{\mathcal{D}},\mathcal{X})$ consisting of an arithmetic divisor $\overline{\mathcal{D}}$ on a projective model $\mathcal{X}$ of $\mathcal{U}$, where $(\overline{\mathcal{D}}_1,\mathcal{X}_1)$ is equivalent to $(\overline{\mathcal{D}}_2 ,\mathcal{X}_2)$ if the  $\overline{\mathcal{D}}_i$'s coincide after pullback to a projective model $\mathcal{X}$ dominating both $\mathcal{X}_1$ and $\mathcal{X}_2$. We abuse notation by writing $\overline{\mathcal{D}}$ instead of (the equivalence class of) $(\overline{\mathcal{D}},\mathcal{X})$. 
	\begin{rem}
		Observe that the full subcategory of $\textup{PM}(\mathcal{X}/\mathcal{O}_K)$ whose objects are projective arithmetic varieties is cofinal. Therefore, we may define the group $	\overline{\textup{Div}}(\mathcal{U}/\mathcal{O}_K)_{\textup{mod}}$ by taking the direct limit over this full subcategory. Indeed, given a projective model $\mathcal{X}$ of $\mathcal{U}$, there is an open immersion of generic fibers $U \rightarrow X$. Since $U$ is smooth, the singularities of $X$ must be contained in $\mathcal{X} \setminus \mathcal{U}$. Then, by taking successive blow-ups of $\mathcal{X}$ with center in $ X \setminus U \subset  \mathcal{X} \setminus \mathcal{U}$, we can resolve the singularities of $X$ and obtain a projective arithmetic variety $\mathcal{X}^{\prime}$ which is a projective model of $\mathcal{U}$ and dominates $\mathcal{X}$.
	\end{rem}
	The group $	\overline{\textup{Div}}(\mathcal{U}/\mathcal{O}_K)_{\textup{mod}}$ admits a natural topology compatible with the group structure. It is known as the \textit{boundary topology}, defined as follows. A \textit{boundary divisor of} $\mathcal{U}/\mathcal{O}_K$ is a pair $(\mathcal{X}, \overline{\mathcal{B}})$ consisting of a projective model $\mathcal{U} \rightarrow \mathcal{X}$ and an arithmetic divisor $\overline{\mathcal{B}} = (\mathcal{B},g_{\mathcal{B}})$ on $\mathcal{X}$ which is \textit{strictly effective}; that is, the divisor $\mathcal{B}$ satisfies the support condition $|\mathcal{B}| = \mathcal{X} \setminus \mathcal{U}$ and the Green's function $g_{\mathcal{B}}$ is positive. By Lemma~2.3.5~of~\cite{Y-Z}, there is an induced notion of effectivity at the level of model divisors; in particular, the notion of boundary divisor carries over to the corresponding equivalence class. Then, a boundary divisor $\overline{\mathcal{B}}$ induces a \textit{boundary norm} $\| \cdot \|_{\overline{\mathcal{B}}}$ on $\overline{\textup{Div}}(\mathcal{U}/\mathcal{O}_K)_{\textup{mod}}$, which is given by
	\begin{displaymath}
		\| \overline{D} \|_{\overline{\mathcal{B}}} \coloneqq \inf \lbrace \varepsilon \in \mathbb{Q}_{> 0} \, \vert \, - \varepsilon \cdot \overline{\mathcal{B}} \leq \overline{\mathcal{D}} \leq \varepsilon \cdot \overline{\mathcal{B}} \rbrace, \quad \inf \emptyset = \infty.
	\end{displaymath}
	This is not a norm in the usual sense; it takes the value $\infty$ if and only if $\mathcal{D}|_{\mathcal{U}} \neq 0$, where $\mathcal{D}$ is the divisorial part of $\overline{\mathcal{D}}$. Nevertheless, by Lemma~2.4.1~of~\cite{Y-Z}, it satisfies the properties of a norm: It vanishes only at $0$, satisfies the triangle inequality, and $\| a \cdot \overline{\mathcal{D}} \|_{\overline{\mathcal{B}}} = |a| \, \| \overline{D} \|_{\overline{\mathcal{B}}}$ for each $a \in \mathbb{Q}$. Moreover, given two boundary divisors, their respective boundary norms are equivalent. Therefore, a boundary norm induces a topology on the group $\overline{\textup{Div}}(\mathcal{U}/\mathcal{O}_K)_{\textup{mod}}$ which does not depend on the choice of boundary divisor. This justifies the definition below.
	\begin{dfn}\label{adelic-divisors}
		The \textit{group of adelic divisors of} $\mathcal{U}$ \textit{over }$\mathcal{O}_K$, denoted by $\overline{\textup{Div}}(\mathcal{U}/\mathcal{O}_K)$, is defined as the completion of the group $\overline{\textup{Div}}(\mathcal{U}/\mathcal{O}_K)_{\textup{mod}}$ of model divisors with respect to a boundary norm. An \textit{adelic divisor} is an element of this group.
	\end{dfn}
	Note that an adelic divisor $\overline{\mathcal{D}}$ identifies with an equivalence class of Cauchy sequences $\lbrace \overline{\mathcal{D}}_n \rbrace_{n \in \mathbb{N}}$, where two Cauchy sequences are equivalent if their difference converges to $0$. We abuse notation by writing $\overline{\mathcal{D}}=\lbrace \overline{\mathcal{D}}_n \rbrace_{n \in \mathbb{N}}$ whenever the Cauchy sequence $\lbrace \overline{\mathcal{D}}_n \rbrace_{n \in \mathbb{N}}$ is a representative of the equivalence class $\overline{\mathcal{D}}$. 
	\begin{rem}\label{decreasing}
		It will often be useful to assume that the Cauchy sequence $\lbrace \overline{\mathcal{D}}_n \rbrace_{n \in \mathbb{N}}$ is decreasing. This is always possible: By the Cauchy property and the definition of the boundary norm $\| \cdot \|_{\overline{\mathcal{B}}}$, there exists an increasing sequence $\lbrace k_n \rbrace_{n \in \mathbb{N}}$ such that
		\begin{displaymath}
			2^{-n-1} \cdot \overline{\mathcal{B}}_n \geq \overline{\mathcal{D}}_i - \overline{\mathcal{D}}_j \geq - 2^{-n-1} \cdot \overline{\mathcal{B}},
		\end{displaymath}
		for all $i,j \geq k_n$. Then, the Cauchy sequence $\lbrace \overline{\mathcal{D}}_{k_n} + 2^{-n} \cdot \overline{\mathcal{B}} \rbrace_{n \in \mathbb{N}}$ is decreasing and represents $\overline{\mathcal{D}}$.
	\end{rem}
	The \textit{group of compactified (geometric) divisors of} $\mathcal{U}$ \textit{over }$\mathcal{O}_K$, denoted by $\textup{Div}(\mathcal{U}/\mathcal{O}_K)$, is defined exactly in the same way as the adelic divisors, just forgetting about Green's functions. Each adelic divisor $\overline{\mathcal{D}} = \lbrace (\mathcal{D}_n, g_{\mathcal{D}_n}) \rbrace_{n \in \mathbb{N}}$ determines a sequence $\mathcal{D} = \lbrace \mathcal{D}_n \rbrace_{n \in \mathbb{N}}$ which is Cauchy in the geometric boundary topology. The compactified divisor $\mathcal{D}$ is called the \textit{divisorial part of} $\overline{\mathcal{D}}$. The assignment $\overline{\mathcal{D}} \mapsto \mathcal{D}$ induces a continuous group morphism $\textup{for} \colon \overline{\textup{Div}}(\mathcal{U}/\mathcal{O}_K) \rightarrow \textup{Div}(\mathcal{U}/\mathcal{O}_K)$, known as the \textit{forgetful map}. Given $\overline{\mathcal{D}} = \lbrace (\mathcal{D}_n, g_{\mathcal{D}_n}) \rbrace_{n \in \mathbb{N}}$, the restriction $\mathcal{D}_n |_{\mathcal{U}}$ determines the same divisor every $n$, which we denote by $\mathcal{D}|_{\mathcal{U}}$. Observe that the sequence $\lbrace g_{\mathcal{D}_n} |_{\mathcal{U}_{\Lambda} (\mathbb{C})} \rbrace_{n \in \mathbb{N}}$ converges uniformly on each compact subset of $\mathcal{U}_{\Lambda} (\mathbb{C})$. Therefore, it determines a Green's function $g_{\mathcal{D}} \colon \mathcal{U}_{\Lambda}(\mathbb{C}) \rightarrow \mathbb{R}_{\pm \infty}$ for $\mathcal{D}|_{\mathcal{U}}$ of continuous type. It is clear that the adelic divisor $\overline{\mathcal{D}}$ is uniquely determined by the pair $(\mathcal{D},g_{\mathcal{D}})$, so we may abuse notation by writing $\overline{\mathcal{D}} = (\mathcal{D},g_{\mathcal{D}})$. Then, Theorem~3.6.4~of~\cite{Y-Z} below describes the asymptotic growth of $g_{\mathcal{D}}$ as it approaches the boundary of $\mathcal{U}_{\Lambda}(\mathbb{C})$. 
	\begin{thm}\label{exact-seq-global}
		The forgetful map $\textup{for} \colon \overline{\textup{Div}}(\mathcal{U}/\mathcal{O}_K) \rightarrow \textup{Div}(\mathcal{U}/\mathcal{O}_K)$ is surjective. It induces a short exact sequence
		\begin{center}
			\begin{tikzcd}
				0 \arrow[r] & C^{0}(\mathcal{U}_{\Lambda}(\mathbb{C}))_{\textup{cptf}} \arrow[r] & \overline{\textup{Div}}(\mathcal{U}/\mathcal{O}_K)  \arrow[r] & \textup{Div}(\mathcal{U}/ \mathcal{O}_K) \arrow[r] & 0.
			\end{tikzcd}
		\end{center}
		Moreover, for each boundary divisor $(\mathcal{X}, (\mathcal{B},g_{\mathcal{B}}))$ of $\mathcal{U}/\mathcal{O}_K$, the assignment $ h \mapsto (h \cdot g_{\mathcal{B}})|_{\mathcal{U}_{\Lambda}(\mathbb{C})}$ induces a bijection between the space of continuous functions $h \colon \mathcal{X}_{\Lambda}(\mathbb{C}) \rightarrow \mathbb{R}$ which are invariant under complex conjugation and satisfy $h(\mathcal{X}_{\Lambda}(\mathbb{C}) \setminus \mathcal{U}_{\Lambda}(\mathbb{C})) = 0$, and the kernel of the forgetful map.
	\end{thm}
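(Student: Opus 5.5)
We have to show three things: the map $h \mapsto (h\cdot g_{\mathcal{B}})|_{\mathcal{U}_{\Lambda}(\mathbb{C})}$ is a bijection onto $\ker(\textup{for})$, the forgetful map is surjective, and the sequence is exact. This also pins down $C^{0}(\mathcal{U}_{\Lambda}(\mathbb{C}))_{\textup{cptf}}$, namely as the space of continuous conjugation-invariant functions $u$ on $\mathcal{U}_{\Lambda}(\mathbb{C})$ such that $u/g_{\mathcal{B}}$ extends continuously by $0$ to $\mathcal{X}_{\Lambda}(\mathbb{C})$. The plan is to treat the kernel first and surjectivity second, working throughout with a fixed boundary divisor $\overline{\mathcal{B}}=(\mathcal{B},g_{\mathcal{B}})$ on a model $\mathcal{X}$.

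\emph{Kernel.} Take $\overline{\mathcal{D}}=\{(\mathcal{D}_n,g_n)\}$ with $\mathcal{D}=0$ as a compactified divisor. After replacing $(\mathcal{D}_n,g_n)$ by $(\mathcal{D}_n,g_n)-(\mathcal{D}_n,0)$, we may assume every $\mathcal{D}_n=0$; since $(\mathcal{D}_n,0)\to 0$ in the arithmetic boundary norm, this does not change the class. I would check that last claim by noting that $\mathcal{D}_n\to 0$ geometrically gives $-\varepsilon\mathcal{B}\le\mathcal{D}_n\le\varepsilon\mathcal{B}$, and the Green part adds at most $\varepsilon g_{\mathcal{B}}$ up to an adjustment by a small constant multiple of the archimedean trivial divisor. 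So $\overline{\mathcal{D}}$ is represented by continuous functions $g_n$ on compact models $\mathcal{X}_n$, and the Cauchy condition reads $|g_n-g_m|\le\varepsilon\, g_{\mathcal{B}}$ on $\mathcal{U}_{\Lambda}(\mathbb{C})$. Set $h_n=g_n/g_{\mathcal{B}}$, which is continuous on $\mathcal{X}_{n,\Lambda}(\mathbb{C})$ (pulling $\mathcal{B}$ back to a common model) and vanishes on the boundary because $g_{\mathcal{B}}=+\infty$ there. The sequence $h_n$ is uniformly Cauchy, so its limit $h$ is continuous, vanishes on the boundary, and satisfies $g_{\mathcal{D}}=h\,g_{\mathcal{B}}$. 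Descending $h$ from the models $\mathcal{X}_n$ to $\mathcal{X}$ is harmless, since pullback along proper birational maps that are isomorphisms over $\mathcal{U}$ preserves continuity and the vanishing condition.

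\emph{Converse for the kernel.} Given $h$ continuous on $\mathcal{X}_{\Lambda}(\mathbb{C})$ with $h=0$ on the boundary, approximate it uniformly by $h_n$ that vanish on a neighbourhood of the boundary, for instance $h_n=\textup{sign}(h)\max(|h|-1/n,0)$. Each $h_n g_{\mathcal{B}}$ is then a continuous function on $\mathcal{X}_{\Lambda}(\mathbb{C})$, hence a model arithmetic divisor $(0,h_ng_{\mathcal{B}})$. These divisors are Cauchy because $|h_n-h_m|g_{\mathcal{B}}\le\|h_n-h_m\|_\infty g_{\mathcal{B}}$, and their limit is $h g_{\mathcal{B}}$. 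Injectivity of the assignment is clear.

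\emph{Surjectivity of $\textup{for}$.} Given a geometric Cauchy sequence $\{\mathcal{D}_n\}$, I would choose Green's functions $g_n$ inductively so that $(\mathcal{D}_n,g_n)$ is arithmetically Cauchy. We have $-\varepsilon_n\mathcal{B}\le\mathcal{D}_{n+1}-\mathcal{D}_n\le\varepsilon_n\mathcal{B}$ on a common model. Pick any Green function $g'$ for $\mathcal{D}_{n+1}$; then $g'-g_n$ is a Green function for $\mathcal{D}_{n+1}-\mathcal{D}_n$. Truncate it as
\[
g_{n+1}:=g_n+\max\bigl(\min(g'-g_n,\ \varepsilon_n g_{\mathcal{B}}),\ -\varepsilon_n g_{\mathcal{B}}\bigr).
\]
The main obstacle is checking that this truncation is again a continuous Green function for $\mathcal{D}_{n+1}-\mathcal{D}_n$. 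Locally, write $g'-g_n=-\log|f|^2+c$ and $\varepsilon_n g_{\mathcal{B}}=-\varepsilon_n\log|b|^2+c'$. The divisor inequality says $f/b^{\varepsilon_n}$ is regular (after passing to a multiple to clear denominators), so $-\log|f|^2+\varepsilon_n\log|b|^2$ is bounded above. Hence near the boundary the minimum is $g'-g_n$ up to a continuous correction, and symmetrically for the maximum. This needs care where the supports meet, and I would handle it by computing with local equations on a common smooth model. Once this is done, $|g_{n+1}-g_n|\le\varepsilon_n g_{\mathcal{B}}$, so summable $\varepsilon_n$ make the sequence Cauchy, which proves surjectivity and completes exactness.
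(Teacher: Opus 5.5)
The paper gives no proof of this statement; it quotes Theorem~3.6.4 of Yuan--Zhang. Your strategy is the natural one: truncate a Green's function against $\pm\varepsilon g_{\mathcal{B}}$ for surjectivity, and use the ratio $g/g_{\mathcal{B}}$ for the kernel. The surjectivity argument and the converse direction of the kernel part are fine. However, one step in the kernel part fails as written.

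\textbf{The gap.} You reduce to $\mathcal{D}_n=0$ by subtracting $(\mathcal{D}_n,0)$, but $(\mathcal{D}_n,0)$ is not an arithmetic divisor. The function $0$ is a Green's function for $\mathcal{D}_n$ only if $\mathcal{D}_{n,\Lambda}=0$, and kernel elements typically have horizontal boundary components. For example, $\overline{\mathcal{D}}_n=\tfrac{1}{n}\overline{\mathcal{B}}$ represents $0$, yet $(\mathcal{D}_n,g_n)-(\mathcal{D}_n,0)=(0,\tfrac{1}{n}g_{\mathcal{B}})$ is not a model divisor, since $g_{\mathcal{B}}$ is infinite on the boundary. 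So the claim that $\overline{\mathcal{D}}$ is represented by continuous functions $g_n$ on compact models does not follow. The ``adjustment by a constant multiple of the archimedean trivial divisor'' cannot absorb a logarithmic singularity.

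\textbf{Fix 1.} The repair is already in your last paragraph. First prove the truncation lemma in the form: if $-\varepsilon\mathcal{B}\le E\le\varepsilon\mathcal{B}$, then $E$ admits a Green's function $g_E$ with $|g_E|\le\varepsilon g_{\mathcal{B}}$. Pass to a subsequence with $-\varepsilon_n\mathcal{B}\le\mathcal{D}_n\le\varepsilon_n\mathcal{B}$ and $\varepsilon_n\to 0$. Apply the lemma to $E=\mathcal{D}_n$ to get lifts $(\mathcal{D}_n,g'_n)$ of boundary norm at most $\varepsilon_n$, and subtract these instead of $(\mathcal{D}_n,0)$.

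\textbf{Fix 2.} Alternatively, skip the reduction altogether. The Green's functions of the effective divisors $\varepsilon_n\mathcal{B}\pm\mathcal{D}_n$ are bounded below, so $|g_n|\le\varepsilon_n g_{\mathcal{B}}+C_n$. Near the boundary this gives $|g_n/g_{\mathcal{B}}|\le 2\varepsilon_n$. Combined with the uniform Cauchy bound on $\mathcal{U}_{\Lambda}(\mathbb{C})$, it shows that $g_{\mathcal{D}}/g_{\mathcal{B}}$ extends continuously by $0$. The same estimate shows that $\overline{\mathcal{D}}$ coincides with the divisor you build from $h$, which is what bijectivity onto the kernel requires.

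\textbf{Minor points.}
\begin{itemize}
\item In the truncation step, $E\le\varepsilon_n\mathcal{B}$ means that $b^{\varepsilon_n}/f$ is regular, not $f/b^{\varepsilon_n}$. The former is what gives the upper bound you actually use.
\item Your ``main obstacle'' needs no smooth model or separate analysis where supports meet. Write $\min(G,\varepsilon g_{\mathcal{B}})=G-\max(0,G-\varepsilon g_{\mathcal{B}})$. Here $G-\varepsilon g_{\mathcal{B}}$ is a Green's function of the anti-effective divisor $E-\varepsilon\mathcal{B}$. It is therefore continuous with values in $[-\infty,\infty)$ and locally bounded above, so $\max(0,\cdot)$ of it is a continuous real function. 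The outer maximum is handled the same way.
\item ``Descending'' $h$ by pullback is the wrong direction. It is simpler to note that $g_n$ is bounded on $\mathcal{U}_{\Lambda}(\mathbb{C})$, by compactness of $\mathcal{X}_{n,\Lambda}(\mathbb{C})$. Hence $g_n/g_{\mathcal{B}}$ already extends by zero to a continuous function on $\mathcal{X}_{\Lambda}(\mathbb{C})$.
\end{itemize}
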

	Now, we state Corollary~3.4.2~of~\cite{Y-Z}. It describes the functorial behavior of shrinking the arithmetic variety.
	\begin{lem}\label{functoriality-general}
		Let $\mathcal{U} \rightarrow \mathcal{V}$ be an open immersion of arithmetic varieties over $\mathcal{O}_{K}$. Then, restriction of model divisors induces a continuous injective map $\overline{\textup{Div}}(\mathcal{V}/\mathcal{O}_K)  \rightarrow \overline{\textup{Div}}(\mathcal{U}/\mathcal{O}_K)$.
	\end{lem}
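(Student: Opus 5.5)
The plan is to construct the map on model divisors, check that it is well defined and continuous for the boundary topologies, and then extend it to the completions, arguing injectivity separately.

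\emph{Step 1 (the map on models).} Let $\mathcal{X}$ be a projective model of $\mathcal{V}$. Composing $\mathcal{U}\to\mathcal{V}\to\mathcal{X}$ exhibits $\mathcal{X}$ as a projective model of $\mathcal{U}$. Hence every pair $(\overline{\mathcal{D}},\mathcal{X})$ representing a model divisor of $\mathcal{V}$ also represents a model divisor of $\mathcal{U}$. Domination of models of $\mathcal{V}$ is in particular domination of models of $\mathcal{U}$, so equivalent pairs remain equivalent. This gives a group morphism
\begin{displaymath}
\overline{\textup{Div}}(\mathcal{V}/\mathcal{O}_K)_{\textup{mod}}\to\overline{\textup{Div}}(\mathcal{U}/\mathcal{O}_K)_{\textup{mod}}.
\end{displaymath}

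\emph{Step 2 (continuity).} Fix a boundary divisor $(\mathcal{X},\overline{\mathcal{B}}_{\mathcal{V}})$ of $\mathcal{V}$. The divisor $\mathcal{B}_{\mathcal{V}}$ is effective with support $\mathcal{X}\setminus\mathcal{V}\subset\mathcal{X}\setminus\mathcal{U}$. Choose a strictly effective arithmetic divisor $\overline{\mathcal{B}}_{\mathcal{U}}$ on some model $\mathcal{X}'$ of $\mathcal{U}$ dominating $\mathcal{X}$, with support $\mathcal{X}'\setminus\mathcal{U}$. By adding a multiple of it if necessary, we may assume $\overline{\mathcal{B}}_{\mathcal{U}}\geq\overline{\mathcal{B}}_{\mathcal{V}}$. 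Then $-\varepsilon\overline{\mathcal{B}}_{\mathcal{V}}\leq\overline{\mathcal{D}}\leq\varepsilon\overline{\mathcal{B}}_{\mathcal{V}}$ implies the same inequalities with $\overline{\mathcal{B}}_{\mathcal{U}}$, so
\begin{displaymath}
\|\overline{\mathcal{D}}\|_{\overline{\mathcal{B}}_{\mathcal{U}}}\leq\|\overline{\mathcal{D}}\|_{\overline{\mathcal{B}}_{\mathcal{V}}}.
\end{displaymath}
The map is therefore Lipschitz, sends Cauchy sequences to Cauchy sequences, and extends continuously to the completions.

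\emph{Step 3 (injectivity).} Here is the real content, since the norm comparison only goes one way. Let $\overline{\mathcal{D}}=\{\overline{\mathcal{D}}_n\}$ be Cauchy for $\mathcal{V}$, and suppose its image tends to $0$ in the $\mathcal{U}$-topology. I would use the description of Theorem~\ref{exact-seq-global}. The limit determines a pair $(\mathcal{D},g_{\mathcal{D}})$, and the restriction to $\mathcal{U}$ is $(\mathcal{D}|_{\mathcal{U}},g_{\mathcal{D}}|_{\mathcal{U}_\Lambda(\mathbb{C})})$.

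At the archimedean level the argument is easy. Convergence to $0$ forces $g_{\mathcal{D}}=0$ on $\mathcal{U}_\Lambda(\mathbb{C})$. This set is dense in $\mathcal{V}_\Lambda(\mathbb{C})$, and $g_{\mathcal{D}}$ is continuous there, so $g_{\mathcal{D}}=0$ on $\mathcal{V}_\Lambda(\mathbb{C})$.

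For the geometric part, $\mathcal{D}|_{\mathcal{U}}=0$ implies that $\mathcal{D}|_{\mathcal{V}}$ is supported on $\mathcal{V}\setminus\mathcal{U}$, and I must show that it vanishes. I would argue via valuations. For each divisorial valuation (prime divisor on some model), $\textup{ord}$ of the Cauchy sequence converges, to a function on Berkovich-type points. The vanishing in $\textup{Div}(\mathcal{U}/\mathcal{O}_K)$ gives $\textup{ord}_E(\mathcal{D}_n)\to0$ for all $E$, because the $\mathcal{U}$-boundary norm controls $|\textup{ord}_E|\leq\varepsilon\,\textup{ord}_E(\mathcal{B}_{\mathcal{U}})$, which is finite. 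This gives vanishing of all orders along components meeting $\mathcal{V}$. For the remaining $E$ the same bound applies since every $E$ is centered somewhere. Hence the limit compactified divisor of $\mathcal{V}$ has all orders zero. One concludes that it is $0$, because an element of the completion is determined by its order function plus Green's function (the Yuan–Zhang analytification embedding), and that function is zero.

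The main obstacle is exactly this last claim, namely that an adelic divisor over $\mathcal{V}$ is determined by its values on the dense analytic subspace of $\mathcal{U}$. I would prove it by showing that the $\mathcal{V}$-norm of $\overline{\mathcal{D}}_n-\overline{\mathcal{D}}_m$ is controlled by the supremum over valuations of $|\textup{ord}|/\textup{ord}(\mathcal{B}_{\mathcal{V}})$. Then I would pass to the limit, using continuity of these ratios away from $|\mathcal{B}_{\mathcal{V}}|$ and density of points of $\mathcal{U}^{\textup{an}}$.
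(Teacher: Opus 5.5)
Your argument is correct in substance, but it takes a different route from the one the paper relies on. The paper gives no proof: the lemma is quoted from Yuan--Zhang, where it comes out of the analytification theory recalled in Proposition~\ref{global-analytification}. In that approach:
\begin{itemize}
\item an adelic divisor on $\mathcal{V}$ is determined by its equivariant Green's function on $\mathcal{V}^{\textup{an}}$, and its effectivity is read off from the sign of that function;
\item restriction to $\mathcal{U}$ is restriction of the function to $\mathcal{U}^{\textup{an}}$, which is dense in $\mathcal{V}^{\textup{an}}$;
\item so if the restriction vanishes, the Green's function vanishes on a dense set off $|\mathcal{D}|_{\mathcal{V}}|$. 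The logarithmic singularities then force $\mathcal{D}|_{\mathcal{V}}=0$, the function vanishes everywhere, and the divisor is $0$.
\end{itemize}
You instead stay entirely with boundary norms. Step~2 is the easy $1$-Lipschitz bound. The real content of Step~3 is that $\|\cdot\|_{\overline{\mathcal{B}}_{\mathcal{V}}}$ is lower semicontinuous for the pointwise convergence that $\mathcal{U}$-convergence provides, namely of divisorial orders and of archimedean Green's functions on $\mathcal{U}_\Lambda(\mathbb{C})$. Your route gives a self-contained, elementary proof that uses density only at the archimedean places. The analytic route is shorter once Proposition~\ref{global-analytification} is granted and treats all places uniformly, but that proposition is the substantial input.

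Your write-up of Step~3 mixes the two routes. The appeal to an ``order function'' version of the analytification embedding is not what Proposition~\ref{global-analytification} says; to use it you would also need density of divisorial points in $\mathcal{V}^{\textup{an}}$. Your final paragraph is the right fix; state it directly:
\begin{itemize}
\item Work on normal models, which are cofinal since $\mathcal{V}$ is normal. There, effectivity of a model arithmetic divisor means $\textup{ord}_E\geq 0$ along prime divisors plus a nonnegative Green's function.
\item For $n,m\geq N_\varepsilon$ you have $|\textup{ord}_E(\mathcal{D}_n)-\textup{ord}_E(\mathcal{D}_m)|\leq\varepsilon\,\textup{ord}_E(\mathcal{B}_{\mathcal{V}})$ for every divisorial valuation $E$, and $|g_{\mathcal{D}_n}-g_{\mathcal{D}_m}|\leq\varepsilon\, g_{\mathcal{B}_{\mathcal{V}}}$.
\item Let $m\to\infty$, using $\textup{ord}_E(\mathcal{D}_m)\to 0$ (your $\mathcal{B}_{\mathcal{U}}$-bound, valid for every $E$) and $g_{\mathcal{D}_m}\to 0$ pointwise on $\mathcal{U}_\Lambda(\mathbb{C})$. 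Then extend the archimedean inequality off the supports by continuity and density.
\end{itemize}
This yields $-\varepsilon\overline{\mathcal{B}}_{\mathcal{V}}\leq\overline{\mathcal{D}}_n\leq\varepsilon\overline{\mathcal{B}}_{\mathcal{V}}$, so $\|\overline{\mathcal{D}}_n\|_{\overline{\mathcal{B}}_{\mathcal{V}}}\to 0$. No continuity of ratios is needed at non-archimedean places. Valuations with $\textup{ord}_E(\mathcal{B}_{\mathcal{V}})=0$ are exactly those centred in $\mathcal{V}$, and for them the same inequality forces $\textup{ord}_E(\mathcal{D}_n)=0$, which is precisely $\mathcal{D}_n|_{\mathcal{V}}=0$.

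This also repairs your archimedean paragraph. It assumed $g_{\mathcal{D}}$ is continuous on all of $\mathcal{V}_\Lambda(\mathbb{C})$ before $\mathcal{D}|_{\mathcal{V}}=0$ had been established, but that only holds away from $|\mathcal{D}|_{\mathcal{V}}|$.
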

	\subsection{A global-local approach}\label{2-2} In this subsection, we assume familiarity with the notation introduced in Section~2~of~\cite{Per26}, the first article of the series. We note that all of these objects were introduced in Chapter~3~of~\cite{Y-Z}, but we reference our paper for consistency in notation. We continue to fix an arithmetic variety $\mathcal{U} / \mathcal{O}_K$ and let $U$ be its generic fiber. For each place $v$, we let $U_v \coloneqq U \times_K \textup{Spec}(K_v)$ be the quasi-projective variety obtained by base change. If $v$ is a finite place of $K$, we also consider the variety $\mathcal{U}_{v}\coloneqq \mathcal{U} \times_{\mathcal{O}_K} \textup{Spec} ( \mathcal{O}_{K_v})$ induced by localization, and its special fiber $\mathcal{U}_{k_v} \coloneqq \mathcal{U} \times_{\mathcal{O}_{K}} \textup{Spec}(k_v)$. The adjective ``\textit{adelic}'' strongly suggests the existence of a local-global principle relating the group $\overline{\textup{Div}}(\mathcal{U}/\mathcal{O}_K)$ with the product
	\begin{displaymath}
		\prod_{v \in \mathfrak{M}(K) } \overline{\textup{Div}}(U_v/ K_v ),
	\end{displaymath}
	where $ \overline{\textup{Div}}(U_v/ K_v )$ is the group of compactified arithmetic divisors of $U_v/K_v$ (Definition~2.2.5 of~\cite{Per26}). This is indeed the case. For each finite place $v \in \mathfrak{M}(K)$, localization and analytification of divisors (Remark~2.2.7~of~\cite{Per26}) induces continuous morphisms
	\begin{center}
		\begin{tikzcd}
			\overline{\textup{Div}}(\mathcal{U}/\mathcal{O}_K) \arrow[r, "{\textup{for}}"] & \textup{Div}(\mathcal{U}/\mathcal{O}_K) \arrow[r] & \textup{Div}(\mathcal{U}_v / \mathcal{O}_{K_v}) \arrow[r, "{\textup{an}}"]  & \overline{\textup{Div}}(U_v/ K_v ).
		\end{tikzcd}
	\end{center}
	For the archimedean places, the corresponding map $\overline{\textup{Div}}(\mathcal{U}/\mathcal{O}_K) \rightarrow \overline{\textup{Div}}(\mathcal{U}_{\Lambda} / \mathbb{C} )$ is given by the assignment $(\mathcal{D},g_{\mathcal{D} }) \mapsto (\mathcal{D}_{\Lambda}, g_{\mathcal{D} })$. Altogether, we obtain a map
	\begin{displaymath}
		\overline{\textup{Div}}(\mathcal{U}/\mathcal{O}_K) \longrightarrow \prod_{v \in \mathfrak{M}(K) } \overline{\textup{Div}}(U_v/ K_v ).
	\end{displaymath}
	For each adelic divisor $\overline{\mathcal{D}}$, we write $( D_v, g_{D,v})_v$ for its image under this map. The theory of Berkovich spaces over $\mathcal{O}_K$ allows us to study the properties of the above map geometrically. We refer to Chapter~1~of~\cite{LP24} for the study of Berkovich spaces over $\mathbb{Z}$ and more general bases. The pair $(\mathcal{O}_K, \| \cdot \| )$ is a Banach ring, where the norm $\| \cdot \|$ is given by 
	\begin{displaymath}
		\| \cdot \| \coloneqq \max_{\lambda \in \Lambda} | \lambda ( \cdot ) |_{\infty}.
	\end{displaymath}
	Then, there is an analytification functor from the category of $\mathcal{O}_K$-varieties to the category of analytic spaces over $(\mathcal{O}_K, \| \cdot \| )$. Applying this functor to structural morphism $\mathcal{U} \rightarrow \textup{Spec}(\mathcal{O}_K)$ gives rise to a continuous map $\mathcal{U}^{\textup{an}} \rightarrow \mathcal{M}(\mathcal{O}_K)$, where $\mathcal{M}(\mathcal{O}_K)$ is the Berkovich spectrum of $\mathcal{O}_K$. By Lemma~1.2.18~of~\cite{LP24}, the fibers of the map $\mathcal{U}^{\textup{an}} \rightarrow \mathcal{M}(\mathcal{O}_K)$ induce a decompostion
	\begin{displaymath}
		\mathcal{U}^{\textup{an}} = U^{\textup{an}}_{0} \cup \coprod_{\lambda \in \Lambda} (0,1] \times \mathcal{U}_{\lambda}(\mathbb{C}) \cup \coprod_{v \in \mathfrak{M}(K)_{\textup{fin}} } (U_{v}^{\textup{an}} \times (0,\infty) \cup \mathcal{U}_{k_v}^{\textup{an}} ).
	\end{displaymath}
	Here, $U_{0}^{\textup{an}}$ and $\mathcal{U}_{k_v}^{\textup{an}}$ are the analytifications with respect to the trivial absolute values on $K$ and $k_v$, respectively. Then, we define arithmetic divisors in this setting.
	\begin{dfn}\label{eqv}
		An arithmetic divisor on $\mathcal{U}^{\textup{an}}$ is a pair $(\mathcal{D},g_{\mathcal{D}})$ consisting of a Cartier divisor $\mathcal{D}$ on $\mathcal{U}$ and a Green's function $g_{\mathcal{D}}  \colon \mathcal{U}^{\textup{an}} \rightarrow \mathbb{R}_{\pm \infty}$ for ${\mathcal{D}}$. We say that the Green's function $g_{\mathcal{D}}$ and the divisor $\overline{\mathcal{D}}$ are \textit{equivariant} if, for all $p_1, p_2 \in  \mathcal{U}^{\textup{an}}$ satisfying $| \cdot |_{p_2} = | \cdot |_{p_1}^{t}$ for some $0 < t < \infty$, we have  $g_{\mathcal{D}} (p_2) = t \, g_{\mathcal{D}} (p_1)$. Denote by $\overline{\textup{Div}}(\mathcal{U}^{\textup{an}})_{\textup{eqv}}$ the group of equivariant arithmetic divisors of continuous type.
	\end{dfn}
	Each adelic divisor $\overline{\mathcal{D}}$ gives rise to a family of compactified geometric divisors $( D_v, g_{D,v})_v$. Then, by the decomposition of $\mathcal{U}^{\textup{an}}$ above, we may extend the Green's functions equivariantly to obtain a function defined over the non-trivially valued fibers. The following result combines Proposition~3.3.1 and Lemma~3.3.3~of~\cite{Y-Z}, showing that this procedure gives rise to an equivariant arithmetic divisor.
	\begin{prop}\label{global-analytification}
		With the previous notation, the assignment $\overline{\mathcal{D}} \mapsto ( D_v, g_{D,v})_v$ induces a natural continuous and injective group morphism $\textup{an}\colon \overline{\textup{Div}}(\mathcal{U}/\mathcal{O}_K) \rightarrow \overline{\textup{Div}}(\mathcal{U}^{\textup{an}})_{\textup{eqv}}$. Moreover, an adelic divisor $\overline{\mathcal{D}}$ is effective if and only if the Green's function $g_{\mathcal{D}}$ obtained by analytification is non-negative. 
	\end{prop}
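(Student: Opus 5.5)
The plan is to define the map on model divisors first, then extend it to the completion by continuity, and prove the properties at each stage.

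\emph{Model divisors.} Let $\overline{\mathcal{D}} = (\mathcal{D}, g_{\mathcal{D}})$ be an arithmetic divisor on a projective model $\mathcal{X}$. Restrict $\mathcal{D}$ to $\mathcal{U}$. For the Green's function on $\mathcal{U}^{\textup{an}}$, use the decomposition of Lemma~1.2.18 of~\cite{LP24}, fiber by fiber:
\begin{itemize}
\item On the archimedean fibers $(0,1] \times \mathcal{U}_{\lambda}(\mathbb{C})$, put $g(t,x) = t\, g_{\mathcal{D},\lambda}(x)$, where the point $(t,x)$ corresponds to the seminorm $|\cdot|_\infty^{t}$.
\item On the fibers $U_v^{\textup{an}} \times (0,\infty)$ over a finite place $v$, use the model Green's function $g_{\mathcal{D},v}$ induced by $\mathcal{X}_{\mathcal{O}_{K_v}}$, scaled by $t$.
\item On the trivially valued fibers $\mathcal{U}_{k_v}^{\textup{an}}$ and $U_0^{\textup{an}}$, use the Green's function attached to the model via its special fibre, respectively its generic fibre, with the trivial valuation.
\end{itemize}
Equivariance then holds by construction. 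Continuity of $g$ on all of $\mathcal{U}^{\textup{an}}$, in particular at the gluing points $t \to 0$ and $t \to \infty$, is exactly Proposition~3.3.1 of~\cite{Y-Z}. I would invoke that result rather than reprove it: the model Green's function is locally $-\log|f|$ for a local equation $f$ of $\mathcal{D}$, which is continuous on $\mathcal{X}^{\textup{an}}$.

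Compatibility with pullback along morphisms of models is immediate, because pullback preserves local equations. So the construction descends to $\overline{\textup{Div}}(\mathcal{U}/\mathcal{O}_K)_{\textup{mod}}$ and is a group morphism.

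\emph{Effectivity on model divisors.} If $\overline{\mathcal{D}} \geq 0$, then $\mathcal{D}$ is effective and $g_{\mathcal{D}} \geq 0$. Hence all local model Green's functions are non-negative, and so is $g$. For the converse, I would use Lemma~3.3.3 of~\cite{Y-Z}. Non-negativity of $g$ on the fibres over the finite places $v$ forces $\mathcal{D}$ to be effective on the localizations, because the divisorial points over each vertical component detect its multiplicity. Non-negativity on $U_0^{\textup{an}}$ and on the archimedean fibers handles the horizontal part and the archimedean Green's function.

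\emph{Extension to the completion.} If $-\varepsilon \overline{\mathcal{B}} \leq \overline{\mathcal{D}} \leq \varepsilon \overline{\mathcal{B}}$, then by the effectivity statement $|g_{\mathcal{D}}| \leq \varepsilon\, g_{\mathcal{B}}$ on $\mathcal{U}^{\textup{an}}$. The function $g_{\mathcal{B}}$ is finite and continuous on $\mathcal{U}^{\textup{an}}$, since $|\mathcal{B}| \cap \mathcal{U} = \emptyset$. So for a Cauchy sequence the Green's functions converge uniformly on compact subsets, and in fact uniformly relative to $g_{\mathcal{B}}$. The divisorial part on $\mathcal{U}$ is constant along the sequence. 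The limit is therefore continuous and equivariant, the map is well defined, and it is continuous.

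Effectivity passes to limits. If $\overline{\mathcal{D}} \geq 0$, then by definition it is a limit of effective model divisors $\overline{\mathcal{D}}_n + \varepsilon_n \overline{\mathcal{B}}$, so $g \geq 0$. Conversely, if $g \geq 0$, take a decreasing representative as in Remark~\ref{decreasing}. Then $g_{\mathcal{D}_n} \geq g \geq 0$, so each $\overline{\mathcal{D}}_n$ is effective and $\overline{\mathcal{D}} \geq 0$.

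\emph{Injectivity.} If $\textup{an}(\overline{\mathcal{D}}) = 0$, then both $\overline{\mathcal{D}}$ and $-\overline{\mathcal{D}}$ are effective. I still need to check that effectivity is antisymmetric in the completion. Under the decreasing-representative comparison, for each $\varepsilon$ eventually $-\varepsilon\overline{\mathcal{B}} \leq \overline{\mathcal{D}}_n \leq \varepsilon\overline{\mathcal{B}}$. Hence $\overline{\mathcal{D}}_n \to 0$ and $\overline{\mathcal{D}} = 0$.

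\emph{Main obstacle.} The hard step is the effectivity criterion on model divisors, in particular the converse direction at the trivially valued and special fibres. I rely on Lemma~3.3.3 of~\cite{Y-Z} there.
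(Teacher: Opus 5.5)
Your proposal is correct and follows the paper's route. The paper's proof consists only of citing Proposition~3.3.1 and Lemma~3.3.3 of Yuan--Zhang, the same two inputs you use for continuity of the glued model Green's functions and for the effectivity criterion. Your passage to the completion via $|g_{\mathcal{D}}|\le\varepsilon\, g_{\mathcal{B}}$, decreasing representatives and antisymmetry of effectivity spells out what those references contain. There is one cosmetic slip: with the paper's convention that $g_{\mathcal{D},\Lambda}+\log|f|^2$ is continuous, the archimedean piece should be $\tfrac{t}{2}\,g_{\mathcal{D},\lambda}(x)$, or the non-archimedean pieces doubled, so that it glues continuously with the $-\log|f|$ pieces on $U_0^{\textup{an}}$ as $t\to 0$; deferring to Yuan--Zhang's Proposition~3.3.1 already gives you that normalization.
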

	\begin{rem}\label{coherence}
		A related question is to find under which condition a family of compactified divisors $( D_v, g_{D,v})_v$ comes from an adelic divisor $\overline{\mathcal{D}}$. This is a so-called \textit{coherence condition}. In \cite{Son24}, Song has announced that the map $\textup{an}\colon \overline{\textup{Div}}(\mathcal{U}/\mathcal{O}_K) \rightarrow \overline{\textup{Div}}(\mathcal{U}^{\textup{an}})_{\textup{eqv}}$ is surjective, and therefore, an isomorphism. In other words, a family of compactified divisors $ ( D_v, g_{D,v})_v$ determines an adelic divisor if it can be extended equivariantly to the whole of $\mathcal{U}^{\textup{an}}$. In Subsection~\ref{4-2}, we give a coherence condition for families of nef toric compactified arithmetic divisors in terms of convex analysis.
	\end{rem}
	
	\subsection{Nefness, semipositivity and intersection numbers} First, we describe the case of a regular projective arithmetic variety $\mathcal{X} / \mathcal{O}_K$ of relative dimension $d$. Afterwards, we move to the quasi-projective setting. In~\cite{GS}, Gillet and Soul\'{e} introduced the \textit{arithmetic Chow ring of} $\mathcal{X}$. It is a graded commutative ring $\widehat{\textup{CH}}^{\bullet}(\mathcal{X})_{\mathbb{Q}}$ which satisfies the expected functorial properties from classical intersection theory~\cite{Ful}: proper pushforward, flat pullback, and a projection formula. We are interested in the homogeneous part of degree $1$, defined as follows. Let $K(\mathcal{X})$ be the function field of $\mathcal{X}$ and $f \in K(\mathcal{X})^{\times}$ be an invertible element. An arithmetic divisor of the form $\overline{\textup{div}}(f) \coloneqq (\textup{div}(f), -\log |f|^2)$ is said to be \textit{principal}. We denote by $\overline{\textup{Pr}}(\mathcal{X})_{\mathbb{Q}}$ and $\overline{\textup{Div}}^{\infty}(\mathcal{X})_{\mathbb{Q}}$ the $\mathbb{Q}$-vector subspaces of $\overline{\textup{Div}}(\mathcal{X})_{\mathbb{Q}}$ spanned by the principal arithmetic divisors and the arithmetic divisors of smooth type, respectively. Trivially, a principal divisor is of smooth type. Then, the \textit{first arithmetic Chow group of}$\mathcal{X}$ is defined as the quotient
	\begin{displaymath}
		\widehat{\textup{CH}}^{1}(\mathcal{X})_{\mathbb{Q}} \coloneqq \overline{\textup{Div}}^{\infty}(\mathcal{X})_{\mathbb{Q}} / \overline{\textup{Pr}}(\mathcal{X})_{\mathbb{Q}}.
	\end{displaymath}
	The morphism given by the composition $\mathcal{X} \rightarrow \textup{Spec}(\mathcal{O}_K) \rightarrow \textup{Spec}(\mathbb{Z})$ has relative dimension $d$. Taking its proper pushforward, we obtain an \textit{arithmetic degree map}
	\begin{displaymath}
		\overline{\textup{deg}} \colon \widehat{\textup{CH}}^{d+1}(\mathcal{X})_{\mathbb{Q}} \longrightarrow \widehat{\textup{CH}}^{1}(\textup{Spec}(\mathbb{Z}))_{\mathbb{Q}} \cong \mathbb{R}.
	\end{displaymath}
	Then, we get an \textit{arithmetic intersection pairing} $\overline{\textup{Div}}^{\infty}(\mathcal{X})^{d+1}_{\mathbb{Q}} \rightarrow  \mathbb{R}$ given by
	\begin{displaymath}
		\overline{\mathcal{D}}_0 \cdot \ldots \cdot \overline{\mathcal{D}}_d \coloneqq  \overline{\textup{deg}} (\left[ \overline{\mathcal{D}}_0 \right] \cdot \ldots \cdot \left[ \overline{\mathcal{D}}_d \right]),
	\end{displaymath}
	where $\left[ \overline{\mathcal{D}} \right]$ denotes the class of $\overline{\mathcal{D}}$ in the first arithmetic Chow group. We call $\overline{\mathcal{D}}_0 \cdot \ldots \cdot \overline{\mathcal{D}}_d$ the \textit{arithmetic intersection number} of the arithmetic divisors $\overline{\mathcal{D}}_0, \ldots, \overline{\mathcal{D}}_d$. This pairing is symmetric, multilinear, and satisfies the functorial properties mentioned above.
	
	Following \cite{Z95}, if we remove the regularity assumption on the projective arithmetic variety $\mathcal{X}$, we still have an intersection pairing $\overline{\textup{Div}}^{\infty}(\mathcal{X})^{d+1}_{\mathbb{Q}} \rightarrow  \mathbb{R}$ of arithmetic divisors of smooth type. This pairing can be extended to include arithmetic divisors of continuous type satisfying certain positivity conditions. Recall that a divisor $\mathcal{D}$ on $\mathcal{X}$ is \textit{relatively nef} if, for every vertical curve $\mathcal{C}$ in $\mathcal{X}$, the (geometric) intersection number $\mathcal{D} \cdot \mathcal{C}$ is non-negative. Now, let $g = g_{\mathcal{D},\Lambda}$ be a Green's function for $\mathcal{D}_{\Lambda}$ of continuous type and consider the $(1,1)$-current $\omega_{\mathcal{D}_{\Lambda}}(g) \coloneqq \textup{dd}^{\textup{c}} g + \delta_{\mathcal{D}_{\Lambda}}$ on $\mathcal{X}_{\Lambda}(\mathbb{C})$, where $\textup{d}^{\textup{c}} \coloneqq \frac{i}{4 \pi} (\overline{\partial} - \partial )$ and $\delta_{\mathcal{D}_{\Lambda}}$ is the current of integration along $\mathcal{D}_{\Lambda}$. We say that $g$ is of \textit{plurisubharmonic} type (psh type for short) if $\omega_{\mathcal{D}_{\Lambda}}(g)$ is a positive current. Then, an arithmetic divisor $(\mathcal{D},g_{\mathcal{D}})$ is \textit{semipositive} if $\mathcal{D}$ is relatively nef and $g_{\mathcal{D}}$ is of psh type. By Theorem~1.4~of~\cite{Z95}, given semipositive arithmetic divisors $\overline{\mathcal{D}}_0, \ldots , \overline{\mathcal{D}}_d$ on $\mathcal{X}$, their arithmetic intersection number is defined as the limit
	\begin{displaymath}
		\overline{\mathcal{D}}_0 \cdot \ldots \cdot \overline{\mathcal{D}}_d \coloneqq \lim_{n \rightarrow \infty} \overline{\mathcal{D}}_{0,n} \cdot \ldots \cdot \overline{\mathcal{D}}_{d,n},
	\end{displaymath}
	where each arithmetic divisor $\overline{\mathcal{D}}_{i,n}$ is semipositive and of smooth type, has the same divisorial part as $\overline{\mathcal{D}}_i$, and the sequence of Green's functions $\lbrace g_{\mathcal{D}_{i,n}} \rbrace_{n \in \mathbb{N}}$ converges uniformly to $g_{\mathcal{D}_i}$. Fix an algebraic closure $\overline{K}$ of $K$. Then, a semipositive arithmetic divisor $\overline{\mathcal{D}}$ is \textit{nef} if for each point $x \in \mathcal{X}(\overline{K})$ the height $\textup{h}(x; \overline{\mathcal{D}})$ is non-negative. See~3.1.1~of~\cite{BGS} for the definition of height.
	
	Now, we consider a quasi-projective arithmetic variety $\mathcal{U} / \mathcal{O}_K$. A model arithmetic divisor $\overline{\mathcal{D}}$ is semipositive (resp. nef) if it is represented by an arithmetic divisor satisfying the same property. We denote by $\overline{\textup{Div}}^{+} (\mathcal{U}/\mathcal{O}_K)_{\textup{mod}}$ and $\overline{\textup{Div}}^{\textup{nef}} (\mathcal{U}/\mathcal{O}_K)_{\textup{mod}}$ the cones of model divisors which are semipositive and nef, respectively. These notions naturally generalize to adelic divisors.
	\begin{dfn}\label{global-positivity}
		Let $\mathcal{U} / \mathcal{O}_K$ be a quasi-projective variety. The \textit{semipositive cone} $\overline{\textup{Div}}^{+} (\mathcal{U}/\mathcal{O}_K)$ is the closure in the boundary topology of the set $\overline{\textup{Div}}^{+} (\mathcal{U}/\mathcal{O}_K)_{\textup{mod}}$. The \textit{nef cone} $\overline{\textup{Div}}^{\textup{nef}} (\mathcal{U}/\mathcal{O}_K)$ is defined similarly. The vector space of integrable adelic divisors is the difference
		\begin{displaymath}
			\overline{\textup{Div}}^{\textup{int}} (\mathcal{U}/\mathcal{O}_K) \coloneqq \overline{\textup{Div}}^{\textup{nef}} (\mathcal{U}/\mathcal{O}_K) - \overline{\textup{Div}}^{\textup{nef}} (\mathcal{U}/\mathcal{O}_K).
		\end{displaymath}
	\end{dfn}
	Note that morphisms of projective models of $\mathcal{U}$ over $\mathcal{O}_K$ are proper and birational. Then, the projection formula yields an arithmetic intersection pairing $\overline{\textup{Div}}^{+}(\mathcal{U}/\mathcal{O}_K)_{\textup{mod}}^{d+1} \rightarrow \mathbb{R}$, where the arithmetic intersection number $\overline{\mathcal{D}}_0, \ldots , \overline{\mathcal{D}}_d$ is computed on a projective model of $\mathcal{U}$ on which all of the divisors $\overline{\mathcal{D}}_i$ are defined. Finally, we state Proposition 4.1.1~of~\cite{Y-Z}, where Yuan and Zhang continuously extended the arithmetic intersection pairing to the nef cone.
	\begin{thm}\label{global-intersection}
		The arithmetic intersection pairing on model arithmetic divisors extends continuously to a pairing $\overline{\textup{Div}}^{\textup{nef}}(\mathcal{U}/\mathcal{O}_K)^{d+1} \rightarrow \mathbb{R}$ on the nef cone. It is given by
		\begin{displaymath}
			\overline{\mathcal{D}}_0 \cdot \ldots \cdot \overline{\mathcal{D}}_d \coloneqq \lim_{n \rightarrow \infty} \overline{\mathcal{D}}_{0,n} \cdot \ldots \cdot \overline{\mathcal{D}}_{d,n},
		\end{displaymath}
		where $\overline{\mathcal{D}}_i = \lbrace  \overline{\mathcal{D}}_{i,n} \rbrace_{n \in \mathbb{N}}$ is represented by a Cauchy sequence of nef model arithmetic divisors. The arithmetic intersection numbers $\overline{\mathcal{D}}_0 \cdot \ldots \cdot \overline{\mathcal{D}}_d$ are independent of the choice of nef sequences $\lbrace \overline{\mathcal{D}}_{i,n} \rbrace_{n \in \mathbb{N}}$. By linearity, this extends to a pairing $	\overline{\textup{Div}}^{\textup{int}}(\mathcal{U}/\mathcal{O}_K)^{d+1} \rightarrow \mathbb{R}$.
	\end{thm}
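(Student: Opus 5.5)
Since this statement is Yuan--Zhang's Proposition~4.1.1, the plan is to reproduce their argument in the present notation. The core is a uniform continuity estimate for the pairing on nef model divisors, measured in the boundary norm. Fix a boundary divisor $\overline{\mathcal{B}}$ on a projective model; after adding a multiple of an ample arithmetic divisor we may assume $\overline{\mathcal{B}}$ is also nef. Let $\overline{\mathcal{D}}_i, \overline{\mathcal{D}}_i'$ be nef model divisors with $-\varepsilon \overline{\mathcal{B}} \leq \overline{\mathcal{D}}_0 - \overline{\mathcal{D}}_0' \leq \varepsilon \overline{\mathcal{B}}$, and $\overline{\mathcal{D}}_i = \overline{\mathcal{D}}_i'$ for $i \geq 1$. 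Then
\begin{displaymath}
	\bigl|(\overline{\mathcal{D}}_0 - \overline{\mathcal{D}}_0')\cdot \overline{\mathcal{D}}_1 \cdots \overline{\mathcal{D}}_d\bigr| \leq \varepsilon\, \overline{\mathcal{B}}\cdot \overline{\mathcal{D}}_1 \cdots \overline{\mathcal{D}}_d .
\end{displaymath}
This holds because intersecting an effective arithmetic divisor with nef divisors on a common projective model gives a non-negative number. Telescoping over the $d+1$ slots then bounds the full difference of intersection numbers by $\varepsilon$ times a sum of mixed terms $\overline{\mathcal{B}}\cdot(\text{nef})^{d}$.

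The second step bounds these mixed terms uniformly along Cauchy sequences. Take Cauchy sequences $\{\overline{\mathcal{D}}_{i,n}\}$ of nef model divisors. By the Cauchy property, for $n \geq n_0$ we have $\overline{\mathcal{D}}_{i,n} \leq \overline{\mathcal{D}}_{i,n_0} + \overline{\mathcal{B}}$. The right-hand side is a fixed nef model divisor $\overline{\mathcal{A}}_i$. Using the same positivity (nef divisors intersected against an effective difference), we can replace each nef factor in $\overline{\mathcal{B}}\cdot\overline{\mathcal{D}}_{1,n}\cdots$ by the larger $\overline{\mathcal{A}}_i$ one at a time. This bounds the mixed terms by the constant $\overline{\mathcal{B}}\cdot\overline{\mathcal{A}}_1\cdots\overline{\mathcal{A}}_d$. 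Combining with the first step, the sequence $\overline{\mathcal{D}}_{0,n}\cdots\overline{\mathcal{D}}_{d,n}$ is Cauchy in $\mathbb{R}$, so the limit exists.

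Independence of the representing sequences follows from the same estimate. Interleaving two equivalent nef sequences gives a nef Cauchy sequence, and its subsequences must have the same limit.

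Multilinearity and symmetry pass to the limit from the model case. They give additivity on the nef cone, which is a cone because sums of nef model divisors are nef. The extension to $\overline{\textup{Div}}^{\textup{int}}$ is then the standard extension of a symmetric multiadditive function from a cone to its difference group. It is well defined because $\overline{\mathcal{D}} = \overline{\mathcal{P}}_1 - \overline{\mathcal{N}}_1 = \overline{\mathcal{P}}_2 - \overline{\mathcal{N}}_2$ implies $\overline{\mathcal{P}}_1 + \overline{\mathcal{N}}_2 = \overline{\mathcal{P}}_2 + \overline{\mathcal{N}}_1$, and additivity on the cone does the rest.

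The main obstacle is the positivity input. One needs that $\overline{\mathcal{E}}\cdot\overline{\mathcal{N}}_1\cdots\overline{\mathcal{N}}_d \geq 0$ for $\overline{\mathcal{E}}$ effective of continuous type and $\overline{\mathcal{N}}_i$ nef, on a possibly singular projective model. I would reduce to smooth-type Green's functions by Zhang's approximation (Theorem~1.4 of~\cite{Z95}) and to regular models by pulling back along a generic resolution, using the projection formula. Then I would compute the intersection as a height of $\mathcal{E}$ plus an integral of $g_{\mathcal{E}}$ against the positive measure $\omega(g_1)\wedge\cdots\wedge\omega(g_d)$. The height of each component of $\mathcal{E}$ is non-negative by nefness (vertical components via relative nefness), and the integral is non-negative because $g_{\mathcal{E}} \geq 0$.

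A secondary technical point is arranging that $\overline{\mathcal{B}}$ is nef. I would handle this by adding to it the pullback of a sufficiently positive ample arithmetic divisor. This only changes the boundary norm to an equivalent one, so the estimates above are unaffected.
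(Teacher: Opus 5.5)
The paper gives no proof of this statement; it quotes Yuan--Zhang's Proposition~4.1.1. Your overall route is the standard argument behind that result: the telescoping estimate against a boundary divisor, a uniform bound on the mixed terms, interleaving to get independence of the sequences, and formal extension to $\overline{\textup{Div}}^{\textup{int}}$. Those parts are fine.

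The step that fails as written is the reduction ``we may assume $\overline{\mathcal{B}}$ is nef''. Your second step uses it twice: to make $\overline{\mathcal{D}}_{i,n_0}+\overline{\mathcal{B}}$ nef, and in the one-at-a-time replacement, which intersects an effective difference against $\overline{\mathcal{B}}$ and needs $\overline{\mathcal{B}}$ nef for the sign.

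Adding an ample $\overline{\mathcal{A}}$ does not keep you among boundary divisors. Whenever $\mathcal{A}|_{\mathcal{U}}\neq 0$, the divisor $\mathcal{B}+\mathcal{A}$ is not supported on $\mathcal{X}\setminus\mathcal{U}$. For $\overline{\mathcal{A}}$ effective, the associated gauge is finite on $\overline{\mathcal{A}}$ itself while $\|\overline{\mathcal{A}}\|_{\overline{\mathcal{B}}}=\infty$. So it is not an equivalent boundary norm, contrary to your claim.

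In fact nef boundary divisors need not exist at all. Suppose the generic fibre of $\mathcal{U}$ is $\mathbb{P}^2_K$ minus a point. Pull any boundary divisor back to a model dominating $\mathbb{P}^2_{\mathcal{O}_K}$. Its generic fibre is a non-zero effective divisor that is exceptional over $\mathbb{P}^2_K$, and by the negativity lemma such a divisor cannot be nef.

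There is also a Green's-function problem: if $g_{\mathcal{B}}$ is only continuous, adding an ample divisor need not produce a Green's function of psh type.

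The repair is to leave the norm alone and use only a one-sided domination. Choose a nef model divisor $\overline{\mathcal{H}}$ on the model of $\overline{\mathcal{B}}$ with $\overline{\mathcal{H}}-\overline{\mathcal{B}}$ effective. For instance, take $\overline{\mathcal{H}}=(\mathcal{B},g_0+c)+m\,\overline{\textup{div}}(s)$, where:
\begin{itemize}
\item $g_0$ is a smooth Green's function for $\mathcal{B}$;
\item $c$ is a constant with $g_0+c\geq g_{\mathcal{B}}$;
\item $s$ is a strictly small section of a multiple of an ample hermitian line bundle, so $\overline{\textup{div}}(s)$ is effective and ample;
\item $m\gg 0$.
\end{itemize}
Then $-\varepsilon\overline{\mathcal{B}}\leq\overline{\mathcal{D}}-\overline{\mathcal{D}}'\leq\varepsilon\overline{\mathcal{B}}$ implies the same inequalities with $\overline{\mathcal{H}}$. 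Both of your steps now go through verbatim with $\overline{\mathcal{H}}$ in place of $\overline{\mathcal{B}}$: $\overline{\mathcal{D}}_{i,n_0}+\overline{\mathcal{H}}$ is nef, and each replacement only uses nefness of $\overline{\mathcal{H}}$.

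A minor point concerns your positivity lemma. A generic resolution only resolves the generic fibre, so it does not give you regular models. None is needed, however: Zhang's intersection theory and its height interpretation work on arbitrary projective models.
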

	\begin{rem}
		In the article~\cite{BK24}, Burgos and Kramer extended Yuan and Zhang's arithmetic intersection pairing using the notion of \textit{mixed relative energy}. This construction is based on the \textit{relative energy}~of~\cite{Darvas}. Remark~\ref{finale}, we sketch this construction and show a similar result for toric arithmetic varieties.
	\end{rem}
	
	\section{Toric arithmetic varieties}\label{3}
	In this section, we describe the geometry of projective toric schemes over the ring of integers~$\mathcal{O}_K$ of a number field~$K$. To do this, we first recall the so-called canonical models: toric schemes associated to a fan. Afterwards, we introduce the notion of an \textit{arithmetic fan}, and we show that every projective toric arithmetic variety is given by an arithmetic fan. Finally, we study a special kind of projective toric arithmetic varieties. These are obtained by successive toric blow-ups of a canonical model with center in a finite number of fibers over maximal ideals of $\mathcal{O}_K$. The upshot is that these toric schemes are enough to describe the group of toric adelic divisors over a given toric arithmetic variety, which we introduce in the next section. 
	
	Our references are \cite{CLS} for the theory of toric varieties over a field. When the base scheme is a DVR, we refer to \S3~in~Ch.~IV~of~\cite{KKMS} (although it is hinted that some of their descriptions generalize to Dedekind domains), its summary in Subsection~3.3~of~\cite{Per26}, Sections 3.5--3.7~of~\cite{BPS}, and \S2~in~Ch.IV~of~\cite{FC90}. A modern treatment over any base ring can be found in Ch.~IV of Rohrer's dissertation~\cite{Roh10}. For the case of regular toric schemes over $\mathcal{O}_K$, we refer to \S4 of Demazure's paper~\cite{Dmz70} or Gualdi's Master's thesis~\cite{Gualdi} for an elementary approach over $\mathbb{Z}$. We assume familiarity with the objects from discrete geometry and convex analysis, which typically appear in toric geometry. For reference, we use Appendix~A~of~\cite{Per26}.
	
	\subsection{Toric schemes and canonical models}\label{3-1}
	First, we recall the definitions of toric schemes and toric morphisms. We follow the notation in~Section~3~of~\cite{Per26}. For an affine scheme $\mathcal{S}=\textup{Spec}(R)$, the \textit{multiplicative group} $\mathbb{G}_{m}/\mathcal{S}$ is the affine group scheme $\textup{Spec}(R[T, T^{-1}])$. In the following, the base scheme $\mathcal{S}$ will be either the spectrum of a field $K$ or a Dedekind domain $R$ with field of fractions $K$. For a positive integer $d$, we let $\mathcal{U} \cong \mathbb{G}^{d}_{m}/\mathcal{S}$ be a split $d$-dimensional torus over $\mathcal{S}$. When the base scheme is a Dedekind domain, we denote by $U \cong \mathbb{G}_{m}^{d}/K$ the split torus given by the generic fiber of $\mathcal{U}$.
	\begin{dfn}\label{toric-scheme}
		A $d$-dimensional \textit{toric scheme over} $\mathcal{S}$ is a triple $(\mathcal{X},\pi,\mu)$ consisting of a normal variety $\mathcal{X}/\mathcal{S}$ of relative dimension $d$, an open embedding $\pi \colon U \rightarrow X$ into the generic fiber $X$ of $\mathcal{X}$, and an $\mathcal{S}$-group scheme action $\mu \colon \mathcal{U} \times_{\mathcal{S}} \mathcal{X} \rightarrow \mathcal{X}$ extending the action of $U$ on itself by translations.
	\end{dfn}
	We often obviate the embedding and action to lighten up notation and say that $\mathcal{X}/\mathcal{S}$ is a toric scheme. When the base scheme is a field, $\mathcal{X}$ coincides with its generic fiber and we recover the usual definition of a toric variety (for instance, Definition~3.1.1~of~\cite{CLS}). We have the corresponding notions for divisors and morphisms.
	\begin{dfn}
		Let $\mathcal{X}/\mathcal{S}$ be a toric scheme. A divisor $\mathcal{D} \in \textup{Div}( \mathcal{X})_{\mathbb{Q}}$ is \textit{toric} if it satisfies $\mu^{\ast} \mathcal{D} = p_{2}^{\ast} \mathcal{D}$, where $\mu$ is the action of $\mathcal{U}$ and $p_{2} \colon \mathcal{U} \times_{\mathcal{S}}  \mathcal{X} \rightarrow \mathcal{X}$ is the projection to the second factor. Denote by $ \textup{Div}_{\mathbb{T}}( \mathcal{X})_{\mathbb{Q}}$ the group of toric Cartier $\mathbb{Q}$-divisors on $\mathcal{X}$.
	\end{dfn}
	\begin{dfn}
		For each $i \in \lbrace 1,2 \rbrace$, let $(\mathcal{X}_i, \pi_i, \mu_i)$ be a toric scheme over $\mathcal{S}$ and $\rho \colon \mathcal{U}_{1} \rightarrow \mathcal{U}_{2}$ be an $\mathcal{S}$-homomorphism between their tori. An $\mathcal{S}$-morphism $f \colon \mathcal{X}_1 \rightarrow \mathcal{X}_2$ is $\rho$\textit{-equivariant} if the following diagram
		\begin{center}
			\begin{tikzcd}
				\mathcal{U}_{1} \times_{\mathcal{S}} \mathcal{X}_1 \arrow[d, "{\rho \times f}"] \arrow[r, "{\mu_1}"] & \mathcal{X}_1  \arrow[d, "{f}"] \\
				\mathcal{U}_{2} \times_{\mathcal{S}} \mathcal{X}_2  \arrow[r, "{\mu_2}"] & \mathcal{X}_2 
			\end{tikzcd}
		\end{center}
		commutes. A $\rho$-equivariant morphism $f$ is $\rho$-\textit{toric} if it restricts to a morphism of tori $U_1 \rightarrow U_2$. If the homomorphism $\rho$ is clear, we say that $f \colon \mathcal{X}_1 \rightarrow \mathcal{X}_2$ is equivariant (resp. toric).
	\end{dfn}
	The notion of a projective model from Subsection~\ref{2-1} specializes to the toric setting.
	\begin{dfn}
		Let  $\mathcal{X}_0 /\mathcal{S}$ be a quasi-projective toric scheme. A projective model $\pi \colon \mathcal{X}_0 \rightarrow \mathcal{X}$ over $\mathcal{S}$ is \textit{toric} if both $\mathcal{X}/\mathcal{S}$ and $\pi$ are toric. A \textit{morphism of toric projective models of} $\mathcal{X}_0$ \textit{over} $\mathcal{S}$ is a morphism of projective models which is toric. We denote by $\textup{PM}_{\mathbb{T}}(\mathcal{X}_0 /\mathcal{S})$ the category of toric projective models of $\mathcal{X}_{0}$ over $\mathcal{S}$.
	\end{dfn}
	Denote by $M$ and $N$ the groups of \textit{characters} and \textit{cocharacters} of the torus $\mathcal{U}$. Both $M$ and $N$ are lattices of rank~$d$, and they are dual to each other. For each character $m \in M$, we denote by $\chi^{m}$ its corresponding element in the group algebra $R \left[ M \right]$. A fan $\Sigma$ in $N_{\mathbb{R}}$ is a strongly convex, conical, rational, polyhedral complex. Given such an object, there is an associated toric scheme.
	\begin{cons}\label{canonical-OK-prop}[Canonical models]
		Let the base scheme $\mathcal{S}$ be either the spectrum of a field $K$ or a Dedekind domain $R$ with field of fractions $K$. For a fan $\Sigma$ in $N_{\mathbb{R}}$, the toric scheme $\mathcal{X}_{\Sigma}/ \mathcal{S}$ is defined as follows:
		\begin{enumerate}
			\item For each $\sigma \in \Sigma$, denote by $M_{\sigma} \coloneqq \sigma^{\vee} \cap M $ the semigroup of lattice points in the dual cone $\sigma^{\vee} \coloneqq \lbrace x \in M_{\mathbb{R}} \, \vert \, \langle x, u \rangle \geq 0 \, \textup{for all } u \in \sigma\rbrace$. Then, $M_{\sigma}$ generates the commutative $R$-algebra $R\left[ M_{\sigma} \right]$. Define the affine scheme $\mathcal{X}_{\sigma} \coloneqq \textup{Spec}( R \left[ M_{\sigma}\right] )$. This is an affine scheme over $R$. The collection $\lbrace \mathcal{X}_{\sigma} \rbrace_{\sigma \in \Sigma}$ of affine schemes constitutes the building blocks of $\mathcal{X}_{\Sigma}$.
			\item Let $\tau$ be a face of $\sigma$. By definition, there exists $m \in M_{\sigma}$ such that $\tau = \sigma \cap H_m$, where $H_m$ is the hyperplane determined by the equation $\langle m, \cdot \rangle = 0$. The scheme $\mathcal{X}_{\tau}$ corresponds to the principal open set $(\mathcal{X}_{\sigma})_m$, given by the localization of $R \left[ M_{\sigma} \right]$ at $\chi^{m}$. This induces an open embedding $\mathcal{X}_{\tau} \rightarrow \mathcal{X}_{\sigma}$, identifying $\mathcal{X}_{\tau}$ with an open subset of $\mathcal{X}_{\sigma}$. Every pair of cones $ \sigma_1, \sigma_2 \in \Sigma$ intersect at a common face $ \tau$, hence the identifications $\mathcal{X}_{\tau} \rightarrow \mathcal{X}_{\sigma_i}$, $i  \in \lbrace 1,2 \rbrace$, allows us to glue the family $\lbrace \mathcal{X}_{\sigma} \rbrace_{\sigma \in \Sigma}$.
			\item The affine scheme $\mathcal{X}_{\lbrace 0 \rbrace}$ is isomorphic to the torus $\mathcal{U}$. For each cone $\sigma$, the assignment $\chi^m \mapsto \chi^m \otimes \chi^m$ determines a ring morphism $\mu^{\sharp}_{\sigma} \colon R \left[  M_{\sigma} \right] \rightarrow R \left[  M_{\sigma} \right] \otimes_R R \left[ M \right]$. This induces an action $\mu_{\sigma} \colon \mathcal{U} \times_{\mathcal{S}} \mathcal{X}_{\sigma} \rightarrow \mathcal{X}_{\sigma}$, and the family $\lbrace \mu_{\sigma} \rbrace_{\sigma \in \Sigma}$ glues into an action $\mu$ of $\mathcal{U}$ on $\mathcal{X}_{\Sigma}$, giving it the structure of toric scheme.
		\end{enumerate}
		The scheme $\mathcal{X}_{\Sigma}$ is noetherian, integral, normal, separated, quasi-compact, flat, and of finite presentation over $\mathcal{S}$, with relative dimension $\textup{dim}_{\mathcal{S}} (\mathcal{X}_{\Sigma}) = d$ (Propositions~IV.1.2.1--IV.1.2.5~of~\cite{Roh10}). Moreover, $\mathcal{X}_{\Sigma}/ \mathcal{S}$ is proper (resp. regular) if and only if the fan $\Sigma$ is complete (resp. smooth) (Theorem~IV.1.3.12~of~\cite{Roh10} and Proposition~1~in~\S4~of~\cite{Dmz70}). The assignment $\Sigma \mapsto \mathcal{X}_{\Sigma}$ induces a functor from the category of fans in $N_{\mathbb{R}}$ with compatible maps (Definition~A.3.5~of~\cite{Per26}) to the category of toric schemes over $\mathcal{S}$ with toric morphisms. Fixing a fan $\Sigma$, the assignment $\mathcal{S} \mapsto \mathcal{X}_{\Sigma}$ is functorial and isomorphic to the base change functor (Paragraph~IV.1.1.9~of~\cite{Roh10}). In particular, the generic fiber of the toric scheme $\mathcal{X}_{\Sigma}/\mathcal{S}$ is the toric variety $X_{\Sigma}/K$. Therefore, $\mathcal{X}_{\Sigma}$ is called the \textit{canonical model} over $\mathcal{S}$ of the toric variety $X_{\Sigma}/K$. 
	\end{cons} 
	\begin{rem}
		For simplicity, most of the time we will assume that the fan $\Sigma$ is smooth and complete, and therefore $\mathcal{X}_{\Sigma}/\mathcal{S}$ is proper and regular. For the purpose of studying adelic divisors, this is not a severe restriction; by Theorem~11.1.9~of~\cite{CLS}, every complete fan $\Sigma_2$ can be refined into a smooth fan $\Sigma_1$. This refinement induces a proper birational toric $\mathcal{S}$-morphism $\mathcal{X}_{\Sigma_1} \rightarrow \mathcal{X}_{\Sigma_2}$.
	\end{rem}
	\begin{desc}\label{canonical-fan}
		When the base $\mathcal{S}$ is a field $K$, the assignment $\Sigma \mapsto X_{\Sigma}/K$ induces an equivalence between the category of complete fans with compatible maps, and the category of proper toric varieties over $K$ with toric morphisms. If $\mathcal{S}$ is a DVR, the functor $\Sigma \mapsto \mathcal{X}_{\Sigma}$ fails to be essentially surjective. Instead, one must consider fans $\widetilde{\Sigma}$ in the upper half-space $N_{\mathbb{R}} \times \mathbb{R}_{\geq 0}$. Then, there is an associated toric scheme $\mathcal{X}_{\widetilde{\Sigma}}/\mathcal{S}$, and the functor $\widetilde{\Sigma} \mapsto \mathcal{X}_{\widetilde{\Sigma}}$ is an equivalence between the categories of complete fans and proper toric schemes (See~Construction~3.3.1 and Proposition~3.3.2~of~\cite{Per26}). Given a fan $\Sigma$ in $N_{\mathbb{R}}$, there are two obvious ways to obtain a fan in $N_{\mathbb{R}} \times \mathbb{R}_{\geq 0}$:
		\begin{enumerate}
			\item The first option is to consider the fan $\Sigma \times \lbrace 0 \rbrace$ in the hyperplane $N_{\mathbb{R}} \times \lbrace 0 \rbrace$. Regarding it as a fan in  $N_{\mathbb{R}} \times \mathbb{R}_{\geq 0}$, the toric scheme $\mathcal{X}_{\Sigma \times \lbrace 0 \rbrace} /\mathcal{S}$ is isomorphic to the toric variety $X_{\Sigma}/K$.
			\item The second option is to consider its \textit{canonical extension} $\Sigma_{\textup{can}}$ in $N_{\mathbb{R}} \times \mathbb{R}_{\geq 0}$. This is the fan consisting of the cones $\sigma \times \lbrace 0 \rbrace$ in $\Sigma \times \lbrace 0 \rbrace$, and all the cones spanned by $\sigma \times \lbrace 0 \rbrace$ and the vertical vector $(0,1)$. In this case, the associated toric scheme $\mathcal{X}_{\Sigma_{\textup{can}}}/\mathcal{S}$ is isomorphic to the toric scheme $\mathcal{X}_{\Sigma}/\mathcal{S}$ from Construction~\ref{canonical-OK-prop}.
		\end{enumerate}
		To avoid confusion, we will use the notation $\mathcal{X}_{\Sigma_{\textup{can}}}$ when dealing with toric schemes over a DVR.
	\end{desc}
	When the base scheme is the spectrum of a field or a DVR, there is a correspondence between the cones in a fan and the torus orbits in the associated toric scheme. We refer to Construction~3.2.4 and Remark~3.3.3 of~\cite{Per26} for a brief description of these correspondences. From now on and until the end of this subsection, we focus on the case of $\mathcal{S}$ being the spectrum of the ring of integers $\mathcal{O}_K$ of a number field $K$. We state below a result of Demazure, which establishes an analogous correspondence for a regular toric scheme $\mathcal{X}/\mathcal{O}_K$  (Proposition~2~in~\S4~of~\cite{Dmz70}). Then, we use this correspondence to characterize the torus-invariant divisors on $\mathcal{X}_{\Sigma}/\mathcal{O}_K$.
	\begin{cons}[The cone-orbit correspondence over $\mathcal{O}_K$]\label{cone-orbit-OK}
		Let $\Sigma$ be a smooth fan in $N_{\mathbb{R}}$ and let $\mathcal{X}_{\Sigma}/\mathcal{O}_K$ be its associated toric scheme. Given a cone $\sigma \in \Sigma$, we consider the lattices $N(\sigma) \coloneqq N/(N \cap \textup{Span}(\sigma))$ and $M(\sigma) \coloneqq N(\sigma)^{\vee} = M \cap \sigma^{\perp}$, where $\sigma^{\perp}$ is the linear space orthogonal to $\sigma$ with respect to the canonical pairing. For each cone $\tau$ containing $\sigma$, write $\overline{\tau}$ for its image in the space $N(\sigma)_{\mathbb{R}}$. Observe that the collection $\Sigma(\sigma) \coloneqq \lbrace \overline{ \tau } \, | \, \tau \in \Sigma, \,  \sigma \leq \tau \rbrace$ is a fan in $N(\sigma)_{\mathbb{R}}$. Then:
		\begin{enumerate}
			\item For each cone $\sigma \in \Sigma$, there is a locally closed immersion $\mathbf{O}(\sigma) \rightarrow \mathcal{X}_{\Sigma}$, where $\mathbf{O}(\sigma)$ is the split torus $\textup{Spec}(\mathcal{O}_K [M(\sigma)] )$. Moreover, we have a disjoint union of the underlying sets
			\begin{displaymath}
				\mathcal{X}_{\Sigma} = \bigcup_{\sigma \in \Sigma} \mathbf{O}(\sigma).
			\end{displaymath}
			Therefore, for each point $x \colon \textup{Spec}(L) \rightarrow \mathcal{X}_{\Sigma}$ there exist $\sigma \in \Sigma$ such that $x$ is a point of $\mathbf{O}(\sigma)$, and its $\mathcal{U}$-orbit is equal to the fiber product $\mathbf{O}(\sigma) \times_{\mathcal{O}_K} \textup{Spec}(L)$.
			\item  For each cone $\sigma \in \Sigma$, denote by $\mathbf{V}(\sigma)$ the closure of $\mathbf{O}(\sigma)$ in $\mathcal{X}_{\Sigma}$, equipped with its unique reduced scheme structure. Then, the closed immersion $\mathcal{X}_{\Sigma(\sigma)} \rightarrow \mathcal{X}_{\Sigma}$ identifies $\mathbf{V}(\sigma)$ with $\mathcal{X}_{\Sigma(\sigma)}$. By (i), we have a disjoint union of underlying sets
			\begin{displaymath}
				\mathcal{X}_{\Sigma(\sigma)} = \bigcup_{\sigma \leq \tau} \mathbf{O}(\tau).
			\end{displaymath}
		\end{enumerate}
		Observe that $\textup{dim}_{\mathcal{O}_K}(\mathcal{X}_{\Sigma(\sigma)}) = \textup{dim}_{\mathcal{O}_K}(\mathbf{O}(\sigma)) = d - \textup{dim}(\sigma)$. Then, there are two kinds of torus-invariant irreducible subvarieties of codimension $p \geq 1$ in $\mathcal{X}_{\Sigma}/\mathcal{O}_K$:
		\begin{enumerate}[label=(\alph*)]
			\item The \textit{horizontal} toric schemes $\mathcal{X}_{\Sigma(\sigma)}/\mathcal{O}_K$, where the cone $\sigma \in \Sigma$ has dimension $p$.
			\item The \textit{vertical} toric varieties $\mathcal{X}_{\Sigma(\tau), k(\mathfrak{p})} \coloneqq \mathcal{X}_{\Sigma(\tau)} \times_{\mathcal{O}_K} \textup{Spec}(k(\mathfrak{p}))$, where the cone $\tau \in \Sigma$ has dimension $p-1$ and $k(\mathfrak{p})$ is the residue field of a maximal ideal of $\mathcal{O}_K$.
		\end{enumerate}
		We call the assignment $\sigma \mapsto \mathbf{O}(\sigma)$ the \textit{cone-orbit correspondence} for the toric scheme $\mathcal{X}_{\Sigma} / \mathcal{O}_K$.
	\end{cons}
	\begin{prop}\label{toric-div-canonical-OK}
		Let $\Sigma$ be a smooth fan in $N_{\mathbb{R}}$. Then, the isomorphism between Cartier and Weil divisors on the regular scheme $\mathcal{X}_{\Sigma}/\mathcal{O}_K$ induces an isomorphism
		\begin{displaymath}
			\textup{Div}_{\mathbb{T}}(\mathcal{X}_{\Sigma}) \cong \bigoplus_{\rho \in \Sigma(1)} \mathbb{Z} \, \oplus \bigoplus_{\mathfrak{p} \in \textup{Max}(\mathcal{O}_K)} \mathbb{Z},
		\end{displaymath}
		where $\Sigma(1)$ is the set of rays of $\Sigma$ and $ \textup{Max}(\mathcal{O}_K)$ is the set of maximal ideals of $\mathcal{O}_{K}$. In particular, every toric divisor decomposes as a sum $\mathcal{D} = \mathcal{D}_{\textup{hor}} + \mathcal{D}_{\textup{vert}}$ of its horizontal and vertical components. 
	\end{prop}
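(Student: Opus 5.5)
Since $\mathcal{X}_{\Sigma}/\mathcal{O}_K$ is regular (Construction~\ref{canonical-OK-prop}, $\Sigma$ smooth), Cartier divisors coincide with Weil divisors. The plan is therefore to show that a Weil divisor is torus-invariant exactly when it is an integral combination of the $\mathcal{U}$-invariant prime divisors. We then list those prime divisors using the cone-orbit correspondence of Construction~\ref{cone-orbit-OK} in codimension $p=1$. By (a) and (b) there, the invariant prime divisors are the horizontal ones $\mathcal{X}_{\Sigma(\rho)} = \mathbf{V}(\rho)$ for $\rho \in \Sigma(1)$, and the vertical ones $\mathcal{X}_{\Sigma(\{0\}),k(\mathfrak{p})} = \mathcal{X}_{\Sigma}\times_{\mathcal{O}_K}\textup{Spec}(k(\mathfrak{p}))$ for $\mathfrak{p}\in\textup{Max}(\mathcal{O}_K)$. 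Assigning to $\mathcal{D}$ its multiplicities along these divisors gives the map to the direct sum. The horizontal multiplicities form the summand $\mathcal{D}_{\textup{hor}}$ and the vertical ones form $\mathcal{D}_{\textup{vert}}$.

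The key steps, in order, are as follows.

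First, we check that each listed divisor is invariant and irreducible of codimension one. For $\mathbf{V}(\rho)$ this is part (ii) of Construction~\ref{cone-orbit-OK}. The fiber over $\mathfrak{p}$ is the canonical model over the field $k(\mathfrak{p})$ by base-change compatibility, hence integral. It is invariant because the action $\mu$ is defined over $\mathcal{O}_K$.

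Second, we check that the support of an invariant Weil divisor is a union of invariant prime divisors. Let $\mathcal{D}=\sum a_i \mathcal{Y}_i$ with $\mu^{\ast}\mathcal{D}=p_2^{\ast}\mathcal{D}$. Both pullbacks are along flat morphisms and are computed on the prime divisors of $\mathcal{U}\times\mathcal{X}_{\Sigma}$. Comparing them shows that the union of the $\mathcal{Y}_i$ with a given coefficient is $\mathcal{U}$-stable. Because $\mathcal{U}$ is geometrically connected with irreducible fibers over $\mathcal{O}_K$, each $\mathcal{Y}_i$ is itself stable.

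Third, we classify invariant prime divisors. An invariant prime divisor $\mathcal{Y}$ has a generic point that is a point of some orbit $\mathbf{O}(\sigma)$, by part (i) of Construction~\ref{cone-orbit-OK}. Invariance and irreducibility then force $\mathcal{Y}$ to be the closure of an invariant subset of $\mathbf{O}(\sigma)$.

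There are two cases. If $\mathcal{Y}$ dominates $\textup{Spec}(\mathcal{O}_K)$, its generic fiber is an invariant prime divisor of $X_{\Sigma}/K$, hence equal to $V(\rho)$, and so $\mathcal{Y}=\mathbf{V}(\rho)$. If $\mathcal{Y}$ is vertical, it lies over some $\mathfrak{p}$ and has codimension one. Since the fiber over $\mathfrak{p}$ is irreducible of the same dimension, $\mathcal{Y}$ is that whole fiber.

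Finally, we verify that multiplicities give a bijection onto the direct sum. Every invariant divisor has finitely many nonzero coefficients, and every finite combination of the listed divisors is invariant. So the map is an isomorphism of groups.

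The main obstacle is the second step. Passing from the equality $\mu^{\ast}\mathcal{D}=p_2^{\ast}\mathcal{D}$ to invariance of each component requires care over the Dedekind base. I would handle it by restricting to the generic fiber, where it is the classical statement (Theorem~4.1.3 of~\cite{CLS}), and to each closed fiber separately, using flatness of $\mu$ and $p_2$ to compare multiplicities at the generic points of the fibers.
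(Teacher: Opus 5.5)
Your proposal is correct and follows essentially the same route as the paper: use regularity to identify Cartier and Weil divisors, use torus-invariance of the support of a toric divisor, and apply the cone-orbit correspondence of Construction~\ref{cone-orbit-OK} to see that the only invariant prime divisors are the $\mathcal{X}_{\Sigma(\rho)}$ and the fibers $\mathcal{X}_{\Sigma,k(\mathfrak{p})}$; the converse, that every integral combination of these is toric, closes the argument. You supply more detail than the paper's short proof on two points (why each component, not just the support, is invariant, and why a vertical prime divisor must be an entire fiber), but the argument is the same.
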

	\begin{proof}
		Let $\mathcal{D}$ be a toric Cartier divisor on $\mathcal{X}_{\Sigma}/\mathcal{O}_K$ and $| \mathcal{D} |$ be its support. Then, for each point  $x \in |\mathcal{D}|$, its $\mathcal{U}$-orbit is contained in $|\mathcal{D}|$. By the cone-orbit correspondence~\ref{cone-orbit-OK} and noetherianity of the scheme $\mathcal{X}_{\Sigma}$, the closed set $|\mathcal{D}|$ is equal to the finite union
		\begin{displaymath}
			|\mathcal{D}| = \bigcup_{i=1}^{n_1} \mathcal{X}_{\Sigma(\rho_i)} \, \cup \, \bigcup_{j=1}^{n_2} \mathcal{X}_{\Sigma, k(\mathfrak{p}_j)},
		\end{displaymath}
		where each $\rho_i$ is a ray of $\Sigma$ and each $\mathfrak{p}_j$ is a maximal ideal of $\mathcal{O}_{K}$. Then, there exist integers $a_i, b_j \in \mathbb{Z}$ such that the associated Weil divisor $\mathcal{D}$ is of the form
		\begin{displaymath}
			\mathcal{D} = \sum_{i=1}^{n_1} a_i \cdot  \mathcal{X}_{\Sigma(\rho_i)}  + \sum_{j=1}^{n_2} b_j \cdot   \mathcal{X}_{\Sigma, k(\mathfrak{p}_j)}.
		\end{displaymath}
		Note that the first summand is a horizontal toric divisor, while the second is a vertical toric divisor. Conversely, every Weil divisor as above is a toric Cartier divisor. The result follows.
	\end{proof}
	\begin{desc}\label{toric-divisors-K-DVR}
		When the base scheme $\mathcal{S}$ is a field or a DVR, the above isomorphism can be strengthened to an isomorphism between the vector space of toric $\mathbb{Q}$-divisors and the vector space of rational support functions on the associated fan. These are real-valued functions that are linear on each cone of a fan, and their values at lattice points are rational numbers. Concretely:
		\begin{enumerate}
			\item Let $\Sigma$ be a complete fan in $N_{\mathbb{R}}$ and $X_{\Sigma}/K$ be its associated toric variety. Given a toric divisor $D$ on $X_{\Sigma}$, its associated Weil divisor is given by
			\begin{displaymath}
				D = \sum_{\rho \in \Sigma(1)} a_{\rho} \cdot X_{\Sigma(\rho)}.
			\end{displaymath}
			Then, its associated support function $\Psi_{D} \colon N_{\mathbb{R}} \rightarrow \mathbb{R}$ is determined by the fact that it is linear on each cone of $\Sigma$, and for each ray $\rho \in \Sigma(1)$ we have $\Psi_D(v_{\rho}) = - a_{\rho}$, where $v_{\rho}$ is the \textit{ray generator} of $\rho$ (meaning $\mathbb{N} \cdot v_{\rho} = \rho \cap N$). Then, the assignment $D \mapsto \Psi_D$ induces an isomorphism $\mathcal{SF} \colon \textup{Div}_{\mathbb{T}}(X_{\Sigma})_{\mathbb{Q}} \rightarrow \mathcal{SF}(\Sigma, \mathbb{Q})$, where $\mathcal{SF}(\Sigma, \mathbb{Q})$ is the vector space of rational support functions on the fan $\Sigma$. For details, see Definition~A.3.10 and Proposition~3.2.5~of~\cite{Per26}.
			\item Let $\widetilde{\Sigma}$ be a complete fan in $N_{\mathbb{R}}\times \mathbb{R}_{\geq 0}$ and $\mathcal{X}_{\widetilde{\Sigma}}/\mathcal{S}$ be its associated toric scheme. Given a toric divisor $\mathcal{D}$ on $\mathcal{X}_{\widetilde{\Sigma}}$, its support function $\Phi_{\mathcal{D}} \colon N_{\mathbb{R}}\times \mathbb{R}_{\geq 0} \rightarrow \mathbb{R} $ is defined in the same way as in (i). Then, there is an isomorphism  $\mathcal{SF} \colon \textup{Div}_{\mathbb{T}}(\mathcal{X}_{\widetilde{\Sigma}})_{\mathbb{Q}} \rightarrow \mathcal{SF}(\widetilde{\Sigma}, \mathbb{Q})$. This isomorphism can also be stated in terms of rational piecewise affine functions. Indeed, the function $\gamma_{\mathcal{D}} \colon N_{\mathbb{R}} \rightarrow \mathbb{R}$ given by restricting $\Phi_{\mathcal{D}}$ to the hyperplane $N_{\mathbb{R}} \times \lbrace 1 \rbrace$ is a rational piecewise affine function on the polyhedral complex $\Pi$ obtained by intersecting each cone of $\widetilde{\Sigma}$ with the same hyperplane. Then, there is an isomorphism $\mathcal{PA} \colon  \textup{Div}_{\mathbb{T}}(\mathcal{X}_{\widetilde{\Sigma}})_{\mathbb{Q}} \rightarrow \mathcal{PA}(\Pi, \mathbb{Q})$, where $\mathcal{PA}(\Pi, \mathbb{Q})$ is the space of rational piecewise affine functions on $\Pi$. For details, see Proposition~3.3.4~of~the~loc.~cit.
		\end{enumerate}
		Under these isomorphisms, a toric divisor is nef (resp. ample) if and only if the corresponding function is concave (resp. strictly concave). See Theorem~3.2.7~and~Proposition~3.3.6~of~\cite{Per26}.
	\end{desc}
	The next result relates the toric divisors on $\mathcal{X}_{\Sigma}/\mathcal{O}_K$ with the functions in the above description.
	\begin{cor}\label{div-fun-can-OK}
		Let $\Sigma$ be a smooth fan in $N_{\mathbb{R}}$ and $\mathcal{X}_{\Sigma}/\mathcal{O}_K$ be its associated toric scheme. For each toric divisor $\mathcal{D}$ on $\mathcal{X}_{\Sigma}$ and each maximal ideal $\mathfrak{p}$ of $\mathcal{O}_K$, denote by $D$ and $\mathcal{D}_{\mathfrak{p}}$ its restrictions to the generic fiber $X_{\Sigma}/K$, and to the base change $\mathcal{X}_{\Sigma,\mathfrak{p}} \coloneqq \mathcal{X}_{\Sigma} \times_{\mathcal{O}_K} \textup{Spec}(\mathcal{O}_{K,\mathfrak{p}})$. Then:
		\begin{enumerate}[label=(\roman*)]
			\item The map $\textup{Div}_{\mathbb{T}}(\mathcal{X}_{\Sigma})_{\mathbb{Q}} \rightarrow \mathcal{SF}(\Sigma ,\mathbb{Q})$ given by the assignment $\mathcal{D} \mapsto \Psi_{D}$ induces an isomorphism between the subspace of $\textup{Div}_{\mathbb{T}}(\mathcal{X}_{\Sigma})_{\mathbb{Q}}$ consisting of horizontal toric divisors and the space of rational support functions $\mathcal{SF}(\Sigma ,\mathbb{Q})$. 
			\item The map $\textup{Div}_{\mathbb{T}}(\mathcal{X}_{\Sigma})_{\mathbb{Q}} \rightarrow \mathcal{PA}(\Sigma ,\mathbb{Q})$ given by the assignment $\mathcal{D} \mapsto \gamma_{\mathcal{D}_{\mathfrak{p}}}$ is surjective. Its kernel consists of the vertical toric divisors supported outside of the vertical fiber $\mathcal{X}_{\Sigma, k(\mathfrak{p})}$. Moreover, the rational piecewise affine function of $\mathcal{D}_{\mathfrak{p}}$ satisfies $\gamma_{\mathcal{D}_{\mathfrak{p}}} = \Psi_D - b_{\mathfrak{p}}$, where $b_{\mathfrak{p}}$ is the order of $\mathcal{D}$ along the vertical fiber $\mathcal{X}_{\Sigma, k(\mathfrak{p})}$.
		\end{enumerate}
	\end{cor}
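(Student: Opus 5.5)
Proposition~\ref{toric-div-canonical-OK} lets me write any toric $\mathbb{Q}$-divisor uniquely as
\begin{displaymath}
	\mathcal{D} = \sum_{\rho \in \Sigma(1)} a_{\rho} \cdot \mathcal{X}_{\Sigma(\rho)} + \sum_{\mathfrak{q} \in \textup{Max}(\mathcal{O}_K)} b_{\mathfrak{q}} \cdot \mathcal{X}_{\Sigma, k(\mathfrak{q})},
\end{displaymath}
with only finitely many $b_{\mathfrak{q}}$ nonzero. The plan is to compute the two restrictions $D$ and $\mathcal{D}_{\mathfrak{p}}$ directly from this decomposition, and then read off the associated functions using Description~\ref{toric-divisors-K-DVR}.

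For (i), restrict to the generic fiber $X_{\Sigma}/K$. The vertical fibers $\mathcal{X}_{\Sigma,k(\mathfrak{q})}$ do not meet the generic fiber, so those terms vanish. By Construction~\ref{canonical-OK-prop}, $\mathcal{X}_{\Sigma(\rho)}$ restricts to the orbit closure $X_{\Sigma(\rho)}$, so $D = \sum_{\rho} a_{\rho} X_{\Sigma(\rho)}$. By Description~\ref{toric-divisors-K-DVR}(i), $\Psi_D$ is the support function with $\Psi_D(v_\rho) = -a_\rho$. Hence the map $\mathcal{D} \mapsto \Psi_D$ kills exactly the vertical part. Restricted to horizontal divisors, it is the composition of the coordinate isomorphism $\bigoplus_{\rho}\mathbb{Q} \cong \textup{Div}_{\mathbb{T}}(X_\Sigma)_{\mathbb{Q}}$ with $\mathcal{SF}$, so it is an isomorphism.

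For (ii), base change to $\mathcal{O}_{K,\mathfrak{p}}$. The fibers over $\mathfrak{q} \neq \mathfrak{p}$ disappear, and the remaining divisor is
\begin{displaymath}
	\mathcal{D}_{\mathfrak{p}} = \sum_{\rho} a_\rho \mathcal{X}_{\Sigma(\rho)} + b_{\mathfrak{p}} \mathcal{X}_{\Sigma,k(\mathfrak{p})}
\end{displaymath}
on $\mathcal{X}_{\Sigma_{\textup{can}}}$. By Description~\ref{canonical-fan}(ii), the rays of $\Sigma_{\textup{can}}$ are the rays $\rho \times \{0\}$, which correspond to the horizontal components, together with the vertical ray generated by $(0,1)$, which corresponds to the special fiber. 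The step I expect to need care is this identification of the special fiber with the orbit closure of the ray $(0,1)$, and the check that its multiplicity is exactly $1$, since the uniformizer has order one there. I would verify it on an affine chart $\textup{Spec}(\mathcal{O}_{K,\mathfrak{p}}[M_\sigma])$, as in Construction~3.3.1 of~\cite{Per26}.

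Granting this, $\Phi_{\mathcal{D}_{\mathfrak{p}}}(v_\rho,0) = -a_\rho$ and $\Phi_{\mathcal{D}_{\mathfrak{p}}}(0,1) = -b_{\mathfrak{p}}$. Linearity on the cones spanned by $\sigma\times\{0\}$ and $(0,1)$ then gives
\begin{displaymath}
	\Phi_{\mathcal{D}_{\mathfrak{p}}}(u,t) = \Psi_D(u) - t\, b_{\mathfrak{p}}.
\end{displaymath}
Restricting to $t=1$ yields $\gamma_{\mathcal{D}_{\mathfrak{p}}} = \Psi_D - b_{\mathfrak{p}}$; the polyhedral complex obtained this way is just $\Sigma$ itself.

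Surjectivity: any $f \in \mathcal{PA}(\Sigma,\mathbb{Q})$ has the form $\Psi + c$, with $\Psi$ the rational support function given by the linear parts of $f$ on each cone and $c = f(0) \in \mathbb{Q}$. Choose $a_\rho = -\Psi(v_\rho)$ and $b_{\mathfrak{p}} = -c$; this divisor maps to $f$.

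Kernel: $\gamma_{\mathcal{D}_{\mathfrak{p}}} = 0$ means $\Psi_D$ equals a constant. A support function vanishes at $0$, so $\Psi_D = 0$ and $b_{\mathfrak{p}} = 0$. Thus all $a_\rho = 0$ and $b_{\mathfrak{p}} = 0$, which says exactly that $\mathcal{D}$ is vertical and supported away from $\mathcal{X}_{\Sigma,k(\mathfrak{p})}$.
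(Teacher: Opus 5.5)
Your proposal is correct and follows essentially the same route as the paper. You decompose $\mathcal{D}$ via Proposition~\ref{toric-div-canonical-OK}, read off $\Psi_D$ from the horizontal coefficients $a_\rho$, and identify $\mathcal{X}_{\Sigma,\mathfrak{p}}$ with $\mathcal{X}_{\Sigma_{\textup{can}}}$ to get $\Phi_{\mathcal{D}_{\mathfrak{p}}}(x,t) = \Psi_D(x) - b_{\mathfrak{p}}\, t$, hence $\gamma_{\mathcal{D}_{\mathfrak{p}}} = \Psi_D - b_{\mathfrak{p}}$; your explicit checks of surjectivity and the kernel in (ii) (via $f = (f - f(0)) + f(0)$) and of the multiplicity one of the special fiber just spell out steps the paper leaves as ``the result follows.''
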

	\begin{proof}
		By Proposition~\ref{toric-div-canonical-OK}, we have a decomposition
		\begin{displaymath}
			\mathcal{D} = \mathcal{D}_{\textup{hor}} + \mathcal{D}_{\textup{vert}} = \sum_{\rho \in \Sigma(1)} a_{\rho}  \cdot  \mathcal{X}_{\Sigma(\rho)} + \sum_{\mathfrak{q} \in \textup{Max}(\mathcal{O}_K)} b_{\mathfrak{q}}  \cdot  \mathcal{X}_{\Sigma, k(\mathfrak{q})},
		\end{displaymath}
		where $a_{\rho},b_{\mathfrak{q}} \in \mathbb{Q}$ and $b_{\mathfrak{q}} = 0$ for almost all maximal ideals $\mathfrak{q}$ of $\mathcal{O}_K$. Then, we get
		\begin{displaymath}
			D = \sum_{\rho \in \Sigma(1)} a_{\rho} \cdot X_{\Sigma(\rho)}, \quad \mathcal{D}_{\mathfrak{p}} = \sum_{\rho \in \Sigma(1)} a_{\rho} \cdot \mathcal{X}_{\Sigma(\rho),\mathfrak{p}} + b_{\mathfrak{p}} \cdot \mathcal{X}_{\Sigma, k(\mathfrak{p})},
		\end{displaymath}
		where $X_{\Sigma(\rho)}/K$ and $\mathcal{X}_{\Sigma(\rho),\mathfrak{p}}/ \mathcal{O}_{K,\mathfrak{p}}$ are the generic fiber and base change to $\mathcal{O}_{K,\mathfrak{p}}$ of the horizontal toric scheme $\mathcal{X}_{\Sigma(\rho)}/ \mathcal{O}_K$.
		
		We first show \textit{(i)}. By Description~\ref{toric-divisors-K-DVR}.(i), the support function $\Psi_D$ is determined by the values $\Psi_D(v_{\rho}) = - a_{\rho}$ on the ray generators $v_{\rho}$. Since the fan $\Sigma$ is smooth, the ray generators $v_{\rho}$ in a cone $\sigma$ are $\mathbb{Z}$-linearly independent. Therefore, any choice of rational numbers $a_{\rho}$ determines a piecewise linear function on $\Sigma$. We conclude that the map $\textup{Div}_{\mathbb{T}}(\mathcal{X}_{\Sigma})_{\mathbb{Q}} \rightarrow \mathcal{SF}(\Sigma ,\mathbb{Q})$ is surjective, and its kernel consists of the vertical toric divisors. The isomorphism follows the fact that $\textup{Div}_{\mathbb{T}}(\mathcal{X}_{\Sigma})$ decomposes as the direct sum of the horizontal and vertical toric divisors. 
		
		Now, we show part~\textit{(ii)}. By~Description~\ref{canonical-fan}, the toric scheme $\mathcal{X}_{\Sigma,\mathfrak{p}}/ \mathcal{O}_{K,\mathfrak{p}}$ is isomorphic to $\mathcal{X}_{\Sigma_{\textup{can}}}/ \mathcal{O}_{K,\mathfrak{p}}$, and its special fiber $ \mathcal{X}_{\Sigma, k(\mathfrak{p})}$ corresponds with the ray in $\Sigma_{\textup{can}}$ spanned by $(0,1)$. Moreover, the polyhedral complex obtained by intersecting $\Sigma_{\textup{can}}$ with the hyperplane $N_{\mathbb{R}} \times \lbrace 1 \rbrace$ is exactly the fan $\Sigma$. The decomposition of $\mathcal{D}_{\mathfrak{p}}$ shows that the support function $\Phi_{\mathcal{D}_{\mathfrak{p}}} \colon N_{\mathbb{R}} \times \mathbb{R}_{\geq 0} \rightarrow \mathbb{R}$ is given by
		\begin{displaymath}
			\Phi_{\mathcal{D}_{\mathfrak{p}}}(x,t) = \Psi_{D} (x) - b_{\mathfrak{p}} \cdot t.
		\end{displaymath}
		By definition, $\gamma_{\mathcal{D}_{\mathfrak{p}}}(x) = \Phi_{\mathcal{D}_{\mathfrak{p}}}(x,1) = \Psi_{D} (x) - b_{\mathfrak{p}}$. Then, the result follows.
	\end{proof}
	The following result combines  Lemma~4 and Corollary~1 in \S4~of~\cite{Dmz70}, characterizing positivity properties of a horizontal toric divisor in terms of concavity of its support function.
	\begin{prop}\label{ample-OK}
		Let $\Sigma$ be a smooth complete fan in $N_{\mathbb{R}}$, $\mathcal{X}_{\Sigma}/\mathcal{O}_K$ be its associated toric scheme, and $\mathcal{D}$ be a horizontal toric divisor on $\mathcal{X}_{\Sigma}$. Then, the divisor $\mathcal{D}$ is ample (resp. relatively nef) if and only if the support function $\Psi_D$ of its restriction $D$ to the generic fiber is strictly concave (resp. concave).
	\end{prop}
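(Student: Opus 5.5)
The plan is to reduce both directions to the corresponding statements over a field or a DVR (Description~\ref{toric-divisors-K-DVR}) by restricting $\mathcal{D}$ to the generic fiber and to the localizations $\mathcal{X}_{\Sigma,\mathfrak{p}}$. Since $\mathcal{D}$ is horizontal, Corollary~\ref{div-fun-can-OK}.(ii) gives $b_{\mathfrak{p}}=0$, so $\gamma_{\mathcal{D}_{\mathfrak{p}}}=\Psi_D$ for every maximal ideal $\mathfrak{p}$. On $\mathcal{X}_{\Sigma,\mathfrak{p}}\cong\mathcal{X}_{\Sigma_{\textup{can}}}$ the piecewise affine function on the complex $\Sigma$ is therefore just $\Psi_D$. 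By Proposition~3.3.6 of~\cite{Per26}, $\mathcal{D}_{\mathfrak{p}}$ is nef (resp. ample) if and only if $\Psi_D$ is concave (resp. strictly concave on the polyhedral complex $\Sigma$, which is the same as strict concavity on the fan).

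\textbf{Relative nefness.} A vertical curve $\mathcal{C}\subset\mathcal{X}_\Sigma$ lies in some fiber $\mathcal{X}_{\Sigma,k(\mathfrak{p})}$, so $\mathcal{D}\cdot\mathcal{C}=\mathcal{D}_{\mathfrak{p}}\cdot\mathcal{C}$ is computed on $\mathcal{X}_{\Sigma,\mathfrak{p}}$. Hence $\mathcal{D}$ is relatively nef if and only if every $\mathcal{D}_{\mathfrak{p}}$ is nef on its special fiber. By the previous paragraph this holds if and only if $\Psi_D$ is concave. Alternatively, the special fiber is the toric variety $X_{\Sigma}/k(\mathfrak{p})$ with divisor of support function $\Psi_D$, and the field case of Description~\ref{toric-divisors-K-DVR}.(i) applies directly.

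\textbf{Ampleness.} If $\mathcal{D}$ is ample, its restriction $D$ to the generic fiber is ample, so $\Psi_D$ is strictly concave by the field case. For the converse, assume $\Psi_D$ is strictly concave. Then $D$ is ample on the generic fiber and the restriction to each special fiber $X_\Sigma/k(\mathfrak{p})$ is ample, since it has the same support function. A line bundle on a scheme proper over $\textup{Spec}(\mathcal{O}_K)$ whose restriction to every fiber is ample is ample: this is relative ampleness via the fiberwise criterion (EGA~III, 4.7.1), and over an affine base relative ampleness is ampleness.

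A more hands-on route avoids the fiberwise criterion. Write $\Psi_D$ on each maximal cone $\sigma$ as $\langle m_\sigma,\cdot\rangle$ with $m_\sigma\in M$ after replacing $\mathcal{D}$ by a multiple. The global sections $\chi^{m}$, $m\in\Delta_D\cap M$, are defined over $\mathcal{O}_K$ and give sections of $\mathcal{O}(k\mathcal{D})$ whose nonvanishing loci are exactly the affine charts $\mathcal{X}_\sigma$ (taking $m=m_\sigma$). Strict concavity gives $\sigma^\vee\cap M$ generated by the differences $m-m_\sigma$, so each $\mathcal{O}_K[M_\sigma]$ is generated by these ratios and $\mathcal{D}$ is very ample.

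\textbf{Main obstacle.} This last step is the hardest: justifying, over $\mathcal{O}_K$ rather than a field, that the toric sections generate and separate. I expect it to follow verbatim from the field argument (CLS, Theorem~6.1.10/6.1.14), because all the constructions are defined over $\mathbb{Z}$ and commute with base change (Construction~\ref{canonical-OK-prop}). I would cite Demazure's Lemma~4 for the details.
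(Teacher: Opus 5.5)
Your proof is correct, but it takes a different route from the paper. The paper gives no argument of its own: it quotes Lemma~4 and Corollary~1 of \S4 of \cite{Dmz70}, where the toric scheme is handled over $\mathbb{Z}$ (and then over $\mathcal{O}_K$ by flat base change) and positivity is read off from the sections $\chi^m$ attached to the support function. That is essentially your ``hands-on'' second route.

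Your main route instead reduces everything to the classical statements over a field. For relative nefness you use that every vertical curve lies in a closed fibre $X_{\Sigma}/k(\mathfrak{p})$. There the horizontal divisor $\mathcal{D}$ restricts to the toric divisor with the same support function $\Psi_D$, since $b_{\mathfrak{p}}=0$ by Corollary~\ref{div-fun-can-OK}. For ampleness you combine three things:
\begin{itemize}
\item the field case on the generic fibre and on all closed fibres;
\item the fibrewise ampleness criterion (EGA~III, 4.7.1) for the proper morphism $\mathcal{X}_\Sigma \to \textup{Spec}(\mathcal{O}_K)$;
\item the fact that relative ampleness over an affine base is ampleness.
\end{itemize}

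What your route buys is that it matches the paper's definition of relative nefness (non-negative degree on vertical curves) word for word. It also works unchanged over any Dedekind base once the field case is known. The price is dependence on a general cohomological theorem, and it gives no information about global sections.

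The section-theoretic route is explicit and uniform over $\mathbb{Z}$. It works with the module of global sections spanned by the $\chi^m$ with $m$ a lattice point of $\Delta_D$ (or of $k\Delta_D$ for $k\mathcal{D}$), and it produces an explicit projective embedding. However, a criterion phrased via global generation and ampleness does not by itself yield ``relatively nef $\Rightarrow$ concave''. That implication still needs an argument on the closed fibres, which is exactly what your reduction supplies.

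One small slip in your second route: the relevant sections of $\mathcal{O}(k\mathcal{D})$ are the $\chi^m$ with $m \in k\Delta_D \cap M$, not $m \in \Delta_D \cap M$.
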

	A toric scheme $\mathcal{X}_{\Sigma}/ \mathcal{O}_K$ is \textit{torically quasi-projective} if there exists a toric projective model of $\mathcal{X}_{\Sigma}$ over $\mathcal{O}_K$. Now, recall that a fan $\Sigma$ in $N_{\mathbb{R}}$ is \textit{projective} if it is complete and there exists a strictly concave rational support function on $\Sigma$ (Lemma~6.1.13~of~\cite{CLS}). Similarly, the fan $\Sigma$ is \textit{quasi-projective} if it is a subfan of a projective fan. As an application of the above proposition, we characterize (quasi-)projectivity of a canonical model. 
	\begin{cor}\label{quasi-proj-OK}
		Let $\Sigma$ be a smooth fan in $N_{\mathbb{R}}$. The following statements hold:
		\begin{enumerate}[label=(\roman*)]
			\item The toric scheme $\mathcal{X}_{\Sigma}/ \mathcal{O}_K$ is projective if and only if the fan $\Sigma$ is projective.
			\item The toric scheme $\mathcal{X}_{\Sigma}/ \mathcal{O}_K$ is torically quasi-projective if and only if $\Sigma$ is quasi-projective. Moreover, if the support $|\Sigma|$ is convex, then quasi-projective and torically quasi-projective are equivalent.
		\end{enumerate}
	\end{cor}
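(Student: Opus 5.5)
For part (i), the plan is to use Proposition~\ref{ample-OK} together with the horizontal/vertical decomposition of Corollary~\ref{div-fun-can-OK}. Assume first that $\Sigma$ is projective and pick a strictly concave rational support function $\Psi$ on $\Sigma$. By Corollary~\ref{div-fun-can-OK}.(i) there is a unique horizontal toric $\mathbb{Q}$-divisor $\mathcal{D}$ with $\Psi_D = \Psi$. Clearing denominators, Proposition~\ref{ample-OK} shows that $\mathcal{D}$ is ample. It remains to see that $\mathcal{X}_{\Sigma}/\mathcal{O}_K$ is proper, and this holds because a projective fan is complete (Construction~\ref{canonical-OK-prop}). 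Hence $\mathcal{X}_{\Sigma}$ is projective.

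Conversely, suppose $\mathcal{X}_{\Sigma}$ is projective. Properness forces $\Sigma$ to be complete. Choose an ample line bundle on $\mathcal{X}_{\Sigma}$; we need a toric ample divisor. On a regular toric scheme, I would argue that every line bundle is isomorphic to $\mathcal{O}(\mathcal{D})$ for some toric divisor $\mathcal{D}$. A cleaner route is to pass to the generic fiber. The restriction of an ample bundle to $X_{\Sigma}/K$ is ample, and on toric varieties over a field every class is represented by a toric divisor $D$ (standard, \cite{CLS}) whose support function is strictly concave (Description~\ref{toric-divisors-K-DVR}). Therefore $\Sigma$ is projective. This avoids having to descend torus-invariance over $\mathcal{O}_K$.

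For part (ii), suppose first that $\Sigma$ is a subfan of a projective fan $\Sigma'$. We may take $\Sigma'$ smooth: refine $\Sigma'$ without subdividing the smooth cones of $\Sigma$, using toric resolution (\cite{CLS}, Theorem~11.1.9). Projectivity is preserved if we pick a projective refinement, e.g. via star subdivisions, which keep strictly concave functions available. Then $\mathcal{X}_{\Sigma}\to\mathcal{X}_{\Sigma'}$ is a toric open immersion, and $\mathcal{X}_{\Sigma'}$ is projective by (i); so it is a toric projective model.

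Conversely, let $\mathcal{X}_{\Sigma}\to\mathcal{X}$ be a toric projective model. Its generic fiber gives a toric open embedding $X_{\Sigma}\to X$ into a projective toric variety over $K$. By Description~\ref{canonical-fan}, $X=X_{\Sigma'}$ for a complete fan $\Sigma'$ containing $\Sigma$ as a subfan. Moreover $\Sigma'$ is projective, since $X$ is projective and the field case of (i) holds. Hence $\Sigma$ is quasi-projective.

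For the final claim, the remaining implication to show is that if $|\Sigma|$ is convex and $\mathcal{X}_{\Sigma}$ is quasi-projective, then $\Sigma$ is quasi-projective. The plan is to take an ample line bundle on $\mathcal{X}_{\Sigma}$ and restrict it to the generic fiber $X_{\Sigma}$. Since $|\Sigma|$ is convex, Pic of $X_{\Sigma}$ is represented by toric divisors, and ampleness corresponds to a strictly concave support function $\Psi$ on $|\Sigma|$ (the CLS characterization for convex support, Theorem~7.2.10). I would then extend $\Psi$ to a strictly concave piecewise linear function on $N_{\mathbb{R}}$, for instance as the minimum of the linear pieces $m_\sigma$ together with large-slope auxiliary linear functions. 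The domains of linearity of this extension form a projective fan containing $\Sigma$.

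I expect the main obstacle to be this extension step: the extension must be arranged so that its linearity domains restricted to $|\Sigma|$ are exactly the cones of $\Sigma$, and this requires a careful choice of the auxiliary linear functions. A second, lesser issue is keeping smoothness when refining in (ii).
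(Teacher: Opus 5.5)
Your arguments for (i) and for the equivalence in (ii) are sound and match the paper's proof, which is only a citation. The paper gets (i) from Proposition~\ref{ample-OK} plus the field case, and (ii) by the argument of Theorem~7.2.4 of \cite{CLS}; your smooth projective refinement (Theorem~11.1.9 of \cite{CLS}) and your passage to the generic fiber fill in those steps correctly.

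The gap is in the final claim, exactly at the extension step you flag, and the recipe you suggest does not work as stated. Every auxiliary linear function $\ell$ must satisfy $\ell\geq\Psi$ on $|\Sigma|$. Suppose that at some $u_0\in\partial|\Sigma|$ every auxiliary $\ell$ has $\ell(u_0)>\Psi(u_0)$. Then near $u_0$, also at points outside $|\Sigma|$, the minimum is still attained by some $m_\sigma$ with $u_0\in\sigma$. So the linearity domain of $m_\sigma$ is strictly larger than $\sigma$, and $\Sigma$ is not a subfan. Hence the auxiliary functions must touch $\Psi$ along all of $\partial|\Sigma|$, which forces them to be adapted to the individual cones; large slopes do not help. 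For example, take the fan in $\mathbb{R}^2$ with maximal cones $\textup{cone}(e_1,e_2)$ and $\textup{cone}(e_2,-e_1)$, and $\Psi(x,y)=\min(0,x)$. Adding $\lambda y$ for any $\lambda>0$ leaves $\{x\leq 0,\ x\leq\lambda y\}=\textup{cone}(e_2,(-\lambda,-1))$ as the linearity domain of $x$.

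The missing idea is to use, for every maximal cone $\sigma$ and every generator $a\in M$ of $|\Sigma|^{\vee}$ (so that $|\Sigma|=\{u \,\vert\, \langle a,u\rangle\geq 0\}$), the function $m_\sigma+a$. Set $\Phi=\min_{\tau,a}\{m_\tau,\,m_\tau+a\}$, which is the support function of the rational polytope $P=\textup{Conv}(\{m_\tau\}\cup\{m_\tau+a\})$.
\begin{itemize}
\item Suppose $m_\sigma(u)=\Phi(u)$. Comparing with $m_\sigma+a$ gives $u\in|\Sigma|$. Comparing with the $m_\tau$ gives $m_\sigma(u)=\Psi(u)$, hence $u\in\sigma$ by strict concavity.
\item Conversely, points of $\sigma$ satisfy all the inequalities, because concavity on the convex set $|\Sigma|$ gives $\Psi\leq m_\tau$ there.
\item So every maximal cone of $\Sigma$ is the normal cone of $P$ at the vertex $m_\sigma$, and $\Sigma$ is a subfan of the normal fan of $P$.
\item $P$ is full-dimensional. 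Otherwise all its normal cones would contain the nonzero subspace of $N_{\mathbb{R}}$ orthogonal to the direction of $\textup{aff}(P)$, contradicting strong convexity of $\sigma$.
\end{itemize}
This gives a projective fan containing $\Sigma$; if $|\Sigma|$ is not full-dimensional, split off the torus factor first. The first half of (ii) then finishes the proof. Alternatively, you can bypass the construction by invoking the corresponding statement over a field directly, as the paper does by pointing to the argument of Theorem~7.2.4 of \cite{CLS}.
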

	\begin{proof}
		Part~\textit{(i)} follows from Proposition~\ref{ample-OK} and Remark~3.2.8~of~\cite{Per26}. The second part is proven in the same way as Theorem~7.2.4~of~\cite{CLS}.
	\end{proof}
	
	\subsection{Toric arithmetic varieties and arithmetic fans}\label{3-2}
	Now, we use the descriptions in the previous section to study projective toric schemes over the base scheme $\mathcal{S} \coloneqq \textup{Spec}(\mathcal{O}_K)$. We will use the conventions from Subsection~\ref{notation} freely, and introduce the following notation. Given a finite set of maximal ideals $S \subset \textup{Max}(\mathcal{O}_K)$, we denote by $\mathcal{O}_{K,S}$ the ring of $S$-integers of $K$. We emphasize that if $S= \lbrace \mathfrak{p} \rbrace$, the rings $\mathcal{O}_{K,\lbrace \mathfrak{p} \rbrace}$ and $\mathcal{O}_{K,\mathfrak{p}}$ are not the same. Then, for each scheme $\mathcal{X}$ over $\mathcal{O}_K$, we consider the following fiberd products:
	\begin{itemize}
		\item The generic fiber $\mathcal{X}_K \coloneqq \mathcal{X} \times_{\mathcal{O}_K} \textup{Spec}(K)$ of $\mathcal{X}$. In particular, $\mathcal{S}_K \coloneqq \textup{Spec}(K)$. If $\mathcal{X}/\mathcal{O}_K$ is a variety, we also use the notation $X \coloneqq \mathcal{X}_K$.
		\item The fiber $\mathcal{X}_{k(\mathfrak{p})} \coloneqq \mathcal{X} \times_{\mathcal{O}_K} \textup{Spec}(k(\mathfrak{p}))$ of $\mathcal{X}$ over $\mathfrak{p}$.
		\item The fiber $\mathcal{X}_{\mathfrak{p}} \coloneqq \mathcal{X} \times_{\mathcal{O}_{K}} \textup{Spec}(\mathcal{O}_{K,\mathfrak{p}})$ of $\mathcal{X}$ over $\mathcal{O}_{K,\mathfrak{p}}$. Sometimes, we also refer to it as the \textit{localization} of $\mathcal{X}$ at $\mathfrak{p}$. In particular, $\mathcal{S}_{\mathfrak{p}} = \textup{Spec}(\mathcal{O}_{K,\mathfrak{p}})$.
		\item The fiber $\mathcal{X}_S \coloneqq \mathcal{X} \times_{\mathcal{O}_{K}}  \textup{Spec}(\mathcal{O}_{K,S})$ of $\mathcal{X}$ over $\mathcal{O}_{K,S}$. In particular, $\mathcal{S}_S = \textup{Spec}(\mathcal{O}_{K,S})$ identifies with an open subscheme of $\mathcal{S} = \textup{Spec}(\mathcal{O}_{K})$.
	\end{itemize}
	The main idea of this subsection is to understand the geometry of a toric scheme $\mathcal{X}/\mathcal{O}_K$ in terms of the fibers $\mathcal{X}_S$ and $\mathcal{X}_{\mathfrak{p}}$ for a carefully chosen set $S$. To do this, we will use the theory of \textit{fpqc descent}. Our references are Chapter~6~of~\cite{BLR90} and Chapter~14~of~\cite{GW20}. We will also give a description in terms of toric blow-ups. First, we need some preparation. 
	\begin{lem}\label{generic-canonical}
		Let $\mathcal{X}/\mathcal{O}_K$ be a proper toric scheme which is also a variety (see Subsection~\ref{notation}), and let $\mu \colon \mathcal{U} \times_{\mathcal{O}_K} \mathcal{X} \rightarrow \mathcal{X}$ be its torus action. Then, there is a finite set $S \subset \textup{Max}(\mathcal{O}_K)$ of maximal ideals and a complete fan $\Sigma$ in $N_{\mathbb{R}}$ such that the toric schemes $\mathcal{X}_{S}$ and $\mathcal{X}_{\Sigma,S}$ are isomorphic.
	\end{lem}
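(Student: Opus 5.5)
The idea is to first fix the fan from the generic fiber and then spread out an isomorphism from $K$ to $\mathcal{O}_{K,S}$ by a standard limit argument.

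\emph{Generic fiber.} The generic fiber $X = \mathcal{X}_K$ is a normal toric variety over $K$. It is proper, since $\mathcal{X}/\mathcal{O}_K$ is proper and properness is stable under base change. By Description~\ref{canonical-fan}, there is a complete fan $\Sigma$ in $N_{\mathbb{R}}$ and a toric isomorphism $\phi_K \colon X_{\Sigma} \to X$. This is compatible with the embeddings $\pi$ of $U$ and with the torus actions.

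\emph{Spreading out.} Both $\mathcal{X}$ and $\mathcal{X}_{\Sigma}$ are of finite presentation over $\mathcal{O}_K$; for $\mathcal{X}_{\Sigma}$ this is Construction~\ref{canonical-OK-prop}, and $\mathcal{X}$ is noetherian of finite type. We use $K = \varinjlim_S \mathcal{O}_{K,S}$ over finite sets $S \subset \textup{Max}(\mathcal{O}_K)$, so that $\textup{Spec}(K) = \varprojlim_S \mathcal{S}_S$ with affine transition maps. By the standard limit results (EGA IV, 8.8.2), the following hold:
\begin{enumerate}
\item The morphisms $\phi_K$ and $\phi_K^{-1}$ descend to morphisms $\phi_S \colon \mathcal{X}_{\Sigma,S} \to \mathcal{X}_S$ and $\psi_S \colon \mathcal{X}_S \to \mathcal{X}_{\Sigma,S}$ over $\mathcal{O}_{K,S}$ for $S$ large enough.
\item Since $\psi_S \circ \phi_S$ and $\phi_S \circ \psi_S$ agree with the identities after base change to $K$, enlarging $S$ makes them equal to the identities. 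Hence $\phi_S$ is an isomorphism.
\end{enumerate}

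\emph{Equivariance.} We must check that $\phi_S$ is toric, i.e.\ that the two morphisms
\[
\mu \circ (\textup{id} \times \phi_S),\ \phi_S \circ \mu_{\Sigma} \colon \mathcal{U}_S \times \mathcal{X}_{\Sigma,S} \rightarrow \mathcal{X}_S
\]
coincide. Their source is of finite presentation over $\mathcal{O}_{K,S}$, and they agree over $K$ because $\phi_K$ is toric. So again by EGA IV, 8.8.2 they agree after enlarging $S$ once more. Compatibility with the open embeddings of the torus follows the same way, or directly, since both schemes contain $\mathcal{U}_S$ as the orbit of the identity.

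\emph{Main obstacle.} The one delicate point is matching the toric structures rather than just the schemes. This amounts to ensuring that $\phi_K$ respects the given embedding $\pi$, which is exactly what the equivalence of categories in Description~\ref{canonical-fan} provides. Finite presentation of $\mathcal{X}$ is automatic, since $\mathcal{O}_K$ is noetherian. Everything else is formal limit bookkeeping, so the overall $S$ is the finite union of the finitely many enlargements above.
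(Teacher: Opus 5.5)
Your proof is correct. It uses the same spreading-out principle as the paper, but implements it differently.

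The paper does not descend $\phi_K$ and $\phi_K^{-1}$ separately. Instead it takes the scheme-theoretic closure $\mathcal{Z}$ of the graph of the toric isomorphism $f\colon X_\Sigma\to X$ inside $\mathcal{X}_\Sigma\times_{\mathcal{O}_K}\mathcal{X}$, and notes that $\mathcal{Z}$ is flat over $\mathcal{O}_K$. It then applies to each projection $p_1,p_2$ a result saying that a morphism which is an isomorphism on generic fibres is an isomorphism over some $\textup{Spec}(\mathcal{O}_{K,S_i})$ (Proposition~14.45 of \cite{GW20}), and sets $g=p_{2,S}\circ p_{1,S}^{-1}$. Equivariance is obtained not by enlarging $S$ again, but from agreement of the two action morphisms on a schematically dense open subset, using separatedness.

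Your route through EGA~IV, 8.8.2 is more direct. It treats the morphism, its inverse, and the equivariance identity uniformly with a single tool. The paper's graph construction instead gives a canonical flat model of the isomorphism once $f$ is fixed, and needs only the statement that isomorphisms spread out.

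Two small remarks.
\begin{enumerate}
\item The repeated enlargements of $S$ in step (ii) and in the equivariance check are unnecessary. The sources $\mathcal{X}_{\Sigma,S}$ and $\mathcal{U}_S\times\mathcal{X}_{\Sigma,S}$ are integral and the targets are separated. So for two morphisms that agree after base change to $K$, the equalizer is a closed subscheme containing the generic point, hence equal to the whole reduced source.
\item Your alternative justification, that $\mathcal{X}_S$ contains $\mathcal{U}_S$ as the orbit of the identity, is not available a priori. It is also not needed: in the paper's definitions, a toric morphism only has to restrict to a morphism of the generic tori, and $\phi_S$ does so because its generic fibre is $\phi_K$.
\end{enumerate}
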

	\begin{proof}
		This is a standard argument in the theory of schemes over a Dedekind domain. Since $\mathcal{X}/\mathcal{O}_K$ is proper, the toric variety $X/K$ given by its generic fiber is also proper. By the classification of proper toric varieties over a field, there exists a complete fan $\Sigma$ in $N_{\mathbb{R}}$ such that $X$ and $X_{\Sigma}$ are isomorphic as toric varieties. Let $f \colon X_{\Sigma} \rightarrow X$ be said toric isomorphism and consider its graph
		\begin{displaymath}
			\Gamma_f \colon X_{\Sigma} \longrightarrow X_{\Sigma} \times_K X.
		\end{displaymath}
		Since $X_{\Sigma}/K$ is a variety, $\Gamma_f$ is a closed immersion, and its scheme-theoretic image $Z$ inherits the structure of a toric variety. Now, let $\iota \colon X_{\Sigma} \times_K X \rightarrow \mathcal{X}_{\Sigma} \times_{\mathcal{O}_K} \mathcal{X}$ be the canonical morphism identifying the toric variety $X_{\Sigma} \times_K X$ as the generic fiber of the toric scheme $\mathcal{X}_{\Sigma} \times_{\mathcal{O}_K} \mathcal{X}$. Then, define $\mathcal{Z}$ as the scheme-theoretic image of the composition
		\begin{displaymath}
			\iota \circ \Gamma_f \colon X_{\Sigma} \longrightarrow \mathcal{X}_{\Sigma} \times_{\mathcal{O}_K} \mathcal{X}.
		\end{displaymath}
		Note that $\mathcal{Z}$ is a model of $Z$, and therefore, it is flat over $\mathcal{O}_K$ (See~Exercise~14.5~of~\cite{GW20}). Now, let $p_1 \colon \mathcal{Z} \rightarrow \mathcal{X}_{\Sigma}$ be the projection to $\mathcal{X}_{\Sigma}$, and it induces a toric isomorphism on generic fibers $p_1 \colon Z \rightarrow X_{\Sigma}$. By Proposition~14.45~of~the~loc.~cit., there is a finite set $S_1 \subset \textup{Max}(\mathcal{O}_K)$ such that the base change $p_{1,S_1} \colon \mathcal{Z}_{S_1} \rightarrow \mathcal{X}_{S_1}$ is an isomorphism over $\textup{Spec}(\mathcal{O}_{K,S_1})$, seen as an open subscheme of $\textup{Spec}(\mathcal{O}_K)$. Since $f$ is an isomorphism, we can repeat the argument with the second projection $p_2 \colon \mathcal{Z} \rightarrow \mathcal{X}$. Thus, we obtain a finite set $S_2 \subset \textup{Max}(\mathcal{O}_K)$ such that the base change $p_{2,S_2}$ is an isomorphism. We choose $S \coloneqq S_1 \cup S_2$ and define $g \colon \mathcal{X}_{\Sigma,S} \rightarrow \mathcal{X}_{S}$ to be the isomorphism given by the composition $p_{2,S} \circ (p_{1,S})^{-1}$. We claim that the isomorphism $g$ is toric. For this, we follow a similar argument to the one in~Proposition~1.3.14~of~\cite{CLS}. Indeed, the base change of $g$ to the generic fiber is $f$, which restricts to an isomorphism of tori over $K$. Moreover, we have an embedding of the torus $\mathcal{U}_{S}$ into $\mathcal{X}_{\Sigma,S}$. By definition of a toric scheme, this torus acts on itself by translations. Then, we get a commutative diagram
		\begin{center}
			\begin{tikzcd}
				\mathcal{U}_{S} \times_{\mathcal{O}_{K,S}} \mathcal{U}_{S} \arrow[d, "{\textup{id} \times g}"] \arrow[r, "{\mu_{\Sigma}}"] & \mathcal{U}_{S}  \arrow[d, "{g}"] \\
				\mathcal{U}_{S} \times_{\mathcal{O}_{K,S}} \mathcal{U}_{S}  \arrow[r, "{\mu}"] & \mathcal{U}_{S}
			\end{tikzcd}
		\end{center}
		where $\mu_{\Sigma}$ is the torus action on $\mathcal{X}_{\Sigma}$ and we have identified $\mathcal{U}_{S}$ as an open subscheme of $\mathcal{X}_{S}$. The above diagram shows that the morphism $g$ is equivariant on a schematically dense open subset of $\mathcal{X}_{\Sigma,S}$, and therefore, it is equivariant everywhere. Observe that, after fixing $f$, this construction is completely canonical.
	\end{proof}
	\begin{lem}\label{fpqc-descent-toric-schemes}
		Let $\mathcal{X}, \Sigma$ and $S$ be as in Lemma~\ref{generic-canonical}. For each $\mathfrak{p} \in S$, let $\widetilde{\Sigma}_{\mathfrak{p}}$ be the unique complete fan in $N_{\mathbb{R}} \times \mathbb{R}_{\geq 0}$ such that the localization $\mathcal{X}_{\mathfrak{p}}$ of $\mathcal{X}$ at $\mathfrak{p}$ is isomorphic to $\mathcal{X}_{\widetilde{\Sigma}_{\mathfrak{p}}}/ \mathcal{O}_{K,\mathfrak{p}}$ (See~Description~\ref{canonical-fan}). Then, the collection of fans consisting of $\Sigma$ and the $\widetilde{\Sigma}_{\mathfrak{p}}$'s uniquely determine the toric scheme $\mathcal{X}/\mathcal{O}_K$ (up to toric isomorphism).
	\end{lem}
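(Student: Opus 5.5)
The plan is to reconstruct $\mathcal{X}$ by gluing: it is covered by the open $\mathcal{X}_S$ and the localizations $\mathcal{X}_{\mathfrak{p}}$ for $\mathfrak{p}\in S$, and each of these pieces is determined by the given fans. Concretely, suppose $\mathcal{X}'$ is another proper toric scheme with the same data: $\mathcal{X}'_{S}\cong\mathcal{X}_{\Sigma,S}$ for the same finite set $S$, and $\mathcal{X}'_{\mathfrak{p}}\cong\mathcal{X}_{\widetilde{\Sigma}_{\mathfrak{p}}}$ for each $\mathfrak{p}\in S$. Enlarging $S$ if needed (canonical models are compatible with localization by Description~\ref{canonical-fan}), we may use one common $S$ for both schemes. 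We then have toric isomorphisms $g_S\colon \mathcal{X}_S\to \mathcal{X}'_S$ and $g_{\mathfrak{p}}\colon \mathcal{X}_{\mathfrak{p}}\to\mathcal{X}'_{\mathfrak{p}}$. The goal is to glue these into a global toric isomorphism $\mathcal{X}\to\mathcal{X}'$.

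The first key step is to normalize all of these isomorphisms so that each restricts to the identity of the torus $U$ on the generic fiber. A toric isomorphism between toric varieties over $K$ with the same fan is, after composing with translation by a $K$-point of the torus, induced by an automorphism of the fan (as in Lemma~\ref{generic-canonical}). We fix the identification of all generic fibers with $X_\Sigma$ through the given embedding $\pi\colon U\to X$. We then require each $g_S$ and $g_{\mathfrak{p}}$ to be compatible with $\pi$ and $\pi'$, which we can arrange by choosing them as the isomorphisms induced by the identity on fans.

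With this normalization, any two of the isomorphisms agree on the generic fiber, since both restrict to the identity on $U$ and $U$ is schematically dense. Because the schemes are separated and integral, they then agree on every overlap. In particular $g_S$ and $g_{\mathfrak{p}}$ agree on $\mathcal{X}_S\times_{\mathcal{O}_K}\mathrm{Spec}(\mathcal{O}_{K,\mathfrak{p}})=X$.

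The second step is the gluing itself. The family $\{\mathrm{Spec}(\mathcal{O}_{K,S})\}\cup\{\mathrm{Spec}(\mathcal{O}_{K,\mathfrak{p}})\}_{\mathfrak{p}\in S}$ is an fpqc covering of $\mathrm{Spec}(\mathcal{O}_K)$, since the map from the disjoint union is flat and surjective. Morphisms of schemes descend along fpqc coverings (Chapter~6 of~\cite{BLR90}, Chapter~14 of~\cite{GW20}). The cocycle condition on the pairwise fiber products therefore reduces to agreement on generic fibers: the fiber products $\mathcal{O}_{K,\mathfrak{p}}\otimes\mathcal{O}_{K,\mathfrak{q}}$ for $\mathfrak{p}\neq\mathfrak{q}$ equal $K$, and $\mathcal{O}_{K,S}\otimes\mathcal{O}_{K,\mathfrak{p}}=K$. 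This agreement is exactly what the previous step gives. Hence the isomorphisms descend to an isomorphism $g\colon\mathcal{X}\to\mathcal{X}'$. Equivariance holds because it holds on the dense torus, by the same argument as at the end of Lemma~\ref{generic-canonical}.

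The main obstacle is the normalization step, making the local isomorphisms compatible on generic fibers. The fans determine the pieces only up to toric isomorphism, which leaves an ambiguity by torus translations and fan automorphisms. I would remove it by insisting that all identifications respect the fixed open embedding $\pi$ of Definition~\ref{toric-scheme}, so that "uniquely determined up to toric isomorphism" means canonically determined relative to $U$. Uniqueness of $\widetilde{\Sigma}_{\mathfrak{p}}$ comes from the DVR equivalence quoted in Description~\ref{canonical-fan}.
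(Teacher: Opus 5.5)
Your proposal is correct and follows essentially the same route as the paper: fpqc descent along $\mathcal{S}_S \amalg \coprod_{\mathfrak{p}\in S}\mathcal{S}_{\mathfrak{p}} \to \textup{Spec}(\mathcal{O}_K)$, whose nontrivial double overlaps are all generic fibers, together with full faithfulness of the descent functor (your descent of morphisms) and a density argument on the torus for equivariance. Your explicit normalization of the local isomorphisms through the fixed embedding $\pi$ is what justifies the paper's brief claim that identifying all generic fibers with $X_{\Sigma}$ gives a descent datum on $\mathcal{X}_{\Sigma,S}\amalg\coprod_{\mathfrak{p}}\mathcal{X}_{\widetilde{\Sigma}_{\mathfrak{p}}}$ isomorphic to the canonical one on the base change of $\mathcal{X}$.
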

	\begin{proof}
		We want to use fpqc descent to show that the collection of toric schemes consisting of $\mathcal{X}_{\Sigma,S}$ and the $\mathcal{X}_{\widetilde{\Sigma}_{\mathfrak{p}}}$'s uniquely determines the toric scheme $\mathcal{X}/\mathcal{O}_K$. First, we consider the scheme 
		\begin{displaymath}
			\mathcal{S}^{\prime} \coloneqq \mathcal{S}_S \amalg \coprod_{\mathfrak{p} \in S} \mathcal{S}_{\mathfrak{p}}.
		\end{displaymath}
		Then, the canonical morphism $\mathcal{S}^{\prime} \rightarrow \mathcal{S}$ is faithfully flat and quasi-compact. Observe that the scheme $\mathcal{X}^{\prime} \rightarrow \mathcal{S}^{\prime}$ obtained by base change is given by the disjoint union
		\begin{displaymath}
			\mathcal{X}^{\prime} = \mathcal{X}_S \amalg \coprod_{\mathfrak{p} \in S} \mathcal{X}_{\mathfrak{p}}.
		\end{displaymath}
		Now, we check that $\mathcal{S}^{\prime \prime} \coloneqq \mathcal{S}^{\prime} \times_{\mathcal{S}} \mathcal{S}^{\prime}$ is given by a disjoint union of the affine schemes:
		\begin{itemize}
			\item The product $\mathcal{S}_S \times_{\mathcal{S}} \mathcal{S}_S \cong \textup{Spec}(\mathcal{O}_{K,S} \otimes_{\mathcal{O}_K} \mathcal{O}_{K,S}) \cong \textup{Spec}(\mathcal{O}_{K,S}) = \mathcal{S}_S$.
			\item The product $\mathcal{S}_{S} \times_{\mathcal{S}} \mathcal{S}_{\mathfrak{p}} \cong \textup{Spec}(\mathcal{O}_{K,S} \otimes_{\mathcal{O}_K} \mathcal{O}_{K,\mathfrak{p}}) \cong \textup{Spec}(K) \cong \mathcal{S}_{\mathfrak{p}} \times_{\mathcal{S}} \mathcal{S}_{S}$.
			\item The product  $\mathcal{S}_{\mathfrak{p}} \times_{\mathcal{S}} \mathcal{S}_{\mathfrak{q}} \cong \textup{Spec}(\mathcal{O}_{K,\mathfrak{p}} \otimes_{\mathcal{O}_K} \mathcal{O}_{K,\mathfrak{q}})$. This fiber product is isomorphic to $\mathcal{S}_K = \textup{Spec}(K)$ if the primes $\mathfrak{p}$ and $\mathfrak{q}$ are different, or to $\mathcal{S}_{\mathfrak{p}}$ if they are the same.
		\end{itemize}
		It is easy to see that (the non-trivial part of) a descent datum on $\mathcal{X}^{\prime}$ consists of isomorphisms between the generic fibers of the schemes $ \mathcal{X}_S$ and $\mathcal{X}_{\mathfrak{p}}$, and the generic fibers of $\mathcal{X}_{\mathfrak{p}}$ and $\mathcal{X}_{\mathfrak{q}}$, satisfying the obvious cocycle condition (See~Definition~14.67~of~\cite{GW20}). By construction, the scheme $\mathcal{X}^{\prime}/\mathcal{S}^{\prime}$ descends to the scheme $\mathcal{X}/\mathcal{S}$. We may also consider the scheme $\mathcal{Y}^{\prime} \rightarrow \mathcal{S}^{\prime}$ given by
		\begin{displaymath}
			\mathcal{Y}^{\prime} = \mathcal{X}_{\Sigma,S} \amalg \coprod_{\mathfrak{p} \in S} \mathcal{X}_{\widetilde{\Sigma},\mathfrak{p}}.
		\end{displaymath}
		Then, we get an $\mathcal{S}^{\prime}$-isomorphism $\mathcal{Y}^{\prime} \rightarrow \mathcal{X}^{\prime}$. Observe that the intersection of $\widetilde{\Sigma}$ with the hyperplane $N_{\mathbb{R}} \times \lbrace 0 \rbrace$ is exactly $\Sigma$. This identifies the toric variety $X_{\Sigma}/K$ with the generic fibers of $\mathcal{X}_{\widetilde{\Sigma}_{\mathfrak{p}}}/ \mathcal{O}_{K,\mathfrak{p}}$ and $\mathcal{X}_{\Sigma,S}$. This identification gives a descent datum on $\mathcal{Y}^{\prime}$, which is isomorphic to the canonical descent datum on $\mathcal{X}^{\prime}$. By Theorem~14.70~of~\cite{GW20}, the descent functor is fully faithful. Therefore, the scheme $\mathcal{Y}^{\prime}/\mathcal{S}^{\prime}$ descends to $\mathcal{X}/\mathcal{S}$, and it is uniquely determined by the collection of fans consisting of $\Sigma$ and the $\widetilde{\Sigma}_{\mathfrak{p}}$'s. 
		
		We now show the compatibility of their toric structures. There is a torus action
		\begin{displaymath}
			\mu^{\prime}_{\Sigma}  \colon \mathcal{U}^{\prime} \times_{\mathcal{S}^{\prime}} \mathcal{Y}^{\prime} \longrightarrow \mathcal{Y}^{\prime}
		\end{displaymath}
		obtained by the disjoint union of the torus actions on $\mathcal{X}_{\Sigma,S}$ and $\mathcal{X}_{\widetilde{\Sigma}_{\mathfrak{p}}}$, where $\mathcal{U}^{\prime} \coloneqq \mathcal{U} \times_{\mathcal{S}} \mathcal{S}^{\prime}$. Then we need to show that this torus action descends to the torus action $\mu \colon \mathcal{U} \times_{\mathcal{S}} \mathcal{X} \rightarrow \mathcal{X}$. By Exercise~14.17~of~\cite{GW20} applied to the schematically dense open subset $\mathcal{U}_S \times_{\mathcal{S}} \mathcal{X}_{\Sigma,S}$ of $\mathcal{U} \times_{\mathcal{S}} \mathcal{X}$, we see that $\mu^{\prime}_{\Sigma}$ descends to a torus action $\mu_{\Sigma}$ on $\mathcal{X}$. The argument at the end of the proof of Lemma~\ref{generic-canonical} shows that $\mu_{\Sigma}$ and $\mu$ coincide.
	\end{proof}
	Our next task is to determine which sets $S$ and collections of fans $\Sigma$ and $\widetilde{\Sigma}_{\mathfrak{p}}$ give a toric scheme $\mathcal{X}/\mathcal{O}_K$ as above. In other words, we want the fpqc descent procedure we just described to be \textit{effective}. By Remark~14.71~\cite{GW20}, fpqc descent of schemes is not effective in general, and one must ask for extra conditions. For instance, Theorem~6.7~of~\cite{BLR90}  shows that descent of schemes equipped with ample line bundles is effective. We want to apply this result to our situation. By Description~\ref{toric-divisors-K-DVR} and Proposition~\ref{ample-OK}, we must restrict to collections consisting of a smooth projective fan $\Sigma$ and projective fans $\widetilde{\Sigma}_{\mathfrak{p}}$ whose restriction to $N_{\mathbb{R}} \times \lbrace 0 \rbrace$ is $\Sigma$. This motivates the following definition.
	\begin{dfn}\label{arithmetic-fan}
		An \textit{arithmetic fan} in $(N_{\mathbb{R}}, K)$ is an element $\overline{\Sigma} \coloneqq (\widetilde{\Sigma}_{\mathfrak{p}} )$ of the product
		\begin{displaymath}
			\prod_{\mathfrak{p} \in \textup{Max}(\mathcal{O}_K)} \textup{PF}(N_{\mathbb{R}} \times \mathbb{R}_{\geq 0}),
		\end{displaymath}
		where $\textup{PF}(N_{\mathbb{R}} \times \mathbb{R}_{\geq 0})$ is the set of projective fans in $N_{\mathbb{R}} \times \mathbb{R}_{\geq 0}$, satisfying the following conditions:
		\begin{enumerate}
			\item There exist a smooth projective fan $\Sigma$ in $N_{\mathbb{R}}$ such that, for each $\mathfrak{p} \in \textup{Max}(\mathcal{O}_K)$, the fan obtained by intersection of $\widetilde{\Sigma}_{\mathfrak{p}}$ with the hyperplane $N_{\mathbb{R}} \times \lbrace 0 \rbrace$ is the fan $\Sigma$.
			\item There exists a finite set $S(\overline{\Sigma}) \subset \textup{Max}(\mathcal{O}_K)$ such that, for all $\mathfrak{p} \not\in S(\overline{\Sigma})$, the fan $\widetilde{\Sigma}_{\mathfrak{p}}$ is the canonical extension $\Sigma_{\textup{can}}$ of $\Sigma$.
		\end{enumerate}
		An arithmetic fan $\overline{\Sigma}$ is \textit{effective} if there exist strictly convex rational support functions $\Psi$ on $\Sigma$ and $\Phi_{\mathfrak{p}}$ on $\widetilde{\Sigma}_{\mathfrak{p}}$ for each $\mathfrak{p} \in S(\overline{\Sigma})$ satisfying $\Phi_{\mathfrak{p}}|_{N_{\mathbb{R}} \times \lbrace 0 \rbrace} = \Psi$.
	\end{dfn}
	\begin{dfn}
		A \textit{projective toric arithmetic variety over} $\mathcal{O}_K$ is a projective toric scheme $\mathcal{X}/\mathcal{S}$ which is also an arithmetic variety. This means that it is a variety, as in Subsection~\ref{notation}, and that its generic fiber $X/K$ is smooth. If the context is clear, we often omit the adjective "projective".
	\end{dfn}
	\begin{thm}\label{effective-fpqc-des-tor}
		Let $\mathcal{X}/\mathcal{O}_K$ be a projective toric arithmetic variety. Then, there exists an arithmetic fan $\overline{\Sigma}$ in $(N_{\mathbb{R}},K)$ such that:
		\begin{enumerate}
			\item The fiber $\mathcal{X}_{S}/\mathcal{O}_{K,S}$ is isomorphic to the toric scheme $\mathcal{X}_{\Sigma,S}$, where $S=S(\overline{\Sigma})$ and $\Sigma$ are as in Definition~\ref{arithmetic-fan}. In particular, there is an isomorphism of generic fibers $X \cong X_{\Sigma}$.
			\item For each $\mathfrak{p} \in S$, the localization $\mathcal{X}_{\mathfrak{p}}/\mathcal{O}_{K,\mathfrak{p}}$ is isomorphic to the toric scheme $\mathcal{X}_{\widetilde{\Sigma}_{\mathfrak{p}}}$.
		\end{enumerate}
		The toric scheme $\mathcal{X}$ is uniquely determined by $\overline{\Sigma}$ (up to toric isomorphism), and we write $\mathcal{X}=\mathcal{X}_{\overline{\Sigma}}$.
		Conversely, for each effective arithmetic fan $\overline{\Sigma}$, there is a projective toric arithmetic variety $\mathcal{X}_{\overline{\Sigma}}$.
	\end{thm}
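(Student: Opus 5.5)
The plan has two halves: constructing $\overline{\Sigma}$ from $\mathcal{X}$, which combines Lemmas~\ref{generic-canonical} and~\ref{fpqc-descent-toric-schemes}, and the converse, which is effective fpqc descent with an ample line bundle.

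\emph{Existence of $\overline{\Sigma}$.} Since $\mathcal{X}$ is projective, it is proper, so Lemma~\ref{generic-canonical} gives a finite set $S_0\subset\textup{Max}(\mathcal{O}_K)$ and a complete fan $\Sigma$ with $\mathcal{X}_{S_0}\cong\mathcal{X}_{\Sigma,S_0}$ torically. The generic fiber $X\cong X_\Sigma$ is smooth and projective, so $\Sigma$ is a smooth projective fan by the classification over a field.

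For every $\mathfrak{p}\in\textup{Max}(\mathcal{O}_K)$, the localization $\mathcal{X}_{\mathfrak{p}}$ is a proper toric scheme over the DVR $\mathcal{O}_{K,\mathfrak{p}}$. By Description~\ref{canonical-fan} it equals $\mathcal{X}_{\widetilde{\Sigma}_{\mathfrak{p}}}$ for a unique complete fan $\widetilde{\Sigma}_{\mathfrak{p}}$ in $N_{\mathbb{R}}\times\mathbb{R}_{\geq0}$. Its intersection with $N_{\mathbb{R}}\times\{0\}$ is the fan of the generic fiber, namely $\Sigma$. The fan $\widetilde{\Sigma}_{\mathfrak{p}}$ is projective: an ample line bundle on $\mathcal{X}$ restricts to an ample one on $\mathcal{X}_{\mathfrak{p}}$. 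Linearizing it, or replacing it by a torus-invariant ample divisor (possible for toric schemes over a DVR), Description~\ref{toric-divisors-K-DVR} produces a strictly concave support function.

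For $\mathfrak{p}\notin S_0$, the isomorphism $\mathcal{X}_{S_0}\cong\mathcal{X}_{\Sigma,S_0}$ localizes to $\mathcal{X}_{\mathfrak{p}}\cong\mathcal{X}_{\Sigma,\mathfrak{p}}\cong\mathcal{X}_{\Sigma_{\textup{can}}}$, so $\widetilde{\Sigma}_{\mathfrak{p}}=\Sigma_{\textup{can}}$. Hence $\overline{\Sigma}=(\widetilde{\Sigma}_{\mathfrak{p}})$ is an arithmetic fan with $S(\overline{\Sigma})$ chosen to be $S_0$. This gives (i) and (ii). Uniqueness up to toric isomorphism is exactly Lemma~\ref{fpqc-descent-toric-schemes}.

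\emph{Converse.} Let $\overline{\Sigma}$ be effective, with strictly concave rational support functions $\Psi$ on $\Sigma$ and $\Phi_{\mathfrak{p}}$ on $\widetilde{\Sigma}_{\mathfrak{p}}$ satisfying $\Phi_{\mathfrak{p}}|_{N_{\mathbb{R}}\times\{0\}}=\Psi$. (The definition says ``convex''; I read it with the paper's concavity convention.) After scaling, all of these functions are integral.

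Form $\mathcal{Y}'=\mathcal{X}_{\Sigma,S}\amalg\coprod_{\mathfrak{p}\in S}\mathcal{X}_{\widetilde{\Sigma}_{\mathfrak{p}}}$ over $\mathcal{S}'$. Equip it with the descent datum coming from the identifications of all generic fibers with $X_\Sigma$, as in the proof of Lemma~\ref{fpqc-descent-toric-schemes}; the cocycle condition holds because every identification is the identity of $X_\Sigma$.

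Next build the line bundle $\mathcal{L}'$. On $\mathcal{X}_{\Sigma,S}$ take the horizontal divisor with support function $\Psi$; it is ample by Proposition~\ref{ample-OK} after base change to $\mathcal{O}_{K,S}$. On each $\mathcal{X}_{\widetilde{\Sigma}_{\mathfrak{p}}}$ take the toric divisor with support function $\Phi_{\mathfrak{p}}$, which is ample by Description~\ref{toric-divisors-K-DVR}. All of these restrict on generic fibers to the same toric divisor $D_\Psi$. Hence they are compatible with the descent datum as divisors, and not merely up to isomorphism, so $\mathcal{L}'$ carries a descent datum.

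Theorem~6.7 of~\cite{BLR90} (effectivity of fpqc descent for schemes with an ample line bundle, quasi-compact faithfully flat base change) then yields a scheme $\mathcal{X}/\mathcal{O}_K$. I expect this step to be the main obstacle, because that theorem's hypotheses must be met in the right form, specifically relative ampleness over $\mathcal{S}'$; I would verify it fiberwise on each component.

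It remains to see that the descended $\mathcal{X}$ is what we want. Projectivity over $\mathcal{O}_K$ holds because ampleness descends under fpqc base change and properness does too, being fpqc local on the base; over a Dedekind base, proper plus relatively ample gives projective. The properties of being separated, flat, of finite type and normal descend as well, the last since the localizations are normal. Integrality follows because $\mathcal{X}$ is flat with integral generic fiber $X_\Sigma$. The generic fiber $X_\Sigma$ is smooth since $\Sigma$ is smooth. Finally, the torus action descends by the argument at the end of Lemma~\ref{fpqc-descent-toric-schemes}. So $\mathcal{X}=\mathcal{X}_{\overline{\Sigma}}$ is a projective toric arithmetic variety.
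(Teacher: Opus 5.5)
Your proposal is correct and follows the paper's proof essentially step by step. For existence you use Lemma~\ref{generic-canonical} and Lemma~\ref{fpqc-descent-toric-schemes}, with projectivity of the localizations giving projective fans; for the converse you feed the compatible ample toric divisors built from $\Psi$ and the $\Phi_{\mathfrak{p}}$ into Theorem~6.7 of~\cite{BLR90}. You also make explicit several points the paper leaves implicit: why $\widetilde{\Sigma}_{\mathfrak{p}}=\Sigma_{\textup{can}}$ for $\mathfrak{p}\notin S$, why the localized fans are projective, and why the descended scheme is normal, integral and projective.
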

	\begin{proof}
		By Lemma~\ref{fpqc-descent-toric-schemes}, there exist a finite set $S \subset \textup{Max}(\mathcal{O}_K)$, a complete fan $\Sigma$ in $N_{\mathbb{R}}$, and a complete fan $\widetilde{\Sigma}_{\mathfrak{p}}$ in $N_{\mathbb{R}} \times \mathbb{R}_{\geq 0}$ for each $\mathfrak{p} \in S$, such that properties \textit{(i)} and \textit{(ii)} hold. By definition, the generic fiber $X \cong X_{\Sigma}$ of $\mathcal{X}$ is smooth and projective. Hence, the fan $\Sigma$ is smooth and projective. This immediately implies that its canonical extension $\Sigma_{\textup{can}}$ is smooth and projective. Now, let~$\mathfrak{p} \in S$. The localization $\mathcal{X}_{\mathfrak{p}}$ of $\mathcal{X}$ is projective. Therefore, the fan $\widetilde{\Sigma}_{\mathfrak{p}}$ is projective. The isomorphism on generic fibers means that the fan obtained by intersecting $\widetilde{\Sigma}_{\mathfrak{p}}$ with the hyperplane $N_{\mathbb{R}} \times \lbrace 0 \rbrace$ is exactly $\Sigma$. Declaring $\widetilde{\Sigma}_{\mathfrak{q}} = \Sigma_{\textup{can}}$ for every maximal ideal $\mathfrak{q} \not \in S$, we conclude that $\overline{\Sigma} \coloneqq (\widetilde{\Sigma}_{\mathfrak{p}})$ is an arithmetic fan in $(N_{\mathbb{R}},K)$. Again, Lemma~\ref{fpqc-descent-toric-schemes} implies that $\mathcal{X}$ is uniquely determined by $\overline{\Sigma}$.
		
		Conversely, suppose that the arithmetic fan $\overline{\Sigma}$ is effective. Then, multiplying the functions $\Psi$ and $\Phi_{\mathfrak{p}}$ by a large enough positive integer $m$, we may assume that they attain integer values at the ray generators of their respective fans. In particular, these functions correspond to ample toric divisors $D$ and $\mathcal{D}_{\mathfrak{p}}$ on $X_{\Sigma}$ and $\mathcal{X}_{\widetilde{\Sigma}_{\mathfrak{p}}}$, respectively. Now, by Corollary~\ref{div-fun-can-OK} and Proposition~\ref{ample-OK}, the ample toric (integral) divisor induces an ample toric integral divisor $\mathcal{D}$ on the canonical model $\mathcal{X}_{\Sigma}/\mathcal{O}_K$. Then, its restriction $\mathcal{D}_S$ to $\mathcal{X}_{\Sigma,S}$ is ample. Consider the ample line bundles
		\begin{displaymath}
			\mathcal{L}_{S} \coloneqq \mathcal{O}_{\mathcal{X}_{\Sigma,S}}(\mathcal{D}_S), \quad \mathcal{L}_{\mathfrak{p}} \coloneqq \mathcal{O}_{\mathcal{X}_{\widetilde{\Sigma}_{\mathfrak{p}}}}(\mathcal{D}_{\mathfrak{p}}), \quad \mathcal{L}_K \coloneqq \mathcal{O}_{X_{\Sigma}} (D).
		\end{displaymath}
		The disjoint union of $\mathcal{L}_S$ and the $\mathcal{L}_{\mathfrak{p}}$'s gives an ample line bundle $\mathcal{L}^{\prime}$ on the scheme $\mathcal{Y}^{\prime} \rightarrow \mathcal{S}^{\prime}$ defined in the proof of Lemma~\ref{fpqc-descent-toric-schemes}. Now, we argue as in the last part of the proof of Theorem~4.2~of~\cite{Kue98}: Observe that the canonical identification of $\mathcal{L}_K$ as the restriction of the line bundles $\mathcal{L}_S$ and $\mathcal{L}_{\mathfrak{p}}$ to the generic fiber $X_{\Sigma}$ defines a descent datum for $\mathcal{S}^{\prime}$ over $\mathcal{S}= \textup{Spec}(\mathcal{O}_K)$. By Theorem~6.7~of~\cite{BLR90}, this descent is effective. Therefore, we obtain a scheme $\mathcal{X}_{\overline{\Sigma}}/\mathcal{O}_K$, which is a projective toric arithmetic variety by virtue of Lemma~\ref{fpqc-descent-toric-schemes}.
	\end{proof}
	\begin{rem}\label{maybe-equivalent}
		Observe that the ``effective'' property of arithmetic fans is a crucial technical condition in the above proof. It translates into a descent datum on the ample line bundle $\mathcal{L}^{\prime}$. We wonder if this is a ``true condition'', or if we can show that every arithmetic fan $\overline{\Sigma}$ is effective. If the latter is true, Theorem~\ref{effective-fpqc-des-tor} would be a combinatorial description of projective toric arithmetic varieties. We present a strategy for proving this. By definition of projective fan, for each $\mathfrak{p} \in S = S(\overline{\Sigma})$ there exist a strictly convex support function $\Phi_{\mathfrak{p}}$ on $\widetilde{\Sigma}_{\mathfrak{p}}$. In particular, the restriction $\Psi_{\mathfrak{p}}$ of $\Phi_{\mathfrak{p}}$ to the hyperplane $N_{\mathbb{R}}\times \lbrace 0 \rbrace$ is a strictly convex rational support function on $\Sigma$. The issue here is that the functions $\Phi_{\mathfrak{p}}$ may differ. Now, fix a prime $\mathfrak{p} \in S$ and suppose that for each $\mathfrak{q} \in S$ distinct from $\mathfrak{p}$, we can find a convex support function $\Upsilon_{(\mathfrak{p},\mathfrak{q})}$ on $\widetilde{\Sigma}_{\mathfrak{p}}$ extending $\Psi_{\mathfrak{p}}$. Geometrically, this means that the ample toric divisor $D_{\mathfrak{q}}$ on $X_{\Sigma}$ determined by $\Psi_{\mathfrak{q}}$ has a (relatively) nef model $\mathcal{D}_{(\mathfrak{p},\mathfrak{q})}$ on the toric scheme $\mathcal{X}_{\widetilde{\Sigma}_{\mathfrak{p}}}$. Then, we may define the function
		\begin{displaymath}
			\Phi_{\mathfrak{p}}^{\prime} \coloneqq \Phi_{\mathfrak{p}} + \sum_{\mathfrak{q} \neq \mathfrak{p}} \Upsilon_{(\mathfrak{p},\mathfrak{q})}.
		\end{displaymath}
		It is rational and strictly convex on $\widetilde{\Sigma}_{\mathfrak{p}}$, and its restriction to $N_{\mathbb{R}} \times \lbrace 0 \rbrace$ is $\Psi = |S| \cdot \Psi_{\mathfrak{p}}$. Therefore, the collection of support functions $\Phi_{\mathfrak{p}}^{\prime}$ make the arithmetic fan $\overline{\Sigma}$ is effective. In particular, this strategy provides a method for producing examples of effective arithmetic fans. For instance, if for each $\mathfrak{p}$ the fan $\widetilde{\Sigma}_{\mathfrak{p}}$ refines the canonical fan $\Sigma_{\textup{can}}$, then $\Upsilon_{(\mathfrak{p},\mathfrak{q})}(x,t) \coloneqq \Psi_{\mathfrak{p}}(x)$ satisfies this condition. There are simple ways to extend $\Psi_{\mathfrak{p}}$ to a support function on some complete fan in $N_{\mathbb{R}} \times \mathbb{R}_{\geq 0}$ (see p.58~of~\cite{BPS}), but these methods do not give much control on the fan.
	\end{rem}
	\begin{ex}\label{canonical-mod-div-1}
		Given a smooth projective fan $\Sigma$ in $N_{\mathbb{R}}$, there is a canonical effective arithmetic fan $\Sigma_{\textup{can}}$ induced by $\Sigma$. Indeed, for each $\mathfrak{p} \in \textup{Max}(\mathcal{O}_K)$, we declare $\widetilde{\Sigma}_{\mathfrak{p}} = \Sigma_{\textup{can}}$. By the previous remark, this is effective. We abuse notation by identifying $\Sigma$ and the canonical arithmetic fan induced by it. Observe that its associated projective toric arithmetic variety is the canonical model $\mathcal{X}_{\Sigma}/\mathcal{O}_K$. If $D$ is a toric divisor on $X_{\Sigma}$, we denote by $\mathcal{D}_{\textup{can}}$ the horizontal toric divisor determined by the support function $\Psi_D$ (Corollary~\ref{div-fun-can-OK}). We call it the \textit{canonical model} of $D$ on $\mathcal{X}_{\Sigma}$.
	\end{ex}
	
	\subsection{Proper toric morphisms} Many properties are stable under faithfully flat descent. This means that if $\mathcal{S}^{\prime} \rightarrow \mathcal{S}$ is faithfully flat, the morphism $\mathcal{Y}^{\prime} \rightarrow \mathcal{X}^{\prime}$ satisfies the property $P$, and it descends to $\mathcal{Y} \rightarrow \mathcal{X}$, then $\mathcal{Y} \rightarrow \mathcal{X}$ has the property $P$. For instance, properness is stable under faithfully flat descent (Proposition~14.51~of~\cite{GW20}). As an application, we characterize proper toric morphisms between toric projective arithmetic varieties. We introduce some notation and then state the result.
	\begin{dfn}\label{morphism-arith-fan-def}
		Let $\overline{\Sigma}_1$ and $\overline{\Sigma}_2$ be arithmetic fans in $((N_1)_{\mathbb{R}},K)$ and $((N_2)_{\mathbb{R}},K)$, respectively. Given an element
		\begin{displaymath}
			\overline{\phi} \coloneqq (\phi_{\mathfrak{p}}) \in \prod_{\mathfrak{p} \in \textup{Max}(\mathcal{O}_K)} \textup{Hom}_{\mathbb{Z}} (N_{1} \times \mathbb{Z} , N_{2} \times \mathbb{Z})
		\end{displaymath}
		we denote by $\overline{\phi}_{\mathbb{R}} = (\phi_{\mathfrak{p},\mathbb{R}})$ be the $\mathfrak{p}$-tuple of $\mathbb{R}$-linear maps obtained by extension of scalars. We say that $\overline{\phi}$\textit{ is a morphism between the arithmetic fans} $\overline{\Sigma}_1$\textit{ and} $\overline{\Sigma}_2$, denoted by $\overline{\phi} \colon \overline{\Sigma}_1 \rightarrow \overline{\Sigma}_2$, if it satisfies the following conditions:
		\begin{enumerate}
			\item For each $\mathfrak{p}$, the map $\phi_{\mathfrak{p},\mathbb{R}}$ is compatible with the fans $\widetilde{\Sigma}_{1,\mathfrak{p}}$ and $\widetilde{\Sigma}_{2,\mathfrak{p}}$. That is, for each $\sigma_1 \in \widetilde{\Sigma}_{1,\mathfrak{p}}$ there is $\sigma_2 \in \widetilde{\Sigma}_{2,\mathfrak{p}}$ such that $\phi_{\mathfrak{p},\mathbb{R}}(\sigma_1) \subset \sigma_2$. Moreover, $(\phi_{\mathfrak{p},\mathbb{R}})^{-1}(|\widetilde{\Sigma}_{2,\mathfrak{p}}|) = |\widetilde{\Sigma}_{1,\mathfrak{p}}|$.
			\item There is a $\mathbb{Z}$-linear map $\phi \colon N_1 \rightarrow N_2$ such that $\phi_{\mathbb{R}}$ is compatible with the fans $\Sigma_1$ and $\Sigma_2$, and satisfies $(\phi_{\mathbb{R}})^{-1}(|\Sigma_2|) = |\Sigma_1|$.
			\item For each maximal ideal $\mathfrak{p}$, we have $(\phi_{\mathfrak{p},\mathbb{R}})|_{(N_1)_{\mathbb{R}} \times \lbrace 0 \rbrace} = \phi_{\mathbb{R}}$.
			\item There is a finite set $S \subset \textup{Max}(\mathcal{O}_K)$ containing $S(\overline{\Sigma}_1) \cup S(\overline{\Sigma}_2)$ such that $\phi_{\mathfrak{p},\mathbb{R}}$ is the \textit{canonical extension} of $\phi_{\mathbb{R}}$ whenever $\mathfrak{p} \not \in S$. That means, $\phi_{\mathfrak{p},\mathbb{R}} (0,1) = (0,1)$. 
		\end{enumerate}
		If additionally, all of the maps $\phi_{\mathfrak{p}}$ are identity, we say that $\overline{\Sigma}_1$\textit{ refines }$\overline{\Sigma}_2$ and write $\overline{\Sigma}_1 \geq \overline{\Sigma}_2$.
	\end{dfn}
	\begin{prop}\label{proper-toric-morphisms}
		Let $\mathcal{X}_{\overline{\Sigma}_1}$ and $\mathcal{X}_{\overline{\Sigma}_2}$ be projective toric arithmetic varieties given by the arithmetic fans $\overline{\Sigma}_1$ and $\overline{\Sigma}_2$. For each morphism of arithmetic fans $\overline{\phi} \colon \overline{\Sigma}_1 \rightarrow \overline{\Sigma}_2$, there is an induced proper toric morphism $f \colon \mathcal{X}_{\overline{\Sigma}_1} \rightarrow \mathcal{X}_{\overline{\Sigma}_2}$. Conversely, a proper toric morphism $f \colon \mathcal{X}_{\overline{\Sigma}_1} \rightarrow \mathcal{X}_{\overline{\Sigma}_2}$ induces a morphism of arithmetic fans, and the assignment $\overline{\phi} \mapsto f$ is bijective.
	\end{prop}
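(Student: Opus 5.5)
The plan is to reduce everything to the field and DVR cases through the fpqc covering $\mathcal{S}^{\prime} \coloneqq \mathcal{S}_S \amalg \coprod_{\mathfrak{p} \in S} \mathcal{S}_{\mathfrak{p}} \rightarrow \mathcal{S}$ used in Lemma~\ref{fpqc-descent-toric-schemes}. For the morphism, fix a finite set $S$ containing $S(\overline{\Sigma}_1) \cup S(\overline{\Sigma}_2)$ as in condition (iv) of Definition~\ref{morphism-arith-fan-def}.

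\emph{From fans to morphisms.} Let $\overline{\phi} \colon \overline{\Sigma}_1 \rightarrow \overline{\Sigma}_2$ be a morphism of arithmetic fans.
\begin{enumerate}
\item By condition (ii) and the functoriality in Construction~\ref{canonical-OK-prop}, the linear map $\phi$ induces a toric morphism $f_S \colon \mathcal{X}_{\Sigma_1,S} \rightarrow \mathcal{X}_{\Sigma_2,S}$.
\item For each $\mathfrak{p} \in S$, condition (i) and the DVR correspondence (Description~\ref{canonical-fan}, together with the functoriality in Subsection~3.3 of~\cite{Per26}) give a toric morphism $f_{\mathfrak{p}} \colon \mathcal{X}_{\widetilde{\Sigma}_{1,\mathfrak{p}}} \rightarrow \mathcal{X}_{\widetilde{\Sigma}_{2,\mathfrak{p}}}$.
\item Condition (iii) says that each $f_{\mathfrak{p}}$ and $f_S$ restricts on generic fibers to the single morphism $X_{\Sigma_1} \rightarrow X_{\Sigma_2}$ attached to $\phi$. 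So $f^{\prime} = f_S \amalg \coprod f_{\mathfrak{p}}$ is compatible with the descent data, which are exactly these generic-fiber identifications.
\item Descent of morphisms is effective, since the descent functor is fully faithful (Theorem~14.70 of~\cite{GW20}). Hence $f^{\prime}$ descends to a morphism $f \colon \mathcal{X}_{\overline{\Sigma}_1} \rightarrow \mathcal{X}_{\overline{\Sigma}_2}$.
\item $f$ is toric: it restricts to a homomorphism of tori, and it is equivariant on the schematically dense open $\mathcal{U}_S \times \mathcal{X}_{\Sigma_1,S}$, so equivariant everywhere as in the proof of Lemma~\ref{generic-canonical}.
\item $f$ is proper: over a field and over a DVR, the support condition $\phi^{-1}(|\Sigma_2|)=|\Sigma_1|$ characterizes properness of toric morphisms (Theorem~3.4.11 of~\cite{CLS} and its KKMS analogue). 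Properness then descends by Proposition~14.51 of~\cite{GW20}. Both fans are complete, so this is automatic, but it is best to check it this way.
\end{enumerate}

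\emph{From morphisms to fans.} Let $f$ be a proper toric morphism.
\begin{enumerate}
\item Restricting to generic fibers gives a toric morphism $X_{\Sigma_1} \rightarrow X_{\Sigma_2}$. The field equivalence of Description~\ref{canonical-fan} turns it into a compatible linear map $\phi$, which gives condition (ii).
\item Localizing at each $\mathfrak{p}$ gives a toric morphism of toric schemes over $\mathcal{O}_{K,\mathfrak{p}}$, and the DVR equivalence produces $\phi_{\mathfrak{p}}$. Its restriction to $N_{\mathbb{R}} \times \{0\}$ is $\phi_{\mathbb{R}}$, because it is compatible with the generic fiber; this gives conditions (i) and (iii).
\item For condition (iv), I first need that a toric morphism over $\mathcal{O}_K$ acts on tori by a homomorphism $\rho$ over $\mathcal{O}_K$ (not only over $K$). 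Such a $\rho$ is given by $\chi^m \mapsto \chi^{\phi^{\ast} m}$ with no scalar, since it is a homomorphism of group schemes.
\item Take $\mathfrak{p} \notin S(\overline{\Sigma}_1) \cup S(\overline{\Sigma}_2)$. Then $\phi_{\mathfrak{p}}$ must send the vertical ray of $\Sigma_{1,\textup{can}}$ into a cone of $\Sigma_{2,\textup{can}}$, and the special fiber must map to the special fiber. The image of $(0,1)$ then has the form $(u,1)$, where $u$ encodes a valuation shift of the torus map at $\mathfrak{p}$. Since there is no scalar, $u=0$, so $\phi_{\mathfrak{p}}$ is the canonical extension. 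In particular (iv) holds with $S = S(\overline{\Sigma}_1)\cup S(\overline{\Sigma}_2)$.
\end{enumerate}
Bijectivity then follows from the equivalences over $K$ and over each $\mathcal{O}_{K,\mathfrak{p}}$, together with full faithfulness of descent.

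The step I expect to be the main obstacle is this last one: identifying the second coordinate of $\phi_{\mathfrak{p}}(0,1)$ and showing it equals $(0,1)$ outside a finite set. It requires care about what a toric morphism means at the level of tori. If the paper's convention instead allows translation by a $K$-point of the torus, $\phi_{\mathfrak{p}}(0,1)=(u_{\mathfrak{p}},1)$ with $u_{\mathfrak{p}}$ the valuation of that point. This is nonzero only for finitely many $\mathfrak{p}$, which still yields (iv) for a suitable finite $S$.
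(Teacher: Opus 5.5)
Your route is the paper's. You use the same cover $\mathcal{S}'\to\mathcal{S}$, build $f_S$ from the canonical models via $\phi$ and $f_{\mathfrak p}$ from the DVR correspondence, and localize for the converse. The one divergence is condition (iv): the paper compares $f$ with the canonical morphism $g$ and spreads $f_K=g_K$ out to some $\mathcal{X}_{\Sigma_1,S}$ (Prop.~10.52 of \cite{GW20}), whereas you read $\phi_{\mathfrak p}(0,1)$ off the torus map.

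\textbf{Gap in forward step 3.} Condition (iii) only fixes $\phi_{\mathfrak p}$ on $N_1\times\{0\}$. It does not make the generic fibre of $f_{\mathfrak p}$ equal to the morphism attached to $\phi$. Let $\varpi_{\mathfrak p}$ be a uniformizer and suppose $\phi_{\mathfrak p}(0,1)=(u_{\mathfrak p},1)$. The dual map is $(m,k)\mapsto(\phi^{\ast}m,\,k+\langle m,u_{\mathfrak p}\rangle)$, so $f_{\mathfrak p}$ is $\varpi_{\mathfrak p}^{k}\chi^{m}\mapsto\varpi_{\mathfrak p}^{k+\langle m,u_{\mathfrak p}\rangle}\chi^{\phi^{\ast}m}$. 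On the generic fibre this is $\chi^{m}\mapsto\varpi_{\mathfrak p}^{\langle m,u_{\mathfrak p}\rangle}\chi^{\phi^{\ast}m}$. That is the $\phi$-homomorphism composed with translation by a point of valuation $u_{\mathfrak p}$ --- exactly the ``valuation shift'' you invoke in your converse. So for $\mathfrak p\in S$ with $u_{\mathfrak p}\neq 0$, $f_S$ and $f_{\mathfrak p}$ disagree over $\mathcal{S}_S\times_{\mathcal{S}}\mathcal{S}_{\mathfrak p}=\textup{Spec}(K)$. Then $f'$ is not a morphism of descent data, and step 4 has nothing to descend.

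\textbf{This cannot be fixed by a better choice of $f_S$.} Your own converse shows why.
\begin{enumerate}
\item Under the convention you use there (the torus map is the homomorphism $\rho$), the no-scalar argument never uses $\mathfrak p\notin S(\overline{\Sigma}_1)\cup S(\overline{\Sigma}_2)$. It therefore proves $\phi_{\mathfrak p}(0,1)=(0,1)$ at \emph{every} prime, so a morphism of arithmetic fans with some $u_{\mathfrak p}\neq 0$ is induced by no toric morphism. For example, take $K=\mathbb{Q}$, $d=1$, $\overline{\Sigma}_1$ the canonical arithmetic fan of $\mathbb{P}^1$, and $\overline{\Sigma}_2$ canonical except that $\widetilde{\Sigma}_{2,p}$ is the image of $\Sigma_{\textup{can}}$ under $\phi_p(x,t)=(x+tu,t)$ with $u\neq 0$. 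The identity of the generic fibre does not even extend to a morphism at $p$.
\item Under the translation convention of your last paragraph, realizing $(u_{\mathfrak p})$ needs a global $t\in U_2(K)$ with exactly these valuations. That is a principality condition on ideals, which can fail when $\mathcal{O}_K$ is not a PID. Moreover, composing with translation by the $\mathcal{O}_K$-point $-1$ gives a second toric morphism with the same $\overline{\phi}$, so $f\mapsto\overline{\phi}$ is not injective.
\end{enumerate}
The paper's proof makes the same identification (``by (iii), $f_K$ is the base change of $f_{\mathfrak p}$''), so the gap is in the source as well. What both arguments actually establish is the bijection for morphisms of arithmetic fans with $\phi_{\mathfrak p}(0,1)=(0,1)$ for all $\mathfrak p$, where (iv) is vacuous. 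That case covers the refinements used in the rest of the paper.
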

	\begin{proof}
		Let $\overline{\phi} \colon \overline{\Sigma}_1 \rightarrow \overline{\Sigma}_2$ be a morphism of arithmetic fans. By the description of the category of proper toric varieties with proper toric morphisms (See Proposition~3.2.2~of~\cite{Per26}), there is a proper toric morphism $f_K \colon X_{\Sigma_1} \rightarrow X_{\Sigma_2}$ induced by $\phi$. Similarly, by Proposition~3.3.2~of~\cite{Per26}, for each maximal ideal $\mathfrak{p}$ there is a proper toric morphism $f_{\mathfrak{p}} \colon \mathcal{X}_{\widetilde{\Sigma}_{1,\mathfrak{p}}} \rightarrow \mathcal{X}_{\widetilde{\Sigma}_{2,\mathfrak{p}}}$ induced by the compatible map $\phi_{\mathfrak{p},\mathbb{R}}$. By Definition~\ref{morphism-arith-fan-def}.(iii), we get an identification of $f_K$ as the base change to the generic fiber of the toric morphism $f_{\mathfrak{p}}$. 
		
		Now, let $S$ be as in Definition~\ref{morphism-arith-fan-def}.(iv). Arguing as in Theorem~3.3.4~of~\cite{CLS}, the map $\phi$ induces a toric morphism $g \colon \mathcal{X}_{\Sigma_1} \rightarrow \mathcal{X}_{\Sigma_2}$ between the canonical models. Note that the toric scheme $\mathcal{X}_{\overline{\Sigma}_i ,S}$ identifies with the toric scheme $\mathcal{X}_{\Sigma_i, S}$. Then, define $f_{S} \colon \mathcal{X}_{\overline{\Sigma}_1 , S} \rightarrow \mathcal{X}_{\overline{\Sigma}_2, S}$ as the base change of $g$ to $\mathcal{O}_{K,S}$. By Definition~\ref{morphism-arith-fan-def}.(iv), for each $\mathfrak{p} \not \in S$, the base change of $f_{S}$ to $\mathcal{O}_{K,\mathfrak{p}}$ coincides with the morphism $f_{\mathfrak{p}}$ we defined above.
		
		Consider the scheme $\mathcal{S}^{\prime}$ defined in the proof of Lemma~\ref{fpqc-descent-toric-schemes}, and denote by $\mathcal{X}_{\overline{\Sigma}_i}^{\prime} \rightarrow \mathcal{S}^{\prime}$ the scheme obtained by base change with respect to the fpqc morphism $\mathcal{S}^{\prime} \rightarrow \mathcal{S}$. The disjoint union of the toric morphisms $f_S$ and $f_{\mathfrak{p}}$'s, $\mathfrak{p} \in S$, gives a proper $\mathcal{S}^{\prime}$-morphism
		\begin{displaymath}
			f^{\prime} \colon  \mathcal{X}_{\overline{\Sigma}_1}^{\prime} \longrightarrow \mathcal{X}_{\overline{\Sigma}_2}^{\prime}.
		\end{displaymath}
		By Exercise~14.17~of~\cite{GW20} applied to the schematically dense open subset $\mathcal{X}_{\overline{\Sigma}_1,S}$, the morphism $f^{\prime} \colon  \mathcal{X}_{\overline{\Sigma}_1}^{\prime} \rightarrow \mathcal{X}_{\overline{\Sigma}_2}^{\prime}$ descends to a morphism $f \colon \mathcal{X}_{\overline{\Sigma}_1} \rightarrow \mathcal{X}_{\overline{\Sigma}_2}$. Moreover, for $i=1,2$, the split torus $\mathcal{U}_{i,S} / \mathcal{O}_{K,S}$ is an open set of $\mathcal{X}_{\overline{\Sigma}_i,S} \cong \mathcal{X}_{\Sigma_i, S}$, and therefore it is open in $\mathcal{X}_{\overline{\Sigma}_i}$. By the construction of $f_S$, it restricts to a group-scheme morphism $f_S \colon  \mathcal{U}_{1,S} \rightarrow \mathcal{U}_{2,S}$. Arguing as in the end of the proof of Lemma~\ref{generic-canonical}, the morphism $f$ is shown to be toric. The injectivity of the assignment $\overline{\phi} \mapsto f$ is obvious from the construction.
		
		Now, let $f \colon \mathcal{X}_{\overline{\Sigma}_1} \rightarrow \mathcal{X}_{\overline{\Sigma}_2}$ be a proper toric morphism. For each maximal ideal $\mathfrak{p}$, we consider the localization $f_{\mathfrak{p}}$ of $f$ at $\mathcal{O}_{K,\mathfrak{p}}$. By Proposition~3.3.2~of~\cite{Per26}, we obtain the $\mathbb{Z}$-linear map $\phi_{\mathfrak{p}}$ satisfying condition~(i) of Definition~\ref{morphism-arith-fan-def}. Similarly, by Proposition~3.2.2~of~the~loc.~cit., the $\mathbb{Z}$-linear map $\phi$ is induced by the base change $f_K$ of $f$ to the generic fiber. Since $f_K$ is proper, the map $\phi$ satisfies the condition~(ii). The property~(iii) follows from the identification of $f_K$ as the base change of $f_{\mathfrak{p}}$ to the generic fiber. It remains to verify condition~(iv). Let $S^{\prime} = S(\overline{\Sigma}_1) \cup S(\overline{\Sigma}_2)$. Observe that the base change $\mathcal{X}_{\overline{\Sigma}_i , S^{\prime}}$ identifies with the base change of the canonical model $\mathcal{X}_{\Sigma_i, S^{\prime}}$. Then, we identify the base change $f_{S^{\prime}}$ of $f$ to $\mathcal{O}_{K,S^{\prime}}$ with the toric morphism
		\begin{displaymath}
			f_{S^{\prime}} \colon \mathcal{X}_{\Sigma_1, S^{\prime}} \longrightarrow \mathcal{X}_{\Sigma_2,  S^{\prime}} 
		\end{displaymath}
		By definition, $f_{S^{\prime}}$ is equivariant, and it restricts to a group-scheme morphism $f_{K} \colon U_1 \rightarrow U_2$ on the tori embedded on the respective generic fibers. On the other hand, by the above construction, the map $\phi$ determined by $f_K$ induces a toric morphism
		\begin{displaymath}
			g_{S^{\prime}} \colon \mathcal{X}_{\Sigma_1, S^{\prime}} \longrightarrow \mathcal{X}_{\Sigma_2,  S^{\prime}}
		\end{displaymath}
		satisfying $f_K = g_K$. In particular, Proposition~10.52~of~\cite{GW20} shows that $f_{S^{\prime}}$ and $g_{S^{\prime}}$ agree on an open neighborhood of the generic fiber $X_{\Sigma_1}$. Without loss of generality (Lemma~\ref{generic-canonical}), we may assume that this neighbourhood is of the form $\mathcal{X}_{\Sigma_1, S}$, where $S$ is a finite set of maximal ideals containing $S^{\prime}$. By the discussion above, this set satisfies the condition~(iv) in Definition~\ref{morphism-arith-fan-def}. This shows that the assignment $\overline{\phi} \mapsto f$ is surjective, and therefore, a bijection.
	\end{proof}
	\begin{rem}
		It is obvious that the assignment $\overline{\phi} \mapsto f$ in the above result is functorial. Then, Theorem~\ref{effective-fpqc-des-tor} and Proposition~\ref{proper-toric-morphisms} identify the category of projective toric arithmetic varieties with proper toric morphisms as a full subcategory of the category of arithmetic fans. If one could show that every arithmetic fan is effective (See Remark~\ref{maybe-equivalent}), then these categories would be equivalent.
	\end{rem}
	\begin{lem}\label{refinement}
		Let $\overline{\Sigma} = (\widetilde{\Sigma}_{\mathfrak{p}})$ be an arithmetic fan in $(N_{\mathbb{R}},K)$ refining the canonical arithmetic fan induced by the fan $\Sigma$ from Definition~\ref{arithmetic-fan}. Then, $\overline{\Sigma}$ is effective and there is a proper toric morphism $f \colon \mathcal{X}_{\overline{\Sigma}} \rightarrow \mathcal{X}_{\Sigma}$. Moreover, the morphism $f$ is given by a finite number of successive blow-ups along torus-invariant ideal sheaves supported on the fibers over the primes $\mathfrak{p} \in S = S(\overline{\Sigma})$.
	\end{lem}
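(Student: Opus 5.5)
Three things need to be established: effectivity of $\overline{\Sigma}$, existence of $f$, and the blow-up description. For effectivity I will use the strategy of Remark~\ref{maybe-equivalent}. Since each $\widetilde{\Sigma}_{\mathfrak{p}}$ is projective, it carries a strictly concave rational support function $\Phi_{\mathfrak{p}}$, whose restriction $\Psi_{\mathfrak{p}}$ to $N_{\mathbb{R}} \times \lbrace 0 \rbrace$ is strictly concave on $\Sigma$. Because $\widetilde{\Sigma}_{\mathfrak{p}}$ refines $\Sigma_{\textup{can}}$, every function of the form $(x,t) \mapsto \Psi_{\mathfrak{q}}(x)$ is linear on each cone of $\Sigma_{\textup{can}}$. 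Hence it is linear on each cone of $\widetilde{\Sigma}_{\mathfrak{p}}$, and it is concave. I will then set
\begin{displaymath}
\Phi^{\prime}_{\mathfrak{p}}(x,t) \coloneqq \Phi_{\mathfrak{p}}(x,t) + \sum_{\mathfrak{q} \in S, \, \mathfrak{q} \neq \mathfrak{p}} \Psi_{\mathfrak{q}}(x),
\end{displaymath}
which is strictly concave on $\widetilde{\Sigma}_{\mathfrak{p}}$ (strictly concave plus concave). Its restriction to height $0$ is $\Psi \coloneqq \sum_{\mathfrak{q} \in S} \Psi_{\mathfrak{q}}$, and this does not depend on $\mathfrak{p}$. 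Finally, $\Psi$ is strictly concave on $\Sigma$. This proves effectivity, so Theorem~\ref{effective-fpqc-des-tor} provides $\mathcal{X}_{\overline{\Sigma}}$.

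For the morphism, I take $\phi_{\mathfrak{p}} = \textup{id}$ for every $\mathfrak{p}$. Conditions (i)--(iv) of Definition~\ref{morphism-arith-fan-def} are then immediate from the refinement hypothesis. For $\mathfrak{p} \notin S$ we have $\widetilde{\Sigma}_{\mathfrak{p}} = \Sigma_{\textup{can}}$, and both fans have the same support $N_{\mathbb{R}} \times \mathbb{R}_{\geq 0}$ because they are complete. Proposition~\ref{proper-toric-morphisms} then yields the proper toric morphism $f \colon \mathcal{X}_{\overline{\Sigma}} \rightarrow \mathcal{X}_{\Sigma}$. By construction $f$ is an isomorphism over $\textup{Spec}(\mathcal{O}_{K,S})$, and over each $\mathcal{O}_{K,\mathfrak{p}}$ with $\mathfrak{p} \in S$ it is the toric morphism induced by the refinement $\widetilde{\Sigma}_{\mathfrak{p}} \geq \Sigma_{\textup{can}}$.

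The blow-up description is the main obstacle. My plan is to reduce it to the local statement over the DVR $\mathcal{O}_{K,\mathfrak{p}}$ and then glue. Over $\mathcal{O}_{K,\mathfrak{p}}$, the refinement $\widetilde{\Sigma}_{\mathfrak{p}}$ of $\Sigma_{\textup{can}}$ agrees with $\Sigma_{\textup{can}}$ on the hyperplane $N_{\mathbb{R}} \times \lbrace 0 \rbrace$, so every new ray lies in the open upper half-space. In the first attempt I would not decompose into elementary star subdivisions. Instead I would realize $f_{\mathfrak{p}}$ as a single normalized blow-up, in the spirit of KKMS, via the following steps.
\begin{enumerate}
\item Take the strictly concave function $\Phi^{\prime}_{\mathfrak{p}}$ and subtract the linear-on-$\Sigma_{\textup{can}}$ function $\Psi$. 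The difference vanishes on height $0$, so, after scaling, it defines a torus-invariant ideal $\mathcal{I}_{\mathfrak{p}}$ on $\mathcal{X}_{\Sigma_{\textup{can}}}$ cosupported on the special fibre.
\item Blowing up $\mathcal{I}_{\mathfrak{p}}$ (passing to a suitable multiple so that the result is normal) then gives exactly the fan whose maximal cones are the domains of linearity of $\Phi^{\prime}_{\mathfrak{p}}$, namely $\widetilde{\Sigma}_{\mathfrak{p}}$.
\end{enumerate}
Since $\mathcal{I}_{\mathfrak{p}}$ is trivial away from $\mathfrak{p}$, these ideals glue to an ideal sheaf on $\mathcal{X}_{\Sigma}$. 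The gluing works because $\prod_{\mathfrak{p} \in S} \mathcal{I}_{\mathfrak{p}}$ is supported on the finitely many fibres over $S$. The blow-ups are then performed successively for $\mathfrak{p} \in S$, and they commute because their centres lie in disjoint fibres.

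Uniqueness from Lemma~\ref{fpqc-descent-toric-schemes} identifies the result with $\mathcal{X}_{\overline{\Sigma}}$ over $\mathcal{X}_{\Sigma}$. The delicate points I expect are normality of the blow-up, which may require replacing $\mathcal{I}_{\mathfrak{p}}$ by a high power or a normalized version, and verifying the integrality of the support function. If the single blow-up fails to be normal, I would fall back on a sequence of weighted star subdivisions along rays, each of which is a blow-up of a torus-invariant ideal.
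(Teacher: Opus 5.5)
Your proposal is correct and follows essentially the same route as the paper: effectivity via the construction of Remark~\ref{maybe-equivalent} (your version, which adds $\Psi_{\mathfrak{q}}$ so that the restriction to height $0$ is $\sum_{\mathfrak{q} \in S} \Psi_{\mathfrak{q}}$ and does not depend on $\mathfrak{p}$, is the consistent one), the morphism $f$ from Proposition~\ref{proper-toric-morphisms}, and the blow-up description obtained by realizing $f_{\mathfrak{p}}$ as a KKMS toric blow-up over $\mathcal{O}_{K,\mathfrak{p}}$, extending the fibre-supported centre to $\mathcal{X}_{\Sigma}$, and identifying the result with $\mathcal{X}_{\overline{\Sigma}}$ by descent. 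Subtracting $(x,t) \mapsto \Psi(x)$ before forming the ideal is an improvement on the paper's justification that the centre lies in the special fibre: being an isomorphism on generic fibres does not force this by itself, since blowing up a horizontal Cartier divisor is also an isomorphism. You should also state explicitly that the final identification relies on blow-ups commuting with the flat base change $\mathcal{O}_K \rightarrow \mathcal{O}_{K,\mathfrak{p}}$.
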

	\begin{proof}
		The fact that $\overline{\Sigma}$ is effective was shown in Remark~\ref{maybe-equivalent}. Since $\overline{\Sigma}$ refines the canonical fan induces by $\Sigma$, there is an induced toric proper morphism $f \colon \mathcal{X}_{\overline{\Sigma}} \rightarrow \mathcal{X}_{\Sigma}$ constructed in Proposition~\ref{proper-toric-morphisms}. We only need to show that $f$ is given by a finite number of successive blow-ups. We show the lemma for the case of a singleton $S = \lbrace \mathfrak{p} \rbrace$. The general case can be obtained by iterating the same procedure one prime at a time. By definition, there exists a strictly convex rational support function $\Phi_{\mathfrak{p}}$ on $\widetilde{\Sigma}_{\mathfrak{p}}$. Multiplying by a large enough positive integer if necessary, we may assume that $\Phi_{\mathfrak{p}}$ attains integer values at the ray generators of the rays in $\widetilde{\Sigma}_{\mathfrak{p}}$. By the proof of Proposition~\ref{proper-toric-morphisms}, the refinement $\widetilde{\Sigma}_{\mathfrak{p}} \geq \Sigma_{\textup{can}}$ induces a proper birational toric $\mathcal{O}_{K,\mathfrak{p}}$-morphism
		\begin{displaymath}
			f_{\mathfrak{p}} \colon \mathcal{X}_{\widetilde{\Sigma}_{\mathfrak{p}}} \longrightarrow \mathcal{X}_{\Sigma_{\textup{can}}},
		\end{displaymath}
		given by localization of $f$ at $\mathcal{O}_{K,\mathfrak{p}}$. The description in~§~IV.3.(h)--(j)~of~\cite{KKMS} implies that the strictly convex piecewise linear function $\Phi_{\mathfrak{p}}$ corresponds to a torus-invariant ideal sheaf $\mathcal{I}$ on $\mathcal{X}_{\Sigma_{\textup{can}}}$, such that the map $f_{\mathfrak{p}}$ is given by the blow-up of $\mathcal{X}_{\Sigma_{\textup{can}}}$ along $\mathcal{I}$. Since the map $f_{\mathfrak{p}}$ is an isomorphism on generic fibers, the torus-invariant closed subscheme $Z$ given by $\mathcal{I}$ must be contained in the special fiber $\mathcal{X}_{\Sigma, k(\mathfrak{p})}$ of $\mathcal{X}_{\Sigma_{\textup{can}}}$, seen as a closed subset of $\mathcal{X}_{\Sigma}$. It follows that $Z$ is a closed toric subscheme of $\mathcal{X}_{\Sigma}$. Define the proper birational $\mathcal{O}_K$-morphism 
		\begin{displaymath}
			g \colon \mathcal{Y} \longrightarrow \mathcal{X}_{\Sigma}
		\end{displaymath}
		to be the blow-up of $\mathcal{X}_{\Sigma}$ with center $Z$. Observe that $g \colon \mathcal{Y} \setminus g^{-1}(Z) \rightarrow \mathcal{X}_{\Sigma} \setminus Z$ is an isomorphism. In particular, the restriction of $g$ to $\mathcal{Y} \setminus \mathcal{Y}_{k(\mathfrak{p})}$ agrees with $f$. Since isomorphisms are stable under faithfully flat descent, to see that $f$ and $g$ agree everywhere, it is enough to show that the base change of $g$ to $\mathcal{O}_{K,\mathfrak{p}}$ is exactly $f_{\mathfrak{p}}$. The flat morphism $\mathcal{O}_K \rightarrow \mathcal{O}_{K,\mathfrak{p}}$ induces a flat morphism $\mathcal{X}_{\Sigma_{\textup{can}}} \rightarrow \mathcal{X}_{\Sigma}$. Then, we may apply Lemma~31.32.3~in~\href{https://stacks.math.columbia.edu/tag/01OF}{Section~01OF}~of~\cite{Stacks}, which shows that blow-ups commute with flat morphisms. Concretely, we have the following pullback diagram
		\begin{center}
			\begin{tikzcd}
				\mathcal{X}_{\widetilde{\Sigma}_{\mathfrak{p}}} \arrow[r, "{f_{\mathfrak{p}}}"] \arrow[d] & \mathcal{X}_{\Sigma_{\textup{can}}}  \arrow[d]\\
				\mathcal{Y} \arrow[r, "{g}"] & \mathcal{X}_{\Sigma}
			\end{tikzcd}
		\end{center}
		We can fit the above diagram into the commutative diagram
		\begin{center}
			\begin{tikzcd}
				\mathcal{X}_{\widetilde{\Sigma}_{\mathfrak{p}}} \arrow[r, "{f_{\mathfrak{p}}}"] \arrow[d] & \mathcal{X}_{\Sigma_{\textup{can}}}  \arrow[d] \arrow[r] & \textup{Spec}(\mathcal{O}_{K,\mathfrak{p}}) \arrow[d] \\
				\mathcal{Y} \arrow[r, "{g}"] & \mathcal{X}_{\Sigma} \arrow[r] & \textup{Spec}(\mathcal{O}_{K})
			\end{tikzcd}
		\end{center}
		where the left and right inner squares are pullback diagrams. Therefore, the outer diagram is a pullback diagram. The result follows.
	\end{proof}
	Now, we describe the regularity of a projective toric arithmetic variety $\mathcal{X}_{\overline{\Sigma}} / \mathcal{O}_K$ in terms of its arithmetic fan. First, we extend the notion of smoothness to arithmetic fans. Then, we give the corresponding result.
	\begin{dfn}
		Let $\overline{\Sigma} = (\widetilde{\Sigma}_{\mathfrak{p}})$ be an arithmetic fan in $(N_{\mathbb{R}}, K)$. We say that $\overline{\Sigma}$ is \textit{smooth} if every fan $\widetilde{\Sigma}_{\mathfrak{p}}$ is smooth.
	\end{dfn}
	\begin{lem}\label{regular}
		Let $\mathcal{X}_{\overline{\Sigma}} / \mathcal{O}_K$ be a projective toric arithmetic variety given by the arithmetic fan $\overline{\Sigma}$. If the arithmetic fan $\overline{\Sigma}$ is smooth, then the scheme $\mathcal{X}_{\overline{\Sigma}}$ is regular.
	\end{lem}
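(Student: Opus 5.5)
Regularity is a local property, so the plan is to check it on a cover of $\mathcal{X}_{\overline{\Sigma}}$ by pieces that are already known to be toric schemes of the canonical or DVR type. Let $S = S(\overline{\Sigma})$. By Theorem~\ref{effective-fpqc-des-tor}, the open subscheme $\mathcal{X}_{\overline{\Sigma},S} = \mathcal{X}_{\overline{\Sigma}} \times_{\mathcal{O}_K} \textup{Spec}(\mathcal{O}_{K,S})$ is isomorphic to the canonical model $\mathcal{X}_{\Sigma,S}$. Moreover, every point of $\mathcal{X}_{\overline{\Sigma}}$ lies either in this open set or in a fiber over some $\mathfrak{p} \in S$. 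For a point $x$ over $\mathfrak{p} \in S$, the local ring $\mathcal{O}_{\mathcal{X}_{\overline{\Sigma}},x}$ coincides with the local ring of $x$ on the localization $\mathcal{X}_{\overline{\Sigma},\mathfrak{p}} = \mathcal{X}_{\overline{\Sigma}} \times_{\mathcal{O}_K} \textup{Spec}(\mathcal{O}_{K,\mathfrak{p}})$. This holds because localization of $\mathcal{O}_K$ at $\mathfrak{p}$ is a localization of rings, so the local rings of points lying over $\mathfrak{p}$ do not change. Hence it suffices to prove two things: that $\mathcal{X}_{\Sigma,S}$ is regular, and that $\mathcal{X}_{\widetilde{\Sigma}_{\mathfrak{p}}}/\mathcal{O}_{K,\mathfrak{p}}$ is regular for each $\mathfrak{p} \in S$.

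For the first piece, $\Sigma$ is smooth by Definition~\ref{arithmetic-fan}. Construction~\ref{canonical-OK-prop} (via Demazure) then says $\mathcal{X}_{\Sigma}/\mathcal{O}_K$ is regular, and open subschemes of regular schemes are regular. Concretely, each affine chart is $\textup{Spec}(\mathcal{O}_{K,S}[\mathbb{N}^k \times \mathbb{Z}^{d-k}])$, a Laurent–polynomial ring over a Dedekind domain, which is regular.

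The second piece is the main step. Here $\widetilde{\Sigma}_{\mathfrak{p}}$ is a smooth fan in $N_{\mathbb{R}} \times \mathbb{R}_{\geq 0}$, and $\mathcal{X}_{\widetilde{\Sigma}_{\mathfrak{p}}}$ is covered by affine pieces $\textup{Spec}(\mathcal{O}_{K,\mathfrak{p}}[\widetilde{M}_{\sigma}]/(\chi^{(0,1)} - \varpi))$ as in \S IV.3 of~\cite{KKMS}. In this description $\widetilde{M} = M \times \mathbb{Z}$, $\varpi$ is a uniformizer, and $\sigma$ runs over the cones of $\widetilde{\Sigma}_{\mathfrak{p}}$.

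1. Cones $\sigma$ contained in $N_{\mathbb{R}} \times \{0\}$ give charts lying in the generic fiber $X_{\Sigma}$, which is smooth.
2. For a cone not contained in $N_{\mathbb{R}}\times\{0\}$, smoothness lets us choose a lattice basis $e_1,\dots,e_{d+1}$ of $N \times \mathbb{Z}$ extending the ray generators $e_1,\dots,e_k$ of $\sigma$. With the dual coordinates, the chart becomes
\begin{displaymath}
\mathcal{O}_{K,\mathfrak{p}}[x_1,\dots,x_k,x_{k+1}^{\pm1},\dots,x_{d+1}^{\pm1}]/(x^{a} - \varpi),
\end{displaymath}
where $a_i$ is the last coordinate of $e_i$. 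These satisfy $a_i \geq 0$ for $i \leq k$, and $(a_1,\dots,a_{d+1})$ is primitive since it is the image of the basis under the projection to $\mathbb{Z}$.

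The hardest point is proving regularity of this local equation at points of the special fiber. The approach is to use that $A = \mathcal{O}_{K,\mathfrak{p}}[x]$ (with the $x_i$ for $i>k$ inverted) is regular, so the quotient $A/(f)$ with $f = x^a - \varpi$ is regular at a prime $\mathfrak{q} \ni f$ if and only if $f \notin \mathfrak{q}_{\mathfrak{q}}^2$.

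1. If some $a_i$ with $i \leq k$ has $a_i \geq 1$ and $x_i \notin \mathfrak{q}$, then $\varpi$ is a unit multiple of a monomial that is not in $\mathfrak{q}^2$, and the claim follows. The same holds when $\varpi$ is part of a regular system of parameters.
2. In general, $\mathfrak{q}$ contains $\varpi$ and some subset of the variables $x_i$. The monomial $x^a$ lies in $\mathfrak{q}^2$ only if it involves at least two of those variables, or one of them with exponent at least $2$. Even then, $\varpi \notin \mathfrak{q}^2$, because $\varpi, x_1,\dots$ extend to a regular system of parameters of $A_{\mathfrak{q}}$, so $f \notin \mathfrak{q}^2$ in all cases.

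So $A/(f)$ is regular. Smoothness of the fan is in fact only needed to put the chart into this monomial form; note that if this argument is right, regularity holds even with multiplicities $a_i > 1$. Finally, regularity of the completed or henselian-free local rings is unaffected by the base change $\mathcal{O}_K \to \mathcal{O}_{K,\mathfrak{p}}$, because it is a localization, which finishes the proof.
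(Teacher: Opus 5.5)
Your proof is correct. It uses the same decomposition as the paper but finishes by a more elementary and more self-contained route.

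\textbf{The paper's route.} The paper works with the faithfully flat cover $\mathcal{X}_{S} \amalg \coprod_{\mathfrak{p} \in S} \mathcal{X}_{\mathfrak{p}} \to \mathcal{X}_{\overline{\Sigma}}$ induced by $\mathcal{S}^{\prime} \to \mathcal{S}$. It concludes by faithfully flat descent of regularity (Proposition~14.57 of~\cite{GW20}). It takes for granted that $\mathcal{X}_{\Sigma,S}$ and the schemes $\mathcal{X}_{\widetilde{\Sigma}_{\mathfrak{p}}}/\mathcal{O}_{K,\mathfrak{p}}$ attached to smooth fans are regular.

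\textbf{Your route.} You avoid descent entirely. Each piece of the cover is a localization of the base, so every local ring of $\mathcal{X}_{\overline{\Sigma}}$ literally is a local ring of one of the pieces. You also replace the unstated citation for the DVR case with an explicit chart computation: on $A/(x^{a}-\varpi)$ you check the criterion $f \notin \mathfrak{m}^{2}$.

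\textbf{Comparison.} The paper's version is shorter and matches the descent formalism used throughout that section. Yours needs no descent theorem and proves the input the paper leaves implicit. It also shows correctly that regularity does not need reduced special fibres: multiplicities $a_i > 1$ are allowed.

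\textbf{Two presentation fixes.}
\begin{enumerate}
\item Your first special-fibre sub-case is garbled; the argument you actually need is this. At a prime $\mathfrak{q} \ni \varpi$, the element $\varpi$ together with the $x_i$ ($i \le k$) lying in $\mathfrak{q}$ is part of a regular system of parameters of $A_{\mathfrak{q}}$. The class of $x^{a}$ in $\mathfrak{m}/\mathfrak{m}^{2}$ is either $0$, or a unit multiple of the class of a single such $x_i$. In either case the class of $f$ is non-zero.
\item Generic-fibre points of charts of cones not contained in $N_{\mathbb{R}} \times \{0\}$ are not covered by your first item. They are still regular, because $X_{\Sigma}$ is smooth.
\end{enumerate}
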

	\begin{proof}
		Let $S= S(\overline{\Sigma})$ be the set from Definition~\ref{arithmetic-fan}. Then, let $\mathcal{S}^{\prime} \rightarrow \mathcal{S}$ be the faithfully flat morphism from the proof of Lemma~\ref{fpqc-descent-toric-schemes}. The base change $\mathcal{X}_{\overline{\Sigma}}^{\prime} \rightarrow \mathcal{X}_{\overline{\Sigma}}$ is faithfully flat. If the arithmetic fan is smooth, it implies that the toric schemes $\mathcal{X}_{\Sigma,S}/\mathcal{O}_{K,S}$ and $\mathcal{X}_{\widetilde{\Sigma}_{\mathfrak{p}}}/\mathcal{O}_{K,\mathfrak{p}}$ are regular. Therefore, $\mathcal{X}_{\overline{\Sigma}}^{\prime}$ is regular. By Proposition~14.57~of~\cite{GW20}, $\mathcal{X}_{\overline{\Sigma}}$ is regular.
	\end{proof}
	We finish this subsection by describing the category of toric projective models for a fairly general class of quasi-projective toric schemes. We will need the following technical lemma on refinements of arithmetic fans.
	\begin{lem}\label{common-refinement}
		Let $\overline{\Sigma}_1 = (\widetilde{\Sigma}_{1,\mathfrak{p}})$ and $\overline{\Sigma}_2 = (\widetilde{\Sigma}_{2,\mathfrak{p}})$ be arithmetic fans in $(N_{\mathbb{R}},K)$. Then, there exists a smooth and effective arithmetic fan $\overline{\Sigma}$ which refines both $\overline{\Sigma}_1$ and $\overline{\Sigma}_2$.
	\end{lem}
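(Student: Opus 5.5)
We construct $\overline{\Sigma}$ prime by prime, working with the finite set $S \coloneqq S(\overline{\Sigma}_1) \cup S(\overline{\Sigma}_2)$. The plan has three steps, and effectivity is arranged through the refinement criterion of Remark~\ref{maybe-equivalent}.

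\textbf{Step 1: the generic fan.} Let $\Sigma_1$ and $\Sigma_2$ be the smooth projective fans in $N_{\mathbb{R}}$ from Definition~\ref{arithmetic-fan}. Their common refinement $\Sigma_1 \wedge \Sigma_2 \coloneqq \lbrace \sigma_1 \cap \sigma_2 \rbrace$ is complete. It is also projective: if $\Psi_1,\Psi_2$ are strictly concave rational support functions on $\Sigma_1,\Sigma_2$, then $\Psi_1+\Psi_2$ is concave, and its maximal domains of linearity are exactly the cones $\sigma_1\cap\sigma_2$, so it is strictly concave on $\Sigma_1\wedge\Sigma_2$. We then take a smooth projective refinement $\Sigma$ of $\Sigma_1 \wedge \Sigma_2$, obtained by a sequence of star subdivisions (toric blow-ups), which preserves projectivity (Theorem~11.1.9 of~\cite{CLS} and its projective version).

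\textbf{Step 2: the local fans.} For $\mathfrak{p} \notin S$ we set $\widetilde{\Sigma}_{\mathfrak{p}} \coloneqq \Sigma_{\textup{can}}$. Since $\Sigma$ refines $\Sigma_i$, the fan $\Sigma_{\textup{can}}$ refines $(\Sigma_i)_{\textup{can}} = \widetilde{\Sigma}_{i,\mathfrak{p}}$.

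For $\mathfrak{p} \in S$ we take the triple common refinement $\widetilde{\Sigma}_{1,\mathfrak{p}} \wedge \widetilde{\Sigma}_{2,\mathfrak{p}} \wedge \Sigma_{\textup{can}}$ in $N_{\mathbb{R}}\times\mathbb{R}_{\geq 0}$. It is complete, and by the same sum-of-strictly-concave-functions argument it is projective. Its intersection with $N_{\mathbb{R}}\times\lbrace 0\rbrace$ is $\Sigma_1\wedge\Sigma_2\wedge\Sigma = \Sigma$.

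We next resolve this fan to a smooth projective fan $\widetilde{\Sigma}_{\mathfrak{p}}$ without changing the cones already in $\Sigma\times\lbrace 0\rbrace$. To do so we use star subdivisions only at rays through interior points of non-smooth cones, in the manner of the KKMS resolution. The cones contained in $N_{\mathbb{R}}\times\lbrace 0\rbrace$ are faces of $\Sigma$ and hence already smooth. Any cone that must be subdivided therefore meets the open half-space, and we choose its new ray in $N_{\mathbb{R}}\times\mathbb{R}_{>0}$. With these choices, $\widetilde{\Sigma}_{\mathfrak{p}}\cap (N_{\mathbb{R}}\times\lbrace 0\rbrace) = \Sigma$ continues to hold, and $\widetilde{\Sigma}_{\mathfrak{p}}$ refines both $\widetilde{\Sigma}_{i,\mathfrak{p}}$ and $\Sigma_{\textup{can}}$.

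\textbf{Step 3: conclusion and main obstacle.} By Steps 1 and 2, $\overline{\Sigma} \coloneqq (\widetilde{\Sigma}_{\mathfrak{p}})$ is an arithmetic fan with $S(\overline{\Sigma})\subset S$. It is smooth, and it refines $\overline{\Sigma}_1$ and $\overline{\Sigma}_2$ in the sense of Definition~\ref{morphism-arith-fan-def} with identity maps. Since every $\widetilde{\Sigma}_{\mathfrak{p}}$ refines $\Sigma_{\textup{can}}$, effectivity follows from Remark~\ref{maybe-equivalent}, equivalently Lemma~\ref{refinement}.

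The main obstacle is Step 2: we must check that the resolution keeps projectivity while leaving the horizontal slice fixed. The plan is to argue this explicitly. Each star subdivision at a ray $\rho$ preserves projectivity, because $\Phi + \varepsilon \cdot h_\rho$ is strictly concave for small $\varepsilon>0$, where $h_\rho$ is the support function of the exceptional divisor. Because $\rho$ lies in the open half-space, the subdivision does not alter any cone in height zero. Finally, the cones meeting the open half-space can always be made unimodular by such subdivisions, since their lower faces are already unimodular; this is the step we will verify most carefully.
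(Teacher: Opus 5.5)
Your proposal is correct and follows essentially the same route as the paper. You take a smooth projective refinement $\Sigma$ of $\Sigma_1\cdot\Sigma_2$, use $\Sigma_{\textup{can}}$ for $\mathfrak{p}\notin S$, take a smooth projective refinement of $\Sigma_{\textup{can}}\cdot\widetilde{\Sigma}_{1,\mathfrak{p}}\cdot\widetilde{\Sigma}_{2,\mathfrak{p}}$ fixing the smooth horizontal cones for $\mathfrak{p}\in S$, and get effectivity from Remark~\ref{maybe-equivalent}. The only difference is that you spell out why the common refinements are projective and why the subdivisions leave the slice $\Sigma$ unchanged, which the paper simply takes from Theorem~11.1.9 of~\cite{CLS}.
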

	\begin{proof}
		Let $\Sigma_i$ and $S_i = S(\overline{\Sigma}_i)$ be as in Definition~\ref{arithmetic-fan}, and write $S= S_1 \cup S_2$. Consider the fan $\Sigma_1 \cdot \Sigma_2$ obtained by intersecting the cones in $\Sigma_1$ with the cones in $\Sigma_2$. By Theorem~11.1.9~of~\cite{CLS}, the fan $\Sigma_1 \cdot \Sigma_2$ admits a smooth projective refinement $\Sigma$. By construction, the fan $\Sigma$ refines both $\Sigma_1$ and $\Sigma_2$. Then, for each $\mathfrak{p} \not \in S$, define $\widetilde{\Sigma}_{\mathfrak{p}} \coloneqq \Sigma_{\textup{can}}$. Observe that $\widetilde{\Sigma}_{1,\mathfrak{p}} = \Sigma_{1,\textup{can}}$ and $\widetilde{\Sigma}_{2,\mathfrak{p}} = \Sigma_{2,\textup{can}}$, therefore, the fan $\widetilde{\Sigma}_{\mathfrak{p}}$ refines both of them. Now, for each $\mathfrak{p} \in S$, we consider the fan $\Sigma_{\textup{can}} \cdot \widetilde{\Sigma}_{1, \mathfrak{p}} \cdot \widetilde{\Sigma}_{2, \mathfrak{p}}$ obtained by intersections. Since $\Sigma$ refines both $\Sigma_1$ and $\Sigma_2$, the fan obtained by intersecting $\Sigma_{\textup{can}} \cdot \widetilde{\Sigma}_{1, \mathfrak{p}} \cdot \widetilde{\Sigma}_{2, \mathfrak{p}}$ with the hyperplane $N_{\mathbb{R}} \times \lbrace 0 \rbrace$ is exactly $\Sigma$. By Theorem~11.1.9~of~\cite{CLS}, the fan $\Sigma_{\textup{can}} \cdot \widetilde{\Sigma}_{1, \mathfrak{p}} \cdot \widetilde{\Sigma}_{2, \mathfrak{p}}$ admits a smooth projective refinement $\widetilde{\Sigma}_{\mathfrak{p}}$ which leaves fixed the smooth cones. In particular, the intersection of $\widetilde{\Sigma}_{\mathfrak{p}}$ with the hyperplane $N_{\mathbb{R}} \times \lbrace 0 \rbrace$ is exactly $\Sigma$. Therefore, $\overline{\Sigma} = (\widetilde{\Sigma}_{\mathfrak{p}})$ is a smooth arithmetic fan in $(N_{\mathbb{R}},K)$ which refines the $\overline{\Sigma}_i$'s and the canonical arithmetic fan induced by $\Sigma$. We conclude that $\overline{\Sigma}$ is effective.
	\end{proof}
	\begin{desc}\label{toric-models-OK}
		Let $\Sigma_0$ be a smooth quasi-projective fan in $N_{\mathbb{R}}$ and $S$ be a finite subset of $\textup{Max}(\mathcal{O}_K)$. By Corollary~\ref{quasi-proj-OK}, the toric scheme $\mathcal{X}_{\Sigma_0, S}/\mathcal{O}_K$ is regular and quasi-projective. Conversely, by Theorem~\ref{effective-fpqc-des-tor}, any torus-invariant open subset $\mathcal{Y}$ of a projective toric arithmetic variety $\mathcal{X}_{\overline{\Sigma}}$ contains an open, schematically dense subscheme of the form $\mathcal{X}_{\Sigma_0, S}$. Then:
		\begin{enumerate}
			\item By Theorem~\ref{effective-fpqc-des-tor}, a projective toric arithmetic variety $\mathcal{X}_{\overline{\Sigma}}$ is a toric projective model of $\mathcal{X}_{\Sigma_{0},S}$ if and only if, for each $\mathfrak{p} \not \in S$, the canonical fan $\Sigma_{0, \textup{can}}$ is a subfan of $\widetilde{\Sigma}_{\mathfrak{p}}$.
			\item A morphism $f \colon \mathcal{X}_{\overline{\Sigma}_1} \rightarrow \mathcal{X}_{\overline{\Sigma}_2}$ of toric projective models of $\mathcal{X}_{\Sigma_{0},S}$ restricts to the identity on the torus $\mathcal{U}_{S}$ contained in  $\mathcal{X}_{\Sigma_{0},S}$. Then, Proposition~\ref{proper-toric-morphisms} implies that $f$ corresponds to a refinement of arithmetic fans $\overline{\Sigma}_1 \geq \overline{\Sigma}_2$.
			\item By Lemma~\ref{common-refinement}, the category $\textup{PM}_{\mathbb{T}}(\mathcal{X}_{\Sigma_{0},S} /\mathcal{O}_K)$ of toric projective models of $\mathcal{X}_{\Sigma_{0},S}$ over $\mathcal{O}_K$ is cofiltered.
			\item The assignment $\overline{\Sigma} \mapsto \mathcal{X}_{\overline{\Sigma}}$ identifies the poset consisting of smooth effective arithmetic fans $\overline{\Sigma} = (\widetilde{\Sigma}_{\mathfrak{p}})$ in $(N_{\mathbb{R}},K)$ satisfying the Condition~(i) and ordered by refinement, as a full and cofinal subcategory of $\textup{PM}_{\mathbb{T}}(\mathcal{X}_{\Sigma_{0},S} /\mathcal{O}_K)$.
		\end{enumerate}
	\end{desc}
	
	\subsection{Toric divisors}\label{3-4}
	The final task in this section is to describe the group of toric $\mathbb{Q}$-divisors on a projective toric arithmetic variety. As before, we will use the theory of fpqc descent to obtain a characterization in terms of support functions or piecewise affine functions.
	\begin{desc}\label{toric-global-sf}
		Let $\mathcal{X}_{\overline{\Sigma}}/\mathcal{O}_K$ be a projective toric arithmetic variety given by the arithmetic fan $\overline{\Sigma} = (\widetilde{\Sigma}_{\mathfrak{p}})$ in $(N_{\mathbb{R}}, K)$. Given a toric Cartier $\mathbb{Q}$-divisor $\mathcal{D}$ on $\mathcal{X}_{\overline{\Sigma}}$, denote by $\mathcal{D}_{\mathfrak{p}}$ the toric Cartier divisor on $\mathcal{X}_{\widetilde{\Sigma}_{\mathfrak{p}}}/\mathcal{O}_{K,\mathfrak{p}}$ obtained by restriction. Similarly, we write $D$ for its restriction to the generic fiber $X_{\Sigma}/K$. By Description~\ref{toric-divisors-K-DVR}, we get
		\begin{displaymath}
			\Psi_D \in \mathcal{SF}(\Sigma, \mathbb{Q}), \quad (\Phi_{\mathcal{D}_{\mathfrak{p}}}) \in  \prod_{\mathfrak{p} \in \textup{Max}(\mathcal{O}_K)} \mathcal{SF}(\widetilde{\Sigma}_{\mathfrak{p}}, \mathbb{Q})
		\end{displaymath}
		where $\Psi_D$ and $\Phi_{\mathcal{D}_{\mathfrak{p}}}$ are the support functions corresponding to the toric divisors $D$ and $\mathcal{D}_{\mathfrak{p}}$. Conversely, a $\mathfrak{p}$-tuple of rational support functions
		\begin{displaymath}
			(\Phi_{\mathfrak{p}}) \in  \prod_{\mathfrak{p} \in \textup{Max}(\mathcal{O}_K)} \mathcal{SF}(\widetilde{\Sigma}_{\mathfrak{p}}, \mathbb{Q}),
		\end{displaymath}
		it determines a toric divisor $\mathcal{D}$ on $\mathcal{X}_{\overline{\Sigma}}$ if the following compatibility conditions are satisfied:
		\begin{enumerate}
			\item There exists a rational support function $\Psi \in  \mathcal{SF}(\Sigma, \mathbb{Q})$ such that 	$\Phi_{\mathfrak{p}}|_{N_{\mathbb{R}}\times \lbrace 0 \rbrace} = \Psi$ for every $\mathfrak{p} \in \textup{Max}(\mathcal{O}_K)$.
			\item There is a finite subset $S \subset \textup{Max}(\mathcal{O}_K)$ containing the set $S(\overline{\Sigma})$ (Definition~\ref{arithmetic-fan}) such that $\Phi_{\mathfrak{p}} (x,t) = \Psi(x)$ for all $(x,t)\in N_{\mathbb{R}} \times \mathbb{R}_{\geq 0}$ and every $\mathfrak{p} \not \in S$.
		\end{enumerate}
		Indeed, for each $\mathfrak{p} \in \textup{Max}(\mathcal{O}_K)$, we have a toric divisor $\mathcal{D}_{\mathfrak{p}}$ on the localization $\mathcal{X}_{\overline{\Sigma},\mathfrak{p}}/\mathcal{O}_{K,\mathfrak{p}}$ whose support function is $\Phi_{\mathfrak{p}}$. By condition~(i), there exists a toric divisor $D$ on the generic fiber $X_{\Sigma}/K$ whose support function is $\Psi$ and such that, for each $\mathfrak{p}$, the restriction of  $\mathcal{D}_{\mathfrak{p}}$ to the generic fiber is $D$. Then, there exists a horizontal toric divisor $\mathcal{D}$ on the canonical model $\mathcal{X}_{\Sigma}/\mathcal{O}_K$ corresponding to the support function $\Psi$. Thus, its restriction to the toric scheme $\mathcal{X}_{\Sigma,S} \cong \mathcal{X}_{\overline{\Sigma},S}$ over $\mathcal{O}_{K,S}$ determines a toric divisor $\mathcal{D}_S$ which, by condition~(ii), induces the toric divisors $\mathcal{D}_{\mathfrak{p}}$ for each $\mathfrak{p} \not \in S$. Arguing as in the proof of Theorem~\ref{effective-fpqc-des-tor}, this gives a descent datum for the collection of toric divisors $\mathcal{D}_S$ and $\mathcal{D}_{\mathfrak{p}}$. The claim follows from effective fpqc descent of coherent sheaves (Theorem~14.66~of~\cite{GW20}).
	\end{desc}
	We can give an alternative description of the group of toric $\mathbb{Q}$-divisors in terms of piecewise affine functions. 
	\begin{desc}\label{toric-global-paf}
		Let $\mathcal{X}_{\overline{\Sigma}}/\mathcal{O}_K$ be a projective toric arithmetic variety given by the arithmetic fan $\overline{\Sigma} = (\widetilde{\Sigma}_{\mathfrak{p}})$ in $(N_{\mathbb{R}}, K)$. Given a toric Cartier $\mathbb{Q}$-divisor $\mathcal{D}$ on $\mathcal{X}_{\overline{\Sigma}}$, denote by $\mathcal{D}_{\mathfrak{p}}$ the toric Cartier divisor on $\mathcal{X}_{\widetilde{\Sigma}_{\mathfrak{p}}}/\mathcal{O}_{K,\mathfrak{p}}$ obtained by restriction. Similarly, we write $D$ for its restriction to the generic fiber $X_{\Sigma}/K$. By Description~\ref{toric-divisors-K-DVR}, we get
		\begin{displaymath}
			\Psi_D \in \mathcal{SF}(\Sigma, \mathbb{Q}), \quad (\gamma_{\mathcal{D}_{\mathfrak{p}}}) \in  \prod_{\mathfrak{p} \in \textup{Max}(\mathcal{O}_K)} \mathcal{PA}(\Pi_{\mathfrak{p}}, \mathbb{Q})
		\end{displaymath}
		where $\Psi_D$ and $\gamma_{\mathcal{D}_{\mathfrak{p}}}$ are the support function and piecewise affine functions corresponding to the toric divisors $D$ and $\mathcal{D}_{\mathfrak{p}}$, and $\Pi_{\mathfrak{p}}$ is the polyhedral complex obtained by intersecting the cones in the fan $\widetilde{\Sigma}_{\mathfrak{p}}$ with the hyperplane $N_{\mathbb{R}} \times \lbrace 1 \rbrace$. Conversely, a $\mathfrak{p}$-tuple of rational piecewise affine functions
		\begin{displaymath}
			(\gamma_{\mathfrak{p}}) \in  \prod_{\mathfrak{p} \in \textup{Max}(\mathcal{O}_K)} \mathcal{PA}(\Pi_{\mathfrak{p}}, \mathbb{Q}),
		\end{displaymath}
		it determines a toric divisor $\mathcal{D}$ on $\mathcal{X}_{\overline{\Sigma}}$ if the following compatibility conditions are satisfied:
		\begin{enumerate}
			\item There exists a rational support function $\Psi \in  \mathcal{SF}(\Sigma, \mathbb{Q})$ such that $\textup{rec}(\gamma_{\mathfrak{p}}) = \Psi$ for every $\mathfrak{p} \in \textup{Max}(\mathcal{O}_K)$. Here, the \textit{recession} of $f$ is the function $\textup{rec}(f) \colon N_{\mathbb{R}} \rightarrow \mathbb{R}$ is given by
			\begin{displaymath}
				\textup{rec}(f) (t) \coloneqq	\lim_{\lambda \rightarrow + \infty} f(\lambda \cdot t) / \lambda.
			\end{displaymath}
			\item There is a finite subset $S \subset \textup{Max}(\mathcal{O}_K)$ containing the set $S(\overline{\Sigma})$ (Definition~\ref{arithmetic-fan}) such that $\gamma_{\mathfrak{p}} = \Psi$ for every $\mathfrak{p} \not \in S$.
		\end{enumerate}
		This follows immediately from Description~\ref{toric-global-paf}.
	\end{desc}
	As an application of Lemmas~\ref{refinement}~and~\ref{regular}, we can compute the group of toric Weil divisors on certain regular projective toric varieties.
	\begin{prop}\label{toric-div-OK}
		Let $\overline{\Sigma}$ be smooth arithmetic fan in $(N_{\mathbb{R}}, K)$ refining the canonical fan induced by $\Sigma$. Then, the isomorphism between Cartier and Weil divisors on the regular projective toric arithmetic variety $\mathcal{X}_{\overline{\Sigma}}/\mathcal{O}_K$ induces an isomorphism
		\begin{displaymath}
			\textup{Div}_{\mathbb{T}}(\mathcal{X}_{\overline{\Sigma}}) \cong \bigoplus_{\rho \in \Sigma(1)} \mathbb{Z} \, \oplus \bigoplus_{\mathfrak{p} \in \textup{Max}(\mathcal{O}_K)} \mathbb{Z}^{r(\mathfrak{p})}.
		\end{displaymath}
		where $\Sigma(1)$ and $\widetilde{\Sigma}_{\mathfrak{p}}(1)$ are the set of rays of the corresponding fan, and $r(\mathfrak{p}) \coloneqq |\widetilde{\Sigma}_{\mathfrak{p}} (1) \setminus  \Sigma(1)|$ is the number of vertical rays in $\widetilde{\Sigma}_{\mathfrak{p}}$. In particular, each toric divisor $\mathcal{D}$ uniquely decomposes as $\mathcal{D} = \mathcal{D}_{\textup{hor}} + \mathcal{D}_{\textup{vert}}$, where $\mathcal{D}_{\textup{hor}}$ and $\mathcal{D}_{\textup{vert}}$ are horizontal and vertical toric divisors, respectively.
	\end{prop}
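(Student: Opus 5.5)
The plan is to follow the proof of Proposition~\ref{toric-div-canonical-OK}, replacing the cone-orbit correspondence over $\mathcal{O}_K$ with the local descriptions over $\mathcal{O}_{K,S}$ and over each $\mathcal{O}_{K,\mathfrak{p}}$. By Lemma~\ref{regular}, $\mathcal{X}_{\overline{\Sigma}}$ is regular, so toric Cartier divisors coincide with toric Weil divisors. It therefore suffices to list the torus-invariant prime divisors and to show that a Weil divisor is toric exactly when it is a combination of these. Let $\mathcal{D}$ be a toric Cartier divisor. Its support $|\mathcal{D}|$ is closed and $\mathcal{U}$-stable, so every irreducible component $\mathcal{Z}$ of $|\mathcal{D}|$ of codimension one is a $\mathcal{U}$-invariant prime divisor. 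I would classify such $\mathcal{Z}$ according to whether they dominate $\textup{Spec}(\mathcal{O}_K)$.

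In the \emph{horizontal} case, the generic fiber $Z$ of $\mathcal{Z}$ is a torus-invariant prime divisor of $X_{\Sigma}$, hence $Z = X_{\Sigma(\rho)}$ for a unique $\rho \in \Sigma(1)$. Since $\mathcal{X}_{\overline{\Sigma}}$ is flat over $\mathcal{O}_K$, $\mathcal{Z}$ is the closure of $X_{\Sigma(\rho)}$ in $\mathcal{X}_{\overline{\Sigma}}$, so horizontal invariant prime divisors correspond bijectively to $\Sigma(1)$.

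In the \emph{vertical} case, $\mathcal{Z}$ lies in a fiber $\mathcal{X}_{\overline{\Sigma},k(\mathfrak{p})}$, and it is determined by its image in the localization $\mathcal{X}_{\overline{\Sigma},\mathfrak{p}} \cong \mathcal{X}_{\widetilde{\Sigma}_{\mathfrak{p}}}$. The orbit-cone correspondence over a DVR (Remark~3.3.3 of~\cite{Per26}) says that the torus-invariant prime divisors of $\mathcal{X}_{\widetilde{\Sigma}_{\mathfrak{p}}}$ contained in the special fiber are in bijection with the rays of $\widetilde{\Sigma}_{\mathfrak{p}}$ that do not lie in $N_{\mathbb{R}} \times \lbrace 0 \rbrace$, that is, with $\widetilde{\Sigma}_{\mathfrak{p}}(1) \setminus \Sigma(1)$. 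This gives $r(\mathfrak{p})$ vertical divisors over each $\mathfrak{p}$. For $\mathfrak{p} \notin S(\overline{\Sigma})$ we have $\widetilde{\Sigma}_{\mathfrak{p}} = \Sigma_{\textup{can}}$, so $r(\mathfrak{p})=1$ and the single divisor is the whole fiber, as in Proposition~\ref{toric-div-canonical-OK}.

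Noetherianity makes $|\mathcal{D}|$ a finite union of such components. Hence $\mathcal{D}$ is a finite integral combination of the invariant primes, which gives an injection into the stated direct sum. For the converse, I need each such Weil divisor to be toric, i.e.\ to satisfy $\mu^{\ast}\mathcal{D} = p_2^{\ast}\mathcal{D}$. Both sides are Weil divisors on the regular scheme $\mathcal{U}\times\mathcal{X}_{\overline{\Sigma}}$ (or at least normal; regularity is preserved since $\mathcal{U}$ is smooth over $\mathcal{O}_K$). So it is enough to check equality of multiplicities along prime divisors. This reduces to checking the equality after base change along the faithfully flat cover $\mathcal{S}'\to\mathcal{S}$ from the proof of Lemma~\ref{fpqc-descent-toric-schemes}, where it follows from the known local statements on $\mathcal{X}_{\Sigma,S}$ and on each $\mathcal{X}_{\widetilde{\Sigma}_{\mathfrak{p}}}$. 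Alternatively, one can use Description~\ref{toric-global-sf} and build the corresponding support functions directly.

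The decomposition $\mathcal{D} = \mathcal{D}_{\textup{hor}}+\mathcal{D}_{\textup{vert}}$ then reads off from the two summands. I expect the main obstacle to be the vertical case: one has to justify that invariant primes of $\mathcal{X}_{\overline{\Sigma}}$ inside the fiber over $\mathfrak{p}$ correspond exactly to invariant primes of the localization, and to identify the latter with the vertical rays through the DVR orbit-cone correspondence. I would handle this with the isomorphism $\mathcal{X}_{\overline{\Sigma},\mathfrak{p}}\cong\mathcal{X}_{\widetilde{\Sigma}_{\mathfrak{p}}}$ of Theorem~\ref{effective-fpqc-des-tor} together with the fact that localization at $\mathfrak{p}$ is a bijection on points lying over $\mathfrak{p}$.
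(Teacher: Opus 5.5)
Your argument is correct, but it takes a different route from the paper.

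\emph{The paper's route.} The paper deduces the statement from Proposition~\ref{toric-div-canonical-OK} together with Lemmas~\ref{refinement} and~\ref{regular}. Since $\overline{\Sigma}$ refines the canonical arithmetic fan, $\mathcal{X}_{\overline{\Sigma}}$ is obtained from the canonical model $\mathcal{X}_{\Sigma}$ by finitely many toric blow-ups centred in fibers over primes of $S(\overline{\Sigma})$. The invariant prime divisors are therefore the strict transforms of those of $\mathcal{X}_{\Sigma}$, together with the exceptional divisors. The strict transforms are the horizontal $\mathcal{X}_{\Sigma(\rho)}$ and the fibers $\mathcal{X}_{\Sigma,k(\mathfrak{p})}$, the latter matching the ray through $(0,1)$. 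The exceptional divisors are matched with the remaining vertical rays. Regularity then identifies Cartier and Weil divisors.

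\emph{Your route.} You never use the blow-up description and instead classify invariant prime divisors directly. Horizontal ones are handled through their generic fiber and the orbit--cone correspondence over $K$. Vertical ones are handled through the localization $\mathcal{X}_{\overline{\Sigma},\mathfrak{p}}\cong\mathcal{X}_{\widetilde{\Sigma}_{\mathfrak{p}}}$ and the orbit--cone correspondence over a DVR.

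\emph{What each buys.} Your route is more self-contained and more general. The refinement hypothesis enters only to guarantee that $\mathcal{X}_{\overline{\Sigma}}$ exists (effectivity), and smoothness only to get regularity. So the same argument works for any smooth effective arithmetic fan, not just refinements of a canonical one. The paper's route gives a more geometric picture of the vertical divisors as exceptional divisors of explicit blow-ups, at the cost of depending on Lemma~\ref{refinement}.

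The step you flag as the main obstacle is handled correctly by your observation. The map $\mathcal{X}_{\overline{\Sigma},\mathfrak{p}}\to\mathcal{X}_{\overline{\Sigma}}$ is a homeomorphism onto the points lying over generalizations of $\mathfrak{p}$, with the same local rings, and the torus action localizes. Hence invariant prime divisors inside the fiber over $\mathfrak{p}$ correspond, with invariance and codimension preserved, to those of the localization inside its special fiber.
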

	\begin{proof}
		Follows from Proposition~\ref{toric-div-canonical-OK} combined with Lemmas~\ref{refinement}~and~\ref{regular}. The main difference is that the irreducible components of the fibers $\mathcal{X}_{\overline{\Sigma},k(\mathfrak{p})}$ over $\mathfrak{p}$ are in bijection with the set of vertical rays $\widetilde{\Sigma}_{\mathfrak{p}}(1) \setminus  \Sigma(1)$ of $\widetilde{\Sigma}_{\mathfrak{p}}$. Then, the exceptional divisors given by the finite number of blow-ups decompose into the toric Weil divisors corresponding to the vertical rays which are not generated by $(0,1) \in N_{\mathbb{R}} \times \mathbb{R}_{\geq 0}$.
	\end{proof}
	A nice feature of these descriptions is that properties of a toric divisor can be read from the associated $\mathfrak{p}$-tuple of functions. We finish the section with the following two examples.
	\begin{cor}\label{div-effective}
		Let $\mathcal{X}_{\overline{\Sigma}}/\mathcal{O}_K$ be a projective toric arithmetic variety given by the arithmetic fan $\overline{\Sigma} = (\widetilde{\Sigma}_{\mathfrak{p}})$ in $(N_{\mathbb{R}}, K)$. Given a toric Cartier $\mathbb{Q}$-divisor $\mathcal{D}$ on $\mathcal{X}_{\overline{\Sigma}}$, let $(\Phi_{\mathcal{D}_{\mathfrak{p}}})$ and $(\gamma_{\mathcal{D}_{\mathfrak{p}}})$ be the associated $\mathfrak{p}$-tuples of support functions and piecewise affine functions, respectively. Then, the following conditions are equivalent:
		\begin{enumerate}
			\item The toric divisor $\mathcal{D}$ is effective.
			\item For each $\mathfrak{p}$, we have $\Phi_{\mathcal{D}_{\mathfrak{p}}} (x,t) \leq 0$ for every $(x,t) \in N_{\mathbb{R}}\times \mathbb{R}_{\geq 0}$.
			\item For each $\mathfrak{p}$, we have $\gamma_{\mathcal{D}_{\mathfrak{p}}}(x) \leq 0$ for every $x \in N_{\mathbb{R}}$.
		\end{enumerate}
	\end{cor}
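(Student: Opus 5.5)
The plan is to reduce effectivity on $\mathcal{X}_{\overline{\Sigma}}$ to effectivity on each localization $\mathcal{X}_{\widetilde{\Sigma}_{\mathfrak{p}}}/\mathcal{O}_{K,\mathfrak{p}}$. There the DVR theory of Description~\ref{toric-divisors-K-DVR} identifies effective toric divisors with non-positive support functions. The equivalence (ii)$\Leftrightarrow$(iii) is then pure homogeneity, with one boundary check at $t=0$.

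For (i)$\Leftrightarrow$(ii), first observe that a Cartier $\mathbb{Q}$-divisor $\mathcal{D}$ on the normal integral scheme $\mathcal{X}_{\overline{\Sigma}}$ is effective if and only if its local equations are regular at every point. Every point of $\mathcal{X}_{\overline{\Sigma}}$ lies either over the generic point of $\textup{Spec}(\mathcal{O}_K)$ or over some $\mathfrak{p}$, so in both cases it lies in the image of some localization $\mathcal{X}_{\overline{\Sigma},\mathfrak{p}} \rightarrow \mathcal{X}_{\overline{\Sigma}}$. Since the local rings are unchanged under this flat localization, $\mathcal{D}$ is effective if and only if every restriction $\mathcal{D}_{\mathfrak{p}}$ is effective. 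By Theorem~\ref{effective-fpqc-des-tor}, $\mathcal{X}_{\overline{\Sigma},\mathfrak{p}} \cong \mathcal{X}_{\widetilde{\Sigma}_{\mathfrak{p}}}$ torically; for $\mathfrak{p}\notin S(\overline{\Sigma})$ this uses Description~\ref{canonical-fan}.(ii). On this toric scheme over a DVR, normality lets us test effectivity on the Weil divisor $\sum_{\rho} a_\rho \cdot \mathbf{V}(\rho)$, $\rho \in \widetilde{\Sigma}_{\mathfrak{p}}(1)$, where by the convention of Description~\ref{toric-divisors-K-DVR} we have $\Phi_{\mathcal{D}_{\mathfrak{p}}}(v_\rho) = -a_\rho$. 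So $\mathcal{D}_{\mathfrak{p}}$ is effective iff $\Phi_{\mathcal{D}_{\mathfrak{p}}}(v_\rho)\le 0$ for all rays. Since $\Phi_{\mathcal{D}_{\mathfrak{p}}}$ is linear on each cone and $\widetilde{\Sigma}_{\mathfrak{p}}$ is complete in $N_{\mathbb{R}}\times\mathbb{R}_{\geq 0}$, each point is a non-negative combination of ray generators of a cone containing it. Hence non-positivity on rays is equivalent to $\Phi_{\mathcal{D}_{\mathfrak{p}}}\le 0$ everywhere, which gives (i)$\Leftrightarrow$(ii).

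For (ii)$\Rightarrow$(iii), use $\gamma_{\mathcal{D}_{\mathfrak{p}}}(x)=\Phi_{\mathcal{D}_{\mathfrak{p}}}(x,1)$. For (iii)$\Rightarrow$(ii), let $t>0$. Positive homogeneity gives $\Phi_{\mathcal{D}_{\mathfrak{p}}}(x,t) = t\,\gamma_{\mathcal{D}_{\mathfrak{p}}}(x/t)\le 0$. For $t=0$, continuity of $\Phi_{\mathcal{D}_{\mathfrak{p}}}$ gives $\Phi_{\mathcal{D}_{\mathfrak{p}}}(x,0)=\lim_{t\to 0^+}\Phi_{\mathcal{D}_{\mathfrak{p}}}(x,t)\le 0$; equivalently, it equals $\textup{rec}(\gamma_{\mathcal{D}_{\mathfrak{p}}})(x)$, which is a limit of non-positive values.

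The main obstacle is the localization step in (i)$\Leftrightarrow$(ii). We must justify that effectivity can be checked fibrewise over the $\mathcal{O}_{K,\mathfrak{p}}$, and that the toric identification from Theorem~\ref{effective-fpqc-des-tor} matches restricted divisors with the support functions $\Phi_{\mathcal{D}_{\mathfrak{p}}}$ of Description~\ref{toric-global-sf}. I would handle this by noting that effectivity of a Cartier divisor is local on the scheme, together with the fact that $\textup{Spec}(\mathcal{O}_{K,\mathfrak{p}})$ for varying $\mathfrak{p}$ covers $\textup{Spec}(\mathcal{O}_K)$ pointwise. That avoids any fpqc descent argument beyond what Description~\ref{toric-global-sf} already provides.
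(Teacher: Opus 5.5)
Your proof is correct and takes essentially the same route as the paper, whose one-line proof reduces effectivity to the local pieces $\mathcal{D}_{\mathfrak{p}}$ of Descriptions~\ref{toric-global-sf} and~\ref{toric-global-paf} by citing Lemma~2.3.6 of~\cite{Y-Z}. You make explicit what that citation leaves implicit: local rings are unchanged under $\mathcal{X}_{\overline{\Sigma},\mathfrak{p}} \rightarrow \mathcal{X}_{\overline{\Sigma}}$, on the normal toric schemes over $\mathcal{O}_{K,\mathfrak{p}}$ effectivity reduces to Weil divisors with coefficients $-\Phi_{\mathcal{D}_{\mathfrak{p}}}(v_\rho)$, and (ii)$\Leftrightarrow$(iii) follows from homogeneity and continuity.
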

	\begin{proof}
		Follows from Lemma~2.3.6.~of~\cite{Y-Z} applied to the Descriptions~\ref{toric-global-sf}~and~\ref{toric-global-paf}. 
	\end{proof}
	\begin{cor}\label{div-nef-ample}
		Let $\mathcal{X}_{\overline{\Sigma}}/\mathcal{O}_K$ be a projective toric arithmetic variety given by the arithmetic fan $\overline{\Sigma} = (\widetilde{\Sigma}_{\mathfrak{p}})$ in $(N_{\mathbb{R}}, K)$. Given a toric Cartier $\mathbb{Q}$-divisor $\mathcal{D}$ on $\mathcal{X}_{\overline{\Sigma}}$, let $(\Phi_{\mathcal{D}_{\mathfrak{p}}})$ and $(\gamma_{\mathcal{D}_{\mathfrak{p}}})$ be the associated $\mathfrak{p}$-tuples of support functions and piecewise affine functions, respectively. Then, the following conditions are equivalent:
		\begin{enumerate}
			\item The toric divisor $\mathcal{D}$ is relatively nef (resp. ample).
			\item For each $\mathfrak{p}$, the function $\Phi_{\mathcal{D}_{\mathfrak{p}}}$ is concave (resp. strictly concave)
			\item For each $\mathfrak{p}$, we have $\gamma_{\mathcal{D}_{\mathfrak{p}}}$ is concave (resp. strictly concave).
		\end{enumerate}
	\end{cor}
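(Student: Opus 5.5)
The plan is to reduce the statement to the local characterizations over the DVRs $\mathcal{O}_{K,\mathfrak{p}}$, in the same way as Corollary~\ref{div-effective}. I will deduce (i)$\Leftrightarrow$(ii) and (ii)$\Leftrightarrow$(iii) separately from Descriptions~\ref{toric-global-sf} and~\ref{toric-global-paf}, together with Proposition~3.3.6 of~\cite{Per26}.

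For (ii)$\Leftrightarrow$(iii), note that $\gamma_{\mathcal{D}_{\mathfrak{p}}}$ is the restriction of the conical function $\Phi_{\mathcal{D}_{\mathfrak{p}}}$ to $N_{\mathbb{R}}\times\lbrace 1\rbrace$. Moreover, $\Phi_{\mathcal{D}_{\mathfrak{p}}}$ on $N_{\mathbb{R}}\times\lbrace 0\rbrace$ is the recession function $\textup{rec}(\gamma_{\mathcal{D}_{\mathfrak{p}}})$, and it is recovered by continuity. A positively homogeneous function is concave if and only if its restriction to the slice $t=1$ is concave. The same holds for strict concavity relative to $\widetilde{\Sigma}_{\mathfrak{p}}$ versus $\Pi_{\mathfrak{p}}$, since the cones of $\widetilde{\Sigma}_{\mathfrak{p}}$ not contained in $t=0$ correspond bijectively to the polyhedra of $\Pi_{\mathfrak{p}}$. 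This is the content of Description~\ref{toric-divisors-K-DVR}.(ii), so this step is formal.

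For (i)$\Leftrightarrow$(ii), relative nefness is tested on vertical curves. Every vertical curve $\mathcal{C}\subset\mathcal{X}_{\overline{\Sigma}}$ lies in a fiber $\mathcal{X}_{\overline{\Sigma},k(\mathfrak{p})}$, and hence in the localization $\mathcal{X}_{\overline{\Sigma},\mathfrak{p}}\cong\mathcal{X}_{\widetilde{\Sigma}_{\mathfrak{p}}}$. The degree $\mathcal{D}\cdot\mathcal{C}$ is unchanged by the flat base change to $\mathcal{O}_{K,\mathfrak{p}}$. Therefore $\mathcal{D}$ is relatively nef if and only if each $\mathcal{D}_{\mathfrak{p}}$ is nef on the special fiber of $\mathcal{X}_{\widetilde{\Sigma}_{\mathfrak{p}}}$. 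By Proposition~3.3.6 of~\cite{Per26}, the latter is equivalent to concavity of $\Phi_{\mathcal{D}_{\mathfrak{p}}}$. For $\mathfrak{p}\notin S$ we have $\Phi_{\mathcal{D}_{\mathfrak{p}}}(x,t)=\Psi_D(x)$, so the condition reduces to concavity of $\Psi_D$, which is already implied at any $\mathfrak{p}\in S$ by restriction to $t=0$.

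For ampleness, ``only if'' follows because ampleness is preserved by the base change to $\mathcal{O}_{K,\mathfrak{p}}$, and then Proposition~3.3.6 of~\cite{Per26} gives strict concavity. For ``if'', I expect the main obstacle: ampleness is not obviously local on the base in a naive sense. I will first reduce to the case where $\mathcal{D}$ is Cartier with integer coefficients. Then I use that ampleness of a line bundle on a scheme proper over $\mathcal{O}_K$ can be checked on fibers (EGA~III, 4.7.1), or alternatively that ampleness descends along the fpqc cover $\mathcal{S}'\to\mathcal{S}$ from the proof of Lemma~\ref{fpqc-descent-toric-schemes} (compare the argument of Theorem~\ref{effective-fpqc-des-tor} via Theorem~6.7 of~\cite{BLR90}). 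On the piece $\mathcal{X}_{\Sigma,S}$, ampleness follows from strict concavity of $\Psi_D$ by Proposition~\ref{ample-OK}, and on each $\mathcal{X}_{\widetilde{\Sigma}_{\mathfrak{p}}}$, $\mathfrak{p}\in S$, it follows from Proposition~3.3.6 of~\cite{Per26}. Since every closed fiber and the generic fiber lie in one of these pieces, the fiberwise criterion gives that $\mathcal{D}$ is ample.
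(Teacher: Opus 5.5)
Your proposal is correct and follows essentially the paper's route. Relative nefness is reduced to the local criterion over each $\mathcal{O}_{K,\mathfrak{p}}$; the paper cites Theorem~3.7.1 of \cite{BPS}, which is the same DVR statement you take from \cite{Per26}. Ampleness is obtained by combining that local criterion with Proposition~\ref{ample-OK} on $\mathcal{X}_{\Sigma,S}$.

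The only difference is the final gluing step. The paper descends ampleness along the fpqc cover $\mathcal{S}'\to\mathcal{S}$ via Proposition~14.56 of \cite{GW20}, which is your stated alternative. Your primary fiberwise criterion (EGA~III, 4.7.1) works equally well, since $\mathcal{X}_{\overline{\Sigma}}$ is proper over the noetherian affine base $\textup{Spec}(\mathcal{O}_K)$.
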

	\begin{proof}
		Relative nefness follows from the local version, given in~Theorem~3.7.1~of~\cite{BPS}. For ampleness, we use the same local result together with Lemma~\ref{ample-OK} to produce an fpqc descent datum of ample line bundles. Then, we apply Proposition~14.56~of~\cite{GW20}.
	\end{proof}
	
	\section{A toric analog of Yuan-Zhang's adelic divisors}\label{4}
	This section introduces the group of toric adelic divisors over a toric arithmetic variety. This is a toric version of the constructions of Yuan and Zhang~\cite{Y-Z}, summarized in Section~\ref{global-arith-theory}. First, we summarize the work of Burgos, Philippon, and Sombra in~\cite{BPS} and their subsequent joint paper with Moriwaki~\cite{BMPS}, and adapt it to the language of toric arithmetic varieties. Then, we develop the formalism of toric adelic divisors. This will be used in Sections~\ref{5}~and~\ref{6}, where we state and prove our main results.
	
	\subsection{The Arakelov geometry of toric arithmetic varieties} \label{4-1}
	Fix a number field $K$ with ring of integers $\mathcal{O}_K$ and set of complex embeddings $\Lambda$. We use the notation introduced at the beginning of Subsections~\ref{notation},~\ref{2-1},~and~\ref{3-2} freely. In particular, we fix a positive integer $d$ and let $\mathcal{U}/\mathcal{O}_K$ be a split $d$-dimensional torus over $\mathcal{O}_K$. We start by recalling the notion of $\mathbb{S}$-invariance. Given a projective toric arithmetic variety $\mathcal{X}_{\overline{\Sigma}}/\mathcal{O}_K$ with torus action $\mu \colon \mathcal{U} \times_{\mathcal{O}_K} \mathcal{X}_{\overline{\Sigma}} \rightarrow \mathcal{X}_{\overline{\Sigma}}$, the analytification functor induces a complex Lie group action $\mu_{\Lambda}^{\textup{an}}$ of $\mathcal{U}_{\Lambda}(\mathbb{C})$ on $\mathcal{X}_{\overline{\Sigma},\Lambda}(\mathbb{C})$, where $ \mathcal{U}_{\Lambda}(\mathbb{C})$ and $\mathcal{X}_{\overline{\Sigma},\Lambda}(\mathbb{C})$ are the complex manifolds given by the complex points of $\mathcal{U}$ and $\mathcal{X}_{\overline{\Sigma}}$. This action decomposes as a disjoint union of actions
	\begin{displaymath}
		\mu_{\lambda}^{\textup{an}} \colon \mathcal{U}_{\lambda}(\mathbb{C}) \times \mathcal{X}_{\overline{\Sigma},\lambda}(\mathbb{C}) \longrightarrow  \mathcal{X}_{\overline{\Sigma},\lambda}(\mathbb{C}), \quad \lambda \in \Lambda.
	\end{displaymath}
	Observe that each $\mathcal{U}_{\lambda}(\mathbb{C})$ is isomorphic to the complex Lie group $(\mathbb{C}^{\times})^{d}$, and it contains a maximal compact torus $\mathbb{S}_{\lambda} \cong (\mathbb{S}^{1})^{d}$, where $\mathbb{S}^{1}$ is the unit circle in $\mathbb{C}^{\times}$. By restriction, we get an action
	\begin{displaymath}
		\mu_{\lambda}^{\textup{an}} \colon \mathbb{S}_{\lambda} \times \mathcal{X}_{\overline{\Sigma},\lambda}(\mathbb{C}) \longrightarrow  \mathcal{X}_{\overline{\Sigma},\lambda}(\mathbb{C}), \quad \lambda \in \Lambda.
	\end{displaymath}
	All together, we get an action of $\mathbb{S}_{\Lambda} \coloneqq \amalg_{\lambda} \, \mathbb{S}_{\lambda}$ on $\mathcal{X}_{\overline{\Sigma},\Lambda}(\mathbb{C})$. Then, a function $f \colon \mathcal{X}_{\overline{\Sigma},\Lambda}(\mathbb{C}) \rightarrow \mathbb{R}_{\pm \infty}$ is said to be $\mathbb{S}$\textit{-invariant} if it is invariant under the action of $\mathbb{S}_{\Lambda}$. We can now recall the definition of a toric divisor on a projective toric arithmetic variety.
	\begin{dfn}
		An arithmetic divisor $\overline{\mathcal{D}}$ on $\mathcal{X}_{\overline{\Sigma}}/\mathcal{O}_K$ is said to be \textit{toric} if its divisorial part $\mathcal{D}$ is toric and its Green's function $g_{\mathcal{D},\Lambda}$ is $\mathbb{S}$-invariant. We denote by $\overline{\textup{Div}}_{\mathbb{T}}(\mathcal{X}_{\overline{\Sigma}})_{\mathbb{Q}}$ the group of toric arithmetic divisors of continuous type on $\mathcal{X}_{\overline{\Sigma}}/\mathcal{O}_K$.
	\end{dfn}
	\begin{rem}
		By definition, a Green's function $g_{\mathcal{D},\Lambda}$ is invariant under complex conjugation. Since a complex embedding $\lambda \colon K \rightarrow \mathbb{C}$ and its complex conjugate $\overline{\lambda}$ induce the same infinite place $v \in \mathfrak{M}(K)_{\infty}$, the Green's function $g_{\mathcal{D},\Lambda}$ canonically identifies with a function
		\begin{displaymath}
			g_{\mathcal{D},\infty} \colon X_{\Sigma, \infty}^{\textup{an}} \longrightarrow \mathbb{R}_{\pm \infty}, \quad   X_{\Sigma, \infty}^{\textup{an}} \coloneqq \coprod_{v \in \mathfrak{M}(K)_{\infty}}  X_{\Sigma,v}^{\textup{an}},
		\end{displaymath}
		where $X_{\Sigma}$ is the generic fiber of $\mathcal{X}_{\overline{\Sigma}}$, $K_v$ is the completion of $K$ at the place $v$, and $X_{\Sigma,v}^{\textup{an}}$ is the Berkovich analytification of the base change $X_{\Sigma,v} \coloneqq X_{\Sigma} \times_{K} \textup{Spec}(K_v)$. We use these functions interchangeably, and we simplify notation by writing $g_{\mathcal{D}}$ for $g_{\mathcal{D},\infty}$ or $g_{\mathcal{D},\Lambda}$. We also identify the function $g_{\mathcal{D},v}$ obtained by restriction to $X_{\Sigma,v}^{\textup{an}}$ with the pair of functions $g_{\mathcal{D},\lambda}$ and $g_{\mathcal{D},\overline{\lambda}}$. This will be clear from the context and should not cause any confusion.
	\end{rem}
	We can describe the $\mathbb{S}$-invariant Green's functions in a similar way as in Description~\ref{toric-global-paf}. To do this, we will use the tropicalization functor.
	\begin{cons}[Tropicalization functor]\label{trop-functor}
		Let $X_{\Sigma}/K$ be a projective toric variety and fix a place $v \in \mathfrak{M}(K)$. We sketch the definition of the \textit{tropicalization functor} and list some of its properties. The motivation behind this construction is that $\mathbb{S}$-invariant objects on $X_{\Sigma,v}^{\textup{an}}$ descend to an object on the \textit{tropical toric variety} $N_{\Sigma}$ associated to $\Sigma$, introduced below. Elements of $N_{\mathbb{R}}$ are linear functionals on $M_{\mathbb{R}}$. Then, there is a \textit{tropicalization map} $\textup{Trop}_v \colon U_{v}^{\textup{an}} \rightarrow N_{\mathbb{R}}$, given by
		\begin{displaymath}
			p \mapsto (m \mapsto - \log |\chi^{m} (p)| ),
		\end{displaymath}
		where $\chi^{m} \in K\left[ M \right]$ and $|\chi^{m} (p)|$ denotes the evaluation of the seminorm $p$ at $\chi^m$. By the cone-orbit correspondence, we may write $X_{\Sigma}$ as a (disjoint) union of torus orbits $\lbrace O(\sigma)\rbrace_{\sigma \in \Sigma}$. Then, we obtain a family $\lbrace \textup{Trop}_v \colon O(\sigma)_{v}^{\textup{an}} \rightarrow N(\sigma)_{\mathbb{R}}\rbrace_{\sigma \in \Sigma}$, which glues into a map $\textup{Trop}_v \colon X_{\Sigma,v}^{\textup{an}} \rightarrow N_{\Sigma}$. Here, the set $N_{\Sigma}$ is given by the disjoint union of the $N(\sigma)_{\mathbb{R}}$'s. For the definition of its topology, we refer to Section~4.1~p.118~of~\cite{BPS}. The map $\textup{Trop}_v$ is continuous, (topologically) proper, and compatible with the stratification of $X_{\Sigma}$ by torus orbits. In particular, $N_{\Sigma}$ is compact. The fibers of $\textup{Trop}_v$ are exactly the orbits of the compact torus action of $\mathbb{S}_v$ on $X_{\Sigma,v}^{\textup{an}}$. The assignment $\Sigma \mapsto N_{\Sigma}$ is functorial: Given a toric morphism $f \colon X_{\Sigma_1} \rightarrow X_{\Sigma_2}$, we let $\phi \colon (N_1)_{\mathbb{R}} \rightarrow (N_2)_{\mathbb{R}}$ be the corresponding linear map compatible with $\Sigma_1$ and $\Sigma_2$. Then, there is a continuous extension $\phi \colon N_{1,\Sigma_1} \rightarrow N_{2,\Sigma_2}$ of $\phi$ such that $\textup{Trop}_v \circ f^{\textup{an}} = \phi \circ \textup{Trop}_v$. We refer to Sections~4.1~and~4.2~of~\cite{BPS} for details on the tropicalization map, and we follow the notation introduced in Subsection~4.1~of~\cite{Per26}.
	\end{cons}
	Now, consider a toric arithmetic divisor $\overline{\mathcal{D}}$ on the projective toric variety $\mathcal{X}_{\overline{\Sigma}}/\mathcal{O}_K$, and fix an infinite place $v \in \mathfrak{M}(K)_{\infty}$. Denote by $D$ the restriction of the divisorial part $\mathcal{D}$ to the generic fiber $X_{\Sigma}$. The support $|D|$ of $D$ does not intersect the torus $U/K$ embedded in $X_{\Sigma}$. Then, the function $g_{\mathcal{D},v}$ restricts to a continuous function on the analytic torus $U_{v}^{\textup{an}}$. By Construction~\ref{trop-functor}, there exists a unique continuous function $\gamma_{\overline{\mathcal{D}}_v} \colon N_{\mathbb{R}} \rightarrow \mathbb{R}$ satisfying 
	\begin{displaymath}
		g_{\mathcal{D},v} = - \gamma_{\overline{\mathcal{D}}_v} \circ \textup{Trop}_v.
	\end{displaymath}
	We call  $\gamma_{\overline{\mathcal{D}}_v}$\textit{ the tropical Green's function of }$\overline{\mathcal{D}}$ \textit{at the place} $v \in \mathfrak{M}(K)_{\infty}$. By density of the analytic torus $U_{v}^{\textup{an}}$ in $X_{\Sigma,v}^{\textup{an}}$, the function $ \gamma_{\overline{\mathcal{D}}_v}$ uniquely determines the Green's function $g_{\mathcal{D},v}$. Then, we state a weaker version of Proposition~4.3.10~of~\cite{BPS}, which characterizes the functions appearing as the tropical Green's function of a given toric divisor.
	\begin{prop}\label{trop-gf-archim}
		Let $\mathcal{D}$ be a toric divisor on the projective toric variety $\mathcal{X}_{\overline{\Sigma}}/\mathcal{O}_K$, $D$ be its restriction to the generic fiber $X_{\Sigma}$, and $\Psi_D$ be the associated rational support function. Then, the assignment $g_{\mathcal{D}} \mapsto (\gamma_{\overline{\mathcal{D}}_v})_{v \in \mathfrak{M}(K)_{\infty}}$ induces a bijection between:
		\begin{enumerate}
			\item The set of Green's functions $g_{\mathcal{D}}$ of continuous type for $\mathcal{D}$.
			\item The set of $v$-tuples $(\gamma_v)_{v \in \mathfrak{M}(K)_{\infty}}$ of continuous functions $\gamma_v \colon N_{\mathbb{R}} \rightarrow \mathbb{R}$ such that, for each $v \in \mathfrak{M}(K)_{\infty}$, the function $\gamma_v - \Psi_D$ extends continuously to the tropical toric variety $N_{\Sigma}$.
		\end{enumerate}
		In particular, if $\overline{\mathcal{D}}$ is a toric arithmetic divisor, the functions $\gamma_{\overline{\mathcal{D}}_v} - \Psi_D$ are bounded.
	\end{prop}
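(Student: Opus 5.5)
The plan is to reduce the statement to a local computation at each archimedean place $v$, together with the classical description of the analytic torus and of the toric orbits. The first step handles one place at a time. Since $g_{\mathcal{D}}$ is invariant under complex conjugation, it is equivalent to the family $(g_{\mathcal{D},v})_{v \in \mathfrak{M}(K)_\infty}$, so it suffices to treat each $v$ separately.

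Let $g_{\mathcal{D},v}$ be a continuous Green's function for $D_v$. On the analytic torus $U_v^{\textup{an}}$ the divisor $D$ has no support, so $g_{\mathcal{D},v}$ restricts to a continuous function there. I would argue as follows.
\begin{enumerate}
\item By Construction~\ref{trop-functor}, $\textup{Trop}_v$ restricted to the torus is a proper, surjective, open quotient map whose fibers are the $\mathbb{S}_v$-orbits. Concretely, for $v$ archimedean it is $(z_i) \mapsto (-\log|z_i|)$.
\item The function $g_{\mathcal{D},v}$ is $\mathbb{S}$-invariant, at least in the toric case relevant to the final clause. Hence it descends to a unique continuous $\gamma_v$ on $N_{\mathbb{R}}$ with $g_{\mathcal{D},v} = -\gamma_v \circ \textup{Trop}_v$.
\item If the intended statement does not presuppose $\mathbb{S}$-invariance, I would read it, as in \cite{BPS}, for $\mathbb{S}$-invariant Green's functions. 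The bijection is then with $\mathbb{S}$-invariant ones.
\end{enumerate}

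Next I would identify the boundary behaviour. On an affine chart $X_\sigma$ with $\sigma$ a maximal cone, $\Psi_D|_\sigma = m_\sigma$ is linear, and $\chi^{m_\sigma}$ is a local equation of $D$. Indeed, by Description~\ref{toric-divisors-K-DVR}, $\Psi_D(v_\rho) = -a_\rho$ corresponds to $\textup{div}(\chi^{-m_\sigma})$ up to sign conventions, and I would fix the sign so that
\[
g_{\mathcal{D},v} + \log|\chi^{m_\sigma}|^2 = -(\gamma_v - \Psi_D)\circ \textup{Trop}_v
\]
on $U_v^{\textup{an}}$, absorbing the factor $2$ into the normalisation of $\textup{Trop}_v$ at complex places. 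Then $g_{\mathcal{D},v}$ is of continuous type on $X_{\sigma,v}^{\textup{an}}$ if and only if this function extends continuously from the torus to $X_{\sigma,v}^{\textup{an}}$. Because $\textup{Trop}_v \colon X_{\Sigma,v}^{\textup{an}} \to N_\Sigma$ is a proper continuous surjection, and therefore a quotient map, an $\mathbb{S}$-invariant function on $X_{\Sigma,v}^{\textup{an}}$ is continuous if and only if the induced function on $N_\Sigma$ is continuous. So the extension condition becomes: $\gamma_v - \Psi_D$ extends continuously to the open piece $N_\sigma \subset N_\Sigma$. Covering $N_\Sigma$ by the $N_\sigma$ gives condition (ii). Conversely, given $(\gamma_v)$ satisfying (ii), I define $g_{\mathcal{D},v} := -\gamma_v\circ\textup{Trop}_v$ on the torus. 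It is automatically invariant under complex conjugation, since $\textup{Trop}$ is, and the same equivalence shows it is a Green's function of continuous type. Injectivity follows from density of the torus.

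The last claim is immediate: $N_\Sigma$ is compact because $\Sigma$ is complete, so a continuous extension of $\gamma_v - \Psi_D$ to $N_\Sigma$ is bounded.

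I expect the main obstacle to be the point-set step. I need that a function on the torus extends continuously to $X_{\Sigma,v}^{\textup{an}}$ if and only if its descent extends continuously to $N_\Sigma$. This uses that $\textup{Trop}_v$ is a quotient map compatible with the orbit stratification, together with the precise topology on $N_\Sigma$ from Section~4.1 of \cite{BPS}. I would cite that source rather than reprove it. Sign and normalisation conventions for $\Psi_D$ versus local equations also need care, but they are routine.
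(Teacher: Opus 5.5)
The paper gives no proof of its own here; it quotes a weak form of Proposition~4.3.10 of \cite{BPS}. Your route is the standard argument for that result: descend $\mathbb{S}$-invariant functions along $\textup{Trop}_v$, compare with toric local equations chart by chart, use that $\textup{Trop}_v$ is proper and hence a quotient map, and get boundedness from compactness of $N_\Sigma$. You are also right that item (i) must be read as $\mathbb{S}$-invariant Green's functions, as the paragraph before the statement presupposes. One step, however, does not work as written.

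\textbf{The step that fails.} Take the convention $\Psi_D(v_\rho)=-a_\rho$ and $\Psi_D|_\sigma=m_\sigma$. Then the local equation of $D$ on $X_\sigma$ is $f_\sigma=\chi^{-m_\sigma}$, not $\chi^{m_\sigma}$. Normalize $\textup{Trop}_v$ by $|\cdot|^2$ at real as well as complex places. On the whole analytic torus you then get $g_{\mathcal{D},v}+\log|f_\sigma|^2=-(\gamma_v-m_\sigma)\circ\textup{Trop}_v$, with the \emph{linear} form $m_\sigma$ rather than $\Psi_D$.
\begin{enumerate}
\item The two expressions agree only over $\textup{Trop}_v^{-1}(\sigma)$.
\item A neighbourhood in $N_\sigma=\textup{Hom}(\sigma^\vee\cap M,\mathbb{R}\cup\{\infty\})$ of a point of a boundary stratum $N(\tau)_{\mathbb{R}}$, $\tau<\sigma$, is not contained in $\sigma$. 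For example, let $\sigma$ be spanned by $e_1,e_2$ and $\tau$ by $e_1$. The points $(n,-1)\notin\sigma$ converge in $N_\sigma$ to the point $(\infty,-1)$ of $N(\tau)_{\mathbb{R}}$.
\end{enumerate}
So your quotient-map argument only proves this: $g_{\mathcal{D},v}$ is of continuous type on $X_{\sigma,v}^{\textup{an}}$ if and only if $\gamma_v-m_\sigma$ extends continuously to $N_\sigma$.

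\textbf{The missing lemma.} To turn this into condition (ii), in either direction, you need that $\Psi_D-m_\sigma$ extends continuously to $N_\sigma$ for each maximal cone $\sigma$. Equivalently, the canonical Green's function $g_{\textup{can}}=-\Psi_D\circ\textup{Trop}_v$ is of continuous type. Here it is quick to prove:
\begin{enumerate}
\item Since $\Sigma$ is projective, write $D=D_1-D_2$ with $D_i$ nef.
\item For nef $D_i$, concavity gives $\Psi_{D_i}=\min_{\sigma'}m_{i,\sigma'}$, with $m_{i,\sigma'}-m_{i,\sigma}\in\sigma^\vee$ for every maximal $\sigma'$.
\item Hence $\Psi_{D_i}-m_{i,\sigma}$ is a finite minimum of evaluation maps $u\mapsto u(m')$ with $m'\in\sigma^\vee$, one of which is identically $0$.
\item Such a minimum extends continuously, with finite values, to $N_\sigma$.
\end{enumerate}

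With this lemma the rest of your argument goes through. It is cleanest to compare with $g_{\textup{can}}$ directly. For any $\mathbb{S}$-invariant $g_{\mathcal{D},v}$ of continuous type, $g_{\mathcal{D},v}-g_{\textup{can}}$ is a continuous $\mathbb{S}$-invariant real function on all of $X_{\Sigma,v}^{\textup{an}}$. It descends to $N_\Sigma$ and restricts to $-(\gamma_v-\Psi_D)$ on $N_{\mathbb{R}}$. Conversely, if $\gamma_v-\Psi_D$ extends to a continuous function $\phi$ on $N_\Sigma$, then $g_{\textup{can}}-\phi\circ\textup{Trop}_v$ is the required Green's function.
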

	\begin{rem}\label{metrized-div}
		The global-local approach from Subsection~\ref{2-2} allows us to treat the characterizations from Description~\ref{toric-global-paf} and Proposition~\ref{trop-gf-archim} in a uniform way, without distinguishing between the finite and infinite places. Recall that each maximal ideal $\mathfrak{p}$ corresponds to a unique finite place $v \in \mathfrak{M}(K)_{\textup{fin}}$. Then, the assignments $(\mathcal{D}, g_{\mathcal{D}}) \mapsto \mathcal{D}_v$ and $\mathcal{D}_v \mapsto \overline{D}_v = (D_v, g_{D,v})$ induce a sequence of maps
		\begin{center}
			\begin{tikzcd}
				\overline{\textup{Div}}(\mathcal{X}_{\overline{\Sigma}})_{\mathbb{Q}} \arrow[r] & \textup{Div}(\mathcal{X}_{\widetilde{\Sigma}_v})_{\mathbb{Q}} \arrow[r, "{\textup{an}}"] & \overline{\textup{Div}}(X_{\Sigma,v})_{\mathbb{Q}}.
			\end{tikzcd}
		\end{center}
		The above sequence restricts to the respective subgroups of toric divisors. Now, for each $v$-adic toric arithmetic divisor $\overline{D}_v   = (D_v, g_{D,v}) \in  \overline{\textup{Div}}(X_{\Sigma,v})_{\mathbb{Q}}$, we can define the tropical Green's function $\gamma_{\overline{D}_v}$ as the unique continuous function satisfying $g_{D,v} = - \gamma_{\overline{D}_v} \circ \textup{Trop}_v$. In particular, if the $v$-adic toric arithmetic divisor $\overline{D}_v$ is the analytification of the toric divisor $\mathcal{D}_v$ on $\mathcal{X}_{\widetilde{\Sigma}_v}$, the tropical Green's function $\gamma_{\overline{D}_v}$ coincides with the piecewise affine function $\gamma_{\mathcal{D}_v}$ from Description~\ref{toric-global-paf}, and $\gamma_{\mathcal{D}_v} - \Psi_D$ extends to $N_{\Sigma}$. This is proven in Proposition~4.5.3~of~\cite{BPS}. It follows that the equivariant arithmetic divisor $(D, ( g_{D,v} ))$ (Definition~\ref{eqv}) obtained by analytification of a toric arithmetic divisor $\overline{\mathcal{D}}$ on $\mathcal{X}_{\overline{\Sigma}}$ is \textit{quasi-algebraic} in the sense of Definitions~1.5.13~of~\cite{BPS}~or~3.2~of~\cite{BMPS}. Therefore, all of the results in~\cite{BPS}~and~\cite{BMPS} apply to our situation.
	\end{rem}
	The previous remark justifies the following definition, which unifies the terminologies from Description~\ref{toric-global-paf} and Proposition~\ref{trop-gf-archim}.
	\begin{dfn}\label{trop-gf-OK-proj}
		Let $\overline{\mathcal{D}}$ be a toric arithmetic divisor on the he projective toric arithmetic variety  $\mathcal{X}_{\overline{\Sigma}}/\mathcal{O}_K$ given by the arithmetic fan $\overline{\Sigma} \in (N_{\mathbb{R}},K)$. The \textit{tropical Green's function associated to }$\overline{\mathcal{D}}$ is the $v$-tuple of continuous functions
		\begin{displaymath}
			\gamma_{\overline{\mathcal{D}}} \coloneqq ( \gamma_{\overline{\mathcal{D}}_v}) \in \prod_{v \in \mathfrak{M}(K)} C^0 (N_{\mathbb{R}}),
		\end{displaymath}
		where $\gamma_{\overline{\mathcal{D}}_v}$ is the associated rational piecewise function if $v \in \mathfrak{M}(K)_{\textup{fin}}$, or the associated tropical Green's function if $v \in \mathfrak{M}(K)_{\infty}$. We call $\gamma_{\overline{\mathcal{D}}_v}$ the $v$\textit{-adic tropical Green's function of }$\overline{\mathcal{D}}$.
	\end{dfn}
	The tropical Green's function $\gamma_{\overline{\mathcal{D}}}$ satisfies the following functorial property.
	\begin{prop}\label{trop-fun-functorial}
		Let $f \colon \mathcal{X}_{\overline{\Sigma}_1} \rightarrow  \mathcal{X}_{\overline{\Sigma}_2}$ be a proper toric morphism between projective arithmetic toric varieties over $\mathcal{O}_{K}$, induced by the morphism of arithmetic fans $\overline{\phi} \colon \overline{\Sigma}_1 \rightarrow \overline{\Sigma}_2$. Given a toric arithmetic divisor $\overline{\mathcal{D}}$ on $\mathcal{X}_{\overline{\Sigma}_2}$, we let $f^{\ast} \overline{\mathcal{D}}$ be its image under the induced pullback morphism $f^\ast \colon \overline{\textup{Div}}_{\mathbb{T}}(\mathcal{X}_{\overline{\Sigma}_2})_{\mathbb{Q}} \rightarrow \overline{\textup{Div}}_{\mathbb{T}}(\mathcal{X}_{\overline{\Sigma}_1})_{\mathbb{Q}}$. Then, the tropical Green's function of  $f^{\ast} \overline{\mathcal{D}}$ is given by
		\begin{displaymath}
			\gamma_{f^\ast \overline{\mathcal{D}}} = ( \gamma_{\overline{\mathcal{D}}_v} \circ \phi_{v,\mathbb{R}}), \quad \phi_{v,\mathbb{R}} \coloneqq \begin{cases}
				\phi_{\mathfrak{p},\mathbb{R}}, & v = v_{\mathfrak{p}} \in \mathfrak{M}(K)_{\textup{fin}} \\
				\phi_{\mathbb{R}}, & v \in \mathfrak{M}(K)_{\infty}.
			\end{cases}
		\end{displaymath}
		In particular, if the morphism $f$ is induced by a refinement $\overline{\Sigma}_1 \geq \overline{\Sigma}_2$, we have $\gamma_{f^\ast \overline{\mathcal{D}}} = \gamma_{\overline{\mathcal{D}}}$. 
	\end{prop}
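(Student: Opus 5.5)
The plan is to check the formula place by place, splitting into finite and infinite places. The pullback $f^{\ast}\overline{\mathcal{D}}$ is the pair $(f^{\ast}\mathcal{D}, g_{\mathcal{D}}\circ f^{\textup{an}}_{\Lambda})$. So its local data at each place is just the pullback of the local data of $\overline{\mathcal{D}}$ along the localized morphism. Since the tropical Green's function at each place is defined locally, it is enough to identify the local morphisms and use local functoriality.

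\emph{Infinite places.} Let $v \in \mathfrak{M}(K)_{\infty}$. Base change of $f$ to the generic fiber gives the toric morphism $f_K \colon X_{\Sigma_1} \rightarrow X_{\Sigma_2}$. By the construction in Proposition~\ref{proper-toric-morphisms}, $f_K$ is induced by the linear map $\phi$ of Definition~\ref{morphism-arith-fan-def}.(ii). The Green's function of $f^{\ast}\overline{\mathcal{D}}$ at $v$ is $g_{\mathcal{D},v}\circ f_{K,v}^{\textup{an}}$. Restricted to the analytic torus $U_{1,v}^{\textup{an}}$, this equals
\begin{displaymath}
	-\gamma_{\overline{\mathcal{D}}_v}\circ \textup{Trop}_v \circ f_{K,v}^{\textup{an}} = -\gamma_{\overline{\mathcal{D}}_v}\circ \phi_{\mathbb{R}} \circ \textup{Trop}_v,
\end{displaymath}
using the compatibility $\textup{Trop}_v\circ f^{\textup{an}} = \phi\circ \textup{Trop}_v$ from Construction~\ref{trop-functor}. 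The tropical Green's function is the unique continuous function with this property. Hence $\gamma_{(f^{\ast}\overline{\mathcal{D}})_v} = \gamma_{\overline{\mathcal{D}}_v}\circ\phi_{\mathbb{R}}$.

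\emph{Finite places.} Let $v = v_{\mathfrak{p}}$. Localizing $f$ at $\mathcal{O}_{K,\mathfrak{p}}$ gives the proper toric morphism $f_{\mathfrak{p}} \colon \mathcal{X}_{\widetilde{\Sigma}_{1,\mathfrak{p}}} \rightarrow \mathcal{X}_{\widetilde{\Sigma}_{2,\mathfrak{p}}}$, induced by $\phi_{\mathfrak{p}}$, as in the proof of Proposition~\ref{proper-toric-morphisms}. Localization commutes with pullback of Cartier divisors, so $(f^{\ast}\mathcal{D})_{\mathfrak{p}} = f_{\mathfrak{p}}^{\ast}\mathcal{D}_{\mathfrak{p}}$. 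Over a DVR, the support function of a pulled-back toric divisor is the composition $\Phi_{\mathcal{D}_{\mathfrak{p}}}\circ\phi_{\mathfrak{p},\mathbb{R}}$, which is the local statement from \cite{Per26} and \cite{BPS}. Restrict to the hyperplane $N_{\mathbb{R}}\times\{1\}$. By Definition~\ref{morphism-arith-fan-def}.(iii), $\phi_{\mathfrak{p},\mathbb{R}}$ extends $\phi_{\mathbb{R}}$. It is linear and preserves the grading, since it sends $N_1\times\{0\}$ to $N_2\times\{0\}$ and respects $\mathbb{R}_{\geq 0}$-heights. So it maps the level-one hyperplane to itself, and $\gamma_{(f^{\ast}\mathcal{D})_{\mathfrak{p}}} = \gamma_{\mathcal{D}_{\mathfrak{p}}}\circ\phi_{\mathfrak{p},\mathbb{R}}$ on that slice. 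Alternatively, one can argue analytically as in the archimedean case: combine Remark~\ref{metrized-div}, which identifies $\gamma_{\mathcal{D}_v}$ with the $v$-adic tropical Green's function, with functoriality of $\textup{Trop}_v$.

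\emph{Main obstacle.} The delicate point is the finite-place bookkeeping: that $\phi_{\mathfrak{p},\mathbb{R}}$ really maps $N_{1,\mathbb{R}}\times\{1\}$ into $N_{2,\mathbb{R}}\times\{1\}$, so the restriction makes sense. I would derive this from the fact that $f_{\mathfrak{p}}$ is an $\mathcal{O}_{K,\mathfrak{p}}$-morphism. The induced map on cocharacter lattices therefore commutes with the projection to the base coordinate $\mathbb{Z}$, which corresponds to the valuation. This is part of the local description in Proposition~3.3.2 of \cite{Per26}.

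\emph{Refinement case.} If $\overline{\Sigma}_1 \geq \overline{\Sigma}_2$, then all $\phi_{\mathfrak{p}}$ and $\phi$ are identities, so $\gamma_{f^{\ast}\overline{\mathcal{D}}} = \gamma_{\overline{\mathcal{D}}}$.
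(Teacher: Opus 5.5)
Your proposal is correct and follows essentially the same route as the paper, which reduces the statement place by place to the local functoriality results (Propositions~3.3.5 and~4.1.4 of \cite{Per26}) for finite and infinite places. Your archimedean argument via $\textup{Trop}_v\circ f^{\textup{an}}=\phi\circ\textup{Trop}_v$ simply unpacks the latter, and your remark that $\phi_{\mathfrak{p},\mathbb{R}}$ preserves the height-one slice usefully clarifies how the composition $\gamma_{\overline{\mathcal{D}}_v}\circ\phi_{\mathfrak{p},\mathbb{R}}$ in the statement should be read.
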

	\begin{proof}
		It follows from the local case, proven in Propositions~3.3.5~and~4.1.4~of~\cite{Per26} for the finite and infinite places, respectively.
	\end{proof}
	The next task is to state Theorem~4.8.1~of~\cite{BPS}. This is an analytic version of Corollary~\ref{div-nef-ample}, which characterizes Green's functions of psh type for a given relatively nef toric divisor in terms of the associated tropical Green's functions. We assume familiarity with the convex-analytic tools described in Appendix~A~of~\cite{Per26}, and we stick to the notation introduced there. In particular, we denote by $f^{\vee}$ the \textit{Legendre-Fenchel transform} of a concave function $f$. Now, let $\mathcal{D}$ be a relatively nef toric divisor on the projective arithmetic toric variety $\mathcal{X}_{\overline{\Sigma}}/\mathcal{O}_K$ given by the arithmetic fan $\overline{\Sigma} \in (N_{\mathbb{R}},K)$. Denote by $D$ its restriction to the generic fiber $X_{\Sigma}$, which is a nef toric divisor. By Description~\ref{toric-global-paf} and Corollary~\ref{div-nef-ample}, the support function $\Psi_D$ of $D$ is concave, and its stability set $\Delta_D \coloneqq \textup{stab}(\Psi_D)$ is a rational convex polytope in $M_{\mathbb{R}}$ (Theorem~3.2.7~of~\cite{Per26}). Then, we have the following result of \cite{BPS}.
	\begin{thm}\label{toric-psh-type}
		With the above notation, the assignments $g_{\mathcal{D}} \mapsto (\gamma_{\overline{\mathcal{D}}_v})_{v \in \mathfrak{M}(K)_{\infty}}$ and $\gamma_v \mapsto \gamma_{v}^{\vee}$ induce bijections between:
		\begin{enumerate}[label=(\roman*)]
			\item The set of Green's functions $g_{\mathcal{D}}$ of psh type for $\mathcal{D}$.
			\item The set of $v$-tuples $(\gamma_v)_{v \in \mathfrak{M}(K)_{\infty}}$ of concave functions $\gamma_v \colon N_{\mathbb{R}} \rightarrow \mathbb{R}$ such that, for each $v \in \mathfrak{M}(K)_{\infty}$, the function $\gamma_v - \Psi_D$ is bounded.
			\item The set of $v$-tuples $(\vartheta_v)_{v \in \mathfrak{M}(K)_{\infty}}$ of continuous concave functions $\vartheta_v \colon \Delta_D \rightarrow \mathbb{R}$.
		\end{enumerate}
	\end{thm}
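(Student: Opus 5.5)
The plan is to reduce the statement, one archimedean place $v$ at a time, to the local result over $\mathbb{C}$ (or $\mathbb{R}$), since Green's functions of a toric arithmetic divisor at infinity are just a tuple of local Green's functions. First, by the Remark following the definition of toric arithmetic divisors, a Green's function $g_{\mathcal{D}}$ for $\mathcal{D}_\Lambda$ that is invariant under complex conjugation is the same as a family $(g_{\mathcal{D},v})_{v\in\mathfrak{M}(K)_\infty}$ of Green's functions for $D_v$ on $X^{\textup{an}}_{\Sigma,v}$. Being of psh type is a local condition on each connected component $\mathcal{X}_{\lambda}(\mathbb{C})$, so $g_{\mathcal{D}}$ is of psh type exactly when every $g_{\mathcal{D},v}$ is. Proposition~\ref{trop-gf-archim} already gives the bijection $g_{\mathcal{D}}\mapsto(\gamma_{\overline{\mathcal{D}}_v})$ at the level of continuous Green's functions, with $\gamma_v-\Psi_D$ bounded. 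So it remains to show, place by place, that psh type corresponds to concavity of $\gamma_v$, and then that concave $\gamma_v$ with $\gamma_v-\Psi_D$ bounded correspond via Legendre--Fenchel duality to continuous concave functions on $\Delta_D$.

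For (i)$\Leftrightarrow$(ii) I would first pass to the $\mathbb{S}$-invariant setting. Note that an $\mathbb{S}$-invariant psh Green's function is automatically toric, and conversely any toric continuous Green's function is of the form $-\gamma_v\circ\textup{Trop}_v$. On the dense torus $U_v^{\textup{an}}\cong(\mathbb{C}^\times)^d$, the current $\omega_{D}(g)$ equals $\textup{dd}^{\textup{c}} g$ (the divisor does not meet $U$). For a function of the form $-\gamma\circ\textup{Trop}$ with $\textup{Trop}(z)=(-\log|z_i|)$, plurisubharmonicity is equivalent to convexity of $-\gamma$, i.e.\ concavity of $\gamma$. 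This is the classical fact that $\mathbb{S}$-invariant psh functions are convex functions of $\log|z|$. The delicate point is extending positivity of the current across the boundary $X_\Sigma\setminus U$. Here I would use that $g+\log|f|^2$ is continuous near the boundary for local equations $f$. In a chart $X_\sigma$ with $\sigma$ smooth, the local function $g+\log|f|^2$ corresponds to $-(\gamma_v-m_\sigma)$, where $m_\sigma$ is the linear piece of $\Psi_D$ on $\sigma$. This function is bounded, and a bounded psh function on the complement of a divisor extends to a psh function (Riemann extension for psh functions, Grauert--Remmert). Hence positivity on the torus suffices. Conversely, if $g$ is of psh type, its restriction to the torus is psh, so $\gamma_v$ is concave. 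I expect this extension step, together with justifying the convexity criterion for merely continuous functions (via regularization by convolution on $N_{\mathbb{R}}$, which preserves concavity and $\mathbb{S}$-invariance), to be the main obstacle. Since the paper presents this as a statement of \cite{BPS}, I would ultimately cite their Theorem~4.8.1 for the local archimedean case, applied to each $X_{\Sigma,v}$ separately.

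For (ii)$\Leftrightarrow$(iii), I would apply Legendre--Fenchel duality from Appendix~A of \cite{Per26}. A concave function $\gamma_v$ with $|\gamma_v-\Psi_D|\le C$ has the same recession function $\Psi_D$, so its stability set is $\textup{stab}(\Psi_D)=\Delta_D$. Moreover, $\gamma_v^\vee$ is a closed concave function with effective domain $\Delta_D$, and $|\gamma_v^\vee-\Psi_D^\vee|\le C$ with $\Psi_D^\vee=\iota_{\Delta_D}$, so $\gamma_v^\vee$ is bounded on $\Delta_D$. A bounded closed concave function on a polytope is continuous, since polytopes are locally simplicial (Rockafellar, Thm.~10.2). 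Conversely, for a continuous concave $\vartheta_v$ on $\Delta_D$, the transform $\vartheta_v^\vee$ is concave with $\vartheta_v^\vee-\Psi_D$ bounded by $\|\vartheta_v\|_\infty$, and involutivity $(\gamma^{\vee})^{\vee}=\gamma$ for closed concave functions gives the bijection. Combining the three steps over all $v\in\mathfrak{M}(K)_\infty$ yields the theorem.
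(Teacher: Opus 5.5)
Your proposal follows the paper, which gives no independent argument but imports \cite{BPS}, Theorem~4.8.1, place by place, identifying the conjugation-invariant $g_{\mathcal{D},\Lambda}$ with the tuple $(g_{\mathcal{D},v})_{v \in \mathfrak{M}(K)_{\infty}}$; your Legendre--Fenchel argument for (ii)$\Leftrightarrow$(iii) is also correct. One caution if your sketch of (i)$\Leftrightarrow$(ii) were meant to stand on its own: Proposition~\ref{trop-gf-archim} characterizes continuous Green's functions by $\gamma_v - \Psi_D$ extending \emph{continuously} to $N_{\Sigma}$, not merely by boundedness, so proving that every tuple in (ii) comes from a Green's function also requires showing that concavity plus boundedness forces such a continuous extension (the Grauert--Remmert extension of a bounded psh function is only guaranteed to be upper semicontinuous), and this is exactly the nontrivial input you are taking from \cite{BPS}.
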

	\begin{rem}
		The combination of Corollary~\ref{div-nef-ample} and Theorem~\ref{toric-psh-type} gives a complete description of the cone $\overline{\textup{Div}}_{\mathbb{T}}^{+}(\mathcal{X}_{\overline{\Sigma}})_{\mathbb{Q}}$ of semipositive toric arithmetic divisors on the projective toric arithmetic variety $\mathcal{X}_{\overline{\Sigma}}/\mathcal{O}_K$ in terms of their associated tropical Green's functions.
	\end{rem}
	Let $\overline{\mathcal{D}}$ be a semipositive toric arithmetic divisor on $\mathcal{X}_{\overline{\Sigma}}$ and $\gamma_{\overline{\mathcal{D}}} =( \gamma_{\overline{\mathcal{D}}_v})$ is its associated tropical Green's function. By Description~\ref{toric-global-paf}, for all but a finite number of places $v \in \mathfrak{M}(K)$, the $v$-adic tropical Green's function $\gamma_{\overline{\mathcal{D}}_v}$ coincides with the support function $\Psi_D$ of the restiction of $\mathcal{D}$ to the generic fiber $X_{\Sigma}$. By Remark~A.1.7, the Legendre-Fenchel dual of $\Psi_D$ is the indicator function $\iota_{\Delta_D}$ of the convex polytope $\Delta_D$ associated to $D$. Therefore, for each $y \in \Delta_D$, we have $\gamma_{\overline{\mathcal{D}}_v}^{\vee}(y)=0$ for all but a finite number of places $v$. This justifies the following definition.
	\begin{dfn}\label{global-roof-fun}
		With the above notation, the $v$\textit{-adic roof function} of $\overline{\mathcal{D}}$ is the Legendre-Fenchel transform $\vartheta_{\overline{\mathcal{D}}_v}$ of $\gamma_{\overline{\mathcal{D}}_v}$. Then, the \textit{global roof function} of $\overline{\mathcal{D}}$ is the continuous concave function $\vartheta_{\overline{\mathcal{D}}} \colon \Delta_D \rightarrow \mathbb{R}$ given by the sum
		\begin{displaymath}
			\vartheta_{\overline{\mathcal{D}}} \coloneqq \sum_{v \in \mathfrak{M}(K)} \delta_v \cdot \vartheta_{\overline{\mathcal{D}}_v}, \quad \delta_v \coloneqq [K_v : \mathbb{Q}_v]/[K : \mathbb{Q}].
		\end{displaymath}
		We emphasize that this is a finite sum.
	\end{dfn}
	Now, we state Theorem~6.1~of~\cite{BMPS}, which characterizes (arithmetic) nefness of toric arithmetic divisors in terms of the associated global roof functions.
	\begin{thm}\label{global-nef}
		Let $\overline{\Sigma}$ be an arithmetic fan in $(N_{\mathbb{R}},K)$ and  $\mathcal{X}_{\overline{\Sigma}}/\mathcal{O}_K$ the corresponding arithmetic toric variety. Then, a semipositive toric arithmetic divisor $\overline{\mathcal{D}}$ is nef if and only if its global roof function $\vartheta_{\overline{\mathcal{D}}}$ is non-negative, i.e., $\vartheta_{\overline{\mathcal{D}}}(y) \geq 0$ for all $y \in \Delta_D$.
	\end{thm}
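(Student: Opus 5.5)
The plan is to reduce Theorem~\ref{global-nef} to the characterization of nefness via heights of points, using the toric structure to compute heights of the relevant points. Nefness of a semipositive $\overline{\mathcal{D}}$ means $\textup{h}(x;\overline{\mathcal{D}})\geq 0$ for all $x\in\mathcal{X}_{\overline{\Sigma}}(\overline{K})$. By Remark~\ref{metrized-div}, the analytification of $\overline{\mathcal{D}}$ is a quasi-algebraic semipositive toric metrized divisor, so the height of any point can be written as a sum of local contributions $\sum_v \delta_v\,(\ldots)$. Each local contribution is expressed through $\gamma_{\overline{\mathcal{D}}_v}$ evaluated at $\textup{Trop}_v(x)$, after choosing a rational section $s_D$ whose divisor avoids $x$, for instance $s=\chi^{m}s_D$ with $m\in\Delta_D\cap M_{\mathbb{Q}}$.

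\emph{Necessity.} Assume $\overline{\mathcal{D}}$ is nef. I would realize a minimum over heights of points by the value $\vartheta_{\overline{\mathcal{D}}}(y)$. The cleanest tool is the successive minima / essential minimum: for a toric metrized divisor the essential minimum equals $\max_{\Delta_D}\vartheta_{\overline{\mathcal{D}}}$, which alone is not enough. Instead, I would restrict to torus-invariant subvarieties and translates. For a vertex $y$ of $\Delta_D$ there is a torus fixed point $x_\sigma$, and its height equals $\vartheta_{\overline{\mathcal{D}}}(y)$, because $\gamma_v^\vee$ at a vertex is read off from the restriction to the fixed point. For a general $y$ I would use a twisted version: consider $\overline{\mathcal{D}}'=\overline{\mathcal{D}}-\overline{\textup{div}}(\chi^m)$-type twists together with positivity of $\overline{\mathcal{D}}$ restricted to orbit closures, and then use concavity. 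The intended route, as in \cite{BMPS}, is this. Nefness implies that $\overline{\mathcal{D}}+\varepsilon\overline{\mathcal{A}}$ has nonnegative height on all subvarieties, hence on the toric variety itself and all its orbit closures. Heights of orbit closures are integrals of $\vartheta$ over faces (by \cite{BPS}, Theorem~5.2.5), and passing to the limit forces $\vartheta\geq 0$ at each point. For an interior point I would localize via a toric blow-up and translate by an algebraic torus point so that the relevant height is $\vartheta_{\overline{\mathcal{D}}}(y)$. This is done by picking points $x\in U(\overline{K})$ whose tropicalizations at all places are chosen at $v$-adic subgradient points $u_v\in\partial\vartheta_{\overline{\mathcal{D}}_v}(y)$. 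Then $\textup{h}(x)=\sum_v\delta_v(\langle y,u_v\rangle+\gamma_v(u_v))$ up to the product formula, which gives $\sum_v\delta_v\vartheta_v(y)$.

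\emph{Sufficiency.} Assume $\vartheta_{\overline{\mathcal{D}}}\geq0$. For any point $x\in U(\overline{K})$ and any $y\in\Delta_D$, the Fenchel inequality gives $-\gamma_v(\textup{Trop}_v x)\geq \vartheta_v(y)-\langle y,\textup{Trop}_v x\rangle$. Summing with weights and using the product formula to kill $\sum_v\delta_v\langle y,\textup{Trop}_v x\rangle$ (for rational $y$, and by density in general) yields $\textup{h}(x)\geq\vartheta_{\overline{\mathcal{D}}}(y)\geq0$. Points in boundary orbits are handled by induction on orbit closures, since the restriction to $\mathbf{V}(\sigma)$ is again toric with roof function the restriction of $\vartheta$ to a face.

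The main obstacle is necessity: producing points with prescribed tropicalizations at finitely many places with subgradients realizable over $\overline{K}$. Subgradients must be approximated by rational ones, which calls for an approximation/density argument using that only finitely many $v$ matter.
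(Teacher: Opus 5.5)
The gap is in necessity; your sufficiency half is correct and is the standard argument. The Fenchel inequality at $u=\textup{Trop}_v(x)$, combined with the product formula $\sum_v\delta_v\textup{Trop}_v(x)=0$, gives $\textup{h}(x;\overline{\mathcal{D}})\geq\vartheta_{\overline{\mathcal{D}}}(y)$ for every $x\in U(\overline{K})$ and every $y\in\Delta_D$. The product formula is an identity of vectors in $N_{\mathbb{R}}$, so no density argument in $y$ is needed. The same computation on $\mathbf{V}(\sigma)$, after twisting by $\chi^{m}$ with $m$ in the face $F_\sigma$ of $\Delta_D$ corresponding to $\sigma$, handles points of $O(\sigma)$ with $y\in F_\sigma$.

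For necessity, you state the right key fact: the height of the fixed point $x_\sigma$ of a maximal cone $\sigma$ is $\vartheta_{\overline{\mathcal{D}}}(m_\sigma)$. But the mechanism you then adopt for a general $y$ cannot work. A point $x\in U(\overline{K})$ with $\textup{Trop}_v(x)=u_v\in\partial\vartheta_{\overline{\mathcal{D}}_v}(y)$ for all $v$ must satisfy $\sum_v\delta_v u_v=0$. Summing the supergradient inequalities $\vartheta_{\overline{\mathcal{D}}_v}(z)\leq\vartheta_{\overline{\mathcal{D}}_v}(y)+\langle z-y,u_v\rangle$ with weights $\delta_v$ then gives $\vartheta_{\overline{\mathcal{D}}}(z)\leq\vartheta_{\overline{\mathcal{D}}}(y)$ for all $z$. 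So such an $x$ exists only if $y$ is a maximum point of $\vartheta_{\overline{\mathcal{D}}}$.

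Your own sufficiency computation says the same thing: every torus point satisfies $\textup{h}(x;\overline{\mathcal{D}})\geq\max_{\Delta_D}\vartheta_{\overline{\mathcal{D}}}$. Torus points only see the maximum of the roof function (the essential minimum) and can never witness a negative value elsewhere. Translating by torus points or passing to toric blow-ups does not help, because neither changes $\vartheta_{\overline{\mathcal{D}}}$. A translation by $t$ shifts each $\vartheta_{\overline{\mathcal{D}}_v}$ by $-\langle\cdot,\textup{Trop}_v(t)\rangle$, and these shifts cancel by the product formula. Hence the obstacle you single out, realizing subgradients rationally, is not the real one; the product formula is. Likewise, non-negativity of $\int_F\vartheta_{\overline{\mathcal{D}}}$ over positive-dimensional faces $F$ gives no pointwise information, and the perturbation by $\varepsilon\overline{\mathcal{A}}$ plays no role.

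What closes necessity is the concavity step you mention only in passing. Since $D$ is nef, $\Delta_D=\textup{Conv}(\lbrace m_\sigma\,\vert\,\sigma\in\Sigma(d)\rbrace)$, where $m_\sigma\in M_{\mathbb{Q}}$ is defined by $\Psi_D|_{\sigma}=\langle m_\sigma,\cdot\rangle$. Nefness gives $\vartheta_{\overline{\mathcal{D}}}(m_\sigma)=\textup{h}(x_\sigma;\overline{\mathcal{D}})\geq 0$ for every $\sigma\in\Sigma(d)$. For $y=\sum_i\lambda_i m_{\sigma_i}$ with $\lambda_i\geq0$ and $\sum_i\lambda_i=1$, concavity gives $\vartheta_{\overline{\mathcal{D}}}(y)\geq\sum_i\lambda_i\vartheta_{\overline{\mathcal{D}}}(m_{\sigma_i})\geq0$.

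Your sufficiency argument also yields $\textup{h}(x;\overline{\mathcal{D}})\geq\max_{F_\sigma}\vartheta_{\overline{\mathcal{D}}}\geq\min_{\Delta_D}\vartheta_{\overline{\mathcal{D}}}$ for $x\in O(\sigma)$. Together with the concavity step, this shows the infimum of heights of points is $\min_{\Delta_D}\vartheta_{\overline{\mathcal{D}}}$, attained at a torus fixed point. That is the standard route to this criterion. The paper itself does not reprove it; it quotes Theorem~6.1 of \cite{BMPS}.
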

	\begin{ex}\label{canonical-mod-div-2}
		Recall the situation from Example~\ref{canonical-mod-div-1}: Given a smooth projective fan $\Sigma$ in $N_{\mathbb{R}}$ and a toric divisor $D$ on the toric variety $X_{\Sigma}/K$, the canonical model $\mathcal{D}_{\textup{can}}$ of $D$ is the toric divisor on the canonical model $\mathcal{X}_{\Sigma}/\mathcal{O}_K$ of $X_{\Sigma}$ satisfying $\gamma_{\mathcal{D}_{\mathfrak{p}}} = \Psi_D$ for every $\mathfrak{p} \in \textup{Max}(\mathcal{O}_K)$. By Proposition~\ref{trop-gf-archim}, the $v$-tuple $(\Psi_D)_{v \in \mathfrak{M}(K)_{\infty}}$ determines a Green's function $g_{\mathcal{D}_{\textup{can}}}$ for $\mathcal{D}_{\textup{can}}$ of continuous type. Then, the pair the pair $\overline{\mathcal{D}}_{\textup{can}} = (\mathcal{D}_{\textup{can}},g_{\mathcal{D}_{\textup{can}}})$ is called the \textit{canonical toric arithmetic divisor induced by }$D$. By the above discussion, $D$ is nef if and only if $\overline{\mathcal{D}}_{\textup{can}}$ is arithmetically nef. Moreover, its global roof function and all of its $v$-adic roof functions are equal to the indicator function $\iota_{\Delta_D}$ of its associated convex polytope $\Delta_D$.
	\end{ex}
	Now, we recall Proposition~5.2.4~of~\cite{BPS}, which relates the arithmetic intersection numbers $\overline{\mathcal{D}}_0 \cdot \ldots \cdot \overline{\mathcal{D}}_d$ in Theorem~\ref{global-intersection} with the local toric heights $\textup{h}^{\textup{tor}}(X_{\Sigma,v} ; \overline{D}_{0,v}, \ldots , \overline{D}_{d,v} )$ of~Definition~4.1.11~of~\cite{Per26}. We emphasize that the local toric height was already introduced in~\cite{BPS}, but it was stated in terms of metrized line bundles, instead of arithmetic divisors.
	\begin{prop}\label{sum-of-heights}
		Let $\overline{\Sigma}$ be an arithmetic fan in $(N_{\mathbb{R}},K)$, $\mathcal{X}_{\overline{\Sigma}}/\mathcal{O}_K$ be the corresponding arithmetic toric variety of relative dimension $d$, and $X_{\Sigma}/K$ its generic fiber. For each $0 \leq i \leq d$, let $\overline{\mathcal{D}}_i$ be a semipositive toric arithmetic divisor on $\mathcal{X}_{\overline{\Sigma}}$ and denote by $\overline{D}_{i,v}$ the corresponding toric arithmetic divisor on $X_{\Sigma,v}$ (Remark~\ref{metrized-div}). Then, the following identity holds
		\begin{displaymath}
			\overline{\mathcal{D}}_0 \cdot \ldots \cdot \overline{\mathcal{D}}_d = \sum_{v \in \mathfrak{M}(K)} \delta_{v} \cdot \textup{h}^{\textup{tor}}(X_{\Sigma,v} ; \overline{D}_{0,v}, \ldots , \overline{D}_{d,v} ),
		\end{displaymath}
		where the $\delta_v$ are the constants in Definition~\ref{global-roof-fun}. This extends by linearity to integrable toric arithmetic divisors.
	\end{prop}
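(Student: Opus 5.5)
The plan is to reduce the identity to the adelic description of the Gillet--Soulé/Zhang arithmetic intersection number, then localize each summand using the toric structure. First treat the case where all $\overline{\mathcal{D}}_i$ are semipositive of smooth type at the archimedean places (and model-given at finite places). In this case, write the arithmetic intersection number via the arithmetic Bézout / induction formula of Bost--Gillet--Soulé. Choose the canonical rational sections $s_i=\chi^{0}$-type toric sections: the torus-invariant rational sections whose divisors are the toric divisors $\mathcal{D}_i$ themselves. Express $\overline{\mathcal{D}}_0 \cdot \ldots \cdot \overline{\mathcal{D}}_d$ as the height of the cycle $\textup{div}(s_d)$ with respect to the remaining divisors, plus an archimedean integral $\int g_{\mathcal{D}_d}\,\omega(g_{\mathcal{D}_0})\wedge\cdots\wedge\omega(g_{\mathcal{D}_{d-1}})$. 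Because the toric sections need not meet properly, I would instead follow \cite{BPS}: first move to a generic translate, or argue directly with the local heights of Definition~4.1.11 of \cite{Per26}.

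The key step is the adelic decomposition. By the work of Zhang \cite{Z95} and Gubler \cite{Gub98}, the global height is a sum over $v\in\mathfrak{M}(K)$ of local heights $\delta_v\,\textup{h}(X_{\Sigma,v};\overline{D}_{0,v},\ldots,\overline{D}_{d,v})$, each computed with respect to the same choice of sections. At finite places these are the intersection numbers on the localized models $\mathcal{X}_{\widetilde{\Sigma}_{\mathfrak{p}}}$; at infinite places they are the Green-current integrals. Then show that, for the toric choice of sections, the local height at $v$ equals the local toric height $\textup{h}^{\textup{tor}}$. The reason is that $\textup{h}^{\textup{tor}}$ is defined as the local height relative to the canonical metrics $\overline{D}_{i,v}^{\textup{can}}$. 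The difference between the two is a sum of local heights of the canonical toric arithmetic divisors of Example~\ref{canonical-mod-div-2}. Their global height vanishes: all their roof functions are $\iota_{\Delta_{D_i}}$, and the height of a canonical model is $0$. For almost all $v$ the divisor is canonical, so the sum is finite.

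Next, pass from smooth type to semipositive continuous type. Approximate each archimedean Green's function uniformly by smooth psh $\mathbb{S}$-invariant ones. Theorem~\ref{toric-psh-type} allows this by uniformly approximating the concave functions $\gamma_v$, for example by convolution with a mollifier, which preserves concavity and boundedness of $\gamma_v-\Psi_D$. Both sides are continuous under uniform convergence: the left by Zhang's Theorem~1.4, and the right by continuity of the local toric height (Per26). Finally, multilinearity gives the extension to integrable divisors.

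I expect the main obstacle to be the identification of the global-minus-local correction terms. This step needs careful handling of the non-proper intersection of toric sections in the induction formula. To resolve it, I would first try citing the quasi-algebraic framework of \cite{BPS} (Remark~\ref{metrized-div}), where this statement is Proposition~5.2.4 proved for metrized line bundles. That would reduce the step to translating between arithmetic divisors and metrized line bundles.
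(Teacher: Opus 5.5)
Your route is, in substance, the paper's. The paper gives no independent proof of this statement: it recalls Proposition~5.2.4 of~\cite{BPS} and uses Remark~\ref{metrized-div} (the analytified metrics are quasi-algebraic) to pass between toric arithmetic divisors on $\mathcal{X}_{\overline{\Sigma}}$ and metrized line bundles, which is exactly your final fallback. Your outline is the argument behind that citation: write the global number as $\sum_v \delta_v$ times local heights for a common choice of sections, subtract the canonical local heights, and use that the canonical global height vanishes. However, two steps of your reconstruction would not work as written.

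\emph{First, the vanishing of the canonical height.} You justify that the global height of the canonical divisors vanishes by noting that their roof functions are $\iota_{\Delta_{D_i}}$. Turning that into ``height $=0$'' requires the integral formula of Theorem~\ref{global-toric-h-proj}. That theorem is deduced from the very identity you are proving, so the step is circular. You need an independent argument. One option is the toric endomorphism $[k]$ of the canonical model $\mathcal{X}_{\Sigma}$ induced by $k\cdot\textup{id}_N$.
\begin{itemize}
\item By Proposition~\ref{trop-fun-functorial}, $[k]^{\ast}\overline{\mathcal{D}}_{i,\textup{can}} = k\,\overline{\mathcal{D}}_{i,\textup{can}}$ for each $i$.
\item The projection formula for the degree-$k^d$ map $[k]$ then gives $k^{d+1}\,\overline{\mathcal{D}}_{0,\textup{can}}\cdots\overline{\mathcal{D}}_{d,\textup{can}} = k^{d}\,\overline{\mathcal{D}}_{0,\textup{can}}\cdots\overline{\mathcal{D}}_{d,\textup{can}}$, so the product is $0$.
\end{itemize}
Alternatively, cite this classical fact directly.

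\emph{Second, the smoothing step.} Mollifying $\gamma_v$ on $N_{\mathbb{R}}$ preserves concavity and the boundedness of $\gamma_v-\Psi_D$. It does not, however, make $-\gamma_v\circ\textup{Trop}_v$ a Green's function of smooth type on the compact manifold $X_{\Sigma}(\mathbb{C})$. Smoothness along the boundary divisors constrains how $\gamma_v-\Psi_D$ behaves at infinity in the coordinates $e^{-\langle m,x\rangle}$, and convolution with a compactly supported kernel does not improve this. For instance, a concave $\gamma_v$ with $\gamma_v-\Psi_D\sim -1/t$ along a ray stays so after mollification, which corresponds to $-1/\log|z|$ near the divisor.

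The detour through smooth type is unnecessary anyway. The decomposition into local heights holds directly for continuous semipositive quasi-algebraic metrics. The toric local height is independent of the choice of sections meeting properly, because changing a section shifts both local heights by the same metric-independent term. So the non-proper intersection of the toric sections is handled by simply choosing other sections, with no need for translates or for the Bost--Gillet--Soulé induction formula.
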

	The final result of this subsection is Theorem~5.2.5~of~\cite{BPS}, which gives a convex-analytic formula for the arithmetic intersection numbers of semipositive toric arithmetic divisors. We first recall the definition of the mixed integral, which is necessary to state this result.
	\begin{dfn}\label{mixed-integral}
		For each $i = 0, \ldots , d$, let $\Delta_i$ be a compact convex subset of $M_{\mathbb{R}}$ and $g_i \colon \Delta_i \rightarrow \mathbb{R}$ be a concave function. The \textit{mixed integral} of $g_0, \ldots , g_d$ is defined as
		\begin{displaymath}
			\textup{MI}_M (g_0, \ldots , g_d ) \coloneqq  \sum_{j=0}^{d} (-1)^{d-j} \sum_{0 \leq i_0 < \ldots < i_j \leq d } \int_{\Delta_{i_0} + \ldots + \Delta_{i_j} } g_{i_0} \boxplus \ldots \boxplus g_{i_j} \textup{ dVol}_M . 
		\end{displaymath}
	\end{dfn}
	\begin{thm}\label{global-toric-h-proj}
		Let $\overline{\mathcal{D}}_0, \ldots, \overline{\mathcal{D}}_d \in  \overline{\textup{Div}}^{+}_{\mathbb{T}}(\mathcal{X}_{\overline{\Sigma}})_{\mathbb{Q}}$. For each $ 0 \leq i \leq d$, denote by $\vartheta_{i,v}$ the $v$-adic roof function of $\overline{\mathcal{D}}_i$. Then,
		\begin{displaymath}
			\overline{\mathcal{D}}_0 \cdot \ldots \cdot \overline{\mathcal{D}}_d = \sum_{v \in \mathfrak{M}(K)} \delta_{v} \cdot  \textup{MI}_M ( \vartheta_{0,v}, \ldots , \vartheta_{d,v}).
		\end{displaymath}
		In particular, if $\overline{\mathcal{D}}_0 = \ldots = \overline{\mathcal{D}}_d = \overline{\mathcal{D}}$, we have the formula
		\begin{displaymath}
			\overline{\mathcal{D}}^{d+1} = (d+1)! \int_{\Delta_D} \vartheta_{\overline{\mathcal{D}}} \, \textup{dVol}_M
		\end{displaymath}
		where $\Delta_D$ and $\vartheta_{\overline{\mathcal{D}}}$ are the corresponding convex polytope and global roof function of $\overline{\mathcal{D}}$.
	\end{thm}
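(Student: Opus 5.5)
The plan is to reduce the global statement to the local toric height formulas. By Proposition~\ref{sum-of-heights}, the arithmetic intersection number $\overline{\mathcal{D}}_0 \cdot \ldots \cdot \overline{\mathcal{D}}_d$ is the weighted sum $\sum_v \delta_v \, \textup{h}^{\textup{tor}}(X_{\Sigma,v}; \overline{D}_{0,v}, \ldots, \overline{D}_{d,v})$. This sum is finite: by Description~\ref{toric-global-paf}, for all $v$ outside a finite set every $\overline{D}_{i,v}$ is the canonical one, with $\gamma_{\overline{\mathcal{D}}_i,v} = \Psi_{D_i}$. Hence it suffices to prove, place by place, the local identity
\begin{displaymath}
	\textup{h}^{\textup{tor}}(X_{\Sigma,v}; \overline{D}_{0,v}, \ldots, \overline{D}_{d,v}) = \textup{MI}_M(\vartheta_{0,v}, \ldots, \vartheta_{d,v}),
\end{displaymath}
and to check that it is consistent at the places where everything is canonical.

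For the local identity, first treat the diagonal case $\overline{D}_{0,v} = \ldots = \overline{D}_{d,v}$. There I would use the local result that the toric local height equals $(d+1)!\int_{\Delta_D} \vartheta_v \, \textup{dVol}_M$ (the local height formula of \cite{BPS}, recalled as the local toric height in \cite{Per26}). At canonical places $\vartheta_v = \iota_{\Delta_D}$ is zero on $\Delta_D$, so the contribution vanishes, and this is compatible with the finiteness above.

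Next, pass from the diagonal to the mixed case by polarization. Both sides are symmetric and multilinear in the semipositive divisors. On the height side this follows from the intersection pairing. On the roof-function side it holds because $\vartheta_{\overline{D}+\overline{E},v} = \vartheta_{\overline{D},v} \boxplus \vartheta_{\overline{E},v}$: the Legendre--Fenchel transform turns sums of concave tropical Green's functions into sup-convolutions, and $\Delta_{D+E} = \Delta_D + \Delta_E$. The mixed integral of Definition~\ref{mixed-integral} is exactly the polarization formula
\begin{displaymath}
	\textup{MI}_M(g_0,\ldots,g_d) = \sum_{j}(-1)^{d-j}\sum_{i_0<\ldots<i_j} \frac{1}{(d+1)!}\,(d+1)!\int_{\Delta_{i_0}+\ldots+\Delta_{i_j}} g_{i_0}\boxplus\ldots\boxplus g_{i_j}.
\end{displaymath}
Applying the diagonal formula to each $\overline{D}_{i_0}+\ldots+\overline{D}_{i_j}$, which is again semipositive, and using the standard inclusion--exclusion polarization identity for a symmetric multilinear form therefore yields the mixed formula.

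Finally, the global diagonal statement follows by summing $\delta_v (d+1)!\int_{\Delta_D}\vartheta_v$ over the finitely many nonzero places and exchanging the finite sum with the integral, which gives $(d+1)!\int_{\Delta_D}\vartheta_{\overline{\mathcal{D}}}$.

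The main obstacle is justifying the local diagonal formula at non-archimedean places for toric models over $\mathcal{O}_{K,\mathfrak{p}}$ given by arbitrary fans $\widetilde{\Sigma}_{\mathfrak{p}}$. Remark~\ref{metrized-div} handles this by showing that the analytified divisors are quasi-algebraic, so the results of \cite{BPS} apply directly. I would lean on that remark, and treat archimedean places via Theorem~\ref{toric-psh-type} together with the Monge--Ampère computation of \cite{BPS}.
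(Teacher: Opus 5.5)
Your proposal is correct and takes the same route as the paper. The paper does not reprove the statement; it quotes Theorem~5.2.5 of \cite{BPS}, which is obtained exactly as you outline: the decomposition into local toric heights (Proposition~\ref{sum-of-heights}) is combined with the local formula $\textup{h}^{\textup{tor}}(X_{\Sigma,v};\overline{D}_{0,v},\ldots,\overline{D}_{d,v}) = \textup{MI}_M(\vartheta_{0,v},\ldots,\vartheta_{d,v})$, which applies at the non-archimedean places thanks to the quasi-algebraicity noted in Remark~\ref{metrized-div}. Your polarization step is also sound, because $\vartheta_{\overline{D}+\overline{E},v}=\vartheta_{\overline{D},v}\boxplus\vartheta_{\overline{E},v}$ and $\Delta_{D+E}=\Delta_D+\Delta_E$, so inclusion--exclusion for the symmetric multilinear pairing reproduces Definition~\ref{mixed-integral} term by term.
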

	
	\subsection{Toric adelic divisors}\label{4--2}
	In this subsection, we introduce a toric analog of the group of adelic divisors of Yuan and Zhang~\cite{Y-Z}. We freely use the notation introduced in Section~\ref{global-arith-theory}. Now, we fix a (quasi-projective) toric arithmetic variety $\mathcal{X}_0 / \mathcal{O}_K$. Then, the group of  \textit{toric model arithmetic divisors} on $\mathcal{X}_0 / \mathcal{O}_K$ is defined as the direct limit
	\begin{displaymath}
		\overline{\textup{Div}}_{\mathbb{T}}(\mathcal{X}_0/\mathcal{O}_K )_{\textup{mod}} \coloneqq \varinjlim_{\mathcal{X} \in \textup{PM}_{\mathbb{T}}(\mathcal{X}_0 /\mathcal{O}_K)} \overline{\textup{Div}}_{\mathbb{T}}(\mathcal{X})_{\mathbb{Q}}.
	\end{displaymath}
	Here, $\textup{PM}_{\mathbb{T}}(\mathcal{X}_0 /\mathcal{O}_K)$ is the category of toric projective models of $\mathcal{X}_0$ over $\mathcal{O}_K$, and the maps between arithmetic divisors are the induced pullback morphisms. By Theorem~\ref{effective-fpqc-des-tor}, any such model must be of the form $\mathcal{X}_{\overline{\Sigma}}$ for some arithmetic fan $\overline{\Sigma}$ in $(N_{\mathbb{R}},K)$. Then, a boundary arithmetic divisor $(\mathcal{X},\overline{\mathcal{B}})$ of $\mathcal{X}_{0} / \mathcal{O}_{K}$ is \textit{toric} if both the projective model $\mathcal{X} = \mathcal{X}_{\overline{\Sigma}}$ and the divisor $\overline{\mathcal{B}}$ are toric. In this case, there is a boundary norm $\| \cdot \|_{\overline{\mathcal{B}}}$ on the group $\overline{\textup{Div}}_{\mathbb{T}}(\mathcal{X}_0/\mathcal{O}_K )_{\textup{mod}}$. It is given by
	\begin{displaymath}
		\| \overline{\mathcal{D}} \|_{\overline{\mathcal{B}}} \coloneqq \inf \lbrace \varepsilon \in \mathbb{Q}_{> 0} \, \vert \, - \varepsilon \cdot \overline{\mathcal{B}} \leq \overline{\mathcal{D}} \leq \varepsilon \cdot \overline{\mathcal{B}} \rbrace.
	\end{displaymath}
	This norm induces a boundary topology on $\overline{\textup{Div}}_{\mathbb{T}}(\mathcal{X}_0/\mathcal{O}_K )_{\textup{mod}}$, which is independent of the choice of toric boundary divisor. Then, we have the following definition.
	\begin{dfn}\label{adelic-dfn}
		The \textit{group of toric adelic divisors} of $\mathcal{X}_{0}$ over $\mathcal{O}_K$ is defined as the completion $\overline{\textup{Div}}_{\mathbb{T}} (\mathcal{X}_0/\mathcal{O}_K)$ of the group $\overline{\textup{Div}}_{\mathbb{T}} (\mathcal{X}_0/\mathcal{O}_K)_{\textup{mod}}$ with respect to a boundary norm. A \textit{toric adelic divisor} on $\mathcal{X}_0/\mathcal{O}_K$ is an element of this group. The semipositive cone $ \overline{\textup{Div}}_{\mathbb{T}}^{+}(\mathcal{X}_0/\mathcal{O}_K )$ and the nef cone $ \overline{\textup{Div}}_{\mathbb{T}}^{\textup{nef}}(\mathcal{X}_0 / \mathcal{O}_K )$ are the closures in the boundary topology of the respective cones of toric model divisors. Then, the space of \textit{integrable toric adelic divisors} is the difference
		\begin{displaymath}
			\overline{\textup{Div}}_{\mathbb{T}}^{\textup{int}}(\mathcal{X}_0/ \mathcal{O}_K ) \coloneqq  \overline{\textup{Div}}_{\mathbb{T}}^{\textup{nef}}(\mathcal{X}_0 / \mathcal{O}_K ) - \overline{\textup{Div}}_{\mathbb{T}}^{\textup{nef}}(\mathcal{X}_0 / \mathcal{O}_K ).
		\end{displaymath}
	\end{dfn}
	\begin{rem}
		In the following, we will restrict to the case when the quasi-projective toric arithmetic variety $\mathcal{X}_0/\mathcal{O}_K$ is of the form $\mathcal{X}_{\Sigma_{0},S}$, where $\Sigma_{0}$ is a quasi-projective fan in $N_{\mathbb{R}}$ and $S$ is a finite set of maximal ideals of $\mathcal{O}_K$. This is not a severe restriction. By Description~\ref{toric-models-OK}, every quasi-projective toric variety $\mathcal{X}_0$ has a torus invariant open subscheme of this form. Then, after realizing $\overline{\textup{Div}}_{\mathbb{T}} (\mathcal{X}_0/\mathcal{O}_K)$ as a closed subgroup of the group of the adelic divisors $\overline{\textup{Div}} (\mathcal{X}_0/\mathcal{O}_K)$, Lemma~\ref{functoriality-general} implies that the group $\overline{\textup{Div}}_{\mathbb{T}} (\mathcal{X}_0/\mathcal{O}_K)$ is a subgroup of $\overline{\textup{Div}}_{\mathbb{T}} (\mathcal{X}_{\Sigma_{0},S}/\mathcal{O}_K)$.
	\end{rem}  
	The main task of this subsection is to extend the notion of a tropical Green's function to the level of toric adelic divisors. To do this, will use the descriptions from Subsections~\ref{3-4}~and~\ref{4-1}, together with the convex-analytic tools from Appendix~A~of~\cite{Per26}. In particular, we recall the definition and basic properties of the spaces of continuous conical functions and sublinear functions.
	\begin{desc}\label{G-def}
		Recall that a function $f \colon  N_{\mathbb{R}} \rightarrow \mathbb{R}$ is \textit{conical} if, for each $\lambda \in \mathbb{R}_{\geq 0}$ and $x \in N_{\mathbb{R}}$, we have $f(\lambda x) = \lambda f(x)$. We denote by $\mathcal{C}(N_{\mathbb{R}})$ the vector space of real-valued continuous conical functions on $N_{\mathbb{R}}$. It becomes Banach space after equipping it with the $\mathcal{C}$-norm, given by
		\begin{displaymath}
			\| f \|_{\mathcal{C}} \coloneqq \inf \lbrace \varepsilon \in \mathbb{Q}_{>0} \, \vert \, |f(x)| \leq \varepsilon \cdot \| x \| \textup{ for all } x \in N_{\mathbb{R}} \rbrace.
		\end{displaymath}
		Now, the function $f$ is \textit{sublinear} if for each $\varepsilon > 0$ there exist $R_{\varepsilon}>0$ such that, whenever $\|x \|>R_{\varepsilon}$, we have $|f(x)| \leq \varepsilon \cdot (1 + \|x\|).$ We denote by $\mathcal{SL}(N_{\mathbb{R}})$ the vector space of all continuous sublinear functions on $N_{\mathbb{R}}$. It becomes a Banach space after equipping it with the $\mathcal{G}$-norm, given by
		\begin{displaymath}
			\| f \|_{\mathcal{G}} \coloneqq \inf \lbrace \varepsilon \in \mathbb{Q}_{>0} \, \vert \, |f (x) | \leq \varepsilon \cdot (1 + \| x \| ) \textup{ for all } x \in N_{\mathbb{R}} \rbrace.
		\end{displaymath}
		Then, we define the Banach space $\mathcal{G}(N_{\mathbb{R}}) \coloneqq \mathcal{SL}(N_{\mathbb{R}}) \oplus \mathcal{C}(N_{\mathbb{R}})$. The elements of $\mathcal{G}(N_{\mathbb{R}})$ naturally identify with continuous functions on $N_{\mathbb{R}}$, the direct sum norm on it is equivalent to the $\mathcal{G}$-norm, and the assignment $f \mapsto \textup{rec}(f)$ induces a continuous retraction $\textup{rec} \colon \mathcal{G}(N_{\mathbb{R}}) \rightarrow \mathcal{C}(N_{\mathbb{R}})$. Finally, we denote by $\mathcal{C}^{+}(N_{\mathbb{R}})$ and $\mathcal{G}^{+}(N_{\mathbb{R}})$ the respective cones of concave functions, which are closed in their respective spaces. Moreover, the cone $\mathcal{G}^{+}(N_{\mathbb{R}})$ coincides with the set of concave, globally Lipschitz continuous functions on $N_{\mathbb{R}}$. For details, see Appendix~A.2~of~\cite{Per26}.
	\end{desc}
	The first step is to show that tropical Green's functions descend to the level of toric model arithmetic divisors.
	\begin{lem}\label{GK-mod}
		Let $\Sigma_0$ be a smooth quasi-projective fan in $N_{\mathbb{R}}$, $S$ be a finite subset of $\textup{Max}(\mathcal{O}_K)$, and $\mathcal{X}_{\Sigma_0 , S} / \mathcal{O}_{K}$ be the associated toric arithmetic variety. Then, the assignment $\overline{\mathcal{D}} \mapsto \gamma_{\overline{\mathcal{D}}} = ( \gamma_{\overline{\mathcal{D}}_v})$, where $ \gamma_{\overline{\mathcal{D}}}$ is the tropical Green's function of $\overline{\mathcal{D}}$ (\ref{trop-gf-OK-proj})induces an injective $\mathbb{Q}$-linear map
		\begin{displaymath}
			\mathcal{G}_{K} \colon \overline{\textup{Div}}_{\mathbb{T}} (\mathcal{X}_{\Sigma_0, S}/\mathcal{O}_K )_{\textup{mod}} \longrightarrow \prod_{v \in \mathfrak{M}(K)} \mathcal{G}(N_{\mathbb{R}}).
		\end{displaymath}
	\end{lem}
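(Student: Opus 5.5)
The plan has three parts: check that the assignment is well defined on a single model, check that it is compatible with the transition maps of the direct limit, and then prove injectivity.

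\emph{Well-definedness on a fixed model.} By Description~\ref{toric-models-OK}.(iv), smooth effective arithmetic fans $\overline{\Sigma}$ satisfying Condition~(i) form a cofinal subsystem of $\textup{PM}_{\mathbb{T}}(\mathcal{X}_{\Sigma_0,S}/\mathcal{O}_K)$, so it suffices to work with such models $\mathcal{X}_{\overline{\Sigma}}$.

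Fix a toric arithmetic divisor $\overline{\mathcal{D}}$ on $\mathcal{X}_{\overline{\Sigma}}$ and a place $v$. I show $\gamma_{\overline{\mathcal{D}}_v}\in\mathcal{G}(N_{\mathbb{R}})$ by splitting it as
\begin{displaymath}
\gamma_{\overline{\mathcal{D}}_v} = \Psi_D + (\gamma_{\overline{\mathcal{D}}_v}-\Psi_D).
\end{displaymath}
The support function $\Psi_D$ is continuous and conical, hence lies in $\mathcal{C}(N_{\mathbb{R}})$.

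For the second summand, the two kinds of places are treated separately.
\begin{itemize}
\item At infinite places, Proposition~\ref{trop-gf-archim} says the difference $\gamma_{\overline{\mathcal{D}}_v}-\Psi_D$ is continuous and bounded.
\item At finite places, Description~\ref{toric-global-paf}.(i) gives $\textup{rec}(\gamma_{\mathcal{D}_v})=\Psi_D$. Since $\gamma_{\mathcal{D}_v}$ is piecewise affine on the complete complex $\Pi_v$, whose recession fan is $\Sigma$, the difference is bounded. Alternatively, Remark~\ref{metrized-div} shows it extends continuously to the compact space $N_\Sigma$.
\end{itemize}
A continuous bounded function $h$ satisfies $|h(x)|\le \|h\|_\infty\le\varepsilon(1+\|x\|)$ for $\|x\|$ large, so it is sublinear and lies in $\mathcal{SL}(N_{\mathbb{R}})$. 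Hence $\gamma_{\overline{\mathcal{D}}_v}\in\mathcal{G}(N_{\mathbb{R}})$.

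\emph{Linearity and compatibility with the limit.} Linearity over $\mathbb{Q}$ is immediate, because both the Green's functions and the piecewise affine functions depend linearly on $\overline{\mathcal{D}}$. Compatibility with the transition maps of the direct limit is exactly the last statement of Proposition~\ref{trop-fun-functorial}: if $\overline{\Sigma}_1\ge\overline{\Sigma}_2$, then $\gamma_{f^\ast\overline{\mathcal{D}}}=\gamma_{\overline{\mathcal{D}}}$. Therefore the maps defined on each model glue to a $\mathbb{Q}$-linear map $\mathcal{G}_K$ on the direct limit.

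\emph{Injectivity.} Suppose $\gamma_{\overline{\mathcal{D}}}=0$, with $\overline{\mathcal{D}}$ represented on some model $\mathcal{X}_{\overline{\Sigma}}$.
\begin{itemize}
\item At any place, $\Psi_D=\textup{rec}(\gamma_{\overline{\mathcal{D}}_v})=0$, so $D=0$ by Description~\ref{toric-divisors-K-DVR}.
\item For each finite $\mathfrak{p}$, we have $\gamma_{\mathcal{D}_{\mathfrak{p}}}=0$, so $\mathcal{D}_{\mathfrak{p}}=0$ by the isomorphism $\mathcal{PA}$ of Description~\ref{toric-divisors-K-DVR}.(ii).
\item The injectivity in Description~\ref{toric-global-paf} (or fpqc descent of divisors, as in Description~\ref{toric-global-sf}) then gives $\mathcal{D}=0$.
\item At infinite places, $g_{\mathcal{D},v}=-\gamma_{\overline{\mathcal{D}}_v}\circ\textup{Trop}_v=0$ on the dense torus $U_v^{\textup{an}}$, so $g_{\mathcal{D}}=0$ by continuity.
\end{itemize}
Hence $\overline{\mathcal{D}}=0$ already on $\mathcal{X}_{\overline{\Sigma}}$, and so also in the direct limit.

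I expect the main obstacle to be this last descent step: showing that a toric divisor on $\mathcal{X}_{\overline{\Sigma}}$ is determined by its localizations $\mathcal{D}_{\mathfrak{p}}$ over all primes. The argument is that a Cartier divisor on the integral scheme $\mathcal{X}_{\overline{\Sigma}}$ which vanishes after base change to every local ring $\mathcal{O}_{K,\mathfrak{p}}$ vanishes on an open cover of $\mathcal{X}_{\overline{\Sigma}}$, and is therefore zero.
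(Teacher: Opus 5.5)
Your proposal is correct and follows the paper's proof. The paper uses the same ingredients: the cofinal system of smooth effective arithmetic fans from Description~\ref{toric-models-OK}.(iv), compatibility with the direct limit via Proposition~\ref{trop-fun-functorial}, boundedness of $\gamma_{\overline{\mathcal{D}}_v}-\Psi_D$ to land in $\mathcal{G}(N_{\mathbb{R}})$, and injectivity from Description~\ref{toric-global-paf} together with Proposition~\ref{trop-gf-archim}, where you additionally spell out details the paper leaves implicit (bounded continuous implies sublinear, and a toric divisor vanishing over every $\mathcal{O}_{K,\mathfrak{p}}$ vanishes on $\mathcal{X}_{\overline{\Sigma}}$).
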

	\begin{proof}
		By Description~\ref{toric-models-OK}.(iv), we have an isomorphism
		\begin{displaymath}
			\overline{\textup{Div}}_{\mathbb{T}}(\mathcal{X}_{\Sigma_0,S}/\mathcal{O}_K )_{\textup{mod}} \cong \varinjlim_{\overline{\Sigma}} \, \overline{\textup{Div}}_{\mathbb{T}}(\mathcal{X}_{\overline{\Sigma}})_{\mathbb{Q}}.
		\end{displaymath}
		where $\overline{\Sigma} = (\widetilde{\Sigma}_{\mathfrak{p}})$ runs over all smooth effective arithmetic fans in $(N_{\mathbb{R}},K)$ such that, for each $\mathfrak{p} \not \in S$, the canonical fan $\Sigma_{0, \textup{can}}$ is a subfan of $\widetilde{\Sigma}_{\mathfrak{p}}$, and ordered by refinement. Therefore, by Proposition~\ref{trop-fun-functorial}, the assignment $\overline{\mathcal{D}} \mapsto \gamma_{\overline{\mathcal{D}}}$ is well-defined. Moreover, for each place $v$, the function $\gamma_{\overline{\mathcal{D}}_v} - \Psi_D$ is bounded. Thus, it belongs to $\mathcal{G}(N_{\mathbb{R}})$. Finally, given two toric model arithmetic divisors $\overline{\mathcal{D}}_1$ and $\overline{\mathcal{D}}_2$, we can choose representatives of their equivalence class which are defined on the same projective toric arithmetic vatiety $\mathcal{X}_{\overline{\Sigma}}/\mathcal{O}_K$. Then, by Description~\ref{toric-global-paf} and Proposition~\ref{trop-gf-archim}, if their tropical Green's functions $\gamma_{\overline{\mathcal{D}}_1}$ and $\gamma_{\overline{\mathcal{D}}_2}$ coincide, they must be the same. We conclude that the $\mathbb{Q}$-linear map $\mathcal{G}_K$ is injective.
	\end{proof}
	\begin{rem}
		It is possible to characterize the image of the map $\mathcal{G}_K$. The $v$-tuple $\gamma = (\gamma_v)$ belongs to the image of $\mathcal{G}_K$ if and only if there exist an arithmetic fan $\overline{\Sigma}$ as in Description~\ref{toric-models-OK}.(iv), and $\gamma = (\gamma_v)$ satisfies the requirements from Description~\ref{toric-global-paf} and Proposition~\ref{trop-gf-archim} are satisfied.
	\end{rem}
	Our next task is to describe the boundary topology in terms of tropical Green's functions. This is done in the next two lemmas.
	\begin{lem}\label{global-bdry-div}
		Let $\Sigma_0$ be a smooth quasi-projective fan in $N_{\mathbb{R}}$, $S$ be a finite subset of $\textup{Max}(\mathcal{O}_K)$, and $\mathcal{X}_{\Sigma_0 , S} / \mathcal{O}_{K}$ be the associated toric arithmetic variety, and $\overline{\Sigma}$ be an arithmetic fan as in Description~\ref{toric-models-OK}.(iv). Then, the toric arithmetic divisor $\overline{\mathcal{B}} =  (\mathcal{B}, g_{\mathcal{B}}) \in \overline{\textup{Div}}_{\mathbb{T}}(\mathcal{X}_{\overline{\Sigma}})_{\mathbb{Q}}$ is a boundary divisor of $\mathcal{X}_{\Sigma_0 , S} / \mathcal{O}_{K}$ if and only if its tropical Green's function $\gamma_{\overline{\mathcal{B}}} = (\gamma_{\overline{\mathcal{B}}_v})$ satisfies:
		\begin{enumerate}[label=(\roman*)]
			\item For each $v \in \mathfrak{M}(K)_{\textup{fin}} \setminus S$ we have $\gamma_{\overline{\mathcal{B}}_v} (x) = 0$ for all $x \in |\Sigma_0|$, and $\gamma_{\overline{\mathcal{B}}_v}(x) < 0$ otherwise.
			\item If $v \in S \cup \mathfrak{M}(K)_{\infty}$, then $\gamma_{\overline{\mathcal{B}}_v}(x)<0$ for all $x \in N_{\mathbb{R}}$.
		\end{enumerate}
	\end{lem}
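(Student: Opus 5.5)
We split the boundary condition into its two ingredients: the support condition $|\mathcal{B}| = \mathcal{X}_{\overline{\Sigma}} \setminus \mathcal{X}_{\Sigma_0,S}$ for the divisorial part, and the strict positivity of the Green's function $g_{\mathcal{B}}$ at the archimedean places. Both will be translated place by place into conditions on the tropical Green's functions $\gamma_{\overline{\mathcal{B}}_v}$, using Description~\ref{toric-global-paf} for finite places and Proposition~\ref{trop-gf-archim} for infinite places.

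For the archimedean places, recall that $g_{\mathcal{B},v} = -\gamma_{\overline{\mathcal{B}}_v}\circ \textup{Trop}_v$ on $U_v^{\textup{an}}$. Since $\textup{Trop}_v$ is surjective onto $N_{\mathbb{R}}$, positivity of $g_{\mathcal{B},v}$ on the torus is equivalent to $\gamma_{\overline{\mathcal{B}}_v}<0$ on all of $N_{\mathbb{R}}$. On the boundary of $X_{\Sigma,v}^{\textup{an}}$ we have $|B| = X_\Sigma\setminus U$, so the generic fiber $B$ has strictly positive multiplicities along all horizontal components. Hence $\Psi_B<0$ away from $0$, and $g_{\mathcal{B},v}=+\infty$ on $|B|$, which is consistent with positivity.

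The horizontal part of the support condition also deserves a check. On the generic fiber the complement $X_\Sigma\setminus X_{\Sigma_0}$ consists of the orbits of cones not in $\Sigma_0$. The requirement $|\mathcal{B}|\cap \mathcal{X}_{\Sigma_0,S}=\emptyset$ forces $B$ to have zero coefficient on rays of $\Sigma_0$, and positive coefficient on the other rays. Because every toric model dominates something containing the torus $\mathcal{U}_S$, and $\mathcal{B}$ must contain all of $X_\Sigma\setminus U$ when $\Sigma_0=\{0\}$, I will phrase everything via $\gamma$ at the finite places outside $S$.

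For a finite place $v=v_{\mathfrak{p}}$, I use Corollary~\ref{div-effective} together with the identification of components of $\mathcal{X}_{\overline{\Sigma},k(\mathfrak{p})}$ with vertical rays of $\widetilde{\Sigma}_{\mathfrak{p}}$ (Proposition~\ref{toric-div-OK}, or more generally the DVR description). The multiplicity of $\mathcal{B}_{\mathfrak{p}}$ along the divisor of a ray with primitive generator $(x,t)$ equals $-\Phi_{\mathcal{B}_{\mathfrak{p}}}(x,t)$; for vertical rays $t>0$ this is $-t\,\gamma_{\overline{\mathcal{B}}_v}(x/t)$. Since $\gamma$ is piecewise affine on $\Pi_{\mathfrak{p}}$, determined by its values at vertices and recession, $\mathcal{B}_{\mathfrak{p}}$ is effective with support exactly equal to the complement of $\mathcal{X}_{\Sigma_0,S}\times\mathcal{O}_{K,\mathfrak{p}}$ if and only if the following holds. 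For $\mathfrak{p}\in S$, all of $\mathcal{X}_{\mathfrak{p}}$ outside the generic torus orbit stratum must lie in the support, so $\gamma<0$ at every vertex and $\textup{rec}(\gamma)=\Psi_B<0$ on nonzero vectors, which is equivalent to $\gamma<0$ everywhere. For $\mathfrak{p}\notin S$, the subfan $\Sigma_{0,\textup{can}}$ must avoid the support and the other cones must lie in it. This gives $\gamma=0$ on $|\Sigma_0|$, together with $\gamma<0$ at vertices outside $|\Sigma_0|$ and $\Psi_B<0$ off $|\Sigma_0|$.

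The main obstacle is the step for $\mathfrak{p}\notin S$: converting the vertex-and-ray conditions into the pointwise statement $\gamma<0$ on $N_{\mathbb{R}}\setminus|\Sigma_0|$. Along a polyhedron of $\Pi_{\mathfrak{p}}$ not contained in $|\Sigma_0|$, $\gamma$ is affine, nonpositive at its vertices and recession directions, and nonzero at some of them. A point in the relative interior of such a polyhedron that is not in $|\Sigma_0|$ picks up a strictly negative contribution. I would argue this using that every such polyhedron has a face structure compatible with $\Sigma_{0,\textup{can}}$ being a subfan, so the zero set of $\gamma$ on each cell is exactly a face lying in $|\Sigma_0|$. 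The converse direction just reads these equivalences backwards.
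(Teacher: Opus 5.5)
Your strategy is the paper's. You list the components of $\mathcal{X}_{\overline{\Sigma}}\setminus\mathcal{X}_{\Sigma_0,S}$ via the orbit--cone correspondence, read effectivity and multiplicities off the support functions (Corollary~\ref{div-effective}), and handle the archimedean places through $g_{\mathcal{B},v}=-\gamma_{\overline{\mathcal{B}}_v}\circ\textup{Trop}_v$ and surjectivity of $\textup{Trop}_v$. Your face argument for $\mathfrak{p}\notin S$ fills in what the paper calls immediate. To make it airtight, use the support condition on orbits rather than only on rays:
\begin{itemize}
\item the zero set of the affine function $\gamma\le 0$ on a cell $C$ of $\Pi_{\mathfrak{p}}$ is a face $F$;
\item $\Phi_{\mathcal{B}_{\mathfrak{p}}}$ vanishes on the cone $\tau_F$ over $F$, so no component of $|\mathcal{B}|$ contains the orbit of $\tau_F$;
\item this forces $\tau_F\in\Sigma_{0,\textup{can}}$, hence $F\subset|\Sigma_0|$.
\end{itemize}

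The real gap is at the places in $S\cup\mathfrak{M}(K)_\infty$, where you silently replace $\mathcal{X}_{\Sigma_0,S}$ by the torus. Several of your claims are false once $\Sigma_0\neq\{0\}$.

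\emph{Places $\mathfrak{p}\in S$.} The localization of $\mathcal{X}_{\Sigma_0,S}$ at $\mathfrak{p}$ is the generic fibre $X_{\Sigma_0}$, not $U$. So the horizontal divisors of rays of $\Sigma_0$ must stay out of the support, and $\Psi_B$ vanishes on $|\Sigma_0|$. Your requirement ``$\textup{rec}(\gamma)=\Psi_B<0$ on nonzero vectors'' therefore contradicts (i). The equivalence you assert is also wrong even for the torus, since $\gamma<0$ on $N_{\mathbb{R}}$ only gives $\textup{rec}(\gamma)\le 0$ (e.g.\ $\gamma=\Psi_B-1$ with $\Psi_B$ vanishing along a ray). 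The correct local statement is that every vertical component lies in the support, i.e.\ $\gamma(w)<0$ at each vertex $w$ of $\Pi_{\mathfrak{p}}$. On each cell one has $\gamma(\sum_i a_i w_i+r)=\sum_i a_i\gamma(w_i)+\Psi_B(r)$, so this together with $\Psi_B\le 0$ is equivalent to $\gamma<0$ on $N_{\mathbb{R}}$. The horizontal condition ($\Psi_B=0$ on $|\Sigma_0|$ and $\Psi_B<0$ off it) is place-independent; read it off from (i) at any $\mathfrak{p}\notin S\cup S(\overline{\Sigma})$, where $\gamma_v=\Psi_B$.

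\emph{Archimedean places.} Here $|B|=X_\Sigma\setminus X_{\Sigma_0}$, not $X_\Sigma\setminus U$. Hence $g_{\mathcal{B},v}$ is finite on $X_{\Sigma_0}(\mathbb{C})\setminus U(\mathbb{C})$, and its positivity there does not follow from $\gamma_v<0$ on $N_{\mathbb{R}}$. For example, take $\mathbb{A}^1\subset\mathbb{P}^1$, $\Psi_B(x)=\min(x,0)$ and $\gamma_v(x)=\min(x,0)-e^{-|x|}$. Then $g_{\mathcal{B},v}(z)=|z|$ for $|z|\le 1$, which vanishes at the point $z=0$ of $X_{\Sigma_0}=\mathbb{A}^1$. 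So for general $\Sigma_0$ the archimedean part of (ii) needs an extra requirement: $g_{\mathcal{B},v}>0$ on the orbits of $\sigma\in\Sigma_0\setminus\{0\}$, i.e.\ the continuous extension of $\gamma_v$ to the corresponding strata of $N_{\Sigma}$ must be negative. The paper's own proof also passes over this point.

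For $\Sigma_0=\{0\}$, the only case used later (Example~\ref{bdry-div-torus-OKS}), your argument with the corrected equivalence goes through.
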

	\begin{proof}
		By the cone-orbit correspondence~\ref{cone-orbit-OK}, the open set $\mathcal{X}_{\Sigma_0}$ is given by the union of horizontal torus orbits
		\begin{displaymath}
			\mathcal{X}_{\Sigma_0} = \bigcup_{\sigma \in \Sigma_0} \mathbf{O}(\sigma ).
		\end{displaymath}
		Changing base to $\textup{Spec}(\mathcal{O}_{K,S})$ has the effect of removing the fibers of $\mathcal{X}_{\Sigma_0} $ over the primes $\mathfrak{p} \in S$. Then, by Proposition~\ref{toric-div-OK}, the closed set $\mathcal{X}_{\Sigma} \setminus \mathcal{X}_{\Sigma_0,S}$ is given by the union of:
		\begin{enumerate}[label=(\roman*')]
			\item The horizontal toric divisors corresponding to the rays $\rho \in \Sigma(1) \setminus \Sigma_0 (1)$.
			\item The vertical toric divisors corresponding to the rays $\rho \in  \widetilde{\Sigma}_{\mathfrak{p}} (1)$ which do not belong to either $\Sigma$ or $\Sigma_{0,\textup{can}}$, if $\mathfrak{p} \not \in S$,
			\item And the vertical fibers $\mathcal{X}_{\overline{\Sigma},k(\mathfrak{p})}$ over the primes $\mathfrak{p} \in S$.
		\end{enumerate}
		By the definition of arithmetic fan, $\overline{\Sigma}_{\mathfrak{p}} = \Sigma_{\textup{can}}$ for almost all $\mathfrak{p}$. Then, there is only a finite number of divisors of type (ii'). Then, conditions~(i)~and~(ii) follow immediately from Corollary~\ref{div-effective} and computing supports with Proposition~\ref{toric-div-OK}. On the other hand, it is immediate from the definition that $g_{\mathcal{B}} > 0$ if and only if $\gamma_{\overline{\mathcal{B}}_v} <0 $ for each place $v \in \mathfrak{M}(K)_{\infty}$. Thus, we get the result.
	\end{proof}
	\begin{lem}\label{global-bdry-top}
		Let $\overline{\mathcal{B}}$ be a toric boundary divisor of $\mathcal{X}_{\Sigma_0,S}$ over $\mathcal{O}_K$ and $\gamma_{\overline{\mathcal{B}}} = (\gamma_{\overline{\mathcal{B}}_v})$ be its tropical Green's function. Then, for each model toric arithmetic divisor $\overline{\mathcal{D}}$ with associated tropical Green's function $\gamma_{\overline{\mathcal{D}}} = (\gamma_{\overline{\mathcal{D}}_v})$, the following identity holds
		\begin{displaymath}
			\| \overline{\mathcal{D}} \|_{\overline{\mathcal{B}}} = \inf \lbrace \varepsilon \in \mathbb{Q}_{>0} \, \vert \, | \gamma_{\overline{\mathcal{D}}_{v}}(x) | \leq \varepsilon \cdot | \gamma_{\overline{\mathcal{B}}_{v}}(x) | \textup{ for all } x \in N_{\mathbb{R}} \textup{ and all } v \in \mathfrak{M}(K) \rbrace. 
		\end{displaymath}
	\end{lem}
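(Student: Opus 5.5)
The plan is to reduce the inequality $-\varepsilon \cdot \overline{\mathcal{B}} \leq \overline{\mathcal{D}} \leq \varepsilon \cdot \overline{\mathcal{B}}$ to pointwise inequalities between tropical Green's functions, place by place. First choose a common toric projective model $\mathcal{X}_{\overline{\Sigma}}$ on which both $\overline{\mathcal{D}}$ and $\overline{\mathcal{B}}$ are defined; by Description~\ref{toric-models-OK}.(iii)--(iv) it may be taken smooth and effective. By Proposition~\ref{trop-fun-functorial}, pulling back along refinements leaves the tropical Green's functions unchanged, so both sides of the claimed identity are independent of this choice. Effectivity of a model divisor is likewise independent of the model (Lemma~2.3.5 of~\cite{Y-Z}).

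Next I would characterize effectivity of a toric arithmetic divisor $\overline{\mathcal{E}}$ on $\mathcal{X}_{\overline{\Sigma}}$ by the condition $\gamma_{\overline{\mathcal{E}}_v} \leq 0$ on $N_{\mathbb{R}}$ for every place $v$. At finite places this is Corollary~\ref{div-effective}.(iii). At infinite places, effectivity means $g_{\mathcal{E}} \geq 0$. Since $g_{\mathcal{E},v} = -\gamma_{\overline{\mathcal{E}}_v} \circ \textup{Trop}_v$ and $\textup{Trop}_v$ is surjective onto $N_{\mathbb{R}}$ from the dense torus, this is $\gamma_{\overline{\mathcal{E}}_v} \leq 0$ on $N_{\mathbb{R}}$. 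The inequality then extends to all of $X^{\textup{an}}_{\Sigma,v}$ by continuity of $g_{\mathcal{E}}$ away from $|\mathcal{E}|$ and by density. Apply this with $\overline{\mathcal{E}} = \varepsilon \overline{\mathcal{B}} \pm \overline{\mathcal{D}}$, using linearity of $\overline{\mathcal{D}} \mapsto \gamma_{\overline{\mathcal{D}}}$ (Lemma~\ref{GK-mod}). The condition $-\varepsilon\overline{\mathcal{B}} \leq \overline{\mathcal{D}} \leq \varepsilon \overline{\mathcal{B}}$ becomes $\varepsilon \gamma_{\overline{\mathcal{B}}_v} \leq \gamma_{\overline{\mathcal{D}}_v} \leq -\varepsilon \gamma_{\overline{\mathcal{B}}_v}$ for all $v$ and $x$. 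Since $\gamma_{\overline{\mathcal{B}}_v} \leq 0$ by Lemma~\ref{global-bdry-div}, this reads $|\gamma_{\overline{\mathcal{D}}_v}(x)| \leq \varepsilon |\gamma_{\overline{\mathcal{B}}_v}(x)|$. Taking the infimum over rational $\varepsilon$ gives the identity, with both sides infinite exactly when no such $\varepsilon$ exists.

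The main obstacle is the archimedean step. Effectivity of an arithmetic divisor involves both the divisorial part and the Green's function, and the divisorial part is not seen directly at infinite places. So I must make sure the combined finite-place conditions from Corollary~\ref{div-effective} fully encode effectivity of the underlying toric divisor, including horizontal components. They do: the horizontal part is read off from $\textup{rec}(\gamma_{\mathfrak{p}}) = \Psi_D$, so non-positivity at any finite place already forces $\Psi_D \leq 0$.

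A second point to check is the places $v \notin S$ where $\gamma_{\overline{\mathcal{B}}_v}$ vanishes on $|\Sigma_0|$. There the condition forces $\gamma_{\overline{\mathcal{D}}_v} = 0$ on $|\Sigma_0|$, which is consistent with the boundary norm being infinite when $\mathcal{D}|_{\mathcal{U}} \neq 0$. No extra argument is needed, since the pointwise equivalence holds regardless.
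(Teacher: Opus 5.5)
Your proposal is correct and follows the same route as the paper. The paper simply invokes Corollary~\ref{div-effective} to translate $-\varepsilon\cdot\overline{\mathcal{B}}\leq\overline{\mathcal{D}}\leq\varepsilon\cdot\overline{\mathcal{B}}$ into $\varepsilon\cdot\gamma_{\overline{\mathcal{B}}_v}\leq\gamma_{\overline{\mathcal{D}}_v}\leq-\varepsilon\cdot\gamma_{\overline{\mathcal{B}}_v}$ at every place and then reads off the boundary norm; your extra steps (a common model, the archimedean places via $g_{\mathcal{E},v}=-\gamma_{\overline{\mathcal{E}}_v}\circ\textup{Trop}_v$, and using $\gamma_{\overline{\mathcal{B}}_v}\leq 0$ to pass to absolute values) only make explicit what the paper leaves implicit.
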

	\begin{proof}
		By the characterization of effectivity in Corollary~\ref{div-effective}, the inequality $- \varepsilon \cdot \overline{\mathcal{B}} \leq \overline{\mathcal{D}} \leq \varepsilon \cdot \overline{\mathcal{B}}$ holds if and only if, for all $x \in N_{\mathbb{R}}$ and all $v \in \mathfrak{M}(K)$, we have
		\begin{displaymath}
			\varepsilon \cdot \gamma_{\overline{\mathcal{B}}_v}(x) \leq \gamma_{\overline{\mathcal{D}}_v}(x) \leq -\varepsilon \cdot \gamma_{\overline{\mathcal{B}}_v}(x).
		\end{displaymath}
		Then, the desired identity follows immediately from the definition of the boundary norm.
	\end{proof}
	We combine the previous lemmas into the following result.
	\begin{thm}\label{GK-adelic}
		Let $\Sigma_0$ be a smooth quasi-projective fan in $N_{\mathbb{R}}$, $S$ be a finite subset of $\textup{Max}(\mathcal{O}_K)$, and $\mathcal{X}_{\Sigma_0, S} / \mathcal{O}_K$ be the associated toric arithmetic variety. Then, the map $\mathcal{G}_K$ in Lemma~\ref{GK-mod} extends to an injective group morphism
		\begin{displaymath}
			\mathcal{G}_{K} \colon \overline{\textup{Div}}_{\mathbb{T}} (\mathcal{X}_{\Sigma_0, S}/\mathcal{O}_K ) \longrightarrow \prod_{v \in \mathfrak{M}(K)} \mathcal{G}(N_{\mathbb{R}})
		\end{displaymath}
		and satisfies the following continuity property: Given a convergent sequence $\lbrace \overline{\mathcal{D}}_n \rbrace_{n \in \mathbb{N}}$, then each of the induced sequences $\lbrace \gamma_{\overline{\mathcal{D}}_{n,v}} \rbrace_{n \in \mathbb{N}}$ converges in the $\mathcal{G}$-norm uniformly in $v \in \mathfrak{M}(K)$.
	\end{thm}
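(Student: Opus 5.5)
The plan is to extend $\mathcal{G}_K$ from model divisors to their completion by a Cauchy-sequence argument. The key input is Lemma~\ref{global-bdry-top}: it says the boundary norm is a weighted sup-norm with weights $|\gamma_{\overline{\mathcal{B}}_v}|$.

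\textbf{Step 1: a uniform comparison of weights.} Fix a toric boundary divisor $\overline{\mathcal{B}}$ on some model $\mathcal{X}_{\overline{\Sigma}}$, as in Lemma~\ref{global-bdry-div}. I would first show that there is a constant $C>0$ with
\begin{displaymath}
|\gamma_{\overline{\mathcal{B}}_v}(x)| \leq C\,(1+\|x\|) \quad \textup{for all } x \in N_{\mathbb{R}} \textup{ and all } v \in \mathfrak{M}(K).
\end{displaymath}
For all but finitely many places, Description~\ref{toric-global-paf} gives $\gamma_{\overline{\mathcal{B}}_v} = \Psi_B$, which is a single piecewise linear function, so one constant works for all of them. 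For the finitely many remaining places, $\gamma_{\overline{\mathcal{B}}_v} - \Psi_B$ is bounded by Proposition~\ref{trop-gf-archim} and Remark~\ref{metrized-div}. Taking the maximum of these finitely many constants gives $C$.

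\textbf{Step 2: continuity and extension.} Combining Step~1 with Lemma~\ref{global-bdry-top}, for every model divisor $\overline{\mathcal{D}}$ I get
\begin{displaymath}
\|\gamma_{\overline{\mathcal{D}}_v}\|_{\mathcal{G}} \leq C\, \|\overline{\mathcal{D}}\|_{\overline{\mathcal{B}}} \quad \textup{for every } v.
\end{displaymath}
So each component of $\mathcal{G}_K$ is Lipschitz with a constant independent of $v$. Hence a Cauchy sequence $\{\overline{\mathcal{D}}_n\}$ in the boundary topology maps to sequences $\{\gamma_{\overline{\mathcal{D}}_{n,v}}\}$ that are Cauchy in the $\mathcal{G}$-norm, uniformly in $v$. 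I also have to check that $\gamma_{\overline{\mathcal{D}}_{n,v}}$ lies in $\mathcal{G}(N_{\mathbb{R}})$ in the first place. This holds because it is a support function plus a bounded continuous function, and the $\mathcal{G}$-norm is equivalent to the direct sum norm (Description~\ref{G-def}). Since $\mathcal{G}(N_{\mathbb{R}})$ is a Banach space, the limits exist. Equivalent Cauchy sequences give the same limit, so $\mathcal{G}_K$ extends to a group morphism, and the stated uniform continuity property follows directly from the Lipschitz bound.

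\textbf{Step 3: injectivity (main obstacle).} Suppose $\overline{\mathcal{D}}=\{\overline{\mathcal{D}}_n\}$ and every limit $\gamma_v$ is $0$. I must show that $\|\overline{\mathcal{D}}_n\|_{\overline{\mathcal{B}}} \to 0$. The difficulty is that Lemma~\ref{global-bdry-top} needs the reverse comparison: a lower bound on $|\gamma_{\overline{\mathcal{B}}_v}|$. This fails near $|\Sigma_0|$ at the places $v\notin S$, where $\gamma_{\overline{\mathcal{B}}_v}=0$.

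First I would reduce to a decreasing representative via Remark~\ref{decreasing}, so that $\overline{\mathcal{D}}_n - \overline{\mathcal{D}}_m$ lies between $\pm 2^{-n}\overline{\mathcal{B}}$ for $m \geq n$. Letting $m\to\infty$ pointwise in this inequality gives
\begin{displaymath}
|\gamma_{\overline{\mathcal{D}}_{n,v}}(x)| \leq 2^{-n}\,|\gamma_{\overline{\mathcal{B}}_v}(x)| \quad \textup{for all } x \textup{ and } v.
\end{displaymath}
By Lemma~\ref{global-bdry-top} this says $\|\overline{\mathcal{D}}_n\|_{\overline{\mathcal{B}}} \leq 2^{-n}$, so $\overline{\mathcal{D}}_n \to 0$.

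The delicate point is justifying the pointwise passage to the limit. Convergence in the $\mathcal{G}$-norm implies pointwise convergence, and the inequality is closed under pointwise limits, so this should work. The care needed is in checking that the inequality at a fixed $x$ survives even though the models $\mathcal{X}_{\overline{\Sigma}}$ vary with $n$; here Proposition~\ref{trop-fun-functorial} ensures the tropical Green's functions are model-independent.
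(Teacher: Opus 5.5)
Your proposal is correct and follows essentially the same route as the paper. The constant $C=\max_v\|\gamma_{\overline{\mathcal{B}}_v}\|_{\mathcal{G}}$ turns boundary-norm Cauchy sequences into $\mathcal{G}$-norm Cauchy sequences uniformly in $v$, and injectivity comes from letting $m\to\infty$ in the weighted estimate of Lemma~\ref{global-bdry-top}; this pointwise-limit step in your Step~3 supplies the justification the paper leaves implicit when it asserts $|\gamma_{\overline{\mathcal{D}}_{m,v}}|\leq\varepsilon\,|\gamma_{\overline{\mathcal{B}}_v}|$ for large $m$, and the detour through a decreasing representative is harmless but unnecessary.
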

	\begin{proof}
		Let $\overline{\mathcal{D}} = \lbrace \overline{\mathcal{D}}_n \rbrace_{n \in \mathbb{N}}$ be a Cauchy sequence of toric model arithmetic divisors on $\mathcal{X}_{\Sigma_0,S}$ and $\lbrace \gamma_{\overline{\mathcal{D}}_n} \rbrace_{n \in \mathbb{N}}$ be the corresponding sequence of tropical Green's functions. By Lemma~\ref{global-bdry-top}, for each $\varepsilon >0$ there exists $n_{\varepsilon}$ such that, for all $n,m \geq n_{\varepsilon}$, all places $v \in \mathfrak{M}(K)$, and all $x \in N_{\mathbb{R}}$, we have
		\begin{displaymath}
			|\gamma_{\overline{\mathcal{D}}_{n,v}} (x)- \gamma_{\overline{\mathcal{D}}_{m,v}}(x) | \leq \varepsilon \cdot |\gamma_{\overline{\mathcal{B}}_v}(x)|, 
		\end{displaymath}
		where $\overline{\mathcal{B}}$ is a toric boundary divisor for $\mathcal{X}_{\Sigma_0 ,S}/\mathcal{O}_K$. Now, by Description~\ref{toric-global-paf}, almost all of the $v$-adic Green's functions $\gamma_{\overline{\mathcal{B}}_v}$ are equal to the support function $\Psi_{B}$. Then, consider the constant $C = \max_{v \in \mathfrak{M}(K)} \| \gamma_{\overline{\mathcal{B}}_v} \|_{\mathcal{G}}$. By definition of the $\mathcal{G}$-norm (Description~\ref{G-def}), we get
		\begin{displaymath}
			|\gamma_{\overline{\mathcal{D}}_{n,v}} (x)- \gamma_{\overline{\mathcal{D}}_{m,v}}(x) | \leq \varepsilon \cdot C \cdot (1 + \|x \|). 
		\end{displaymath}
		It follows that sequences $\lbrace \gamma_{\overline{\mathcal{D}}_{n,v}} \rbrace_{n \in \mathbb{N}}$ converge in the $\mathcal{G}$-norm, uniformly in $v \in \mathfrak{M}(K)$. Then, define $\gamma_{\overline{\mathcal{D}}} \coloneqq (\gamma_{\overline{\mathcal{D}}_v})$, where $\gamma_{\overline{\mathcal{D}}_v}$ is the limit of the sequence $\lbrace \gamma_{\overline{\mathcal{D}}_{n,v}} \rbrace_{n \in \mathbb{N}}$. If $\gamma_{\overline{\mathcal{D}}_v} = 0$ for all places $v$, then for each $\varepsilon >0$, there exist an $m_{\varepsilon,v}$ such that for all $m \geq m_{\varepsilon,v}$ and all $x \in N_{\mathbb{R}}$ we have
		\begin{displaymath}
			|\gamma_{\overline{\mathcal{D}}_{m,v}}(x) | \leq \varepsilon \cdot |\gamma_{\overline{\mathcal{B}}_v}(x)|.
		\end{displaymath}
		Now, for each place $v$, we choose $m \geq m_{\varepsilon,v}$. Then, for all $n \geq n_{\varepsilon}$ and all $x \in N_{\mathbb{R}}$, we get
		\begin{displaymath}
			|\gamma_{\overline{D}_{n,v}}(x) | \leq |\gamma_{\overline{\mathcal{D}}_{n,v}} (x)- \gamma_{\overline{\mathcal{D}}_{m,v}}(x) |  + |\gamma_{\overline{\mathcal{D}}_{m,v}}(x) | \leq 2 \, \varepsilon \cdot |\gamma_{\overline{\mathcal{B}}_v}(x)|.
		\end{displaymath}
		By Lemma~\ref{global-bdry-top}, the sequence $\overline{\mathcal{D}} = \lbrace \overline{\mathcal{D}}_n \rbrace_{n \in \mathbb{N}}$ converges to $0$ in the boundary topology. Therefore, $\mathcal{G}_K$ is injective. The result follows.
	\end{proof}
	Now we discuss the functorial properties of the map $\mathcal{G}_K$. Let $\Sigma_1$ and $\Sigma_2$ be smooth quasi-projective fans in $N_{\mathbb{R}}$ and $S_1 \subset S_2 \subset \textup{Max}(\mathcal{O}_K)$. Suppose that the identity map on $N_{\mathbb{R}}$ is compatible with the fans $\Sigma_1$ and $\Sigma_2$. Then, there is an induced toric birational morphism $\iota \colon \mathcal{X}_{\Sigma_1, S_1} \rightarrow \mathcal{X}_{\Sigma_2, S_2}$. Then, we have the following result.
	\begin{lem}\label{GK-functorial}
		The toric birational morphism $\iota \colon \mathcal{X}_{\Sigma_1, S_1} \rightarrow \mathcal{X}_{\Sigma_2, S_2}$ induces a pullback morphism $\iota^{\ast} \colon \overline{\textup{Div}}_{\mathbb{T}}(X_{\Sigma_2,S_2}/\mathcal{O}_K) \rightarrow \overline{\textup{Div}}_{\mathbb{T}}(X_{\Sigma_1,S_1}/\mathcal{O}_K)$ which is continuous and injective. It is a topological embedding if and only if the map $\iota$ is proper. Moreover, the map $\mathcal{G}_K$ satisfies $\mathcal{G}_K = \mathcal{G}_K \circ \iota^{\ast}$.
	\end{lem}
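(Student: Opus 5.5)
The proof should run at the level of model divisors and then pass to completions. First, describe $\iota^{\ast}$ on model divisors. Since $\Sigma_1 \subset |\Sigma_2|$-compatible via the identity and $S_1 \subset S_2$, every toric projective model $\mathcal{X}_{\overline{\Sigma}}$ of $\mathcal{X}_{\Sigma_2,S_2}$ can be refined (Lemma~\ref{common-refinement} together with Description~\ref{toric-models-OK}) to a toric projective model of $\mathcal{X}_{\Sigma_1,S_1}$. This refinement keeps the same torus and the same tropical Green's functions by Proposition~\ref{trop-fun-functorial}. Pullback along the refinement therefore defines a $\mathbb{Q}$-linear map $\iota^{\ast}$ between the groups of toric model divisors. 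By construction it satisfies $\mathcal{G}_K(\iota^{\ast}\overline{\mathcal{D}}) = \mathcal{G}_K(\overline{\mathcal{D}})$, since both are computed from the same $v$-tuple $(\gamma_{\overline{\mathcal{D}}_v})$. Injectivity on model divisors then follows from the injectivity of $\mathcal{G}_K$ in Lemma~\ref{GK-mod}.

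Next, I prove continuity. Fix a toric boundary divisor $\overline{\mathcal{B}}_2$ for $\mathcal{X}_{\Sigma_2,S_2}$ and $\overline{\mathcal{B}}_1$ for $\mathcal{X}_{\Sigma_1,S_1}$, defined on a common model. By Lemma~\ref{global-bdry-div}, the function $\gamma_{\overline{\mathcal{B}}_2,v}$ vanishes on $|\Sigma_2| \supset |\Sigma_1|$ for $v \notin S_2$, while $\gamma_{\overline{\mathcal{B}}_1,v}$ vanishes only on $|\Sigma_1|$ and is negative elsewhere. The required comparison is
\[
|\gamma_{\overline{\mathcal{B}}_2,v}| \leq C\,|\gamma_{\overline{\mathcal{B}}_1,v}| \quad \text{for all } v,
\]
with a constant $C$ independent of $v$. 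This holds because both functions are piecewise affine on common complexes, the zero set of $\gamma_{\overline{\mathcal{B}}_1,v}$ is contained in that of $\gamma_{\overline{\mathcal{B}}_2,v}$, and only finitely many places are non-canonical. Equivalently, $\overline{\mathcal{B}}_2 \leq C\,\overline{\mathcal{B}}_1$ after pullback. Lemma~\ref{global-bdry-top} then gives $\|\iota^{\ast}\overline{\mathcal{D}}\|_{\overline{\mathcal{B}}_1} \leq C\,\|\overline{\mathcal{D}}\|_{\overline{\mathcal{B}}_2}$. So $\iota^{\ast}$ is Lipschitz and extends to the completions, and the identity $\mathcal{G}_K = \mathcal{G}_K\circ\iota^{\ast}$ persists by the continuity statement of Theorem~\ref{GK-adelic}. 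Injectivity on the completions follows from this identity combined with the injectivity of $\mathcal{G}_K$ in Theorem~\ref{GK-adelic}.

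The main obstacle is the embedding criterion. If $\iota$ is proper, then $|\Sigma_1| = |\Sigma_2|$. Since $\iota$ is also birational, I expect this to force $S_1 = S_2$: properness over $\mathcal{O}_K$ forbids removing fibres, because $\mathcal{X}_{\Sigma_1,S_1}\to\mathcal{X}_{\Sigma_2,S_2}$ would then be an open immersion onto a non-closed subset. In that case the zero loci in Lemma~\ref{global-bdry-div} coincide, and the reverse inequality $|\gamma_{\overline{\mathcal{B}}_1,v}| \leq C'\,|\gamma_{\overline{\mathcal{B}}_2,v}|$ holds by the same piecewise-affine comparison. The two norms are then equivalent and $\iota^{\ast}$ is a homeomorphism onto its image.

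Conversely, if $\iota$ is not proper, there are two cases.
\begin{itemize}
\item[(a)] If $|\Sigma_1| \subsetneq |\Sigma_2|$, there is a ray direction $x_0 \in |\Sigma_2|\setminus|\Sigma_1|$. I would build model divisors $\overline{\mathcal{D}}_n$ on $\mathcal{X}_{\Sigma_2,S_2}$ at one finite canonical place $v$ with $\gamma_{\overline{\mathcal{D}}_n,v}$ supported near $x_0$ and of size $1/n$ relative to $\gamma_{\overline{\mathcal{B}}_1,v}$ there, but of size $1$ relative to $\gamma_{\overline{\mathcal{B}}_2,v}$. The latter is possible because $\gamma_{\overline{\mathcal{B}}_2,v}$ vanishes near $x_0$ at canonical places.
\item[(b)] If $S_1 \subsetneq S_2$, I would use the vertical fibre at $\mathfrak{p} \in S_2\setminus S_1$ and the constant functions $\gamma = \pm 1/n$ at $v_{\mathfrak{p}}$, which tend to $0$ in $\|\cdot\|_{\overline{\mathcal{B}}_1}$ but not in $\|\cdot\|_{\overline{\mathcal{B}}_2}$, or the reverse.
\end{itemize}
In either case the inverse of $\iota^{\ast}$ on its image is discontinuous. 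I expect getting these witnesses right, and making the $v$-uniform comparison constants rigorous using the finiteness of non-canonical places, to be the delicate part.
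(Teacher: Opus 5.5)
Your overall architecture is the natural one and is what the paper's one-line reference to the geometric case amounts to: define $\iota^{\ast}$ by refining models, read off $\mathcal{G}_K\circ\iota^{\ast}=\mathcal{G}_K$ from Proposition~\ref{trop-fun-functorial}, get injectivity from $\mathcal{G}_K$, and get continuity and the proper case by comparing $\gamma_{\overline{\mathcal{B}}_1,v}$ with $\gamma_{\overline{\mathcal{B}}_2,v}$ through Lemma~\ref{global-bdry-top}. Two things in the embedding criterion need repair.

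\textbf{The inclusion of $S$-sets.} A morphism $\mathcal{X}_{\Sigma_1,S_1}\to\mathcal{X}_{\Sigma_2,S_2}$ over $\mathcal{O}_K$ exists only if $S_2\subseteq S_1$, so the inclusion $S_1\subset S_2$ in the setup is reversed. Your continuity inequality silently uses $S_2\subseteq S_1$: at $v\in S_2\setminus S_1$ one would have $\gamma_{\overline{\mathcal{B}}_1,v}=0$ on $|\Sigma_1|$ while $\gamma_{\overline{\mathcal{B}}_2,v}<0$ everywhere, and the inequality fails. So in (b) take $\mathfrak{p}\in S_1\setminus S_2$. Then $\tfrac1n$ times the vertical fibre over $\mathfrak{p}$ has infinite $\overline{\mathcal{B}}_2$-norm. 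Its pullback has $\overline{\mathcal{B}}_1$-norm $O(1/n)$, because $\gamma_{\overline{\mathcal{B}}_1,v_{\mathfrak{p}}}$ is negative piecewise affine, hence bounded away from $0$. Your ``or the reverse'' option would contradict the continuity you just proved.

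\textbf{The witness in (a).} This is a genuine gap: the witness you propose does not exist in general.
\begin{itemize}
\item At a finite place $v\notin S_2$, every toric model divisor of $\mathcal{X}_{\Sigma_2,S_2}$ lives on an arithmetic fan with $\Sigma_{2,\textup{can}}\subset\widetilde{\Sigma}_v$ (Description~\ref{toric-models-OK}). Hence $\gamma_{\overline{\mathcal{D}},v}$ is affine on every cone of $\Sigma_2$ and cannot be localized near $x_0$.
\item Finiteness of $\|\iota^{\ast}\overline{\mathcal{D}}\|_{\overline{\mathcal{B}}_1}$ forces $\gamma_{\overline{\mathcal{D}},v}=0$ on $|\Sigma_1|\ni 0$. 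So on $|\Sigma_2|$ it is a support function of $\Sigma_2$ vanishing on $|\Sigma_1|$.
\item Such a function is nonzero only when the complement of $\iota(\mathcal{X}_{\Sigma_1,S_1})$ contains a horizontal divisor.
\item Counterexample: let $\Sigma_2$ be the positive quadrant with its faces and $\Sigma_1$ its two rays, i.e.\ $\mathbb{A}^2\setminus\{0\}\to\mathbb{A}^2$. Then that function vanishes on all of $|\Sigma_2|$, and no witness can come from the divisorial part or from finite places outside $S_2$.
\end{itemize}

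Non-embedding has to be detected by the opposite mechanism to the one you describe. At an archimedean place (or at $v\in S_2$), $\gamma_{\overline{\mathcal{B}}_2,v}=\Psi_{B_2}+O(1)$ is \emph{bounded} on $|\Sigma_2|$. Meanwhile $|\gamma_{\overline{\mathcal{B}}_1,v}|\geq c\,t$ on $\overline{\textup{B}}(t x_0,\delta t)$, since $\Psi_{B_1}<0$ on a conical neighbourhood of $x_0\notin|\Sigma_1|$.

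Define $\overline{\mathcal{D}}_t$ by:
\begin{itemize}
\item divisorial part $0$;
\item archimedean tropical Green's function a continuous bump of height $1$ supported in that ball;
\item all other components $0$.
\end{itemize}
Compact support lets the bump extend continuously to every $N_{\Sigma}$, so $\overline{\mathcal{D}}_t$ is a model divisor of $\mathcal{X}_{\Sigma_2,S_2}$. Then $\|\overline{\mathcal{D}}_t\|_{\overline{\mathcal{B}}_2}$ stays bounded below (look at the peak $t x_0$), while $\|\iota^{\ast}\overline{\mathcal{D}}_t\|_{\overline{\mathcal{B}}_1}=O(1/t)$. So the inverse of $\iota^{\ast}$ is discontinuous on the image whenever $|\Sigma_1|\subsetneq|\Sigma_2|$.
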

	\begin{proof}
		The proof is the same as in Lemma~3.2.14~of~\cite{Per26}.
	\end{proof}
	Now, we compare our constructions with the ones of Yuan~and~Zhang~\cite{Y-Z}.
	\begin{rem}[Compatibility with Yuan-Zhang's divisors]
		Let $\Sigma_0$ be a smooth quasi-projective fan in $N_{\mathbb{R}}$, $S$ be a finite subset of $\textup{Max}(\mathcal{O}_K)$, and $\mathcal{X}_{\Sigma_0, S} / \mathcal{O}_K$ be the associated toric arithmetic variety. Trivially, we have a continuous group morphism
		\begin{displaymath}
			\overline{\textup{Div}}_{\mathbb{T}} (\mathcal{X}_{\Sigma_0, S}/\mathcal{O}_K ) \longrightarrow  \overline{\textup{Div}} (\mathcal{X}_{\Sigma_0, S}/\mathcal{O}_K ).
		\end{displaymath}
		We now show that it is injective. By Proposition~\ref{global-analytification}, there is a natural continuous and injective group morphism $\textup{an}\colon \overline{\textup{Div}}(\mathcal{X}_{\Sigma_0, S}/\mathcal{O}_K) \rightarrow \overline{\textup{Div}}(\mathcal{X}_{\Sigma_0, S}^{\textup{an}})_{\textup{eqv}}$, and the image of an adelic divisor $\overline{\mathcal{D}}$ is uniquely determined by a $v$-tuple of local arithmetic divisors $(D_v, g_{D,v})_v$. If the divisor $\overline{\mathcal{D}}$ is toric, by the construction of the map $\mathcal{G}_K$ in Theorem~\ref{GK-adelic}, and the properties of the tropicalization functor from Description~\ref{trop-functor} we get an identity $g_{D,v} = - \gamma_{\overline{\mathcal{D}}_v} \circ \textup{Trop}_v$ on the analytic torus $U_{v}^{\textup{an}}$. It follows that the tropical Green's function $\gamma_{\overline{\mathcal{D}}} = (\gamma_{\overline{\mathcal{D}}_v})$ uniquely determines the equivariant divisor determined by the $v$-tuple $(D_v, g_{D,v})_v$. Therefore, we have a commutative diagram
		\begin{center}
			\begin{tikzcd}
				\overline{\textup{Div}}_{\mathbb{T}} (\mathcal{X}_{\Sigma_0, S}/\mathcal{O}_K ) \arrow[r] \arrow[d, "{\mathcal{G}_K}"] & \overline{\textup{Div}} (\mathcal{X}_{\Sigma_0, S}/\mathcal{O}_K ) \arrow[d, "{\textup{an}}"] \\
				\displaystyle \prod_{v \in \mathfrak{M}(K)} \mathcal{G}(N_{\mathbb{R}}) \arrow[r] & \overline{\textup{Div}}(\mathcal{X}_{\Sigma_0, S}^{\textup{an}})_{\textup{eqv}}
			\end{tikzcd}
		\end{center}
		The vertical maps and the lower horizontal map are injective. Therefore, the upper horizontal map must be injective. Then, the group morphism $\overline{\textup{Div}}_{\mathbb{T}} (\mathcal{X}_{\Sigma_0, S}/\mathcal{O}_K ) \rightarrow  \overline{\textup{Div}} (\mathcal{X}_{\Sigma_0, S}/\mathcal{O}_K )$ identifies the group of toric adelic divisors as a closed topological subgroup of the group of adelic divisors.
	\end{rem}
	We finish this section by working out some of these descriptions for the case of the torus $\mathcal{U}_S /\mathcal{O}_K$.
	\begin{ex}[A boundary divisor of the torus $\mathcal{U}_S$]\label{bdry-div-torus-OKS}
		Identify $N = \mathbb{Z}^{d}$ and let $e_1,...,e_d$ be the standard basis. Write $e_0 = - (e_1 + ... + e_d)$. Let $\Sigma$ be the fan whose cones are spanned by all proper subsets of $\lbrace e_0, ..., e_d \rbrace$. The toric variety $X_{\Sigma}/K$ is the projective space $\mathbb{P}^{d}/K$ with the usual toric structure. Its canonical model $\mathcal{X}_{\Sigma}/\mathcal{O}_K$ is the projective space $\mathbb{P}^{d}_{\mathcal{O}_K}$. Then, for each finite set $S \subset \textup{Max}(\mathcal{O}_K)$, the open immersion $\mathcal{U}_{S} \rightarrow \mathbb{P}_{\mathcal{O}_K}^{d}$ is a toric projective model of $\mathcal{U}_{S}$ over $\mathcal{O}_K$. Consider the toric divisor
		\begin{displaymath}
			\mathcal{B} = \sum_{i = 0}^{d} \mathcal{H}_i + \sum_{\mathfrak{p} \in S} \mathbb{P}^{d}_{k(\mathfrak{p})}, \quad \mathcal{H}_i \coloneqq \lbrace [X_0, \ldots, X_d] \in \mathbb{P}_{\mathcal{O}_K}^{d} \, \vert \, X_i =0 \rbrace.
		\end{displaymath}
		Observe that this divisor satisfies the support condition of a boundary divisor. We equip it with the toric Green's function $g_{\mathcal{B}}$ corresponding to the tropical Green's function
		\begin{displaymath}
			\gamma_{\overline{\mathcal{B}}_v}(x) = \Psi_{B}(x) -1, \quad v \in \mathfrak{M}(K)_{\infty}.
		\end{displaymath}
		Here, $\Psi_B$ is the associated support function of the restriction $B$ of $\mathcal{B}$ to the generic fiber $\mathbb{P}^{d}_{K}$. Lemma~\ref{global-bdry-div} implies that the pair $\overline{\mathcal{B}}= (\mathcal{B}, g_{\mathcal{B}})$ is a toric boundary arithmetic divisor for $\mathcal{U}_{S}$, with tropical Green's function $\gamma_{\overline{\mathcal{B}}} = (\gamma_{\overline{\mathcal{B}}_v})$ given by
		\begin{displaymath}
			\gamma_{\overline{\mathcal{B}}_v}(x) = \begin{cases}\Psi_{B}(x) -1, &  v \in S \cup\mathfrak{M}(K)_{\infty}\\ \Psi_{B}(x), & \textup{otherwise.} \end{cases}
		\end{displaymath}
		Moreover, Corollary~\ref{div-nef-ample} and Theorem~\ref{toric-psh-type} show that $\overline{\mathcal{B}}$ is semipositive. Taking Legendre-Fenchel transforms and summing over $v$, the global roof function $\vartheta_{\overline{\mathcal{B}}} \colon \Delta_B \rightarrow \mathbb{R}$ is the constant function $1$ on its associated convex polytope $\Delta_B$. By Theorem~\ref{global-nef}, the divisor $\overline{\mathcal{B}}$ is arithmetically nef.
	\end{ex}
	\begin{ex}[The boundary topology for the torus $\mathcal{U}_S$]\label{bdry-top-torus-OKS}
		Continuing with Example~\ref{bdry-div-torus-OKS}, observe that a sequence $\lbrace \overline{\mathcal{D}}_n \rbrace_{n \in \mathbb{N}}$ in $\overline{\textup{Div}}_{\mathbb{T}}(\mathcal{U}_{S}/\mathcal{O}_K)$ converges to a toric adelic divisor $\overline{\mathcal{D}}$ if and only if for each $\varepsilon>0$ there exists $n_{\varepsilon}$ such that the following holds:
		\begin{enumerate}
			\item For each $v \in S \cup \mathfrak{M}(K)_{\infty}$ and every $x \in N_{\mathbb{R}}$ we have
			\begin{displaymath}
				|\gamma_{\overline{\mathcal{D}}_{n,v}}(x) - \gamma_{\overline{\mathcal{D}}_v}(x)| \leq \varepsilon \cdot (1 + |x|).
			\end{displaymath}
			\item For each $v \in \mathfrak{M}(K)_{\textup{fin}} \setminus S$ and every $x \in N_{\mathbb{R}}$ we have
			\begin{displaymath}
				|\gamma_{\overline{\mathcal{D}}_{n,v}}(x) - \gamma_{\overline{\mathcal{D}}_v}(x)| \leq \varepsilon \cdot |x|.
			\end{displaymath}
		\end{enumerate}
		In other words, the sequence $\lbrace \gamma_{\overline{\mathcal{D}}_{n,v}} \rbrace_{n \in \mathbb{N}}$ converges to $\gamma_{\overline{\mathcal{D}}_v}$ in the $\mathcal{G}$-norm (resp. $\mathcal{C}$-norm) uniformly in $v \in \mathfrak{M}(K)$.
	\end{ex}
	\begin{ex}[Canonical models of divisors]\label{global-can-mod}
		We keep working with Examples~\ref{bdry-div-torus-OKS}~and~\ref{bdry-top-torus-OKS}. By Theorem~3.2.15~of~\cite{Per26}, every continuous and conical function $\Psi$ on $N_{\mathbb{R}}$ appears as the support function of a toric compactified divisor $D \in \textup{Div}_{\mathbb{T}}(U/K)$, where $U/K$ is the torus given by the generic fiber of $\mathcal{U}_S /\mathcal{O}_K$. Let $\lbrace D_n \rbrace_{n \in \mathbb{N}}$ be a Cauchy sequence of toric model divisors of $U/K$ representing $D$. By the above example, the sequence of canonical models $\lbrace \overline{\mathcal{D}}_{n,\textup{can}} \rbrace_{n \in \mathbb{N}}$ is Cauchy in the boundary topology of $\mathcal{U}_S /\mathcal{O}_K$. It determines a toric adelic divisor $\overline{\mathcal{D}}_{\textup{can}}$, and its tropical Green's function is $\gamma_{\overline{\mathcal{D}}_{\textup{can}}} = (\Psi_D)_{v \in \mathfrak{M}(K)}$. Then, the pair $\overline{\mathcal{D}}_{\textup{can}} = (\mathcal{D}_{\textup{can}}, g_{\mathcal{D}_\textup{can}})$ is called the  \textit{canonical arithmetic model} of $D$.
	\end{ex}
	\begin{rem}[The image of the map $\mathcal{G}_K$]
		Ideally, we want to describe the group $\overline{\textup{Div}}_{\mathbb{T}} (\mathcal{U}_{S}/\mathcal{O}_K )$ in terms of the associated tropical Green's functions. In other words, we would like to compute the image of the map $\mathcal{G}_K$. Then, Theorems~3.2.15,~3.3.13~and~4.2.10~of~\cite{Per26} impose necessary conditions for a $v$-tuple of functions $(\gamma_v)_{v \in \mathfrak{M}(K)}$ to appear as the tropical Green's function of a toric adelic divisor. However, we are still missing a coherence condition as in Remark~\ref{coherence}. This is a difficult task: By the previous remark, we would need an approximation result similar to the one in Theorem~A.4.10~of~the~loc.~cit. that works for arbitrary rational piecewise-affine functions, not just concave ones. We were not able to obtain such a result in this article. Alternatively, one could adapt Song's~ideas~\cite{Son24} to obtain a convex-analytic description of the group of toric equivariant divisors on the Berkovich analytic torus $\mathcal{U}_{S}^{\textup{an}} \rightarrow \mathcal{M}(\mathcal{O}_K)$.
	\end{rem}
	
	\section{Positivity properties of toric adelic divisors}\label{5}
	In this section, we extend the descriptions of the cones of semipositive and nef toric arithmetic divisors in Description~\ref{toric-global-paf} and Theorems~\ref{toric-psh-type}~and~\ref{global-nef} to the adelic setting. In particular, we will focus on the case of the split $d$-dimensional torus $\mathcal{U}_{S}$, where $S$ is a finite subset of $\textup{Max}(\mathcal{O}_K)$.
	\subsection{The semipositive cone}\label{4-2} The first observation is that the tropical Green's function associated to a semipositive toric adelic divisor consists of concave functions.
	\begin{prop}\label{adelic-semipositive-GF}
		Let $\Sigma_0$ be a smooth quasi-projective fan in $N_{\mathbb{R}}$, $S \subset \textup{Max}(\mathcal{O}_K)$ be finite, and $\mathcal{X}_{\Sigma_0 , S} / \mathcal{O}_{K}$ be the associated toric arithmetic variety. Then, the map $\mathcal{G}_K$ in Proposition~\ref{GK-adelic} restricts to an injective morphism of cones
		\begin{displaymath}
			\mathcal{G}_{K} \colon \overline{\textup{Div}}_{\mathbb{T}}^{+} (\mathcal{X}_{\Sigma_0, S}/\mathcal{O}_K ) \longrightarrow \prod_{v \in \mathfrak{M}(K)} \mathcal{G}^{+}(N_{\mathbb{R}}),
		\end{displaymath}
		where $\mathcal{G}^{+}(N_{\mathbb{R}})$ is the cone of globally Lipschitz concave functions on $N_{\mathbb{R}}$ (\ref{G-def}).  
	\end{prop}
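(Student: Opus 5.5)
Injectivity and additivity are inherited from Theorem~\ref{GK-adelic}, since $\mathcal{G}_K$ is already an injective group morphism on all of $\overline{\textup{Div}}_{\mathbb{T}}(\mathcal{X}_{\Sigma_0,S}/\mathcal{O}_K)$. The restriction to the semipositive cone is therefore injective and compatible with sums and positive rational multiples. The only real content is that the image lands in $\prod_v \mathcal{G}^{+}(N_{\mathbb{R}})$. I will check this first for toric model divisors and then pass to limits.

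\emph{Model divisors.} Let $\overline{\mathcal{D}}$ be a semipositive toric model divisor, represented on some $\mathcal{X}_{\overline{\Sigma}}$ with $\overline{\Sigma}$ as in Description~\ref{toric-models-OK}.(iv). Its divisorial part $\mathcal{D}$ is relatively nef.
\begin{itemize}
\item At finite places, Corollary~\ref{div-nef-ample} shows that each $\gamma_{\overline{\mathcal{D}}_v}=\gamma_{\mathcal{D}_{\mathfrak{p}}}$ is concave.
\item At infinite places, $g_{\mathcal{D}}$ is of psh type, so Theorem~\ref{toric-psh-type} shows that each $\gamma_{\overline{\mathcal{D}}_v}$ is concave with $\gamma_{\overline{\mathcal{D}}_v}-\Psi_D$ bounded.
\end{itemize}
In both cases $\gamma_{\overline{\mathcal{D}}_v}$ lies in $\mathcal{G}(N_{\mathbb{R}})$, by Lemma~\ref{GK-mod}. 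Being concave, it lies in $\mathcal{G}^{+}(N_{\mathbb{R}})$. Since a concave function at bounded distance from the concave conical function $\Psi_D$ is globally Lipschitz, this matches the identification of $\mathcal{G}^{+}$ in Description~\ref{G-def}.

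\emph{Limits.} By Definition~\ref{adelic-dfn}, a semipositive toric adelic divisor $\overline{\mathcal{D}}$ is a boundary-topology limit of semipositive toric model divisors $\overline{\mathcal{D}}_n$. By the continuity property of Theorem~\ref{GK-adelic}, for each $v$ the functions $\gamma_{\overline{\mathcal{D}}_{n,v}}$ converge to $\gamma_{\overline{\mathcal{D}}_v}$ in the $\mathcal{G}$-norm. Here I should make sure that the limit defining $\mathcal{G}_K(\overline{\mathcal{D}})$ does not depend on the chosen sequence. This follows because $\mathcal{G}_K$ is well defined on the completion and continuous in this sense. The cone $\mathcal{G}^{+}(N_{\mathbb{R}})$ is closed in $\mathcal{G}(N_{\mathbb{R}})$ by Description~\ref{G-def}; at the pointwise level, a pointwise limit of concave functions is concave. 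Hence $\gamma_{\overline{\mathcal{D}}_v}\in\mathcal{G}^{+}(N_{\mathbb{R}})$ for every $v$.

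The main obstacle is mild. It is to make sure the model-level step really yields membership in $\mathcal{G}$, rather than mere concavity, at every place including the finite ones. For those places I rely on the rational piecewise affine description, which gives $\gamma_{\mathcal{D}_{\mathfrak{p}}}-\Psi_D$ bounded (Remark~\ref{metrized-div}), together with the fact that $\gamma_{\mathcal{D}_{\mathfrak{p}}}=\Psi_D$ for almost all $\mathfrak{p}$. Everything else is a formal consequence of closedness and continuity.
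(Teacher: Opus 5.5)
Your proposal is correct and follows essentially the paper's argument. The steps are: concavity of each $v$-adic tropical Green's function for semipositive toric model divisors (finite places via the piecewise affine description, archimedean places via Theorem~\ref{toric-psh-type}); $\mathcal{G}$-norm convergence from Theorem~\ref{GK-adelic}; closedness of $\mathcal{G}^{+}(N_{\mathbb{R}})$; and injectivity inherited from Theorem~\ref{GK-adelic}.
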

	\begin{proof}
		Let $\overline{\mathcal{D}}$ be a semipositive toric adelic divisor, represented by the Cauchy sequence $\lbrace \overline{\mathcal{D}}_n \rbrace_{n \in \mathbb{N}}$ of semipositive toric model arithmetic divisors. Write $\gamma_{\overline{\mathcal{D}}} = (\gamma_{\overline{\mathcal{D}}_v})$ and $\gamma_{\overline{\mathcal{D}}_n} = (\gamma_{\overline{\mathcal{D}}_{n,v}})$ for the corresponding tropical Green's functions. By Description~\ref{toric-global-paf} and Theorem~\ref{toric-psh-type}, for each $n \in \mathbb{N}$ and $v \in \mathfrak{M}(K)$, the function $\gamma_{\overline{\mathcal{D}}_{n,v}}$ is concave. Proposition~\ref{GK-adelic} shows that, for each $v \in \mathfrak{M}(K)$, the sequence $\lbrace \gamma_{\overline{\mathcal{D}}_{n,v}} \rbrace_{n \in \mathbb{N}}$ converges in the $\mathcal{G}$-norm to the function $\gamma_{\overline{\mathcal{D}}_v}$. Therefore, it belongs to the closed cone  $\mathcal{G}^{+}(N_{\mathbb{R}})$ (See~Description~\ref{G-def}).
	\end{proof}
	The next task is to characterize the image of the map $\mathcal{G}_K$. To do this, we will restrict our discussion to the case of the torus $\mathcal{U}_S$. To justify this simplification, observe that the torus $\mathcal{U}_S$ is an open dense toric subscheme of the toric arithmetic variety $\mathcal{X}_{\Sigma_0 , S} / \mathcal{O}_{K}$. Then, by Lemma~\ref{GK-functorial}, we have a continuous injective group morphism
	\begin{displaymath}
		\iota^{\ast} \colon \overline{\textup{Div}}_{\mathbb{T}}(\mathcal{X}_{\Sigma_0,S}/\mathcal{O}_K) \longrightarrow \overline{\textup{Div}}_{\mathbb{T}}(\mathcal{U}_{S}/\mathcal{O}_K).
	\end{displaymath}
	which commutes with the map $\mathcal{G}_K$. Trivially, this map restricts to the corresponding cones of semipositive toric adelic divisors (resp. nef toric adelic divisors). Hence, we may always reduce to the case of the torus. This has the additional advantage that the boundary topology on the torus is easier to work with. We then state the main result of this subsection.
	\begin{thm}\label{semipositive-torus-OKS}
		Let $S \subset \textup{Max}(\mathcal{O}_K)$ be a finite set, and let $\mathcal{U}_{S}$ be a split torus of relative dimension $d$ over $\textup{Spec}(\mathcal{O}_{K,S})$. The map
		\begin{displaymath}
			\mathcal{G}_{K} \colon \overline{\textup{Div}}_{\mathbb{T}}^{+} (\mathcal{U}_{S}/\mathcal{O}_K ) \longrightarrow \prod_{v \in \mathfrak{M}(K)} \mathcal{G}^{+}(N_{\mathbb{R}})
		\end{displaymath}
		induces a bijection from the cone of semipositive toric adelic divisors to the cone of $v$-tuples of globally Lipschitz concave functions $\gamma = (\gamma_v)$ satisfying the following conditions:
		\begin{enumerate}[label=(\roman*)]
			\item For all places $v \in \mathfrak{M}(K)_{\textup{fin}} \setminus S$, we have $\gamma_v(0) \in \mathbb{Q}$.
			\item There exists a conical concave function $\Psi \colon N_{\mathbb{R}} \rightarrow \mathbb{R}$ such that $\textup{rec}(\gamma_v) = \Psi$ for all places $v \in \mathfrak{M}(K)$.
			\item For each $\varepsilon >0$ there is a finite set $S(\varepsilon) \subset \mathfrak{M}(K)_{\textup{fin}}$ containing the set $S$ and such that $\| \gamma_v - \Psi  \|_{\mathcal{C}} \leq \varepsilon$ for all places $v \in \mathfrak{M}(K)_{\textup{fin}} \setminus S(\varepsilon)$. In particular, the definition of the $\mathcal{C}$-norm (\ref{G-def}) implies that $\gamma_v (0) = \Psi (0) = 0$ for almost all finite places $v$.
		\end{enumerate}
	\end{thm}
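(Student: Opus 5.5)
The plan has two halves. First I show that every image satisfies (i)--(iii); then I construct a preimage for every tuple satisfying them. Injectivity is already given by Proposition~\ref{adelic-semipositive-GF}, so only the description of the image needs proof.

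\emph{Necessity.} Let $\overline{\mathcal{D}}=\lbrace\overline{\mathcal{D}}_n\rbrace$ be a Cauchy sequence of semipositive toric model divisors. By Remark~\ref{decreasing} I may assume it is decreasing, with $\overline{\mathcal{D}}_{n}-\overline{\mathcal{D}}_{n+1}$ controlled by $2^{-n}\overline{\mathcal{B}}$, where $\overline{\mathcal{B}}$ is the boundary divisor of Example~\ref{bdry-div-torus-OKS}.

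For each model $\overline{\mathcal{D}}_n$, Description~\ref{toric-global-paf} gives concave rational piecewise affine functions $\gamma_{n,v}$ for finite $v$. These satisfy three things: they all share the recession $\Psi_{D_n}$; $\gamma_{n,v}(0)\in\mathbb{Q}$; and $\gamma_{n,v}=\Psi_{D_n}$ outside a finite set $S_n$ of places.

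By Example~\ref{bdry-top-torus-OKS}, at finite places $v\notin S$ the convergence $\gamma_{n,v}\to\gamma_v$ holds in the $\mathcal{C}$-norm, uniformly in $v$. Uniformity gives $\textup{rec}(\gamma_v)=\lim\Psi_{D_n}=:\Psi$, because $\textup{rec}$ is a continuous retraction (Description~\ref{G-def}) and $\Psi$ is concave and conical. This gives (ii).

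Convergence in the $\mathcal{C}$-norm forces $\gamma_{n,v}(0)=\gamma_v(0)$ once $n$ is large, since $|\gamma_{n,v}(0)-\gamma_v(0)|\le \varepsilon\|0\|=0$. This gives (i).

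For (iii), choose $n$ with $\|\gamma_{n,v}-\gamma_v\|_{\mathcal{C}}\le\varepsilon/2$ and $\|\Psi_{D_n}-\Psi\|_{\mathcal{C}}\le\varepsilon/2$ for all $v$, and take $S(\varepsilon)=S\cup S_n$. For $v\notin S(\varepsilon)$ we have $\gamma_{n,v}=\Psi_{D_n}$, so $\|\gamma_v-\Psi\|_{\mathcal{C}}\le\varepsilon$.

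\emph{Sufficiency.} Given $(\gamma_v)$ satisfying (i)--(iii), I build a Cauchy sequence of semipositive toric model divisors whose tropical Green's functions converge to $(\gamma_v)$ uniformly in $v$, in the sense of Example~\ref{bdry-top-torus-OKS}. I use the local approximation results already established in~\cite{Per26}: Theorems~3.2.15, 3.3.13 and~4.2.10, together with Theorem~A.4.10.

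\begin{enumerate}
\item First approximate $\Psi$ in the $\mathcal{C}$-norm by a sequence of concave rational support functions $\Psi_n$ on smooth projective fans $\Sigma_n$, for instance with $\Psi_n\ge\Psi$ and $\|\Psi_n-\Psi\|_{\mathcal{C}}\le 1/n$.
\item For $v\notin S(1/n)$, set $\gamma_{n,v}=\Psi_n$. Then $\|\gamma_{n,v}-\gamma_v\|_{\mathcal{C}}\le 2/n$, uniformly in $v$.
\item For the finitely many finite places $v\in S(1/n)$, invoke the local non-archimedean result. A concave, globally Lipschitz $\gamma_v$ with rational $\gamma_v(0)$ is a $\mathcal{C}$-norm limit of concave rational piecewise affine functions with recession $\Psi_n$. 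For $v\in S$ only the $\mathcal{G}$-norm is needed. Rationality of $\gamma_v(0)$ is exactly what makes the $\mathcal{C}$-approximation possible at $0$.
\item For the archimedean places, approximate $\gamma_v$ in the $\mathcal{G}$-norm by concave functions with bounded difference from $\Psi_n$; Theorem~\ref{toric-psh-type} allows this.
\end{enumerate}

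These local data share the recession $\Psi_n$ and agree with $\Psi_n$ outside a finite set of places. By Description~\ref{toric-global-paf}, together with a common refinement of the local fans (Lemma~\ref{common-refinement}, Description~\ref{toric-models-OK}), they glue to a semipositive toric model divisor $\overline{\mathcal{D}}_n$ on an effective arithmetic fan. Semipositivity follows from Corollary~\ref{div-nef-ample} and Theorem~\ref{toric-psh-type}.

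The main obstacle is making the local approximations compatible across places. All of them need a common recession $\Psi_n$, and they must use fans refining a single global $\Sigma_n$ so that the $\widetilde{\Sigma}_{\mathfrak{p}}$ form an arithmetic fan. Meanwhile the error must stay uniform in $v$ while $S(1/n)$ grows. I would first handle this by taking the local approximants to be sup-convolutions (equivalently, minima of finitely many rational affine functions) whose linear parts are drawn from the slopes of $\Psi_n$. That makes the recession automatically equal to $\Psi_n$, and refining $\Sigma_n$ makes everything compatible. Finally I would pass to a subsequence so that the total error at stage $n$ is at most $2^{-n}$, which gives the Cauchy property. By the continuity in Theorem~\ref{GK-adelic}, the limit has tropical Green's function $(\gamma_v)$.
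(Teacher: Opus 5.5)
Your proposal is correct and follows essentially the paper's proof. The necessity part is the same: (i) from $\mathcal{C}$-convergence at $0$, (ii) from continuity of $\textup{rec}$, and (iii) from the triangle inequality with $\gamma_{n,v}=\Psi_{D_n}$ off a finite set. The sufficiency part is also the same: set $\gamma_{n,v}=\Psi_n$ off $S(\varepsilon_n)$, use Theorem~A.4.10 of~\cite{Per26} with $f_{n,v}(0)=\gamma_v(0)$ at the remaining finite places, use $\mathcal{G}$-approximations at $S\cup\mathfrak{M}(K)_{\infty}$, and glue via Lemma~\ref{common-refinement}.

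The one slip is your suggested choice $\Psi_n\ge\Psi$. This means $\textup{stab}(\Psi_n)\subset\textup{stab}(\Psi)$, which no rational polytope can satisfy when, e.g., $\textup{stab}(\Psi)$ is an irrational point. So take $\Psi_n$ increasing to $\Psi$, as in Theorem~A.4.4 of~\cite{Per26}. Also, the archimedean approximants come from Lemma~A.2.17 there, not from Theorem~\ref{toric-psh-type}, which only certifies that they give psh Green's functions.
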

	\begin{proof}
		First, we show that these conditions are necessary. Let $\overline{\mathcal{D}}$ be any semipositive toric adelic divisor and $\overline{\mathcal{B}}$ be the toric boundary divisor from Example~\ref{bdry-div-torus-OKS}. Since $\overline{\mathcal{B}}$ is semipositive, Remark~\ref{decreasing} implies the existence of a decreasing Cauchy sequence of semipositive toric model divisors $\lbrace \overline{\mathcal{D}}_n \rbrace_{n \in \mathbb{N}}$ representing $\overline{\mathcal{D}}$. Write $\gamma_{\overline{\mathcal{D}}} = (\gamma_{\overline{\mathcal{D}}_v})$ and $\gamma_{\overline{\mathcal{D}}_n} = (\gamma_{\overline{\mathcal{D}}_{n,v}})$ for the corresponding tropical Green's functions. In particular, for each finite place $v$, the functions $\gamma_{\overline{\mathcal{D}}_{n,v}}$ are rational and piecewise linear. By Example~\ref{bdry-top-torus-OKS}.(ii), for each $v \in \mathfrak{M}(K)_{\textup{fin}} \setminus S$, the sequence of rational numbers $\lbrace \gamma_{\overline{\mathcal{D}}_{n,v}} (0)\rbrace_{ n \in \mathbb{N}}$ is eventually constant. This means that $\gamma_{\overline{\mathcal{D}}_{v}} (0)$ is a rational number, and therefore, Condition~\textit{(i)} is satisfied. Combining Description~\ref{toric-global-paf} and Theorem~\ref{toric-psh-type}, we obtain that $\textup{rec}(\gamma_{\overline{\mathcal{D}}_{n,v}}) = \Psi_{D_n}$ for each $n \in \mathbb{N}$ and $v \in \mathfrak{M}(K)_{\textup{fin}}$. Here, $\Psi_{D_n}$ is the support function of the restriction of $\mathcal{D}_n$ to the generic fiber, and in particular, it is concave. By Description~\ref{G-def}, the recession map $\textup{rec} \colon \mathcal{G}(N_{\mathbb{R}}) \rightarrow \mathcal{C}(N_{\mathbb{R}})$ is a continuous retraction. Then, for each $v \in \mathfrak{M}(K)$, we have
		\begin{displaymath}
			\lim_{n \in \mathbb{N}}  \Psi_{D_n} = 	\lim_{n \in \mathbb{N}} \textup{rec}(\gamma_{\overline{\mathcal{D}}_{n,v}}) = \textup{rec} \left( \lim_{n \in \mathbb{N}} \gamma_{\overline{\mathcal{D}}_{n,v}} \right) = \textup{rec}(\gamma_{\overline{\mathcal{D}}_v}).
		\end{displaymath}
		Since each $\Psi_{D_n}$ is conical and concave, so is their limit $\Psi_D$. Therefore, Condition~\textit{(ii)} is satisfied. Now we show that Condition~\textit{(iii)} holds, so we fix an $\varepsilon >0$. Description~\ref{toric-global-paf} implies that, for each $n \in \mathbb{N}$, there exists a finite set $S(n) \in \mathfrak{M}(K)_{\textup{fin}}$ such that $\gamma_{\overline{\mathcal{D}}_{n,v}} = \Psi_{D_n}$ for all $v \in \mathfrak{M}(K)_{\textup{fin}} \setminus S(n)$. Enlarging $S(n)$ if necessary, we may assume $S \subset S(n)$. By Example~\ref{bdry-top-torus-OKS}, there exists $n(\varepsilon)$ such that, for all $n \geq n(\varepsilon)$ and all places $v \in \mathfrak{M}(K)_{\textup{fin}} \setminus S$, we have
		\begin{displaymath}
			\| \Psi_{D_n} - \Psi_{D} \|_{\mathcal{C}} \leq \varepsilon/2 \quad \textup{and} \quad \| \gamma_{\overline{\mathcal{D}}_{n,v}} - \gamma_{\overline{\mathcal{D}}_v} \|_{\mathcal{C}} \leq \varepsilon/2.
		\end{displaymath}
		Then, let $S(\varepsilon ) = S(n(\varepsilon))$. The triangle inequality and the two inequalities above give
		\begin{displaymath}
			\| (\gamma_{\overline{\mathcal{D}}_v} - \Psi_{D}) + (\Psi_{D_{n(\varepsilon)}} -\gamma_{\overline{\mathcal{D}}_{n(\varepsilon),v}})  \|_{\mathcal{C}} \leq \| \Psi_{D_{n(\varepsilon)}} - \Psi_{D} \|_{\mathcal{C}}  + \| \gamma_{\overline{\mathcal{D}}_{n(\varepsilon),v}} - \gamma_{\overline{\mathcal{D}}_v} \|_{\mathcal{C}}  \leq \varepsilon.
		\end{displaymath}
		Condition~\textit{(iii)} follows from the fact that $\Psi_{D_{n(\varepsilon)}} = \gamma_{\overline{\mathcal{D}}_{n(\varepsilon),v}}$ for all $v \in \mathfrak{M}(K)_{\textup{fin}} \setminus S(\varepsilon)$.
		
		Now, we show that Conditions~\textit{(i)--(iii)} are sufficient. Let $\gamma = (\gamma_v)$ and $\Psi$ be as in the statement. We want to find a Cauchy sequence of semipositive toric model arithmetic divisors $ \lbrace \overline{\mathcal{D}}_n \rbrace_{n \in \mathbb{N}}$ such that the induced sequence of $v$-tuples $\gamma_{\overline{\mathcal{D}}_n} = (\gamma_{\overline{\mathcal{D}}_{n,v}})$ approximates $\gamma$ as in Example~\ref{bdry-top-torus-OKS}. By continuity and injectivity of the map $\mathcal{G}_K$ (Proposition~\ref{adelic-semipositive-GF}), conclude that the $v$-tuple $\gamma$ coincides with the tropical Green's function $\gamma_{\overline{\mathcal{D}}}$ of the semipositive toric adelic divisor $\overline{\mathcal{D}} = \lbrace \overline{\mathcal{D}}_n \rbrace_{n \in \mathbb{N}}$. To do this, we will apply the density results developed in Appendix~A~of~\cite{Per26} as follows:
		\begin{enumerate}[label=(\arabic*)]
			\item By Theorem~A.4.4, there exists a sequence of rational concave support functions $\lbrace \Psi_n  \rbrace_{n \in \mathbb{N}}$ on $N_{\mathbb{R}}$, which is increasing and converges in the $\mathcal{C}$-norm to $\Psi$. Denote $\varepsilon_n = \| \Psi_n - \Psi \|_{\mathcal{C}}$. Then, the sequence $\lbrace \varepsilon_n \rbrace_{n \in \mathbb{N}}$ is decreasing and converges to $0$.
			\item For each $v \in \mathfrak{M}(K)_{\textup{fin}} \setminus S$, apply Theorem~~A.4.10~of~\cite{Per26} to get an increasing sequence $\lbrace f_{n,v} \rbrace_{n \in \mathbb{N}}$ of concave rational piecewise affine functions on $N_{\mathbb{R}}$ satisfying
			\begin{displaymath}
				f_{n,v}(0)=\gamma_v (0), \quad \textup{rec}(f_{n,v})= \Psi_n, \quad \textup{and} \quad \| f_{n,v} - \gamma_v \|_{\mathcal{C}} \leq 2 \, \varepsilon_n.
			\end{displaymath}
			\item For each $v \in S \cup \mathfrak{M}(K)_{\infty}$, we apply Lemma~A.2.17 to find an increasing sequence $\lbrace \gamma_{n,v} \rbrace_{n \in \mathbb{N}}$ of globally Lipschitz concave functions on $N_{\mathbb{R}}$ which converges in the $\mathcal{G}$-norm to $\gamma_v$ and such that $|\gamma_{n,v} - \Psi_n |$ is bounded. Moreover, the proof of this Lemma makes this convergence explicit, meaning that we can choose the functions $\gamma_{n,v}$ so that $\| \gamma_{n,v} - \gamma_v \|_{\mathcal{G}} \leq \varepsilon_n$. Then, by Corollary~A.4.8, we may assume that the functions $\gamma_{n,v}$ are rational piecewise affine. 
		\end{enumerate}
		For each $n \in \mathbb{N}$, let $S(\varepsilon_n)$ be a finite set of finite places such that property \textit{(iii)} holds. Without loss of generality, we may assume that $S(\varepsilon_0 ) \subset S(\varepsilon_1 ) \subset \ldots \subset \mathfrak{M}(K)_{\textup{fin}}$. Then, for each $n \in \mathbb{N}$, define the functions
		\begin{displaymath}
			\gamma_{n,v} = \begin{cases} f_{n,v}, & \textup{ if } v \in S(\varepsilon_n) \setminus S \\ \Psi_n & \textup{ if } v \in \mathfrak{M}(K)_{\textup{fin}} \setminus S(\varepsilon_n). \end{cases}
		\end{displaymath}
		Let $w \in \mathfrak{M}(K)_{\textup{fin}} \setminus S(\varepsilon_0)$ and observe that $\gamma_w (0) = 0$. This implies $\gamma_{n,w}(0)=0$ for all $n \in \mathbb{N}$. By construction, for each $n \in \mathbb{N}$,
		\begin{displaymath}
			\| \gamma_{n,v} - \gamma_v \|_{\mathcal{C}} = \begin{cases} \| f_{n,v} - \gamma_v \|_{\mathcal{C}} \leq 2 \, \varepsilon_n &  \textup{ if } v \in S(\varepsilon_n) \setminus S \\ \| \Psi_n - \gamma_v \|_{\mathcal{C}} \leq \varepsilon_n & \textup{ if } v \in \mathfrak{M}(K)_{\textup{fin}} \setminus S(\varepsilon_n). \end{cases}
		\end{displaymath}
		Then, define the $v$-tuple $\gamma_n = (\gamma_{n,v})$. For a fixed $n \in \mathbb{N}$, all of the $\gamma_{n,v}$ are rational piecewise affine and $\gamma_{n,v} = \Psi_n$ for all but a finite number of finite places $v$. Then, iterating the procedure in Lemma~\ref{common-refinement} a finite number of times, we can find a smooth effective arithmetic fan $\overline{\Sigma}_{n} = (\widetilde{\Sigma}_{n,v})$ in $(N_{\mathbb{R}},K)$ such that $\gamma_{n,v}$ is a rational piecewise affine function on the polyhedron $\Pi_{n,v}$ obtained by intersecting $\widetilde{\Sigma}_v$ with the hyperplane $N_{\mathbb{R}} \times \lbrace 1 \rbrace$. By Description~\ref{toric-global-paf}, Corollary~\ref{div-nef-ample}, and Theorem~\ref{toric-psh-type}, the $v$-tuple $\gamma_n = (\gamma_{n,v})$ determines a semipositive toric divisor $\overline{\mathcal{D}}_n$ on the projective toric variety $\mathcal{X}_{\overline{\Sigma}_n}/\mathcal{O}_K$. Moreover, the proof of Lemma~\ref{common-refinement} and Description~\ref{toric-models-OK} imply that $\mathcal{X}_{\overline{\Sigma}_n}$ is a toric projective model over $\mathcal{O}_K$ of the torus $\mathcal{U}_S$. Then,  Example~\ref{bdry-top-torus-OKS} show that $\lbrace \overline{\mathcal{D}}_n \rbrace_{n \in \mathbb{N}}$ is a Cauchy sequence in the boundary topology of $\mathcal{U}_{S}$, whose limit $\overline{\mathcal{D}}$ satisfies $\gamma_{\mathcal{\overline{\mathcal{D}}}} = \gamma$.
	\end{proof}
	By Legendre-Fenchel duality, the above result can be stated in terms of the $v$-adic roof functions.
	\begin{cor}\label{semipositive-OKS-roofs}
		The assignment $\overline{\mathcal{D}} \mapsto (\vartheta_{\overline{\mathcal{D}}_v})$ induces a bijection between the cone of semipositive toric adelic divisors $\overline{\mathcal{D}}$ and the cone of $v$-tuples $(\vartheta_v)$ of closed concave functions $\vartheta_v \colon M_{\mathbb{R}} \rightarrow \mathbb{R}_{-\infty}$ satisfying the following conditions:
		\begin{enumerate}[label=(\roman*)]
			\item For each place $v \in \mathfrak{M}(K)_{\textup{fin}}\setminus S$, the supremum $\sup \vartheta_v$ is a rational number.
			\item There is a compact convex set $\Delta \subset M_{\mathbb{R}}$ such that, for each $v \in \mathfrak{M}(K)$, the closure of the effective domain $\textup{dom}(\vartheta_v)\coloneqq \lbrace y \in M_{\mathbb{R}} \, \vert \, \vartheta_v (y) > - \infty \rbrace$ coincides with $\Delta$.
			\item For each $\varepsilon >0$ there is a finite set $S(\varepsilon) \subset \mathfrak{M}(K)_{\textup{fin}}$ containing the set $S$ and such that $ \sup \vartheta_v = 0$ and $\vartheta_v \boxplus \iota_{\overline{\textup{B}}(0,\varepsilon)} \geq \iota_{\Delta}$ for every $v \in \mathfrak{M}(K)_{\textup{fin}}\setminus S(\varepsilon)$.
		\end{enumerate}
	\end{cor}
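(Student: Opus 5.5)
The plan is to deduce the corollary from Theorem~\ref{semipositive-torus-OKS} by translating each of its three conditions through the Legendre--Fenchel transform, using the convex-analytic dictionary of Appendix~A of~\cite{Per26}. Since $\gamma \mapsto \gamma^{\vee}$ is an involutive bijection between closed concave functions on $N_{\mathbb{R}}$ and on $M_{\mathbb{R}}$, the assignment $\overline{\mathcal{D}} \mapsto (\vartheta_{\overline{\mathcal{D}}_v}) = (\gamma_{\overline{\mathcal{D}}_v}^{\vee})$ is the composition of the bijection of Theorem~\ref{semipositive-torus-OKS} with a bijection. It therefore suffices to show that a $v$-tuple $(\gamma_v)$ of globally Lipschitz concave functions satisfies (i)--(iii) of Theorem~\ref{semipositive-torus-OKS} if and only if $(\gamma_v^{\vee})$ satisfies (i)--(iii) of the corollary. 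I also have to identify the image of $\mathcal{G}^{+}(N_{\mathbb{R}})$ under duality: a closed concave $\vartheta$ has a real-valued, globally Lipschitz, concave dual exactly when $\textup{dom}(\vartheta)$ is bounded and nonempty.

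Condition (i) is immediate from the identity $\gamma^{\vee\vee}=\gamma$, which gives $\gamma_v(0) = \inf_y(\langle y,0\rangle - \vartheta_v(y)) = -\sup \vartheta_v$. Hence $\gamma_v(0)\in\mathbb{Q}$ if and only if $\sup\vartheta_v\in\mathbb{Q}$. Condition (ii) follows from the standard fact that for a Lipschitz concave $\gamma$ with recession function $\textup{rec}(\gamma)=\Psi$, the closure of $\textup{dom}(\gamma^{\vee})$ equals $\textup{stab}(\Psi)$, which is the compact convex set $\Delta$ with $\Psi^{\vee}=\iota_\Delta$. Conversely, if $\overline{\textup{dom}(\vartheta_v)}=\Delta$ for all $v$, then all $\textup{rec}(\gamma_v)$ equal the support function $\Psi$ of $\Delta$, and this $\Psi$ is conical and concave.

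Condition (iii) is the main obstacle. I would use the fact that the Legendre--Fenchel transform turns sums into sup-convolutions, and that the dual of $\varepsilon\|\cdot\|$ (up to the sign convention for concave transforms) is $\iota_{\overline{\textup{B}}(0,\varepsilon)}$. Given that $\gamma_v(0)=0$ and $\textup{rec}(\gamma_v)=\Psi$, concavity together with the conical behaviour yields $\gamma_v\le\Psi$ for those $v$. The $\mathcal{C}$-norm bound $\|\gamma_v-\Psi\|_{\mathcal{C}}\le\varepsilon$ then becomes the one-sided inequality $\gamma_v \geq \Psi - \varepsilon\|\cdot\|$, since $|\gamma_v(x)-\Psi(x)|\le \varepsilon\|x\|$ for all $x$. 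Dualizing this inequality, which reverses order, gives
\begin{displaymath}
\vartheta_v \boxplus \iota_{\overline{\textup{B}}(0,\varepsilon)} = \bigl(\gamma_v + \varepsilon\|\cdot\|\bigr)^{\vee}\geq \ldots
\end{displaymath}
The delicate point is to get the signs right, since $\Psi - \varepsilon\|\cdot\|$ is concave with dual $\iota_\Delta \boxplus \iota_{\overline{\textup{B}}(0,\varepsilon)}$. I would rather write the bound as $\gamma_v \le \Psi \le \gamma_v + \varepsilon\|\cdot\|$, where $\gamma_v+\varepsilon\|\cdot\|$ may fail to be concave. Taking transforms of the concave minorants, the inequality $\Psi\le\gamma_v+\varepsilon\|\cdot\|$ dualizes to $\iota_\Delta \le \vartheta_v\boxplus\iota_{\overline{\textup{B}}(0,\varepsilon)}$. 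This uses that the dual of a sum of concave functions is the sup-convolution of the duals, which holds here because both summands are finite. For the converse, I would apply the double transform to $\vartheta_v\boxplus\iota_{\overline{\textup{B}}(0,\varepsilon)}\ge\iota_\Delta$ to recover $\gamma_v+\varepsilon\|\cdot\|\ge\Psi$. The inequality $\gamma_v\le\Psi$ follows from $\sup\vartheta_v=0$, because then $\vartheta_v\le\iota_\Delta$. Finally, $\sup\vartheta_v=0$ is equivalent to $\gamma_v(0)=0$, which matches the parenthetical remark in Theorem~\ref{semipositive-torus-OKS}(iii). Since both conditions are required for all $v\notin S(\varepsilon)$, the same finite sets $S(\varepsilon)$ serve on both sides.
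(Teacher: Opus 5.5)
Your strategy is the paper's own. It proves the corollary in one line by transporting conditions (i)--(iii) of Theorem~\ref{semipositive-torus-OKS} through the Legendre--Fenchel transform, citing Propositions~A.1.11, A.2.3 and Lemma~A.2.16 of~\cite{Per26}. Your arguments for (i) and (ii) are correct.

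Your argument for (iii), however, has the signs reversed and does not prove the equivalence.
\begin{itemize}
\item \emph{The automatic inequality goes the other way.} If $\gamma$ is concave with $\gamma(0)=0$, then $\gamma(x)\ge\gamma(\lambda x)/\lambda$ for $\lambda\ge 1$. Hence $\textup{rec}(\gamma)\le\gamma$, so what holds for free is $\gamma_v\ge\Psi$, not $\gamma_v\le\Psi$. Dually, $\sup\vartheta_v=0$ gives $\vartheta_v\le\iota_\Delta$, and the transform reverses order.
\item \emph{The dual of the ball is $-\varepsilon\|\cdot\|$, not $+\varepsilon\|\cdot\|$.} The concave function whose transform is $\iota_{\overline{\textup{B}}(0,\varepsilon)}$ is $-\varepsilon\|\cdot\|$. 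So $(\vartheta_v\boxplus\iota_{\overline{\textup{B}}(0,\varepsilon)})^{\vee}=\gamma_v-\varepsilon\|\cdot\|$, and the condition $\vartheta_v\boxplus\iota_{\overline{\textup{B}}(0,\varepsilon)}\ge\iota_\Delta$ is equivalent to the \emph{upper} bound $\gamma_v\le\Psi+\varepsilon\|\cdot\|$.
\item \emph{The inequality you dualize is vacuous.} $\Psi\le\gamma_v+\varepsilon\|\cdot\|$ holds automatically once $\gamma_v(0)=0$, and it does not imply the corollary's condition. For example, take $d=1$ and $\vartheta(y)=\min(y-1/2,0)$ on $\Delta=[0,1]$, with $\vartheta=-\infty$ elsewhere. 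Then $\gamma=\vartheta^{\vee}\ge\Psi=\min(0,\cdot)$, yet $(\vartheta\boxplus\iota_{[-1/4,1/4]})(0)=\vartheta(1/4)=-1/4<0$.
\end{itemize}
So each direction of your (iii) argument combines one false inequality with one vacuous one.

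The repair is short, and with the correct signs everything stays concave, so no concave minorants are needed.
\begin{itemize}
\item \emph{Reduction.} For $v$ with $\gamma_v(0)=0$, equivalently $\sup\vartheta_v=0$, the lower bound $\Psi\le\gamma_v$ is automatic. Hence $\|\gamma_v-\Psi\|_{\mathcal{C}}\le\varepsilon$ is equivalent to $\gamma_v-\varepsilon\|\cdot\|\le\Psi$.
\item \emph{Forward direction.} Transforming $\gamma_v-\varepsilon\|\cdot\|\le\Psi$ gives $(\vartheta_v\boxplus\iota_{\overline{\textup{B}}(0,\varepsilon)})^{\vee\vee}\ge\iota_\Delta$. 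The function $\vartheta_v\boxplus\iota_{\overline{\textup{B}}(0,\varepsilon)}$ is closed, being $y\mapsto\max_{\|y'-y\|\le\varepsilon}\vartheta_v(y')$, the maximum of an upper semicontinuous function over a compact ball. So it equals its double transform, and $\vartheta_v\boxplus\iota_{\overline{\textup{B}}(0,\varepsilon)}\ge\iota_\Delta$.
\item \emph{Converse.} Suppose $\vartheta_v\boxplus\iota_{\overline{\textup{B}}(0,\varepsilon)}\ge\iota_\Delta$. For $y\in\Delta$ pick $y'$ with $\|y'-y\|\le\varepsilon$ and $\vartheta_v(y')\ge 0$. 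Then $\gamma_v(x)\le\langle x,y'\rangle\le\langle x,y\rangle+\varepsilon\|x\|$. Taking the infimum over $y\in\Delta$ gives $\gamma_v\le\Psi+\varepsilon\|\cdot\|$.
\end{itemize}
This is exactly the computation the paper carries out in the proof of Lemma~\ref{finite-sum}, and it is the content of Lemma~A.2.16 of~\cite{Per26}. With it, the same sets $S(\varepsilon)$ serve on both sides, as you say.
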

	\begin{proof}
		Follows from taking Legendre-Fenchel transforms. Applying Propositions~A.1.11,~A.2.3, and Lemma~A.2.16~of~\cite{Per26} to Conditions~\textit{(i)--(iii)} respectively, they are shown to be equivalent to the ones in Theorem~\ref{semipositive-torus-OKS}.
	\end{proof}
	
	\subsection{The nef cone} The main goal of this subsection is to extend the notion of global roof function from Definition~\ref{global-roof-fun} to the level of semipositive toric adelic divisors. Afterwards, we will use this function to characterize (arithmetic) nefness. First, we make the following remark, recalling the situation from the proof of Theorem~\ref{semipositive-torus-OKS} and fixing notation that will be used repeatedly throughout the rest of the article.
	\begin{rem}\label{notation-decreasing-seq}
		Let $S$ be a finite subset of maximal ideals of $\mathcal{O}_K$ and denote by $\mathcal{U}_S$ a split torus of dimension $d$ over $\mathcal{O}_{K,S}$, regarded as a toric arithmetic variety over $\mathcal{O}_K$. Let $\overline{\mathcal{D}}$ be a semipositive toric adelic divisor of $\mathcal{U}_S/\mathcal{O}_K$ and $\overline{\mathcal{B}}$ be the toric boundary divisor from Example~\ref{bdry-div-torus-OKS}. Since $\overline{\mathcal{B}}$ is semipositive, Remark~\ref{decreasing} implies the existence of a decreasing Cauchy sequence of semipositive toric model divisors $\lbrace \overline{\mathcal{D}}_n \rbrace_{n \in \mathbb{N}}$ representing $\overline{\mathcal{D}}$. Then, we fix the following notation:
		\begin{enumerate}
			\item For each $n \in \mathbb{N}$, denote by $D_n$ the nef toric divisor obtained by restriction to the generic fiber. Then, we write $\Psi_{D_n}$ and $\Delta_{D_n}$ for the corresponding support function and compact convex set (Theorem~).
			\item Denote by $\gamma_{\overline{\mathcal{D}}_n} = (\gamma_{\overline{\mathcal{D}}_{n,v}})$ the tropical Green's function associated to $\overline{\mathcal{D}}_n$. Then, we write $\vartheta_{\overline{\mathcal{D}}_v}$ and $\vartheta_{\overline{\mathcal{D}}_n}$ for the corresponding $v$-adic and global roof functions, respectively.
			\item Similarly, we let $D$, $\Psi_{D}$, $\Delta_{D}$, $\overline{\mathcal{D}}_v$, $\gamma_{\overline{\mathcal{D}}}=(\gamma_{\overline{\mathcal{D}}_v})$ and $\vartheta_{\overline{\mathcal{D}}_{v}}$ be the corresponding limit objects associated to $\overline{\mathcal{D}}$. 
			\item Moreover, if $\overline{\mathcal{D}}$ is nef, we will assume that each $\overline{\mathcal{D}}_n$ is nef.
		\end{enumerate}
	\end{rem}
	For the reader's convenience, we also include Lemma~A.2.7~of~\cite{Per26} below.
	\begin{lem}\label{geom-convergence}
		Let $\Psi: N_{\mathbb{R}} \rightarrow \mathbb{R}$ be conical and concave, with stabilty set $\textup{stab}(\Psi) = \Delta$. Let $\lbrace \Psi_n \rbrace_{n \in \mathbb{N}}$ be a sequence of conical concave functions on $N_{\mathbb{R}}$ and write $\Delta_n = \textup{stab}(\Psi_n)$. Then, the sequence $\lbrace \Psi_n  \rbrace_{n \in \mathbb{N}}$ converges to $\Psi$ in the $\mathcal{C}$-norm if and only if the sequence $\lbrace \Delta_n  \rbrace_{n \in \mathbb{N}}$ converges to $\Delta$ in the Hausdorff distance, i.e. for each $\varepsilon > 0$ there is a $n_{\varepsilon}$ such that for all $n \geq n_{\varepsilon}$ we have 
		\begin{displaymath}
			\Delta_n \subset \Delta + \overline{\textup{B}}(0,\varepsilon) \quad \textup{and} \quad \Delta \subset \Delta_n + \overline{\textup{B}}(0,\varepsilon).
		\end{displaymath}
		Moreover, if the sequence $\lbrace \Psi_n  \rbrace_{n \in \mathbb{N}}$ converges to $\Psi$ in the $\mathcal{C}$-norm and $\Delta$ has non-empty interior, then for every $\varepsilon >0$ there is $n_{\varepsilon}$ such that for all $n \geq n_\varepsilon$ we have $\Delta_{\varepsilon} \subset \Delta_n$, where
		\begin{displaymath}
			\Delta_{\varepsilon} \coloneqq \textup{Conv} (\lbrace y \in \Delta \, \vert \, \textup{dist}(y,\textup{rb}(\Delta)) \geq \varepsilon \rbrace)
		\end{displaymath}
		and $\textup{dist}(y,\textup{rb}(\Delta))$ is the distance from the point $y$ to the relative boundary $\textup{rb}(\Delta)$ of $\Delta$.
	\end{lem}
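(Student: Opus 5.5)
The plan is to work entirely with the duality between conical concave functions and their stability sets. A conical concave function $\Psi$ on $N_{\mathbb{R}}$ with $\textup{stab}(\Psi)=\Delta$ is the support function of $\Delta$:
\begin{displaymath}
\Psi(x)=\min_{y\in\Delta}\langle y,x\rangle .
\end{displaymath}
This is Remark~A.1.7 of~\cite{Per26}: the Legendre--Fenchel dual of $\Psi$ is $\iota_{\Delta}$, and biduality recovers $\Psi$. The three facts we need are standard.
\begin{enumerate}
\item The assignment $\Delta\mapsto\Psi_\Delta$ sends Minkowski sums to sums of support functions.
\item The closed ball $\overline{\textup{B}}(0,\varepsilon)\subset M_{\mathbb{R}}$, taken with the dual norm, has support function $-\varepsilon\|x\|$.
\item For compact convex sets, $A\subset B$ holds if and only if $\Psi_A\geq\Psi_B$ pointwise. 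The nontrivial direction is proved by separating a point of $A\setminus B$ from $B$ with a linear functional $x\in N_{\mathbb{R}}$ (Hahn--Banach).
\end{enumerate}

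With these facts, the first equivalence is formal. The inclusion $\Delta_n\subset\Delta+\overline{\textup{B}}(0,\varepsilon)$ is equivalent to $\Psi_n(x)\geq\Psi(x)-\varepsilon\|x\|$ for all $x$. Symmetrically, $\Delta\subset\Delta_n+\overline{\textup{B}}(0,\varepsilon)$ is equivalent to $\Psi(x)\geq\Psi_n(x)-\varepsilon\|x\|$. Together the two inclusions are equivalent to $|\Psi_n(x)-\Psi(x)|\leq\varepsilon\|x\|$ for all $x$, which is exactly the bound $\|\Psi_n-\Psi\|_{\mathcal{C}}\leq\varepsilon$ from the definition of the $\mathcal{C}$-norm (Description~\ref{G-def}). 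Hence $\mathcal{C}$-norm convergence and Hausdorff convergence are the same statement.

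For the second part, assume $\Delta$ has non-empty interior, so $\textup{rb}(\Delta)$ is the topological boundary. Consider the set
\begin{displaymath}
C_\varepsilon\coloneqq\lbrace y \, \vert \, \overline{\textup{B}}(y,\varepsilon)\subset\Delta\rbrace .
\end{displaymath}
It is convex, and every point at distance at least $\varepsilon$ from the boundary lies in it. It follows that $\Delta_\varepsilon\subset C_\varepsilon$, and I would verify this step carefully. Next, choose $n_\varepsilon$ such that $\|\Psi_n-\Psi\|_{\mathcal{C}}<\varepsilon$ for all $n\geq n_\varepsilon$, using strict inequality.

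Suppose some $y\in\Delta_\varepsilon$ does not lie in $\Delta_n$. By separation there is $x\in N_{\mathbb{R}}$ with $\|x\|=1$ and $\langle y,x\rangle<\Psi_n(x)$. Pick $y'\in M_{\mathbb{R}}$ of dual norm $1$ with $\langle y',x\rangle=1$. Since $y-\varepsilon y'\in\overline{\textup{B}}(y,\varepsilon)\subset\Delta$, we get
\begin{displaymath}
\Psi(x)\leq\langle y-\varepsilon y',x\rangle=\langle y,x\rangle-\varepsilon<\Psi_n(x)-\varepsilon .
\end{displaymath}
This contradicts $|\Psi_n(x)-\Psi(x)|<\varepsilon\|x\|=\varepsilon$. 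Therefore $\Delta_\varepsilon\subset\Delta_n$ for all $n\geq n_\varepsilon$.

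The only delicate points are the bookkeeping of primal and dual norms and the use of a strict inequality in the final step. Everything else reduces to separation for compact convex sets.
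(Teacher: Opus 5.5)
Your proposal is correct and follows essentially the same route as the source (Lemma~A.2.7 of \cite{Per26}, whose key step the paper itself invokes in the proof of Lemma~\ref{finite-sum}). In the first part you use support functions to turn the two Hausdorff inclusions into $\Psi_n \geq \Psi - \varepsilon\|x\|$ and $\Psi \geq \Psi_n - \varepsilon\|x\|$. In the second part you use the inclusion $\Delta_\varepsilon + \overline{\textup{B}}(0,\varepsilon) \subset \Delta$; your final separation argument is just the unpacked form of the cancellation $\Psi_{\Delta_\varepsilon} - \varepsilon\|x\| \geq \Psi \geq \Psi_n - \varepsilon\|x\|$, which gives $\Delta_\varepsilon \subset \Delta_n$. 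The strict inequality in the choice of $n_\varepsilon$ is not actually needed.
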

	Let $X/K$ be a proper variety of dimension $d$ and $D \in \textup{Div}(X)$ be a Cartier divisor. The \textit{volume} of $D$ is defined as the limit
	\begin{displaymath}
		\textup{vol}(X,D) \coloneqq \frac{1}{d!}\limsup_{n \rightarrow \infty} \frac{\textup{dim}_{K}(\Gamma (X, \mathcal{O}(D)^{\otimes n})}{n^{d}},
	\end{displaymath}
	where $\mathcal{O}(D)$ is the line bundle associated to $D$ and $\textup{dim}_{K}(\Gamma (X, \mathcal{O}(D))$ is the dimension over $K$ of the space of global sections of $\mathcal{O}(D)$. The divisor $D$ is said to be \textit{big} if its volume is positive. This definition extends naturally to $\mathbb{Q}$-divisors and $\mathbb{R}$-divisors. In the case where $X=X_{\Sigma}/K$ is a toric variety, and $D$ is a toric Cartier divisor, we get that $D$ is big if and only if $\textup{dim}(\Delta_D) = d$, where $\Delta_D$ is the polyhedron associated to $D$. This is implied by~Proposition~4.9~of~\cite{BMPS}. Therefore, a toric divisor $D$ is nef and big if and only if $\Delta_D$ is convex and $\textup{Vol}_M (\Delta_D)>0$. We will use this description to generalize this notion to the compactified (resp. adelic) setting.
	\begin{dfn}
		We say that a nef toric compactified divisor $D \in \textup{Div}_{\mathbb{T}}^{\textup{nef}}(U/K)$ is \textit{big} if $\textup{dim}(\Delta_D) = d$. Then, a semipositive toric adelic divisor $\overline{\mathcal{D}} \in  \overline{\textup{Div}}_{\mathbb{T}}^{+} (\mathcal{U}_{S}/\mathcal{O}_K )$ is \textit{generically big} if the nef toric compactified divisor $D$ on $U/K$ induced by restriction is big.
	\end{dfn}
	Now, we state a technical lemma. It will be used to extend the notion of global roof functions to the adelic setting. 
	\begin{lem}\label{finite-sum}
		With the notation of Remark~\ref{notation-decreasing-seq}, suppose that $\overline{\mathcal{D}}$ is generically big. For each $\varepsilon>0$, let $\Delta_{D,\varepsilon}$ be the compact convex set from Lemma~\ref{geom-convergence} and $\Psi_{\Delta_{D,\varepsilon}}$ be its corresponding conical concave function. Then, there exists a number $n_{\varepsilon}$ and a finite set of finite places $R(\varepsilon) \subset \mathfrak{M}(K)_{\textup{fin}}$ containing $S$ such that $\vartheta_{\overline{\mathcal{D}}_{n,v}}(y) = 0$ for all $n \geq n_{\varepsilon}$, $v \in \mathfrak{M}(K)_{\textup{fin}} \setminus R(\varepsilon)$ and $y \in \Delta_{D, \varepsilon}$.
	\end{lem}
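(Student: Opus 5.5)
The plan is to work on the dual side. For each $n$ and each finite place $v$, $\gamma_{\overline{\mathcal{D}}_{n,v}}$ is a concave rational piecewise affine function with recession $\Psi_{D_n}$. The roof function $\vartheta_{\overline{\mathcal{D}}_{n,v}}$ is its Legendre--Fenchel transform, a concave function on $\Delta_{D_n}$. Two facts force $\vartheta_{\overline{\mathcal{D}}_{n,v}}(y)=0$ at a point $y$: the upper bound $\vartheta_{\overline{\mathcal{D}}_{n,v}}\le \sup \vartheta_{\overline{\mathcal{D}}_{n,v}} = \gamma_{\overline{\mathcal{D}}_{n,v}}(0)$ (with the sign convention of the paper), and a matching lower bound at $y$. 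I will obtain both uniformly in $n\ge n_\varepsilon$ and $v\notin R(\varepsilon)$.

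\textbf{Step 1: the supremum vanishes.} Choose $R(\varepsilon)\supset S$ by applying Theorem~\ref{semipositive-torus-OKS}(iii), or equivalently Corollary~\ref{semipositive-OKS-roofs}(iii), with a small parameter $\delta$ to be fixed later, say $\delta=\varepsilon/2$. Set $R(\varepsilon)=S(\delta)$. For $v\notin R(\varepsilon)$ we then have $\gamma_{\overline{\mathcal{D}}_v}(0)=0$.

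Next, Example~\ref{bdry-top-torus-OKS}(ii) says that for $v\notin S$ the values $\gamma_{\overline{\mathcal{D}}_{n,v}}(0)$ converge to $\gamma_{\overline{\mathcal{D}}_v}(0)$ uniformly in $v$. These values lie in the discrete set of rationals attained at $0$, and the convergence is in the $\mathcal{C}$-norm, which controls the value at $0$ exactly. So for $n\ge n_1$, independent of $v$, we get $\gamma_{\overline{\mathcal{D}}_{n,v}}(0)=\gamma_{\overline{\mathcal{D}}_v}(0)=0$. This gives $\sup\vartheta_{\overline{\mathcal{D}}_{n,v}}=0$, hence $\vartheta_{\overline{\mathcal{D}}_{n,v}}\le 0$.

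\textbf{Step 2: the lower bound on $\Delta_{D,\varepsilon}$.} For $n\ge n_1$ and $v\notin R(\varepsilon)$, combine the $\mathcal{C}$-norm estimates
$$\|\gamma_{\overline{\mathcal{D}}_{n,v}}-\gamma_{\overline{\mathcal{D}}_v}\|_{\mathcal{C}}\le\delta' \quad\text{and}\quad \|\gamma_{\overline{\mathcal{D}}_v}-\Psi_D\|_{\mathcal{C}}\le\delta,$$
where the first holds uniformly in $v$ for $n\ge n_2$ by Example~\ref{bdry-top-torus-OKS}(ii). This yields $\gamma_{\overline{\mathcal{D}}_{n,v}}\ge \Psi_D-(\delta+\delta')\|\cdot\|$.

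Now dualize. A lower bound $\gamma\ge\Psi_D-r\|\cdot\|$ gives $\gamma^\vee\ge(\Psi_D-r\|\cdot\|)^\vee=\iota_{\Delta_D}\boxplus\iota_{\overline{\textup{B}}(0,r)}$ up to the sign conventions. One must check that the order is reversed correctly under the paper's concave Legendre transform; this is the content of Lemma~A.2.16 of \cite{Per26} used in Corollary~\ref{semipositive-OKS-roofs}. The resulting inequality is $\vartheta_{\overline{\mathcal{D}}_{n,v}}\ge 0$ on the set $\{y : \overline{\textup{B}}(y,r)\cap\Delta_D\ne\emptyset\}$ — wait, this goes the wrong way. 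The correct version uses $\Psi_D-r\|\cdot\|$ having stability set the $r$-erosion of $\Delta_D$. Concretely, for $y$ with $\overline{\textup{B}}(y,r)\subset\Delta_D$ we have
$$\langle y,x\rangle-r\|x\|\ge\Psi_D(x),$$
so $\langle y,x\rangle-\gamma_{\overline{\mathcal{D}}_{n,v}}(x)\le 0$ for all $x$. This gives $\vartheta_{\overline{\mathcal{D}}_{n,v}}(y)\ge 0$ in the sign convention where the roof function is $\inf_x(\gamma(x)-\langle y,x\rangle)$. Thus $\vartheta_{\overline{\mathcal{D}}_{n,v}}\ge 0$ on the erosion $\{y:\textup{dist}(y,\textup{rb}(\Delta_D))\ge r\}$. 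Its convex hull is $\Delta_{D,r}$, and by concavity the inequality extends to the hull.

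Choosing $\delta+\delta'\le\varepsilon$ gives the claim on $\Delta_{D,\varepsilon}\subset\Delta_{D,r}$. Generic bigness guarantees that $\Delta_D$ has nonempty interior, so $\Delta_{D,\varepsilon}$ is nonempty for small $\varepsilon$ (if it is empty, the claim is vacuous). Set $n_\varepsilon=\max(n_1,n_2)$.

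The main obstacle is Step 2: keeping the sign conventions of the Legendre--Fenchel transform straight, and making sure the erosion estimate is uniform in $v$. The latter is exactly why both $\mathcal{C}$-norm bounds must be uniform. The first comes from the boundary topology via Example~\ref{bdry-top-torus-OKS}; the second comes from condition (iii), which holds only outside the finite set $R(\varepsilon)$.
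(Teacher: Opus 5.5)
Your plan follows the paper's proof; the only substantive problem is a sign error in Step 2. Write $\gamma_{n,v}=\gamma_{\overline{\mathcal{D}}_{n,v}}$ and $\vartheta_{n,v}=\vartheta_{\overline{\mathcal{D}}_{n,v}}$. Like the paper, you take $R(\varepsilon)=S(\varepsilon/2)$ from condition (iii) of Theorem~\ref{semipositive-torus-OKS}. You then combine the uniform $\mathcal{C}$-convergence of Example~\ref{bdry-top-torus-OKS}(ii) with the triangle inequality to get $\|\gamma_{n,v}-\Psi_D\|_{\mathcal{C}}\le\varepsilon$ for $n\ge n_\varepsilon$ and $v\notin R(\varepsilon)$. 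Finally you dualize for the lower bound and use $\gamma_{n,v}(0)=0$ for the upper bound.

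Step 1 is sound, with two harmless slips. A finite $\mathcal{C}$-distance already forces $\gamma_{n,v}(0)=\gamma_{\overline{\mathcal{D}}_v}(0)=0$, so the remark about a ``discrete set of rationals'' is false and unnecessary. Also, in the paper's convention one has $\sup\vartheta_{n,v}=-\gamma_{n,v}(0)$, not $+\gamma_{n,v}(0)$; this does not matter here because the value is $0$.

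Step 2 fails as written. You use the lower half of the estimate, $\gamma_{n,v}\ge\Psi_D-r\|\cdot\|$, together with $\Psi_D(x)\le\langle y,x\rangle-r\|x\|$. Both inequalities bound their left-hand sides from below by $\Psi_D$-type quantities, so together they say nothing about the sign of $\langle y,x\rangle-\gamma_{n,v}(x)$. The inequality you then assert, $\langle y,x\rangle-\gamma_{n,v}(x)\le 0$ for all $x$, is in fact false. Take $\gamma_{n,v}=\Psi_{D_n}$, which holds for all but finitely many $v$ at fixed $n$, and $y\in\textup{int}(\Delta_D)\subset\textup{int}(\Delta_{D_n})$. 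Then $\langle y,x\rangle-\Psi_{D_n}(x)>0$ for every $x\neq 0$.

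Switching to the ``convention'' $\vartheta(y)=\inf_x(\gamma(x)-\langle y,x\rangle)$ does not rescue the step. That is not the paper's transform, which is $\gamma^\vee(y)=\inf_x(\langle y,x\rangle-\gamma(x))$, as the identities $\gamma(0)=-\sup\vartheta$ and $\Psi_D^\vee=\iota_{\Delta_D}$ show. Under your convention, the roof function of the canonical model would be $-\infty$ on the interior of $\Delta_D$.

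The fix is to use the other half of the two-sided bound you already have. Whenever $\overline{\textup{B}}(y,r)\subset\Delta_D$, we get $\gamma_{n,v}(x)\le\Psi_D(x)+r\|x\|\le\langle y,x\rangle$, hence $\vartheta_{n,v}(y)=\inf_x(\langle y,x\rangle-\gamma_{n,v}(x))\ge 0$. This is exactly the paper's chain $\Psi_{\Delta_{D,\varepsilon}}-\varepsilon\|x\|\ge\Psi_D\ge\gamma_{n,v}-\varepsilon\|x\|$, which gives $\Psi_{\Delta_{D,\varepsilon}}\ge\gamma_{n,v}$ and dualizes to $\vartheta_{n,v}\ge\iota_{\Delta_{D,\varepsilon}}$. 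With this correction your argument coincides with the paper's.
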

	\begin{proof}
		Observe that the sequence of compact convex sets $\lbrace \Delta_{D_n} \rbrace_{n \in \mathbb{N}}$ is decreasing, and each of the local roof functions $\vartheta_{\overline{\mathcal{D}}_{n,v}}$ continuous on $\Delta_{D_n}$. Thus, it makes sense to evaluate $\vartheta_{\overline{\mathcal{D}}_{n,v}}$ at points
		\begin{displaymath}
			y \in \Delta_{D,\varepsilon} \subset \Delta_{D} \subset \Delta_{D_n}. 
		\end{displaymath}
		Without loss of generality, we may assume that $\Delta_{D,\varepsilon} \neq \emptyset$; since $\Delta_D$ has a non-empty interior, we only need to choose $\varepsilon$ small enough. Then, the value $\vartheta_{\overline{\mathcal{D}}_{n,v}}(y) $ exists and is finite. Define $R(\varepsilon) \coloneqq S(\varepsilon / 2)$, where $S(\varepsilon)$ is the set satisfying property \textit{(iii)} in Theorem~\ref{semipositive-torus-OKS}. By Remark~\ref{bdry-top-torus-OKS}, there exists $n_{\varepsilon}$ such that for all $v \in \mathfrak{M}(K)_{\textup{fin}} \setminus S$ and all $n \geq n_{\varepsilon}$ we have
		\begin{displaymath}
			\| \gamma_{\overline{\mathcal{D}}_{n,v}} - \gamma_{\overline{\mathcal{D}}_v} \|_{\mathcal{C}} \leq \varepsilon / 2. 
		\end{displaymath}
		By property  \textit{(iii)} in Theorem~\ref{semipositive-torus-OKS} and the triangle inequality, we have
		\begin{displaymath}
			\| \gamma_{\overline{\mathcal{D}}_{n,v}} - \Psi_D \|_{\mathcal{C}} \leq  \| \gamma_{\overline{\mathcal{D}}_{n,v}} - \gamma_{\overline{\mathcal{D}}_v} \|_{\mathcal{C}} +  \| \gamma_{\overline{\mathcal{D}}_v} - \Psi_{D} \|_{\mathcal{C}} \leq \varepsilon.
		\end{displaymath}
		By definition of the $\mathcal{C}$-norm, this implies
		\begin{displaymath}
			\Psi_D (x)  \geq \gamma_{\overline{\mathcal{D}}_{n,v}}(x) - \varepsilon \cdot |x|,
		\end{displaymath}
		for all $x \in N_{\mathbb{R}}$. On the other hand, by the proof of Lemma~\ref{geom-convergence}, we have $\Delta_{D,\varepsilon} + \overline{\textup{B}}(0,\varepsilon) \subset \Delta_D$. This is equivalent to
		\begin{displaymath}
			\Psi_{\Delta_{D,\varepsilon}}(x) - \varepsilon \cdot |x| \geq \Psi_D (x),
		\end{displaymath}
		for all $x \in N_{\mathbb{R}}$. Therefore, we get
		\begin{displaymath}
			\Psi_{\Delta_{D,\varepsilon}}(x) - \varepsilon \cdot |x|  \geq  \gamma_{\overline{\mathcal{D}}_{n,v}}(x) - \varepsilon \cdot |x|,
		\end{displaymath}
		for all $v \in \mathfrak{M}(K)_{\textup{fin}} \setminus S$, $n \geq n_{\varepsilon}$  and $x \in N_{\mathbb{R}}$. We conclude that $\Psi_{\Delta_{D,\varepsilon}}  \geq \gamma_{\overline{\mathcal{D}}_{n,v}}$. Taking the Legendre-Fenchel transform, we obtain $\vartheta_{\overline{\mathcal{D}}_{n,v}} \geq \iota_{\Delta_{D,\varepsilon}}$, where $\iota_{\Delta_{D,\varepsilon}}$ is the indicator function of the set $\Delta_{D,\varepsilon}$. In particular $	\vartheta_{\overline{\mathcal{D}}_{n,v}}(y) \geq 0$ for all $v \in \mathfrak{M}(K)_{\textup{fin}} \setminus S$, $n \geq n_{\varepsilon}$ and $y \in \Delta_{D,\varepsilon}$. The reverse inequality follows from the fact that $\gamma_{\overline{\mathcal{D}}_{n,v}}(0) = 0$. Indeed, by the definition of the Legendre-Fenchel transform
		\begin{align*}
			0 = \gamma_{\overline{\mathcal{D}}_{n,v}}(0) = \vartheta_{\overline{\mathcal{D}}_{n,v}}^{\vee}(0) =  \inf \lbrace - \vartheta_{\overline{\mathcal{D}}_{n,v}}(y) \, \vert \,  y \in \Delta_{D_{n,v}} \rbrace = - \sup \lbrace \vartheta_{\overline{\mathcal{D}}_{n,v}}(y) \, \vert \,  y \in \Delta_{D_{n,v}} \rbrace.
		\end{align*}
	\end{proof}
	\begin{rem}\label{local-roof-neg}
		With the notation of Lemma~\ref{notation-decreasing-seq}, property \textit{(iii)} in Theorem~\ref{semipositive-torus-OKS} implies that $\vartheta_{\overline{\mathcal{D}}_v} \leq 0$ for almost all $v \in \mathfrak{M}(K)_{\textup{fin}}$. We just need to copy the last part of the previous proof.
	\end{rem}
	Then, we show the existence and expected properties of the global roof function of a generically big semipositive toric adelic divisor (resp. nef toric adelic divisor).
	\begin{prop}\label{global-roof-OKS}
		With the notation of Remark~\ref{notation-decreasing-seq}, suppose that $\overline{\mathcal{D}}$ is nef or generically big. Then, the following statements hold:
		\begin{enumerate}[label=(\roman*)]
			\item The sequence of global roof functions $\lbrace \vartheta_{\overline{\mathcal{D}}_n} \rbrace_{n \in \mathbb{N}}$ converges decreasingly (pointwise) to a closed concave function
			\begin{displaymath}
				\vartheta_{\overline{\mathcal{D}}} \colon \Delta_{D} \longrightarrow \mathbb{R}_{-\infty}.
			\end{displaymath}
			Moreover, if $\overline{\mathcal{D}}$ is nef, then $\vartheta_{\overline{\mathcal{D}}}(y) \geq 0$ for all $y \in \Delta_D$.
			\item For each $y \in \textup{ri}(\Delta_D)$ we have
			\begin{displaymath}
				\vartheta_{\overline{\mathcal{D}}} (y) = \sum_{v \in \mathfrak{M}(K)}  \delta_v \cdot \vartheta_{\overline{\mathcal{D}}_{v}}(y) > - \infty,
			\end{displaymath}
			where the $\delta_v$ are the weights from Definition~\ref{global-roof-fun}. If $\overline{\mathcal{D}}$ is generically big, the above sum is finite.
			\item If $\overline{\mathcal{D}}$ is nef then there exists a constant $\beta$ such that $|\vartheta_{\overline{\mathcal{D}}_v}(y)| \leq \beta $ for each place $v \in \mathfrak{M}(K)$ and $y \in \Delta_D$.
		\end{enumerate}
	\end{prop}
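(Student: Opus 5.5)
The plan is to exploit monotonicity. The sequence $\lbrace \overline{\mathcal{D}}_n \rbrace_{n \in \mathbb{N}}$ from Remark~\ref{notation-decreasing-seq} is decreasing, so by Corollary~\ref{div-effective} and Lemma~\ref{global-bdry-top} the tropical Green's functions satisfy $\gamma_{\overline{\mathcal{D}}_{n+1,v}} \geq \gamma_{\overline{\mathcal{D}}_{n,v}}$ for every $v$. Since the Legendre--Fenchel transform is order-reversing, the roof functions decrease: $\vartheta_{\overline{\mathcal{D}}_{n+1,v}} \leq \vartheta_{\overline{\mathcal{D}}_{n,v}}$, and the polytopes $\Delta_{D_n}$ decrease. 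Summing with the weights $\delta_v$, which is a finite sum for each $n$ by Definition~\ref{global-roof-fun}, shows that $\lbrace \vartheta_{\overline{\mathcal{D}}_n} \rbrace_{n \in \mathbb{N}}$ is pointwise decreasing on $\Delta_D \subset \Delta_{D_n}$.

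Define $\vartheta_{\overline{\mathcal{D}}}$ as the pointwise limit. A decreasing limit of concave upper semicontinuous functions is concave and upper semicontinuous, hence closed, which settles the first part of (i). If $\overline{\mathcal{D}}$ is nef, each $\overline{\mathcal{D}}_n$ is nef, so Theorem~\ref{global-nef} gives $\vartheta_{\overline{\mathcal{D}}_n} \geq 0$ and the limit is nonnegative. Moreover, the $\mathcal{G}$-convergence in Theorem~\ref{GK-adelic} yields pointwise convergence $\vartheta_{\overline{\mathcal{D}}_{n,v}} \downarrow \vartheta_{\overline{\mathcal{D}}_v}$ on $\textup{ri}(\Delta_D)$ for each fixed $v$, via continuity of the Legendre transform under uniform convergence on sublinear scales.

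For (ii) in the generically big case, fix $y \in \textup{ri}(\Delta_D) = \textup{int}(\Delta_D)$ and choose $\varepsilon$ with $y \in \Delta_{D,\varepsilon}$. Lemma~\ref{finite-sum} gives a finite set $R(\varepsilon)$ with $\vartheta_{\overline{\mathcal{D}}_{n,v}}(y)=0$ for all $v\notin R(\varepsilon)\cup\mathfrak{M}(K)_\infty$ and all $n\geq n_\varepsilon$; passing to the limit, the same holds for $\vartheta_{\overline{\mathcal{D}}_v}(y)$. Hence $\vartheta_{\overline{\mathcal{D}}_n}(y)$ is a sum over the fixed finite set $R(\varepsilon)\cup\mathfrak{M}(K)_\infty$, and we may exchange the limit with this finite sum. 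Finiteness follows because each $\vartheta_{\overline{\mathcal{D}}_v}(y)>-\infty$: indeed $y$ lies in the interior of $\Delta$, which is the closure of $\textup{dom}(\vartheta_v)$ by Corollary~\ref{semipositive-OKS-roofs}.

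The nef case without bigness is where I expect the main obstacle, since Lemma~\ref{finite-sum} is unavailable and $\textup{ri}$ may be lower dimensional. My first attempt is to use Remark~\ref{local-roof-neg}: $\vartheta_{\overline{\mathcal{D}}_v}\leq 0$ for almost all finite $v$, so the tail terms are nonpositive. The series then has a well-defined value in $\mathbb{R}_{-\infty}$, and monotone convergence (applied to $-\vartheta$ on the tail) identifies the limit of the partial sums with $\vartheta_{\overline{\mathcal{D}}}(y)$. Nonnegativity from (i) then forces the series to converge absolutely, which gives $>-\infty$. Lemma~\ref{finite-sum} should adapt to relative interiors by working inside the affine span of $\Delta_D$ and using the relative version of $\Delta_{D,\varepsilon}$.

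For (iii), bound each term from above and below. The upper bound $\vartheta_{\overline{\mathcal{D}}_v}\leq \vartheta_{\overline{\mathcal{D}}_{0,v}}\leq \max \vartheta_{\overline{\mathcal{D}}_{0,v}}$ is uniform in $v$, because only finitely many of these maxima are nonzero. For the lower bound, use $0\le \vartheta_{\overline{\mathcal{D}}} = \vartheta_{\overline{\mathcal{D}}_v}\delta_v + \sum_{w\neq v}\delta_w\vartheta_{\overline{\mathcal{D}}_w}$. This gives $\delta_v\vartheta_{\overline{\mathcal{D}}_v}\geq -\sum_{w\ne v}\delta_w \sup \vartheta_{\overline{\mathcal{D}}_w}$. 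The right-hand side is bounded by the finite sum of the positive suprema, which I would control through $\sup\vartheta_{\overline{\mathcal{D}}_{0,w}}$. Since $\delta_v\geq 1/[K:\mathbb{Q}]$, this gives a uniform $\beta$ on $\textup{ri}(\Delta_D)$, and the bound extends to all of $\Delta_D$ by closedness.
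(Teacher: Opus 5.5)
Your proposal is correct and follows essentially the same route as the paper. It uses monotonicity of the roof functions together with Theorem~\ref{global-nef} for (i), and the monotone convergence theorem in the nef case and Lemma~\ref{finite-sum} in the generically big case for (ii). For (iii) it combines nonnegativity of $\sum_{v}\delta_v\vartheta_{\overline{\mathcal{D}}_v}$ with a uniform upper bound on the finitely many terms that can be positive.

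Your deviations are minor and slightly more careful than the paper:
\begin{itemize}
\item You dominate by $\vartheta_{\overline{\mathcal{D}}_{0,v}}$ instead of using Remark~\ref{local-roof-neg} and Proposition~A.1.11 of~\cite{Per26}. This domination is what actually justifies monotone convergence in (ii); Remark~\ref{local-roof-neg} only concerns the limit functions.
\item You divide explicitly by $\delta_v \geq 1/[K:\mathbb{Q}]$, a factor the paper's choice $\beta = C\cdot|R\cup\mathfrak{M}(K)_{\infty}|$ silently drops.
\item You pass from $\textup{ri}(\Delta_D)$ to $\Delta_D$ by upper semicontinuity.
\end{itemize}
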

	\begin{proof}
		The arguments are different, depending on whether $\overline{\mathcal{D}}$ is nef or generically big. First, we will prove the results for the nef case. By Remark~\ref{notation-decreasing-seq}, the decreasing sequence $\lbrace \overline{\mathcal{D}}_n \rbrace_{n \in \mathbb{N}}$ consists of nef divisors and converges in the boundary topology to $\mathcal{\overline{\mathcal{D}}}$. On the other hand, Theorem~\ref{global-nef} implies that $\vartheta_{\overline{\mathcal{D}}_n}(y) \geq 0$ for each $n \in \mathbb{N}$ and every $y \in \Delta_{D_n}$. This shows that the sequence of closed concave functions $\lbrace \vartheta_{\overline{\mathcal{D}}_n} \rbrace_{n \in \mathbb{N}}$ is decreasing and bounded below by $0$ on $\Delta_D$. Therefore, it converges pointwise to a non-negative, closed concave function on $\Delta_D$. Concretely, for each $y \in \Delta_D$,
		\begin{displaymath}
			\vartheta_{\overline{\mathcal{D}}}(y) \coloneqq \lim_{n \in \mathbb{N}}  \vartheta_{\overline{\mathcal{D}}_n}(y) = \lim_{n \in \mathbb{N}}  \sum_{v \in \mathfrak{M}(K)} \delta_v \cdot \vartheta_{\overline{\mathcal{D}}_{n,v}}(y) \geq 0.
		\end{displaymath}
		Given $v \in \mathfrak{M}(K)$, the sequence $\lbrace  \vartheta_{\overline{\mathcal{D}}_{n,v}}(y)  \rbrace_{n \in \mathbb{N}}$ is decreasing and converges to $\vartheta_{\overline{\mathcal{D}}_{v}}(y)$. Then, the monotone convergence theorem from measure theory shows that we can interchange limits and series. Therefore, we obtain the identity
		\begin{displaymath}
			\vartheta_{\overline{\mathcal{D}}}(y) = \lim_{n \in \mathbb{N}}  \sum_{v \in \mathfrak{M}(K)} \delta_v \cdot \vartheta_{\overline{\mathcal{D}}_{n,v}}(y) =\sum_{v \in \mathfrak{M}(K)} \delta_v \cdot \lim_{n \in \mathbb{N}}  \vartheta_{\overline{\mathcal{D}}_{n,v}}(y)= \sum_{v \in \mathfrak{M}(K)}  \delta_v \cdot \vartheta_{\overline{\mathcal{D}}_{v}}(y) \geq 0.
		\end{displaymath}
		Finally, we show that the functions $\vartheta_{\overline{\mathcal{D}}_v}$ are bounded. By Remark~\ref{local-roof-neg}, there exists a finite set $R \subset \mathfrak{M}(K)_{\textup{fin}}$ containing $S$ such that $0 \geq \vartheta_{\overline{\mathcal{D}}_v}(y)$ for all places $v \in \mathfrak{M}(K)_{\textup{fin}} \setminus R$. Since the set $R \cup \mathfrak{M}(K)_{\infty}$ is finite, Proposition~A.1.11~of~\cite{Per26} implies the existence of a constant $C$ such that $C \geq \vartheta_{\overline{\mathcal{D}}_v}(y)$ for all $v \in R \cup \mathfrak{M}(K)_{\infty}$ and $y \in \Delta_D$. Then, for each $y \in \Delta_D$, we have
		\begin{displaymath}
			0 \geq  \sum_{v \in \mathfrak{M}(K)_{\textup{fin}} \setminus R}  \delta_v \cdot \vartheta_{\overline{\mathcal{D}}_{v}}(y) \geq  - \sum_{v \in  R \cup \mathfrak{M}(K)_{\infty}}  \delta_v \cdot \vartheta_{\overline{\mathcal{D}}_{v}}(y) \geq - \sum_{v \in  R \cup \mathfrak{M}(K)_{\infty}} \delta_v \, C.
		\end{displaymath}
		The above inequality, together with the previous upper bounds, gives the following estimates:
		\begin{enumerate}[label=(\arabic*)]
			\item If $w \in \mathfrak{M}(K)_{\textup{fin}} \setminus R$, we have
			\begin{displaymath}
				0 \geq \delta_w \cdot \vartheta_{\overline{\mathcal{D}}_{w}}(y) \geq \sum_{v \in \mathfrak{M}(K)_{\textup{fin}} \setminus R}  \delta_v \cdot \vartheta_{\overline{\mathcal{D}}_{v}}(y) \geq  -\sum_{v \in  R \cup \mathfrak{M}(K)_{\infty}} \delta_v \, C
			\end{displaymath}
			for each $y \in \Delta_D$. Therefore, the function $\vartheta_{\overline{\mathcal{D}}_{w}}$ is bounded.
			\item If $w \in R \cup \mathfrak{M}(K)_{\infty}$, we have
			\begin{displaymath}
				\delta_w \, C \geq \delta_w  \cdot \vartheta_{\overline{\mathcal{D}}_{w}} (y) \geq - \sum_{v \in  (R \cup \mathfrak{M}(K)_{\infty}) \setminus \lbrace w \rbrace}  \delta_v \cdot \vartheta_{\overline{\mathcal{D}}_{v}}(y) \geq - \sum_{v \in  (R \cup \mathfrak{M}(K)_{\infty}) \setminus \lbrace w \rbrace}  \delta_v \, C
			\end{displaymath}
			for each $y \in \Delta_D$. Therefore, each function $\vartheta_{\overline{\mathcal{D}}_w}$ is bounded.
		\end{enumerate}
		By definition, the weights $\delta_v$ are positive numbers bounded by $1$. Then, define
		\begin{displaymath}
			\beta \coloneqq C \cdot \vert R \cup \mathfrak{M}(K)_{\infty} \vert.
		\end{displaymath}
		The inequalities (1) and (2) show that $\vartheta_{\overline{\mathcal{D}}_v}$ is bounded by $\beta$, completing the proof in the nef case.
		
		Now, we prove the results when $\overline{\mathcal{D}}$ is generically big. For each $y \in \textup{ri}(\Delta_D)$, there exists $\varepsilon >0$ such that $y \in \Delta_{D, \varepsilon}$. Indeed, any $\varepsilon$ smaller than $\textup{dist}(y, \textup{rb}(\Delta_D ))$ suffices. By Lemma~\ref{finite-sum}, for each $n \geq n_{\varepsilon}$ and $v \in \mathfrak{M}(K)_{\textup{fin}} \setminus R(\varepsilon)$, we have $\vartheta_{\overline{\mathcal{D}}_{n,v}}(y) =0$. Then, the global roof function $\vartheta_{\overline{\mathcal{D}}_n}$ evaluated at $y$ is given by a sum of at most $|R(\varepsilon) \cup \mathfrak{M}(K)_{\infty}|$ non-zero terms. Concretely,
		\begin{displaymath}
			\vartheta_{\overline{\mathcal{D}}_n}(y) = \sum_{v \in \mathfrak{M}(K)} \delta_v \cdot \vartheta_{\overline{\mathcal{D}}_{n,v}}(y) = \sum_{v \in R(\varepsilon) \cup \mathfrak{M}(K)_{\infty}} \delta_v \cdot \vartheta_{\overline{\mathcal{D}}_{n,v}}(y).
		\end{displaymath}
		On the other hand, for each $v \in \mathfrak{M}(K)$, we have
		\begin{displaymath}
			\vartheta_{\overline{\mathcal{D}}_{v}}(y) = \lim_{n \in \mathbb{N}} \vartheta_{\overline{\mathcal{D}}_{n,v}}(y)
		\end{displaymath}
		In particular, $\vartheta_{\overline{\mathcal{D}}_{v}}(y) = 0$ for all $v \in 
		\mathfrak{M}(K)_{\textup{fin}} \setminus R(\varepsilon)$. We conclude that
		\begin{align*}
			\vartheta_{\overline{\mathcal{D}}}(y) & \coloneqq \lim_{n \in \mathbb{N}}  \vartheta_{\overline{\mathcal{D}}_n}(y) = \lim_{n \in \mathbb{N}}  \sum_{v \in R(\varepsilon) \cup \mathfrak{M}(K)_{\infty} } \delta_v \cdot \vartheta_{\overline{\mathcal{D}}_{n,v}}(y) = \sum_{v \in R(\varepsilon)\cup \mathfrak{M}(K)_{\infty} } \delta_v \cdot \vartheta_{\overline{\mathcal{D}}_{v}}(y) \\
			& = \sum_{v \in \mathfrak{M}(K)}  \delta_v \cdot \vartheta_{\overline{\mathcal{D}}_{v}}(y).
		\end{align*}
	\end{proof}
	We recall the following well-known lemma on interchanging limits and infinite sums.
	\begin{lem}\label{interchanging-limits-1}
		Let $f \colon \mathbb{N}^2 \rightarrow \mathbb{R}$ be a function decreasing on the second variable, i.e.,
		\begin{displaymath}
			f(m,n_1) \geq f(m,n_2)
		\end{displaymath}
		for each $m \in \mathbb{N}$ and $n_1 \leq n_2$. Suppose that $\sum_{m \in \mathbb{N}} f(m,1)$ is finite. Then,
		\begin{displaymath}
			\lim_{n \in \mathbb{N}} \sum_{m \in \mathbb{N}} f(m,n) = \sum_{m \in \mathbb{N}} \lim_{n \in \mathbb{N}} f(m,n).
		\end{displaymath}
	\end{lem}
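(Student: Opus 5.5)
The plan is to reduce the statement to the monotone convergence theorem for the counting measure on $\mathbb{N}$, by subtracting off the first term. Set
\begin{displaymath}
	g(m,n) \coloneqq f(m,1) - f(m,n), \quad m,n \in \mathbb{N}.
\end{displaymath}
Since $f$ is decreasing in the second variable, $g(m,n) \geq 0$ for $n \geq 1$, and $g(m,n)$ is increasing in $n$ for each fixed $m$. So for each $m$ the limit $\lim_n g(m,n) \in [0,\infty]$ exists. Equivalently, $\lim_n f(m,n) = f(m,1) - \lim_n g(m,n)$ exists in $\mathbb{R}_{-\infty}$, which gives the right-hand side a meaning.

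Next I fix how the series are interpreted. I read the hypothesis that $\sum_m f(m,1)$ is finite as saying that this series is absolutely summable (an unordered sum over the countable set $\mathbb{N}$). This is the situation in the applications, where all but finitely many terms have a fixed sign. Under this reading, each $\sum_m f(m,n) = \sum_m f(m,1) - \sum_m g(m,n)$ is well defined in $\mathbb{R}_{-\infty}$, because the first sum is finite and the second is a sum of non-negative terms with value in $[0,\infty]$. The same decomposition applies to $\sum_m \lim_n f(m,n)$.

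The key step is the monotone convergence theorem for counting measure, applied to the increasing non-negative family $\{g(\cdot,n)\}_n$. It gives
\begin{displaymath}
	\lim_{n} \sum_{m} g(m,n) = \sum_m \lim_n g(m,n) \in [0,\infty].
\end{displaymath}
If one prefers to avoid measure theory, the two inequalities can be checked directly. The bound $\leq$ holds termwise, since $g(m,n) \leq \lim_n g(m,n)$. For $\geq$, truncate to a finite set $\{m \leq M\}$, pass to the limit in $n$ in the finite sum, and then let $M \to \infty$.

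Finally, I subtract this identity from the finite number $\sum_m f(m,1)$, using the convention $a - \infty = -\infty$ from Subsection~\ref{notation}, which yields the claimed equality in $\mathbb{R}_{-\infty}$. The main obstacle is not the limit interchange but the bookkeeping. One has to make sure that no expression of the form $\infty - \infty$ occurs, which is exactly why the finiteness of $\sum_m f(m,1)$ is needed, and the series must be interpreted in a way that does not depend on the order of summation.
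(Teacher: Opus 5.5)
Your proof is correct and takes the same route as the paper, which simply cites the monotone convergence theorem (or Fatou's lemma). Subtracting the summable upper bound $f(\cdot,1)$ to obtain the increasing non-negative family $g$, and using the finiteness of $\sum_m f(m,1)$ to rule out $\infty-\infty$, is exactly the standard way to make that citation rigorous; your absolute-summability reading also matches the paper's applications, where for each fixed $n$ only finitely many terms are non-zero.
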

	\begin{proof}
		This is a well-known result implied by the monotone convergence theorem in measure theory (or Fatou's lemma). Note that the above limit is not necessarily finite.
	\end{proof}
	\begin{rem}
		With the notation of Remark~\ref{notation-decreasing-seq}, the previous lemma shows that if one of the following limits
		\begin{displaymath}
			\lim_{n \in \mathbb{N}}  \vartheta_{\overline{\mathcal{D}}_n}(y) \quad \textup{or} \quad \sum_{v \in \mathfrak{M}(K)}  \delta_v \cdot \vartheta_{\overline{\mathcal{D}}_{v}}(y)
		\end{displaymath}
		exists, then the other exists, and they are equal. This motivates the following definition.
	\end{rem}
	\begin{dfn}
		With the notation of Remark~\ref{notation-decreasing-seq}, the function $\vartheta_{\overline{\mathcal{D}}} \colon \Delta_{D} \rightarrow \mathbb{R}_{-\infty}$ given by
		\begin{displaymath}
			\vartheta_{\overline{\mathcal{D}}} = \sum_{v \in \mathfrak{M}(K)}  \delta_v \cdot \vartheta_{\overline{\mathcal{D}}_{v}}
		\end{displaymath}
		is called the \textit{global roof function} of the semipositive toric adelic divisor $\overline{\mathcal{D}}$, whenever it exists. That is, it is not the constant function $-\infty$.
	\end{dfn}
	Now, we state and prove the main theorem of this subsection, which characterizes nefness in terms of the global roof function.
	\begin{thm}\label{nef-torus-OKS}
		Let $S \subset \mathfrak{M}(K)_{\textup{fin}}$ be a finite set and $\mathcal{U}_{S}$ be the $d$-dimensional torus over $\textup{Spec}(\mathcal{O}_{K,S})$. Then, a semipositive toric adelic divisor $\overline{\mathcal{D}}$ on $\mathcal{U}_{S}/\mathcal{O}_{K}$ is nef if and only if its global roof function $\vartheta_{\overline{\mathcal{D}}} \colon \Delta_{D} \rightarrow \mathbb{R}_{-\infty}$ exists and $\vartheta_{\overline{\mathcal{D}}}(y) \geq 0$ for all $y \in \Delta_D$.
	\end{thm}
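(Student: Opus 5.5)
The forward implication is essentially Proposition~\ref{global-roof-OKS}(i). If $\overline{\mathcal{D}}$ is nef, Remark~\ref{notation-decreasing-seq}(iv) gives a decreasing Cauchy sequence $\lbrace \overline{\mathcal{D}}_n \rbrace_{n \in \mathbb{N}}$ of nef toric model divisors representing $\overline{\mathcal{D}}$. By Theorem~\ref{global-nef}, each $\vartheta_{\overline{\mathcal{D}}_n}$ is non-negative on $\Delta_{D_n} \supset \Delta_D$, and Proposition~\ref{global-roof-OKS} shows that these functions decrease to $\vartheta_{\overline{\mathcal{D}}} = \sum_v \delta_v \vartheta_{\overline{\mathcal{D}}_v} \geq 0$. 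In particular $\vartheta_{\overline{\mathcal{D}}}$ exists, i.e.\ it is not identically $-\infty$.

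For the converse, assume $\vartheta_{\overline{\mathcal{D}}} \geq 0$ on $\Delta_D$. The natural approach is to perturb $\overline{\mathcal{D}}$ by the nef boundary divisor $\overline{\mathcal{B}}$ of Example~\ref{bdry-div-torus-OKS}. Since the nef cone is closed in the boundary topology and $\overline{\mathcal{D}} + \eta\overline{\mathcal{B}} \to \overline{\mathcal{D}}$ as $\eta \to 0^+$, it suffices to show that $\overline{\mathcal{D}}+\eta\overline{\mathcal{B}}$ is nef for every rational $\eta>0$. The global roof function of $\overline{\mathcal{B}}$ is $1$ on $\Delta_B$, and its $v$-adic roof functions are $\iota_{\Delta_B}$ plus the constant $1$ at the places $v\in S\cup\mathfrak{M}(K)_\infty$. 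Hence the $v$-adic roof functions of $\overline{\mathcal{D}}+\eta\overline{\mathcal{B}}$ are the sup-convolutions $\vartheta_{\overline{\mathcal{D}}_v}\boxplus\eta\vartheta_{\overline{\mathcal{B}}_v}$. At the finitely many places in $S \cup \mathfrak{M}(K)_\infty$ this adds $\eta$ and thickens the domain to $\Delta_D+\eta\Delta_B$. Note that $\Delta_B$ is a simplex containing $0$ in its interior.

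The goal is then to produce nef model divisors $\overline{\mathcal{E}}_n$ converging to $\overline{\mathcal{D}}+\eta\overline{\mathcal{B}}$. Take $\overline{\mathcal{E}}_n \coloneqq \overline{\mathcal{D}}_n + \eta\overline{\mathcal{B}}$, where $\lbrace\overline{\mathcal{D}}_n\rbrace$ is the decreasing approximating sequence. By Theorem~\ref{global-nef}, it is enough to show that $\vartheta_{\overline{\mathcal{E}}_n} \geq 0$ on $\Delta_{D_n}+\eta\Delta_B$ for $n$ large. Writing a point of this set as $y+\eta b$ with $y\in\Delta_{D_n}$ and $b\in\Delta_B$, one has $\vartheta_{\overline{\mathcal{E}}_n}(y+\eta b)\ge \vartheta_{\overline{\mathcal{D}}_n}(y)+\eta\cdot 1$. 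It would therefore suffice to have $\vartheta_{\overline{\mathcal{D}}_n}\geq-\eta/2$ near $\Delta_D$, with some room to spare.

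Here the main obstacle appears. The functions $\vartheta_{\overline{\mathcal{D}}_n}$ decrease to $\vartheta_{\overline{\mathcal{D}}}\ge 0$, so they are already $\ge 0$ on $\Delta_D$. The difficulty is on $\Delta_{D_n}\setminus\Delta_D$, where $\vartheta_{\overline{\mathcal{D}}_n}$ can be very negative, because infinitely many places contribute near the boundary. I would handle this using the freedom in choosing $y$. Since $\Delta_D\subset\Delta_{D_n}\subset\Delta_D+\overline{\textup{B}}(0,\varepsilon_n)$ by Lemma~\ref{geom-convergence}, and $\eta\Delta_B\supset\overline{\textup{B}}(0,c\eta)$ for some $c>0$, once $\varepsilon_n<c\eta$ every point of $\Delta_{D_n}+\eta\Delta_B$ can be rewritten as $y'+\eta b'$ with $y'\in\Delta_D$ and $b'\in\Delta_B$. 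Taking the supremum over such decompositions then gives $\vartheta_{\overline{\mathcal{E}}_n}\ge \vartheta_{\overline{\mathcal{D}}_n}(y')+\eta\ge\eta>0$.

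The delicate point is to carry out this sup-convolution decomposition globally, not place by place: the global roof function of a model divisor is a finite sum of the $v$-adic sup-convolutions, and the decomposition point $y'$ must be chosen uniformly in $v$. I would first check whether $\vartheta_{\overline{\mathcal{E}}_n}\ge\vartheta_{\overline{\mathcal{D}}_n}\boxplus\eta\vartheta_{\overline{\mathcal{B}}}$ holds. If it fails, I would fall back on using $\vartheta_{\overline{\mathcal{D}}_n}\ge\vartheta_{\overline{\mathcal{D}}}\ge0$ on $\Delta_D$ together with the fact that only finitely many places are non-trivial for a fixed model. This would yield nefness of $\overline{\mathcal{E}}_n$, and then, by closedness of the nef cone, nefness of $\overline{\mathcal{D}}$.
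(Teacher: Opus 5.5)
Your forward implication is correct and is the paper's argument. The converse has a genuine gap at the step where you claim that, once $\varepsilon_n<c\eta$, every point of $\Delta_{D_n}+\eta\Delta_B$ can be written as $y'+\eta b'$ with $y'\in\Delta_D$ and $b'\in\Delta_B$.

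Your inclusions only give $\Delta_{D_n}\subset\Delta_D+\eta\Delta_B$, hence $\Delta_{D_n}+\eta\Delta_B\subset\Delta_D+2\eta\Delta_B$. Moreover, by cancellation for Minkowski sums of compact convex sets (support functions add), the equality $\Delta_{D_n}+\eta\Delta_B=\Delta_D+\eta\Delta_B$ would force $\Delta_{D_n}=\Delta_D$. At an exposed point of $\Delta_{D_n}+\eta\Delta_B$ lying over some $z\in\Delta_{D_n}\setminus\Delta_D$, the decomposition is forced. There $\vartheta_{\overline{\mathcal{E}}_n}$ equals $\vartheta_{\overline{\mathcal{D}}_n}(z)$ plus a bounded contribution from $\eta\overline{\mathcal{B}}$.

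Nothing controls $\vartheta_{\overline{\mathcal{D}}_n}$ off $\Delta_D$. This has nothing to do with infinitely many places, since $\overline{\mathcal{D}}_n$ is a model divisor; the approximating sequence is simply arbitrary there. Concretely, take $K=\mathbb{Q}$, $d=1$, $S=\emptyset$, and $\overline{\mathcal{D}}$ the canonical model with all roof functions $\iota_{[0,1]}$. Let $\overline{\mathcal{C}}_n$ have archimedean roof function $-n^2|y|$ on $[-1,1]$ and $\iota_{[-1,1]}$ at the other places. Then $\overline{\mathcal{D}}+\frac{1}{n}\overline{\mathcal{C}}_n$ is a legitimate decreasing approximating sequence of semipositive model divisors, with boundary distance at most $\frac{1}{n}$ to $\overline{\mathcal{D}}$. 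It has $\Delta_{D_n}=[-\frac{1}{n},1+\frac{1}{n}]$ and $\vartheta_{\overline{\mathcal{D}}_n}(-\frac{1}{n})=-n$. Hence $\vartheta_{\overline{\mathcal{E}}_n}(-\frac{1}{n}-\eta)=\eta-n<0$, so $\overline{\mathcal{D}}_n+\eta\overline{\mathcal{B}}$ is not nef for large $n$. Neither of your fallbacks helps, because both only control points of $\Delta_D+\eta\Delta_B$.

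The missing idea is the truncation step used in the paper.
\begin{itemize}
\item Choose a rational polytope $\Delta_F$ with $\Delta_D\subset\Delta_F\subset\Delta_D+\eta\Delta_B$. This is possible because $0$ is interior to $\Delta_B$.
\item Restrict every $v$-adic roof function of $\overline{\mathcal{E}}_n$ to $\Delta_F$. For almost all $v$ this restriction is $\iota_{\Delta_F}$, so the result is a semipositive model divisor $\overline{\mathcal{F}}_n$.
\item Your common-decomposition estimate, with the same $z\in\Delta_D$ at every place, now applies on all of $\Delta_F$. It gives $\vartheta_{\overline{\mathcal{F}}_n}(z+\eta b)\ge\vartheta_{\overline{\mathcal{D}}_n}(z)+\eta\ge\vartheta_{\overline{\mathcal{D}}}(z)+\eta\ge0$, so $\overline{\mathcal{F}}_n$ is nef by Theorem~\ref{global-nef}.
\item The restricted functions lie between $\vartheta_{\overline{\mathcal{D}}_v}$ and $\vartheta_{\overline{\mathcal{E}}_{n,v}}$, so $\overline{\mathcal{D}}\le\overline{\mathcal{F}}_n\le\overline{\mathcal{E}}_n$.
\end{itemize}
Since $\Delta_F$ is strictly smaller than $\Delta_D+\eta\Delta_B$, the divisors $\overline{\mathcal{F}}_n$ no longer approximate $\overline{\mathcal{D}}+\eta\overline{\mathcal{B}}$. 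So do not fix $\eta$; take $\eta=\eta_n\to0$ together with $n$. The sandwich then gives $\|\overline{\mathcal{F}}_n-\overline{\mathcal{D}}\|_{\overline{\mathcal{B}}}\le\|\overline{\mathcal{D}}_n-\overline{\mathcal{D}}\|_{\overline{\mathcal{B}}}+\eta_n\to0$. This is exactly the paper's proof, which uses $\overline{\mathcal{B}}_{\textup{can}}$ in place of $\overline{\mathcal{B}}$.
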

	\begin{proof}
		By Proposition~\ref{global-roof-OKS}, if $\overline{\mathcal{D}}$ is nef, then its roof function exists and is positive. We only need to show the converse. By definition, there exists a decreasing sequence $\lbrace \overline{\mathcal{D}}_n \rbrace_{n \in \mathbb{N}}$ of semipositive toric model arithmetic divisors converging to $\overline{\mathcal{D}}$. We will use the notation in Remark~\ref{notation-decreasing-seq} freely. Let $\lbrace \varepsilon_n \rbrace_{n \in \mathbb{N}}$ be a decreasing sequence of positive rational numbers converging to $0$. Denote by $\overline{\mathcal{B}}_{\textup{can}}$ the canonical arithmetic model of $B$ (see Example~\ref{global-can-mod}). It is immediate that $\| \overline{\mathcal{B}}_{\textup{can}} \|_{\overline{\mathcal{B}}} = 1$. For each $n \in \mathbb{N}$, define the semipositive toric model arithmetic divisor 
		\begin{displaymath}
			\overline{\mathcal{E}}_{n} \coloneqq \overline{\mathcal{D}}_n + \varepsilon_n \cdot \overline{\mathcal{B}}_{\textup{can}}.
		\end{displaymath}
		For each place $v$, the corresponding roof function $\vartheta_{\overline{\mathcal{E}}_{n,v}}$ is a continous concave function on $\Delta_{E_n} = \Delta_{D_n} + \varepsilon_n \cdot \Delta_{B}$. If the place $v$ is finite, then $\vartheta_{\overline{\mathcal{E}}_{n,v}}$ is rational piecewise affine. Moreover,
		\begin{displaymath}
			\vartheta_{\overline{\mathcal{E}}_{n,v}} = \vartheta_{\overline{\mathcal{D}}_{n,v}} \boxplus \iota_{\varepsilon_n \cdot \Delta_B},
		\end{displaymath}
		where $\iota_{\varepsilon_n \cdot \Delta_B}$ is the indicator function of $\varepsilon_n \cdot \Delta_{B}$. The inequality
		\begin{displaymath}
			\overline{\mathcal{E}}_n = \overline{\mathcal{D}}_n + \varepsilon_n \cdot \overline{\mathcal{B}}_{\textup{can}} \geq   \overline{\mathcal{D}}+ \varepsilon_n \cdot \overline{\mathcal{B}}_{\textup{can}}
		\end{displaymath}
		implies the following inequality of $v$-adic roof functions
		\begin{displaymath}
			\vartheta_{\overline{\mathcal{D}}_{n,v}} \boxplus \iota_{\varepsilon_n \cdot \Delta_B} \geq \vartheta_{\overline{\mathcal{D}}_v} \boxplus \iota_{\varepsilon_n \cdot \Delta_B}.
		\end{displaymath}
		summing over all the places, we obtain
		\begin{displaymath}
			\vartheta_{\overline{\mathcal{E}}_n} (y)= \sum_{v \in \mathfrak{M}(K)} \delta_v \cdot (\vartheta_{\overline{\mathcal{D}}_{n,v}} \boxplus \iota_{\varepsilon_n \cdot \Delta_B}) (y) \geq \sum_{v \in \mathfrak{M}(K)} \delta_v \cdot ( \vartheta_{\overline{\mathcal{D}}_v} \boxplus \iota_{\varepsilon_n \cdot \Delta_B} ) (y) \geq 0
		\end{displaymath}
		for all $y \in \Delta_{D} + \varepsilon_n \cdot \Delta_{B}$. The positivity on the right-hand side is implied by the hypothesis $\vartheta_{\overline{\mathcal{D}}} \geq 0$. We have no control on the values of $\vartheta_{\overline{\mathcal{E}}_n}$ at the set $\Delta_{E_n} \setminus (\Delta_{D} + \varepsilon_n \cdot \Delta_{B})$. To fix this, for each $n \in \mathbb{N}$, we choose a rational convex polytope $\Delta_{F_n}$ satisfying
		\begin{displaymath}
			\Delta_D \subset \Delta_{F_n} \subset \Delta_{D} + \varepsilon_n \cdot \Delta_{B} \subset \Delta_{D_n} + \varepsilon_n \cdot \Delta_{B} =  \Delta_{E_n}.
		\end{displaymath}
		Since $\Delta_B$ has a non-empty interior, this is always possible. Without loss of generality, we may assume that $\lbrace \Delta_{F_n} \rbrace_{n \in \mathbb{N}}$ is a decreasing sequence. Then, for each $n \in \mathbb{N}$ and $v \in \mathfrak{M}(K)$, define the function
		\begin{displaymath}
			h_{n,v}(y) \coloneqq \begin{cases} \vartheta_{\overline{\mathcal{E}}_{n,v}}(y) & y \in \Delta_{F_n} \\ -\infty & \textup{otherwise}.\end{cases}
		\end{displaymath}
		By construction, $h_{n,v}$ is a continuous concave function on $\Delta_{F_n}$. If the place $v$ is finite, the function $h_{n,v}$ is a rational piecewise affine function, and it is equal to the indicator function of $\Delta_{F_n}$ for almost all places. Denote by $f_{n,v}$ the Legendre-Fenchel transform of $h_{n,v}$. By Description~\ref{toric-global-paf}, Corollary~\ref{div-nef-ample} and Theorem~\ref{toric-psh-type}, the $v$-tuple $f_n = (f_{n,v})$ determines a semipositive toric model arithmetic divisor $\overline{\mathcal{F}}_n$. By construction, for each $y \in \Delta_{F_n}$ we have
		\begin{displaymath}
			\vartheta_{\overline{\mathcal{F}}_n}(y) = \sum_{v \in \mathfrak{M}(K)} h_{n,v}(y) \geq 0.
		\end{displaymath}
		Therefore $\overline{\mathcal{F}}_n$ is nef. Again by construction, for each $n \in \mathbb{N}$ and $v \in \mathfrak{M}(K)$, we have an inequality
		\begin{displaymath}
			\vartheta_{\overline{\mathcal{E}}_{n,v}} \geq h_{n,v} \geq \vartheta_{\overline{\mathcal{D}}_v}.
		\end{displaymath}
		This translates into an inequality
		\begin{displaymath}
			\overline{\mathcal{E}}_n = \overline{\mathcal{D}}_n + \varepsilon_n \cdot \overline{\mathcal{B}}_{\textup{can}} \geq  \overline{\mathcal{F}}_n \geq \overline{\mathcal{D}}.
		\end{displaymath}
		The sequences $\lbrace \overline{\mathcal{D}}_n \rbrace_{n \in \mathbb{N}}$  $\lbrace \varepsilon_n \rbrace_{n \in \mathbb{N}}$  converge to $\overline{\mathcal{D}}$ and $0$, respectively. Therefore, the decreasing sequence of nef toric model arithmetic divisors $\lbrace \overline{\mathcal{F}}_n \rbrace_{n \in \mathbb{N}}$ converges in the boundary topology to the semipositive toric adelic divisor $\overline{\mathcal{D}}$. We conclude that  $\overline{\mathcal{D}}$ is nef. 
	\end{proof}
	\begin{cor}\label{integrable-bounded}
		Let $\overline{\mathcal{D}}$ be an integrable toric adelic divisor on $\mathcal{U}_{S}/\mathcal{O}_K$ with support function $\Psi_D$ and tropical Green's function $(\gamma_{\overline{\mathcal{D}}_v})$. Then, there exists a constant $C$ such that the functions $|\Psi_D - \gamma_{\overline{\mathcal{D}}_v}|$ are all bounded by $C$. Additionally, if $\overline{\mathcal{D}}$ is semipositive, then the $v$-adic roof functions $\vartheta_{\overline{\mathcal{D}}_v}$ are bounded by $C$.
	\end{cor}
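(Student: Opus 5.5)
The plan is to reduce to the nef case and use Proposition~\ref{global-roof-OKS}.(iii) together with Legendre--Fenchel duality. By definition of $\overline{\textup{Div}}_{\mathbb{T}}^{\textup{int}}(\mathcal{U}_S/\mathcal{O}_K)$, we write $\overline{\mathcal{D}} = \overline{\mathcal{D}}_1 - \overline{\mathcal{D}}_2$ with $\overline{\mathcal{D}}_1, \overline{\mathcal{D}}_2$ nef toric adelic divisors. The map $\mathcal{G}_K$ of Theorem~\ref{GK-adelic} is a group morphism, and the recession map is linear. Hence for every place $v$ we have
\begin{displaymath}
\gamma_{\overline{\mathcal{D}}_v} = \gamma_{\overline{\mathcal{D}}_{1,v}} - \gamma_{\overline{\mathcal{D}}_{2,v}}, \qquad \Psi_D = \Psi_{D_1} - \Psi_{D_2}.
\end{displaymath}
It therefore suffices to prove the first assertion for a nef $\overline{\mathcal{D}}$. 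In the general case one then takes $C \coloneqq C_1 + C_2$ and applies the triangle inequality.

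For nef $\overline{\mathcal{D}}$, Proposition~\ref{global-roof-OKS}.(iii) gives a constant $\beta$ with $|\vartheta_{\overline{\mathcal{D}}_v}(y)| \leq \beta$ for all $v \in \mathfrak{M}(K)$ and all $y \in \Delta_D$. By Corollary~\ref{semipositive-OKS-roofs}.(ii), the closure of the effective domain of each $\vartheta_{\overline{\mathcal{D}}_v}$ is $\Delta_D$. Since the bound says $\vartheta_{\overline{\mathcal{D}}_v}$ is finite on all of $\Delta_D$, we obtain the sandwich
\begin{displaymath}
\iota_{\Delta_D} - \beta \leq \vartheta_{\overline{\mathcal{D}}_v} \leq \iota_{\Delta_D} + \beta .
\end{displaymath}
The Legendre--Fenchel transform is order reversing in the sense relevant here, since $f^{\vee}(x) = \inf_y (\langle x,y\rangle - f(y))$, and it shifts constants with the opposite sign. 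Moreover $\gamma_{\overline{\mathcal{D}}_v} = \vartheta_{\overline{\mathcal{D}}_v}^{\vee}$ by biduality of closed concave functions, and $\iota_{\Delta_D}^{\vee} = \Psi_D$. Dualizing the sandwich thus gives
\begin{displaymath}
\Psi_D - \beta \leq \gamma_{\overline{\mathcal{D}}_v} \leq \Psi_D + \beta
\end{displaymath}
for every $v$. This is the required uniform bound, with $C = \beta$.

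For the second assertion, assume $\overline{\mathcal{D}}$ is also semipositive. By Proposition~\ref{adelic-semipositive-GF}, each $\gamma_{\overline{\mathcal{D}}_v}$ is concave, and by Theorem~\ref{semipositive-torus-OKS}.(ii) its recession function is $\Psi_D$. From the first part we have $\Psi_D - C \leq \gamma_{\overline{\mathcal{D}}_v} \leq \Psi_D + C$. Applying the transform once more and using $\vartheta_{\overline{\mathcal{D}}_v} = \gamma_{\overline{\mathcal{D}}_v}^{\vee}$ yields
\begin{displaymath}
\iota_{\Delta_D} - C \leq \vartheta_{\overline{\mathcal{D}}_v} \leq \iota_{\Delta_D} + C .
\end{displaymath}
So $\vartheta_{\overline{\mathcal{D}}_v}$ is finite on $\Delta_D$ and bounded in absolute value by $C$ there.

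The only delicate point is the duality bookkeeping: the concave sign conventions and the identification $\iota_{\Delta}^{\vee} = \Psi_{\Delta}$, which I would take from Remark~A.1.7 and the duality results in Appendix~A of~\cite{Per26}. One must also make sure that Proposition~\ref{global-roof-OKS}.(iii) really gives finiteness on all of $\Delta_D$, including its relative boundary. I expect neither point to cause real difficulty.
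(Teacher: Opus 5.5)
Your proposal is correct and follows the paper's proof. The paper also writes $\overline{\mathcal{D}}$ as a difference of nef divisors, uses the uniform bound on the $v$-adic roof functions from Proposition~\ref{global-roof-OKS}.(iii) to bound $|\Psi - \gamma_v|$ for each nef piece, combines the two with the triangle inequality, and dualizes back in the semipositive case. The only difference is cosmetic: the paper cites Proposition~A.1.9 of~\cite{Per26} for passing between bounds on $\gamma_v - \Psi_D$ and bounds on $\vartheta_v$, whereas you spell this out via the sandwich $\iota_{\Delta_D} \pm \beta$ and the order-reversal of the Legendre--Fenchel transform.
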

	\begin{proof}
		Write $\overline{\mathcal{D}} = \overline{\mathcal{E}} - \overline{\mathcal{F}}$, where $\overline{\mathcal{E}}$ and $\overline{\mathcal{F}}$ are nef. By Proposition~\ref{global-roof-OKS}, there exists a constant $C$ such that the $v$-adic roof functions $\vartheta_{\overline{\mathcal{E}}_v}$ and $\vartheta_{\overline{\mathcal{F}}_v}$ are bounded by $C/2$. This implies that the functions $|\Psi_E - \gamma_{\overline{\mathcal{E}}_v}|$ and $|\Psi_F - \gamma_{\overline{\mathcal{F}}_v}|$ are bounded by $C/2$. The equation $\overline{\mathcal{D}} = \overline{\mathcal{E}} - \overline{\mathcal{F}}$ implies that $\Psi_D (x) = \Psi_E - \Psi_F$ and $\gamma_{\overline{\mathcal{D}}_v} =  \gamma_{\overline{\mathcal{E}}_v} -  \gamma_{\overline{\mathcal{F}}_v}$. Then, for each $x \in N_{\mathbb{R}}$, the triangle inequality gives
		\begin{align*}
			|\Psi_D (x) - \gamma_{\overline{\mathcal{D}}_v} (x)| & = |(\Psi_E (x) - \Psi_F (x)) - (\gamma_{\overline{\mathcal{E}}_v} (x) -\gamma_{\overline{\mathcal{F}}_v}(x) )| \\
			& \leq |\Psi_E (x) - \gamma_{\overline{\mathcal{E}}_v} (x) | + |\Psi_F (x)  - \gamma_{\overline{\mathcal{F}}_v} (x)| \leq C.
		\end{align*}
		Finally, if $\overline{\mathcal{D}}$ is semipositive, the functions $\gamma_{\overline{\mathcal{D}}_v}$ are concave. Since the functions $|\Psi_D - \gamma_{\overline{\mathcal{D}}_v}|$ are all bounded by $C$, Proposition~A.1.9~of~\cite{Per26} implies that the functions $\vartheta_{\overline{\mathcal{D}}_v}$ are bounded by $C$.
	\end{proof}
	
	\subsection{Approximation of semipositive divisors}\label{5-3} We finish this section with a series of approximation lemmas for semipositive toric adelic divisors. We will use these approximation results in the next section, where we give a formula for the arithmetic intersection numbers of semipositive toric adelic divisors.
	\begin{lem}\label{bounded-approx-1}
		Supppose that $\lbrace \overline{\mathcal{D}}_n \rbrace_{n \in \mathbb{N}}$ is a decreasing sequence of nef toric adelic divisors on $\mathcal{U}_{S}/\mathcal{O}_K$ converging to $\overline{\mathcal{D}}$ in the boundary topology. Then, there is a constant $\beta$ satisfying
		\begin{enumerate}[label=(\roman*)]
			\item For all $v \in \mathfrak{M}(K)$ and all $y \in \Delta_{D_n}$ we have $| \vartheta_{\overline{\mathcal{D}}_{n,v}}(y)| \leq \beta$.
			\item  For all $v \in \mathfrak{M}(K)$ and all $y \in \Delta_{D}$ we have $| \vartheta_{\overline{\mathcal{D}}_{n}}(y)| \leq \beta$.
		\end{enumerate}
	\end{lem}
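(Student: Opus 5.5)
The plan is to reduce everything to the uniform bound of Proposition~\ref{global-roof-OKS}.(iii), applied not to $\overline{\mathcal{D}}$ but to the first term $\overline{\mathcal{D}}_0$ and to the limit $\overline{\mathcal{D}}$. Then I would propagate these bounds along the sequence using monotonicity. Since each $\overline{\mathcal{D}}_n$ is nef, Theorem~\ref{nef-torus-OKS} and Proposition~\ref{global-roof-OKS} apply to each term individually. The issue is that the constant $\beta$ produced there depends on $n$, so I need a comparison argument that gives a single constant.

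For the \emph{upper} bounds I use that the sequence is decreasing. The inequality $\overline{\mathcal{D}}_n \leq \overline{\mathcal{D}}_0$ means $\gamma_{\overline{\mathcal{D}}_{n,v}} \geq \gamma_{\overline{\mathcal{D}}_{0,v}}$ for every $v$, by Corollary~\ref{div-effective} and its adelic extension via Proposition~\ref{global-analytification}. Taking Legendre--Fenchel transforms reverses this, giving $\vartheta_{\overline{\mathcal{D}}_{n,v}} \leq \vartheta_{\overline{\mathcal{D}}_{0,v}} \leq \beta_0$, where $\beta_0$ is the constant of Proposition~\ref{global-roof-OKS}.(iii) for $\overline{\mathcal{D}}_0$. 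Summing with the weights $\delta_v$ then bounds $\vartheta_{\overline{\mathcal{D}}_n}$ from above by $\vartheta_{\overline{\mathcal{D}}_0}$. This function is bounded, since $\vartheta_{\overline{\mathcal{D}}_0} = \lim_k \vartheta_{\overline{\mathcal{D}}_{0,k}}$ is a decreasing limit of continuous concave functions on compact sets, and hence bounded above by $\sup \vartheta_{\overline{\mathcal{D}}_{0,0}}$. Its non-negativity comes from nefness.

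For the \emph{lower} bounds I use $\overline{\mathcal{D}}_n \geq \overline{\mathcal{D}}$, which gives $\vartheta_{\overline{\mathcal{D}}_{n,v}} \geq \vartheta_{\overline{\mathcal{D}}_v}$ on $\Delta_D$. Since $\overline{\mathcal{D}}$ is nef, being a limit of nef divisors in the closed nef cone, Proposition~\ref{global-roof-OKS}.(iii) gives $\vartheta_{\overline{\mathcal{D}}_v} \geq -\beta_\infty$ on $\Delta_D$, uniformly in $v$. This settles~(ii), because on $\Delta_D$ we have $0 \leq \vartheta_{\overline{\mathcal{D}}_n} \leq \vartheta_{\overline{\mathcal{D}}_0}$.

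The main obstacle is the lower bound in~(i) on the larger set $\Delta_{D_n} \supset \Delta_D$, where $\vartheta_{\overline{\mathcal{D}}_v}$ is $-\infty$. Here I would redo the argument from the proof of Proposition~\ref{global-roof-OKS}.(iii), but for $\overline{\mathcal{D}}_n$ with constants made uniform in $n$. I would use Remark~\ref{local-roof-neg} together with Theorem~\ref{semipositive-torus-OKS}.(iii) and the uniform convergence of Example~\ref{bdry-top-torus-OKS}. Concretely, I would pick $\varepsilon$ and $n_\varepsilon$ and a single finite set $R \supseteq S$ such that $\vartheta_{\overline{\mathcal{D}}_{n,v}} \leq 0$ for all $v \notin R \cup \mathfrak{M}(K)_\infty$ and all $n$. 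For $n \geq n_\varepsilon$ this follows from $\gamma_{\overline{\mathcal{D}}_{n,v}}(0) = 0$, argued as in the last step of Lemma~\ref{finite-sum}. For the finitely many $n < n_\varepsilon$ I would enlarge $R$.

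The places in $R \cup \mathfrak{M}(K)_\infty$ have local roof functions bounded above by $\beta_0$, by the upper bound above. Nefness of $\overline{\mathcal{D}}_n$ gives $\sum_v \delta_v \vartheta_{\overline{\mathcal{D}}_{n,v}} \geq 0$ on $\Delta_{D_n}$. The two-case estimate (1)--(2) of Proposition~\ref{global-roof-OKS} then bounds every $|\vartheta_{\overline{\mathcal{D}}_{n,v}}|$ by $\beta \coloneqq \beta_0 \cdot |R \cup \mathfrak{M}(K)_\infty|$, independently of $n$. Finally I would take $\beta$ to be the maximum of this constant and the constants obtained in~(ii).
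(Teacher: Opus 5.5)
Your proposal is correct and follows essentially the same route as the paper: a uniform upper bound $C$ taken from $\overline{\mathcal{D}}_0$ via monotonicity of the roof functions, and a single finite set $R \supseteq S$ obtained from the uniform $\mathcal{C}$-norm convergence, which forces $\gamma_{\overline{\mathcal{D}}_{n,v}}(0)=0$ and hence $\vartheta_{\overline{\mathcal{D}}_{n,v}}\leq 0$ off $R$; the two-case estimate from the proof of Proposition~\ref{global-roof-OKS}.(iii), applied to each nef $\overline{\mathcal{D}}_n$, then gives $\beta = C\cdot|R\cup\mathfrak{M}(K)_\infty|$. You are more explicit than the paper about the finitely many $n<n_\varepsilon$ and about part (ii); the only blemish, which the paper shares, is that dividing by $\delta_w$ in that two-case estimate costs an extra factor $[K:\mathbb{Q}]$ in $\beta$, which is harmless since only the existence of a uniform constant is claimed.
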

	\begin{proof}
		Following the proof of part \textit{(iii)} of Proposition~\ref{global-roof-OKS}, the constant $\beta= \beta (\overline{\mathcal{D}})$ is given by
		\begin{displaymath}
			\beta (\overline{\mathcal{D}}) = C (\overline{\mathcal{D}}) \cdot \vert R  (\overline{\mathcal{D}}) \cup \mathfrak{M}(K)_{\infty} \vert,
		\end{displaymath}
		where $C  (\overline{\mathcal{D}})$ is any upper bound of the $v$-adic roof functions $\vartheta_{\overline{\mathcal{D}}_v}$ and $R (\overline{\mathcal{D}})$ is any set of finite places such that $S\subset R  (\overline{\mathcal{D}})$ and $\gamma_{\overline{\mathcal{D}}_v}(0) = 0$ for all finite places not in $R  (\overline{\mathcal{D}})$. In our situation, the sequences $\lbrace \vartheta_{\overline{\mathcal{D}}_{n,v}} \rbrace_{n \in \mathbb{N}}$ are decreasing and converging to $\vartheta_{\overline{\mathcal{D}}_v}$, and the functions $\gamma_{\overline{\mathcal{D}}_{n,v}}$ converge in the $\mathcal{C}$-norm to $\gamma_{\overline{\mathcal{D}}_v}$, uniformly in $v$ for all finite places $v$ not in $S$ (see Remark~\ref{global-bdry-top}). Then, choose $C$ to be an upper bound of the $v$-adic roof functions of $\overline{\mathcal{D}}_0$ and $R$ to be the complement of the set
		\begin{displaymath}
			\lbrace v \in \mathfrak{M}(K)_{\textup{fin}} \setminus  R  (\overline{\mathcal{D}})  \, \vert \, \| \gamma_{\overline{\mathcal{D}}_{v,n}} - \gamma_{\overline{\mathcal{D}}_v} \|_{\mathcal{C}} < \infty \rbrace.
		\end{displaymath}
		and define $\beta = C \cdot  \vert R \cup \mathfrak{M}(K)_{\infty} \vert$. Then, $C$ is an upper bound for all $\vartheta_{\overline{\mathcal{D}}_{n,v}}$ and $\vartheta_{\overline{\mathcal{D}}_{v}}$. The uniform convergence in the $\mathcal{C}$-norm implies that the set $R$ is finite, and the condition $\| \gamma_{\overline{\mathcal{D}}_{v,n}} - \gamma_{\overline{\mathcal{D}}_v} \|_{\mathcal{C}} < \infty$ implies $\gamma_{\overline{\mathcal{D}}_{v,n}}(0) = \gamma_{\overline{\mathcal{D}}_v} (0)$. Note that $v$ does not belong to $R(\overline{\mathcal{D}})$, hence $ \gamma_{\overline{\mathcal{D}}_v} (0) = 0$.
	\end{proof}
	\begin{lem}\label{bounded-approx-2}
		Suppose that $\overline{\mathcal{D}}$ is a semipositive toric adelic divisor on $\mathcal{U}_{S}/ \mathcal{O}_K$ whose global roof function $\gamma_{\overline{\mathcal{D}}}$ exists and is bounded below by a constant $C$. Then, there exists a decreasing sequence $\lbrace \overline{\mathcal{D}}_n \rbrace_{n \in \mathbb{N}}$ of semipositive toric model arithmetic divisors on $\mathcal{U}_{S}/\mathcal{O}_K$ converging to $\overline{\mathcal{D}}$ in the boundary topology and a constant $\beta$ satisfying:
		\begin{enumerate}[label=(\roman*)]
			\item For all $v \in \mathfrak{M}(K)$, all $n \in \mathbb{N}$, and all $y \in \Delta_{D_n}$ we have $| \vartheta_{\overline{\mathcal{D}}_{n,v}}(y)| \leq \beta$.
			\item  For all $v \in \mathfrak{M}(K)$, and all $y \in \Delta_{D}$ we have $| \vartheta_{\overline{\mathcal{D}}_{v}}(y)| \leq \beta$.
			\item For all $n \in \mathbb{N}$, the global roof function $\gamma_{\overline{\mathcal{D}}_n}$ exists and is bounded below by $C$.
		\end{enumerate}
	\end{lem}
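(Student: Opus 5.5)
The idea is to reduce to the nef case by twisting with a multiple of the canonical boundary divisor, and then reuse Lemma~\ref{bounded-approx-1} and the construction in the proof of Theorem~\ref{nef-torus-OKS}. (Here "global roof function $\gamma_{\overline{\mathcal{D}}}$" in the statement means $\vartheta_{\overline{\mathcal{D}}}$.)

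\textbf{Step 1: reduction to the nef case.} Since the roof function $\vartheta_{\overline{\mathcal{D}}}$ exists and is bounded below by $C$, shift by a constant. Choose a place, say an archimedean $w$, and let $\overline{\mathcal{O}}(c)$ be the toric model divisor with zero divisorial part and tropical Green's function $\gamma_{w}= -c/\delta_w$ at $w$ and $0$ elsewhere. Its roof functions are the constant $c/\delta_w$ at $w$ and $0$ at the other places. Put
\[
\overline{\mathcal{D}}' \coloneqq \overline{\mathcal{D}} + \overline{\mathcal{O}}(\max(-C,0)).
\]
Then $\vartheta_{\overline{\mathcal{D}}'} = \vartheta_{\overline{\mathcal{D}}} + \max(-C,0) \geq 0$, so $\overline{\mathcal{D}}'$ is nef by Theorem~\ref{nef-torus-OKS}.

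\textbf{Step 2: approximation of $\overline{\mathcal{D}}'$.} The proof of Theorem~\ref{nef-torus-OKS} produces a decreasing sequence of nef toric model divisors $\overline{\mathcal{F}}_n$ converging to $\overline{\mathcal{D}}'$, with $v$-adic roof functions $h_{n,v}$ satisfying $\vartheta_{\overline{\mathcal{E}}_{n,v}} \geq h_{n,v} \geq \vartheta_{\overline{\mathcal{D}}'_v}$. Lemma~\ref{bounded-approx-1} gives a uniform constant $\beta'$ bounding all $\vartheta_{\overline{\mathcal{F}}_{n,v}}$. Proposition~\ref{global-roof-OKS}(iii) bounds all $\vartheta_{\overline{\mathcal{D}}'_v}$ by a constant as well.

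\textbf{Step 3: untwisting.} Set $\overline{\mathcal{D}}_n \coloneqq \overline{\mathcal{F}}_n - \overline{\mathcal{O}}(\max(-C,0))$. This sequence is decreasing, semipositive and converges to $\overline{\mathcal{D}}$. Its roof functions differ from those of $\overline{\mathcal{F}}_n$ only at $w$, and there by a constant. Hence (i) and (ii) hold with $\beta = \beta' + |C|/\delta_w$, enlarged if necessary to also cover the bound from Step 2.

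\textbf{Step 4: property (iii).} By construction the global roof function of $\overline{\mathcal{F}}_n$ is $\geq 0$ on $\Delta_{F_n}$. Therefore $\vartheta_{\overline{\mathcal{D}}_n} \geq -\max(-C,0)$, which is $\geq C$ when $C\le 0$. For $C>0$, $\overline{\mathcal{D}}$ is already nef, no shift is made, and $\vartheta_{\overline{\mathcal{D}}_n} \geq 0$ does not give a bound by $C$. So in that case I would instead shift by $\overline{\mathcal{O}}(-C)$, i.e.\ apply the construction to $\overline{\mathcal{D}} - \overline{\mathcal{O}}(C)$, whose roof function is $\geq 0$, and add $\overline{\mathcal{O}}(C)$ back. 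Uniformly in the sign of $C$, then, apply the nef construction to $\overline{\mathcal{D}}-\overline{\mathcal{O}}(C)$ and add back; this gives $\vartheta_{\overline{\mathcal{D}}_n}\geq C$ on $\Delta_{D_n}$.

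\textbf{Main obstacle.} The delicate point is that Lemma~\ref{bounded-approx-1} really applies to $\overline{\mathcal{F}}_n$. This requires checking that the sequence from Theorem~\ref{nef-torus-OKS} is decreasing and converges in the uniform $\mathcal{C}$-norm sense at almost all finite places. It requires in particular that $\gamma_{\overline{\mathcal{F}}_{n,v}}(0)=0$ outside a finite set independent of $n$. I would obtain this from the sandwich $\overline{\mathcal{E}}_n \geq \overline{\mathcal{F}}_n \geq \overline{\mathcal{D}}'$ together with the corresponding property of the $\overline{\mathcal{E}}_n$.
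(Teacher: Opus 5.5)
Your argument, in its corrected form from Step~4, is the paper's proof. You shift the Green's function at one archimedean place $w$ by a constant so that the twisted divisor has global roof function $\vartheta_{\overline{\mathcal{D}}}-C\geq 0$ and is nef by Theorem~\ref{nef-torus-OKS}; you approximate it by a decreasing sequence of nef toric model divisors, take uniform bounds from Lemma~\ref{bounded-approx-1}, and shift back, with (iii) coming from nefness of the approximants. Your division by $\delta_w$ is slightly more careful than the paper, which drops that weight. The $\max(-C,0)$ detour is unnecessary, and the ``main obstacle'' is not one: Lemma~\ref{bounded-approx-1} needs only a decreasing nef sequence converging in the boundary topology, so any such sequence will do, not just the one from the proof of Theorem~\ref{nef-torus-OKS}.
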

	\begin{proof}
		Choose any infinite place $w \in \mathfrak{M}(K)_{\infty}$ and consider the semipositive toric adelic divisor $\overline{\mathcal{E}}$ determined by the functions
		\begin{displaymath}
			\gamma_{\overline{\mathcal{E}}_v} \coloneqq \begin{cases} \gamma_{\overline{\mathcal{D}}_{v}} + C & \textup{if } v=w \\ \gamma_{\overline{\mathcal{D}}_{v}}  & \textup{otherwise}. \end{cases}
		\end{displaymath}
		Taking Legendre-Fenchel transforms, we obtain
		\begin{displaymath}
			\vartheta_{\overline{\mathcal{E}}_v} = \begin{cases} \vartheta_{\overline{\mathcal{D}}_{v}} - C & \textup{if } v=w \\ \vartheta_{\overline{\mathcal{D}}_{v}}  & \textup{otherwise}. \end{cases}
		\end{displaymath}
		Then, $\vartheta_{\overline{\mathcal{E}}}(y) \geq 0$ for all $y \in \Delta_{E}$. By Theorem~\ref{nef-torus-OKS}, the toric adelic divisor $\overline{\mathcal{E}}$ is nef. By definition, there exists a decreasing sequence $\lbrace \overline{\mathcal{E}}_n \rbrace_{n \in \mathbb{N}}$ of nef toric model arithmetic divisors converging to $\overline{\mathcal{E}}$. Then, apply Lemma~\ref{bounded-approx-1} to find a constant $\beta^{\prime}$ satisfying properties \textit{(i)--(iii)} for $\overline{\mathcal{E}}$. Consider sequence $\lbrace \overline{\mathcal{D}}_n \rbrace_{n \in \mathbb{N}}$ determined by the functions
		\begin{displaymath}
			\gamma_{\overline{\mathcal{D}}_{n,v}} \coloneqq \begin{cases} \gamma_{\overline{\mathcal{E}}_{n,v}} - C & \textup{if } v=w \\ \gamma_{\overline{\mathcal{D}}_{v}}  & \textup{otherwise}. \end{cases}
		\end{displaymath}
		Then, properties \textit{(i)--(iii)} are satisfied for $\beta \coloneqq \beta^{\prime} + |C|$.
	\end{proof}
	\begin{lem}\label{bounded-approx-3}
		Let $\overline{\mathcal{D}}$ be a semipositive toric adelic divisor on $\mathcal{U}_{S}/ \mathcal{O}_K$ whose global roof function $\vartheta_{\overline{\mathcal{D}}} \colon \Delta_D \rightarrow \mathbb{R}_{-\infty}$ exists. Then, there exists a decreasing sequence $\lbrace \overline{\mathcal{D}}_n \rbrace_{n \in \mathbb{N}}$ of semipositive toric adelic divisors on $\mathcal{U}_{S}/\mathcal{O}_K$ such that:
		\begin{enumerate}[label=(\roman*)]
			\item The sequence $\lbrace \overline{\mathcal{D}}_n \rbrace_{n \in \mathbb{N}}$ converges in the boundary topology to $\overline{\mathcal{D}}$.
			\item For each $n \in \mathbb{N}$, the toric compactified divisor $D_n$ obtained by restriction of $\overline{\mathcal{D}}_n$ to the generic fiber is equal to $D$.
			\item For each $n \in \mathbb{N}$, the global roof function $\vartheta_{\overline{\mathcal{D}}_n} \colon \Delta_D \rightarrow \mathbb{R}_{-\infty}$ exists and is bounded below.
		\end{enumerate}
	\end{lem}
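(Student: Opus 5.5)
The approach is to truncate the roof functions from below at a single archimedean place. All the approximants then keep the same effective domains, and hence the same divisorial part $D$. Fix an infinite place $w \in \mathfrak{M}(K)_{\infty}$. For each $n \in \mathbb{N}$, define a $v$-tuple of closed concave functions $(\vartheta_{n,v})$ on $M_{\mathbb{R}}$ by $\vartheta_{n,v} \coloneqq \vartheta_{\overline{\mathcal{D}}_v}$ for $v \neq w$. At $w$, replace $\vartheta_{\overline{\mathcal{D}}_w}$ by the closed concave function
\begin{displaymath}
\vartheta_{n,w} \coloneqq \max\Big( \vartheta_{\overline{\mathcal{D}}_w},\; \iota_{\Delta_D} + c_n - \delta_w^{-1}\sum_{v \neq w} \delta_v \vartheta_{\overline{\mathcal{D}}_v} \Big)
\end{displaymath}
with $c_n \to -\infty$ decreasing. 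However, a maximum of concave functions is not concave, so this formula will be replaced by a concave variant: the concave envelope (hypograph closure of the convex hull) of $\max(\vartheta_{\overline{\mathcal{D}}_w}, \iota_{\Delta_D} - n)$.

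More precisely, I would first try the simpler choice $\vartheta_{n,w} \coloneqq \vartheta_{\overline{\mathcal{D}}_w} \boxplus_{\textup{hull}} (\iota_{\Delta_D} - n)$, the smallest closed concave function above both. This function is closed and concave, has $\textup{dom}$ closure equal to $\Delta_D$, decreases in $n$, and converges pointwise to $\vartheta_{\overline{\mathcal{D}}_w}$.

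The first step is to check that $(\vartheta_{n,v})_v$ satisfies Conditions (i)--(iii) of Corollary~\ref{semipositive-OKS-roofs}. Only the archimedean place $w$ changed, so all three conditions are inherited from $\overline{\mathcal{D}}$. This yields a semipositive toric adelic divisor $\overline{\mathcal{D}}_n$. Its effective domains have closure $\Delta_D$, so $\Psi_{D_n} = \Psi_D$ and $D_n = D$, which gives (ii). Monotonicity of $\vartheta_{n,w}$ in $n$ translates, via Legendre--Fenchel duality and Corollary~\ref{div-effective}, into $\overline{\mathcal{D}}_n \geq \overline{\mathcal{D}}_{n+1} \geq \overline{\mathcal{D}}$, so the sequence is decreasing.

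For (iii), let $R$ be a finite set of places containing $\mathfrak{M}(K)_{\infty}$ such that $\vartheta_{\overline{\mathcal{D}}_v} \leq 0$ off $R$ (Remark~\ref{local-roof-neg}). The global roof function of $\overline{\mathcal{D}}$ exists, so $\sum_{v \neq w} \delta_v \vartheta_{\overline{\mathcal{D}}_v}$ is not identically $-\infty$. The difficulty is that this sum may be unbounded below, and then lowering only $w$ is not enough. I would therefore perform the truncation at every place in the sense of Lemma~\ref{bounded-approx-2}, working with $\gamma$'s rather than $\vartheta$'s. Replace $\gamma_{\overline{\mathcal{D}}_v}$ by $\min(\gamma_{\overline{\mathcal{D}}_v}, \Psi_D + a_{n,v})$, which is concave. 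Dually, this replaces $\vartheta_v$ by $\vartheta_v \boxplus$-type hull with $\iota_{\Delta_D} - a_{n,v}$. Here $a_{n,v} = n$ at places in $R$, and $a_{n,v} = 0$ at the remaining finite places. There, $\vartheta_v \geq \iota_{\Delta_D}$ hull gives roof $\geq 0$ on $\Delta_D$ and $\leq 0$, hence $\equiv 0$, wait — more carefully the truncated roof is $\max$-hull with $0$, i.e.\ nonnegative.

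Lowerboundedness then follows. The truncated global roof function is at least $-\sum_{v\in R}\delta_v n + 0$ on $\Delta_D$, using that each hull is at least its second argument. Rationality of $\sup$ at finite places (Condition (i)) is preserved because the sup becomes $\max(\sup\vartheta_v,0)$, with $0$ rational. Condition (iii) is preserved because $\| \min(\gamma_v,\Psi_D) - \Psi_D\|_{\mathcal{C}} \leq \|\gamma_v - \Psi_D\|_{\mathcal{C}}$.

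Convergence (i) is the main obstacle. Near the boundary, $\gamma_{\overline{\mathcal{D}}_v} - \Psi_D$ may be unbounded, so $\min(\gamma_v, \Psi_D + n)$ does not converge uniformly. It does converge in the $\mathcal{G}$-norm at each place in $R$, though: $\gamma_v - \Psi_D$ is sublinear, so the set where $\gamma_v > \Psi_D + n$ lies in $\|x\| \geq R_\varepsilon(n)$, and there the error is at most $\varepsilon(1+\|x\|)$. At the infinitely many finite places outside $R$, the difference $\gamma_v - \min(\gamma_v,\Psi_D)$ is controlled in the $\mathcal{C}$-norm by Condition (iii) of Theorem~\ref{semipositive-torus-OKS}. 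To make this uniform, I would let the exceptional set grow with $n$, taking $R_n \coloneqq S(1/n) \cup \mathfrak{M}(K)_{\infty}$ with $a_{n,v}=n$ on $R_n$. This gives $\mathcal{C}$-error at most $1/n$ outside $R_n$ and $\mathcal{G}$-error tending to $0$ on the finite set $R_n$, once $a_{n,v}$ is chosen large enough place by place. Example~\ref{bdry-top-torus-OKS} then gives convergence in the boundary topology. Finally, I would verify that the enlargement of $R_n$ keeps the sequence decreasing, which holds because the truncation levels increase in $n$ at each place.
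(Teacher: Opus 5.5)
Your final construction is correct, and it takes a different route from the paper. You discard the single-place attempt, replace every $\gamma_{\overline{\mathcal{D}}_v}$ by $\min(\gamma_{\overline{\mathcal{D}}_v},\Psi_D+a_{n,v})$, set $a_{n,v}=0$ off a growing finite set $R_n=S(1/n)\cup\mathfrak{M}(K)_\infty$, and use large, nondecreasing integer levels on $R_n$.

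The paper does not truncate $\gamma$; it perturbs and then restricts the domain. It sets $\overline{\mathcal{E}}_n=\overline{\mathcal{D}}+\varepsilon_n\overline{\mathcal{B}}$. The roofs of $\overline{\mathcal{E}}_n$ are $\vartheta_{\overline{\mathcal{D}}_v}\boxplus\iota_{\varepsilon_n\Delta_B}$, shifted by $\varepsilon_n$ at $S\cup\mathfrak{M}(K)_\infty$. They live on $\Delta_D+\varepsilon_n\Delta_B$, whose interior contains $\Delta_D$. The paper restricts these roofs back to $\Delta_D$. Being concave and finite on a neighbourhood of $\Delta_D$, the restrictions are continuous, hence bounded, on $\Delta_D$. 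The argument of Lemma~\ref{finite-sum}, applied to the generically big $\overline{\mathcal{E}}_n$, shows they equal $\iota_{\Delta_D}$ off a finite set of places. The sandwich $\overline{\mathcal{D}}+\varepsilon_n\overline{\mathcal{B}}\geq\overline{\mathcal{D}}_n\geq\overline{\mathcal{D}}$ then gives monotonicity and convergence at once, with $\|\overline{\mathcal{D}}_n-\overline{\mathcal{D}}\|_{\overline{\mathcal{B}}}\leq\varepsilon_n$.

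Your version has its own advantages. It avoids enlarging the convex set and does not need the finite-sum lemma. It only modifies $\gamma_{\overline{\mathcal{D}}_v}$ where it exceeds $\Psi_D+a_{n,v}$. It gives the explicit bound $\vartheta_{\overline{\mathcal{D}}_n}\geq-\sum_{v\in R_n}\delta_v a_{n,v}$ on $\Delta_D$.

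The price is that convergence must be proved by hand. On $R_n$ you use sublinearity of $\gamma_{\overline{\mathcal{D}}_v}-\Psi_D$. Off $R_n$ you use Condition (iii) of Theorem~\ref{semipositive-torus-OKS}, where $\vartheta_{\overline{\mathcal{D}}_v}\leq\iota_{\Delta_D}$ forces $\gamma_{n,v}=\Psi_D$. Monotonicity requires nested $R_n$ and nondecreasing levels. Your roofs are only bounded below, not necessarily continuous on $\Delta_D$ when $\Delta_D$ is not a polytope, but that is all the lemma asks.

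Three points to tidy up.

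First, drop the single-place attempt and the notation $\boxplus_{\textup{hull}}$. In the paper $\boxplus$ is sup-convolution. What you actually use is the closed concave hull of $\max(\vartheta_{\overline{\mathcal{D}}_v},\iota_{\Delta_D}-a)$, i.e.\ the dual of $\min(\gamma_{\overline{\mathcal{D}}_v},\Psi_D+a)$.

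Second, the supremum of this hull is $\max(\sup\vartheta_{\overline{\mathcal{D}}_v},-a)$, not $\max(\sup\vartheta_{\overline{\mathcal{D}}_v},0)$. Take $a_{n,v}\in\mathbb{Z}$ to preserve Condition (i).

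Third, at finite places of $R_n\setminus S$, Example~\ref{bdry-top-torus-OKS} requires $\mathcal{C}$-norm convergence, not just $\mathcal{G}$-norm convergence. This still holds. Take $r\geq\max(1,R_\varepsilon)$ and $a_{n,v}\geq\max_{\|x\|\leq r}(\gamma_{\overline{\mathcal{D}}_v}-\Psi_D)$. Then the truncation error vanishes on $\overline{\textup{B}}(0,r)$ and is at most $\varepsilon(1+\|x\|)\leq2\varepsilon\|x\|$ outside it.
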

	\begin{proof}
		Let $\lbrace \varepsilon_n \rbrace_{n \in \mathbb{N}}$ be a decreasing sequence of positive rational numbers converging to $0$. Denote by $\overline{\mathcal{B}}$ the boundary divisor in Example~\ref{bdry-div-torus-OKS}. For each $n \in \mathbb{N}$, define the semipositive toric model arithmetic divisor $\overline{\mathcal{E}}_{n} \coloneqq \overline{\mathcal{D}} + \varepsilon_n \cdot \overline{\mathcal{B}}$. For each place $v$, the $v$-adic roof function $\vartheta_{\overline{\mathcal{E}}_{n,v}}$ satisfies
		\begin{displaymath}
			\vartheta_{\overline{\mathcal{E}}_{n,v}} = \vartheta_{\overline{\mathcal{D}}_v} \boxplus \vartheta_{\varepsilon_n \cdot \overline{\mathcal{B}}_v} = \begin{cases} \vartheta_{\overline{\mathcal{D}}_{v}} \boxplus \iota_{\varepsilon_n \cdot \Delta_B} + \varepsilon_n & v \in S \cup \mathfrak{M}(K)_{\infty} \\ \vartheta_{\overline{\mathcal{D}}_{v}} \boxplus \iota_{\varepsilon_n \cdot \Delta_B}  & \textup{otherwise} \end{cases}
		\end{displaymath}
		where $\iota_{\varepsilon_n \cdot \Delta_B}$ is the indicator function of the convex polytope $\varepsilon_n \cdot \Delta_{B}$. Moreover,
		\begin{displaymath}
			\vartheta_{\overline{\mathcal{E}}_n}(y) = \sum_{v \in \mathfrak{M}(K)} \delta_v \cdot  (\vartheta_{\overline{\mathcal{D}}_{v}} \boxplus \iota_{\varepsilon_n \cdot \Delta_B} )(y) + \sum_{v \in  S \cup \mathfrak{M}(K)_{\infty}} \delta_v \, \varepsilon_n .
		\end{displaymath}
		Then, $\vartheta_{\overline{\mathcal{E}}_n} \colon \Delta_{E_n} \rightarrow \mathbb{R}_{-\infty}$ is a closed concave function, where $\Delta_{E_n} = \Delta_D + \varepsilon_n \cdot \Delta_B$. Since $\Delta_B$ has a non-empty interior, the functions
		\begin{displaymath}
			h_{n,v}(y) \coloneqq \begin{cases} \vartheta_{\overline{\mathcal{E}}_{n,v}}(y) & y \in \Delta_D \\ -\infty & \textup{otherwise}\end{cases} \quad \textup{and} \quad h_{n}(y) \coloneqq \begin{cases} \vartheta_{\overline{\mathcal{E}}_{n}}(y) & y \in \Delta_D \\ -\infty & \textup{otherwise} \end{cases}
		\end{displaymath}
		are closed concave functions on $M_{\mathbb{R}}$, continuous on $\Delta_D$; in particular, bounded. Observe that
		\begin{displaymath}
			\Delta_D \subset \Delta_{E_n,\varepsilon_n} = \lbrace y \in \Delta_{E_n} \, \vert \, \textup{dist}(y, \textup{rb}(\Delta_{E_n}) \geq \varepsilon_n \rbrace.
		\end{displaymath}
		Following the proof of Lemma~\ref{finite-sum}, we can find a finite set $R(\varepsilon_n ) \subset \mathfrak{M}(K)_{\textup{fin}}$ containing $S$ such that $\vartheta_{\overline{\mathcal{E}}_{n,v}}|_{\Delta_{E_n, \varepsilon_n}} = 0$ for all $v \in \mathfrak{M}(K)_{\textup{fin}} \setminus R(\varepsilon_n)$. This implies that $h_{n,v} = \iota_{\Delta_D}$. By Description~\ref{toric-global-paf}, Corollary~\ref{div-nef-ample} and Theorem~\ref{toric-psh-type}, there is a semipositive toric adelic divisor $\overline{\mathcal{D}}_{n}$ satisfying $\vartheta_{\overline{\mathcal{D}}_{n,v}} = h_{n,v}$. Then, the sequence $\lbrace \overline{\mathcal{D}}_n \rbrace_{n \in \mathbb{N}}$ is decreasing and satisfies
		\begin{displaymath}
			\vartheta_{\overline{\mathcal{E}}_{n,v}} \geq \vartheta_{\overline{\mathcal{D}}_{n,v}} \geq \vartheta_{\overline{\mathcal{D}}_v}.
		\end{displaymath}
		In terms of toric adelic divisors, this means
		\begin{displaymath}
			\overline{\mathcal{E}}_n = \overline{\mathcal{D}} + \varepsilon_n \cdot \overline{\mathcal{B}} \geq  \overline{\mathcal{D}}_n \geq \overline{\mathcal{D}}.
		\end{displaymath}
		Therefore, the sequence $\lbrace \overline{\mathcal{D}}_n \rbrace_{n \in \mathbb{N}}$ satisfies the conditions of the lemma.
	\end{proof}
	
	\section{Arithmetic intersection numbers}\label{6}
	This final section defines the arithmetic intersection numbers of semipositive toric adelic divisors. Moreover, we give an integral formula for a large class of semipositive toric adelic divisors.
	
	\subsection{The integral formula} Let $S \subset \mathfrak{M}(K)_{\textup{fin}}$ be a finite set and $\mathcal{U}_{S}$ be the $d$-dimensional torus over $\textup{Spec}(\mathcal{O}_{K,S})$. As in the geometric and local arithmetic cases, the projection formula induces an arithmetic intersection pairing on the vector space of toric model arithmetic divisors
	\begin{displaymath}
		\overline{\textup{Div}}_{\mathbb{T}}(\mathcal{U}_{S}/\mathcal{O}_K )_{\textup{mod}}^{d+1} \colon \longrightarrow \mathbb{R}.
	\end{displaymath}
	In particular, if $\overline{\mathcal{D}}$ is a semipositive toric model arithmetic divisor, Theorem~\ref{global-toric-h-proj} gives the following formula for the arithmetic intersection number
	\begin{displaymath}
		\overline{\mathcal{D}}^{d+1} = (d+1)! \int_{\Delta_D} \vartheta_{\overline{\mathcal{D}}} \, \textup{dVol}_M = (d+1)! \sum_{v \in \mathfrak{M}(K)} \delta_v \cdot \int_{\Delta_D} \vartheta_{\overline{\mathcal{D}}_v} \, \textup{dVol}_M,
	\end{displaymath}
	where $\vartheta_{\overline{\mathcal{D}}}$ and $\vartheta_{\overline{\mathcal{D}}_v}$ are the global and $v$-adic roof functions corresponding to $\overline{\mathcal{D}}$, respectively. We will extend this result to the arithmetic intersection numbers defined below. This definition is a global version of Definition~4.3.7~of~\cite{Per26}, and it is inspired by the generalized arithmetic intersection numbers of Burgos and Kramer, discussed in Remark~\ref{finale}.
	\begin{dfn}\label{global-intersection-numbers}
		Let $S \subset \mathfrak{M}(K)_{\textup{fin}}$ be a finite set and $\mathcal{U}_{S}$ be the $d$-dimensional torus over $\textup{Spec}(\mathcal{O}_{K,S})$. Given semipositive toric adelic divisors $\overline{\mathcal{D}}_0, \ldots, \overline{\mathcal{D}}_d$ on $\mathcal{U}_{S}/\mathcal{O}_K$, the \textit{arithmetic intersection number} $\overline{\mathcal{D}}_0 \cdot \ldots \cdot \overline{\mathcal{D}}_d$ is defined as follows:
		\begin{enumerate}[label=(\roman*)]
			\item Suppose that, for each $0 \leq i \leq d$, the global roof function $\vartheta_{\overline{\mathcal{D}}_i}$ exists and is bounded. The number $\overline{\mathcal{D}}_0 \cdot \ldots \cdot \overline{\mathcal{D}}_d$ is given by the limit
			\begin{displaymath}
				\overline{\mathcal{D}}_0 \cdot \ldots \cdot \overline{\mathcal{D}}_d\coloneqq \lim_{n \rightarrow \infty} \overline{\mathcal{D}}_{0,n} \cdot \ldots \cdot \overline{\mathcal{D}}_{d,n}
			\end{displaymath}
			where $\lbrace \overline{\mathcal{D}}_{i,n} \rbrace_{n \in \mathbb{N}}$ is any decreasing sequence of semipositive toric model arithmetic divisors on $\mathcal{U}_{S}/\mathcal{O}_K$, converging in the boundary topology to $\overline{\mathcal{D}}_i$, and satisfying conditions \textit{(i)--(iii)} in Lemma~\ref{bounded-approx-2}.
			\item For each $0 \leq i \leq d$, let $D_i$ be the toric compactified divisor induced by restriction of $\overline{\mathcal{D}}_i$ to the generic fiber $U/K$. Then, the number $\overline{\mathcal{D}}_0 \cdot \ldots \cdot \overline{\mathcal{D}}_d$ is given by 
			\begin{displaymath}
				\overline{\mathcal{D}}_0 \cdot \ldots \cdot \overline{\mathcal{D}}_d\coloneqq \textup{inf} \lbrace\overline{\mathcal{E}}_0 \cdot \ldots \cdot \overline{\mathcal{E}}_d \, \vert \, (\overline{\mathcal{E}}_0 , \ldots, \overline{\mathcal{E}}_d) \textup{ satisfying } (\star) \rbrace,
			\end{displaymath}
			where $(\star )$ means that, for each $0 \leq i \leq d$, the toric adelic divisor $\overline{\mathcal{E}}_i \geq \overline{\mathcal{D}}_i$ is semipositive, its restriction $E_i$ to the generic fiber is equal to $D_i$, and its global roof function $\vartheta_{\overline{\mathcal{E}}_i}$ exists and is bounded.
		\end{enumerate}
		This definition is extended to differences of semipositive toric adelic divisors by linearity.
	\end{dfn}
	\begin{rem}
		The above definition can be stated for any toric arithmetic variety of the form $\mathcal{X}_{\Sigma_0 , S} / \mathcal{O}_{K}$. In any case, we can always reduce to the case of the torus. Indeed, by Lemma~\ref{GK-functorial}, we have a continuous injective group morphism
		\begin{displaymath}
			\iota^{\ast} \colon \overline{\textup{Div}}_{\mathbb{T}}(\mathcal{X}_{\Sigma_0,S}/\mathcal{O}_K) \longrightarrow \overline{\textup{Div}}_{\mathbb{T}}(\mathcal{U}_{S}/\mathcal{O}_K).
		\end{displaymath}
		This map restricts to the corresponding cones of semipositive toric adelic divisors (resp. nef toric adelic divisors). Then, the projection formula gives the identity
		\begin{displaymath}
			\iota^{\ast}\overline{\mathcal{D}}_0 \cdot \ldots \cdot \iota^{\ast}\overline{\mathcal{D}}_d = \overline{\mathcal{D}}_0 \cdot \ldots \cdot\overline{\mathcal{D}}_d
		\end{displaymath}
		for all semipositive toric model arithmetic divisors $\overline{\mathcal{D}}_0 ,\ldots , \overline{\mathcal{D}}_d$ of $\mathcal{X}_{\Sigma_{0},S}$. By the above definition and continuity of the pullback $\iota^{\ast}$, the above identity extends to the level of semipositive toric adelic divisors. Thus, we may always compute the arithmetic intersection number $ \overline{\mathcal{D}}_0 \cdot \ldots \cdot\overline{\mathcal{D}}_d$ by pulling back to the torus.
	\end{rem}
	Then, we show the main result of this article: an integral formula for the arithmetic self-intersection number of any semipositive toric adelic divisor $\overline{\mathcal{D}}$ whose global roof function $\vartheta_{\overline{\mathcal{D}}}$ exists. This is automatically satisfied if $\overline{\mathcal{D}}$ is generically big, i.e. the compact convex set $\Delta_D$ in $M_{\mathbb{R}}$ has a non-empty interior (see Proposition~\ref{global-roof-OKS}).
	\begin{thm}\label{global-toric-height-OKS}
		Let $S \subset \mathfrak{M}(K)_{\textup{fin}}$ be a finite set and $\mathcal{U}_{S}$ be the $d$-dimensional torus over $\textup{Spec}(\mathcal{O}_{K,S})$. Given a semipositive toric adelic divisor $\overline{\mathcal{D}}$ on $\mathcal{U}_{S}/\mathcal{O}_K$ whose global roof function $\vartheta_{\overline{\mathcal{D}}} \colon \Delta_D \rightarrow \mathbb{R}_{-\infty}$ exists, we have the following integral formula
		\begin{displaymath}
			\overline{\mathcal{D}}^{d+1} = (d+1)! \int_{\Delta_D} \vartheta_{\overline{\mathcal{D}}} \, \textup{dVol}_M = (d+1)! \sum_{v \in \mathfrak{M}(K)} \delta_v \cdot \int_{\Delta_D} \vartheta_{\overline{\mathcal{D}}_v} \, \textup{dVol}_M,
		\end{displaymath}
		where $\vartheta_{\overline{D_v}}$ is the $v$-adic roof function of $\overline{\mathcal{D}}$ at $v$, and $\delta_v$ is the weight in Definition~\ref{global-roof-fun}. The arithmetic self-intersection number $\overline{\mathcal{D}}^{d+1}$ is finite if and only if $\vartheta_{\overline{\mathcal{D}}}  \in L^1 (\Delta_D)$.
	\end{thm}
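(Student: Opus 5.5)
We argue in two stages, mirroring the two clauses of Definition~\ref{global-intersection-numbers}: first for divisors whose global roof function is bounded below, then in general by an infimum argument. Throughout we use the notation of Remark~\ref{notation-decreasing-seq}, and the second equality in the formula will follow once the first is established, by interchanging sum and integral.

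\emph{Bounded case.} Suppose $\vartheta_{\overline{\mathcal{D}}}$ exists and is bounded below by $C$. By Lemma~\ref{bounded-approx-2} we take a decreasing sequence $\lbrace \overline{\mathcal{D}}_n \rbrace_{n \in \mathbb{N}}$ of semipositive toric model divisors converging to $\overline{\mathcal{D}}$, with all $v$-adic roof functions bounded by a uniform $\beta$ and all global roof functions bounded below by $C$. By Theorem~\ref{global-toric-h-proj},
\begin{displaymath}
\overline{\mathcal{D}}_n^{d+1} = (d+1)! \int_{\Delta_{D_n}} \vartheta_{\overline{\mathcal{D}}_n}\, \textup{dVol}_M ,
\end{displaymath}
and by definition the left-hand side converges to $\overline{\mathcal{D}}^{d+1}$. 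We then pass to the limit on the right. We split $\int_{\Delta_{D_n}} = \int_{\Delta_D} + \int_{\Delta_{D_n}\setminus \Delta_D}$.

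For the first piece, the functions $\vartheta_{\overline{\mathcal{D}}_n}$ decrease pointwise to $\vartheta_{\overline{\mathcal{D}}}$ on $\Delta_D$, as in Proposition~\ref{global-roof-OKS} and Lemma~\ref{interchanging-limits-1}. They are bounded below by $C$, and $\vartheta_{\overline{\mathcal{D}}_0}$ is bounded above. Monotone or dominated convergence therefore applies.

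For the second piece, we need a uniform upper bound on $\vartheta_{\overline{\mathcal{D}}_n}$ over $\Delta_{D_n}$. We intend to obtain it from the uniform $\beta$ together with the finiteness argument of Lemma~\ref{finite-sum}: only finitely many places, uniformly in $n$, contribute positively. Since $\textup{Vol}_M(\Delta_{D_n}\setminus\Delta_D)\to 0$ by Lemma~\ref{geom-convergence}, the second piece tends to $0$. The main obstacle is this step, namely controlling $\vartheta_{\overline{\mathcal{D}}_n}$ near $\textup{rb}(\Delta_D)$ uniformly. If the uniform bound fails, we will first replace $\overline{\mathcal{D}}_n$ using Lemma~\ref{bounded-approx-3} so that $D_n = D$, which removes this piece entirely.

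\emph{General case.} When $\vartheta_{\overline{\mathcal{D}}}$ exists but may be unbounded, we use clause (ii) of Definition~\ref{global-intersection-numbers}. Any admissible $\overline{\mathcal{E}} \geq \overline{\mathcal{D}}$ with $E = D$ has $\vartheta_{\overline{\mathcal{E}}_v} \geq \vartheta_{\overline{\mathcal{D}}_v}$, so the bounded case yields $\overline{\mathcal{E}}^{d+1} \geq (d+1)!\int_{\Delta_D}\vartheta_{\overline{\mathcal{D}}}$. Hence $\overline{\mathcal{D}}^{d+1}$ is at least this integral.

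For the reverse inequality, we take the sequence from Lemma~\ref{bounded-approx-3}: it is decreasing, has $D_n=D$, and each $\vartheta_{\overline{\mathcal{D}}_n}$ is bounded below. Since $\vartheta_{\overline{\mathcal{D}}_n}\downarrow\vartheta_{\overline{\mathcal{D}}}$ pointwise on $\Delta_D$ (at least on $\textup{ri}(\Delta_D)$, which has full measure when $\textup{Vol}_M(\Delta_D)>0$; the degenerate case gives $0$ on both sides), and $\vartheta_{\overline{\mathcal{D}}_0}$ is bounded above, the monotone convergence theorem gives
\begin{displaymath}
\int_{\Delta_D}\vartheta_{\overline{\mathcal{D}}_n}\to\int_{\Delta_D}\vartheta_{\overline{\mathcal{D}}}\in \mathbb{R}_{-\infty}.
\end{displaymath}
Thus the infimum equals the integral. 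The value is finite exactly when $\vartheta_{\overline{\mathcal{D}}}\in L^1(\Delta_D)$, since $\vartheta_{\overline{\mathcal{D}}}$ is bounded above.

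Finally, for the second equality we write $\vartheta_{\overline{\mathcal{D}}}=\sum_v\delta_v\vartheta_{\overline{\mathcal{D}}_v}$ and split the places into the finite set $R$ from Remark~\ref{local-roof-neg} and its complement. On the complement $\vartheta_{\overline{\mathcal{D}}_v}\le 0$, so monotone convergence permits swapping sum and integral. The places in $R$ contribute finitely many integrals, each bounded above.
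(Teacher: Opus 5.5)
Your two-stage structure matches the paper's. However, the lower bound in your general case has a genuine gap.

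\textbf{The gap.} Clause~(ii) of Definition~\ref{global-intersection-numbers} defines $\overline{\mathcal{D}}^{d+1}$ as the infimum of the \emph{mixed} numbers $\overline{\mathcal{E}}_0 \cdot \ldots \cdot \overline{\mathcal{E}}_d$ over all tuples satisfying $(\star)$. The $\overline{\mathcal{E}}_i$ are chosen independently of one another, even though every $\overline{\mathcal{D}}_i$ equals $\overline{\mathcal{D}}$. Your argument only bounds the diagonal numbers $\overline{\mathcal{E}}^{d+1}$, and your bounded-case formula only concerns self-intersections. So ``hence $\overline{\mathcal{D}}^{d+1}$ is at least this integral'' does not follow.

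Two ingredients are missing:
\begin{enumerate}
\item[(a)] A mixed formula
\begin{displaymath}
\overline{\mathcal{E}}_0 \cdot \ldots \cdot \overline{\mathcal{E}}_d = \sum_{v \in \mathfrak{M}(K)} \delta_v \cdot \textup{MI}_M(\vartheta_{\overline{\mathcal{E}}_{0,v}}, \ldots, \vartheta_{\overline{\mathcal{E}}_{d,v}})
\end{displaymath}
for tuples with bounded global roof functions. You can obtain it by polarizing your bounded-case identity: use multilinearity of clause~(i) on the partial sums $\overline{\mathcal{E}}_I$. Their $v$-adic roof functions are the sup-convolutions $\boxplus_{i \in I} \vartheta_{\overline{\mathcal{E}}_{i,v}}$, and their global roof functions remain bounded.
\item[(b)] Monotonicity of $\textup{MI}_M$ in each argument for functions on the common domain $\Delta_D$. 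This gives, place by place,
\begin{displaymath}
\textup{MI}_M(\vartheta_{\overline{\mathcal{E}}_{0,v}}, \ldots, \vartheta_{\overline{\mathcal{E}}_{d,v}}) \geq \textup{MI}_M(\vartheta_{\overline{\mathcal{D}}_v}, \ldots, \vartheta_{\overline{\mathcal{D}}_v}) = (d+1)! \int_{\Delta_D} \vartheta_{\overline{\mathcal{D}}_v} \, \textup{dVol}_M .
\end{displaymath}
\end{enumerate}
Summing over $v$ and interchanging sum and integral then yields $\overline{\mathcal{E}}_0 \cdot \ldots \cdot \overline{\mathcal{E}}_d \geq (d+1)! \int_{\Delta_D} \vartheta_{\overline{\mathcal{D}}} \, \textup{dVol}_M$ for every admissible tuple. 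This is exactly the step the paper carries out, via the argument of Corollary~4.3.10 of~\cite{Per26}, before taking the infimum. Your upper bound via Lemma~\ref{bounded-approx-3} is the same as the paper's and is fine.

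\textbf{The bounded case.} The step you flag as the main obstacle is in fact immediate. Since the sequence is decreasing, $\vartheta_{\overline{\mathcal{D}}_n} \leq \vartheta_{\overline{\mathcal{D}}_0}$ on $\Delta_{D_n} \subset \Delta_{D_0}$, and $\vartheta_{\overline{\mathcal{D}}_0}$ is continuous on the polytope $\Delta_{D_0}$. Property~(iii) of Lemma~\ref{bounded-approx-2} gives the lower bound $C$ on all of $\Delta_{D_n}$. Together with $\textup{Vol}_M(\Delta_{D_n} \setminus \Delta_D) \to 0$ (Hausdorff convergence plus Beer's theorem), this disposes of the boundary piece, as in the paper.

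Your fallback would not have worked. The divisors produced by Lemma~\ref{bounded-approx-3} are adelic, not model, divisors, so neither Theorem~\ref{global-toric-h-proj} nor clause~(i) of Definition~\ref{global-intersection-numbers} applies to them.

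\textbf{The second equality.} Here you take a different and more direct route than the paper's Lemma~\ref{interchanging-limits-2}. The paper regularizes each $\vartheta_{\overline{\mathcal{D}}_v}$ by sup-convolution with $\vartheta_{\varepsilon_n \cdot \overline{\mathcal{B}}_v}$ and interchanges two monotone limits. You instead apply monotone convergence to the tail of non-positive terms outside the finite set of Remark~\ref{local-roof-neg}, and add finitely many integrals of functions bounded above. That argument is correct and shorter.
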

	\begin{proof}
		Here, we prove the formula
		\begin{displaymath}
			\overline{\mathcal{D}}^{d+1} = (d+1)! \int_{\Delta_D} \vartheta_{\overline{\mathcal{D}}} \, \textup{dVol}_M.
		\end{displaymath}
		Then, Lemma~\ref{interchanging-limits-2} gives the identity
		\begin{displaymath}
			(d+1)! \int_{\Delta_D} \vartheta_{\overline{\mathcal{D}}} \, \textup{dVol}_M = (d+1)! \sum_{v \in \mathfrak{M}(K)} \delta_v \cdot \int_{\Delta_D} \vartheta_{\overline{\mathcal{D}}_v} \, \textup{dVol}_M.
		\end{displaymath}
		First, we assume that $\vartheta_{\overline{\mathcal{D}}}$ is bounded. Let $\lbrace \overline{\mathcal{D}}_n \rbrace_{n \in \mathbb{N}}$ be a decreasing sequence of semipositive toric model arithmetic divisors as in Lemma~\ref{bounded-approx-2}. Then, Theorem~\ref{global-toric-h-proj} gives
		\begin{displaymath}
			\overline{\mathcal{D}}^{d+1} = \lim_{n \in \mathbb{N}} \overline{\mathcal{D}}_{n}^{d+1} = (d+1)! \lim_{n \in \mathbb{N}} \int_{\Delta_{D_n}} \vartheta_{\overline{\mathcal{D}}_n} \, \textup{dVol}_M.
		\end{displaymath}
		By Theorem~3.2.17~of~\cite{Per26}, the sequence $\lbrace \Delta_{D_n} \rbrace_{n \in \mathbb{N}}$ is decreasing and converges in the Hausdorff distance to the compact convex set $\Delta_D$. For each $n \in \mathbb{N}$, write
		\begin{displaymath}
			\int_{\Delta_{D_n}} \vartheta_{\overline{\mathcal{D}}_n} \, \textup{dVol}_M = \int_{\Delta_{D}} \vartheta_{\overline{\mathcal{D}}_n} \, \textup{dVol}_M + \int_{\Delta_{D_n} \setminus \Delta_D} \vartheta_{\overline{\mathcal{D}}_n} \, \textup{dVol}_M
		\end{displaymath}
		By the monotonic convergence theorem, we know
		\begin{displaymath}
			\lim_{n \in \mathbb{N}} \int_{\Delta_{D}} \vartheta_{\overline{\mathcal{D}}_n} \, \textup{dVol}_M = \int_{\Delta_D} \vartheta_{\overline{\mathcal{D}}} \, \textup{dVol}_M.
		\end{displaymath}
		On the other hand, Beer's~Theorem~\cite{Beer} implies that $\lim_{n \in \mathbb{N}} \textup{Vol}_M (\Delta_{D_n} \setminus \Delta_D) = 0$. This, together with property \textit{(iii)} of Lemma~\ref{bounded-approx-2} shows
		\begin{displaymath}
			\lim_{n \in \mathbb{N}} \int_{\Delta_{D_n} \setminus \Delta_D} \vartheta_{\overline{\mathcal{D}}_n} \, \textup{dVol}_M = 0.
		\end{displaymath}
		This identity, together with Lemma~\ref{interchanging-limits-2}, gives
		\begin{displaymath}
			\overline{\mathcal{D}}^{d+1} = (d+1)! \int_{\Delta_D} \vartheta_{\overline{\mathcal{D}}} \, \textup{dVol}_M = (d+1)! \sum_{v \in \mathfrak{M}(K)} \delta_v \cdot \int_{\Delta_D} \vartheta_{\overline{\mathcal{D}}_v} \, \textup{dVol}_M.
		\end{displaymath}
		Now, we remove the boundedness assumption on $\vartheta_{\overline{\mathcal{D}}}$. For each $0 \leq i \leq d$, let $\overline{\mathcal{E}}_i$ be a toric adelic divisor such that $\overline{\mathcal{E}}_i \geq \overline{\mathcal{D}}$ is semipositive, its restriction $E_i$ to the generic fiber is equal to $D$, and its global roof function $\vartheta_{\overline{\mathcal{E}}_i}$ exists and is bounded. Then, combining the identity
		\begin{displaymath}
			\overline{\mathcal{E}}_{i}^{d+1} = (d+1)! \int_{\Delta_D} \vartheta_{\overline{\mathcal{E}}_i} \, \textup{dVol}_M = (d+1)! \sum_{v \in \mathfrak{M}(K)} \delta_v \cdot \int_{\Delta_D} \vartheta_{\overline{\mathcal{E}}_{i,v}} \, \textup{dVol}_M
		\end{displaymath}
		with the argument in the proof of Corollary~4.3.10~of~\cite{Per26}, we obtain the formula
		\begin{displaymath}
			\overline{\mathcal{E}}_0 \cdot \ldots \cdot \overline{\mathcal{E}}_d = \sum_{v \in \mathfrak{M}(K)} \delta_v \cdot \textup{MI}_{M}(\vartheta_{\overline{\mathcal{E}}_{0,v}}, \ldots ,\vartheta_{\overline{\mathcal{E}}_{d,v}}),
		\end{displaymath}
		where $\vartheta_{\overline{\mathcal{E}}_{i,v}}$ is the $v$-adic roof function of $\overline{\mathcal{E}}_i$ at the place $v$ and $\textup{MI}_M$ is the mixed integral, introduced in Definition~\ref{mixed-integral}. Since $\vartheta_{\overline{\mathcal{E}}_{i,v}}$ is bounded on its effective domain $\Delta_D$ and $\vartheta_{\overline{\mathcal{E}}_{i,v}} \geq \vartheta_{\overline{\mathcal{D}}_v}$, we obtain the inequality
		\begin{displaymath}
			\textup{MI}_{M}(\vartheta_{\overline{\mathcal{E}}_{0,v}}, \ldots ,\vartheta_{\overline{\mathcal{E}}_{d,v}}) \geq \textup{MI}_{M}(\vartheta_{\overline{\mathcal{D}}_{v}}, \ldots ,\vartheta_{\overline{\mathcal{D}}_{v}}) = (d+1)! \int_{\Delta_D} \vartheta_{\overline{\mathcal{D}}_{v}} \, \textup{dVol}_M.
		\end{displaymath}
		Taking the weighted sum over all the places, we get
		\begin{displaymath}
			\sum_{v \in \mathfrak{M}(K)} \delta_v \cdot \textup{MI}_{M}(\vartheta_{\overline{\mathcal{E}}_{0,v}}, \ldots ,\vartheta_{\overline{\mathcal{E}}_{d,v}}) \geq \sum_{v \in \mathfrak{M}(K)} \delta_v \cdot \int_{\Delta_D} \vartheta_{\overline{\mathcal{D}}_{v}} \, \textup{dVol}_M.
		\end{displaymath}
		Then, Lemma~\ref{interchanging-limits-2} gives
		\begin{displaymath}
			\overline{\mathcal{E}}_0 \cdot \ldots \cdot \overline{\mathcal{E}}_d \geq (d+1)! \int_{\Delta_D} \vartheta_{\overline{\mathcal{D}}} \, \textup{dVol}_M.
		\end{displaymath}
		After taking the infimum over all $(d+1)$-tuples $(\overline{\mathcal{E}}_0 , \ldots, \overline{\mathcal{E}}_d)$, we get the bound
		\begin{displaymath}
			\overline{\mathcal{D}}^{d+1} \geq (d+1)! \int_{\Delta_D} \vartheta_{\overline{\mathcal{D}}} \, \textup{dVol}_M.
		\end{displaymath}
		Finally, let $\lbrace \overline{\mathcal{D}}_n \rbrace_{n \in \mathbb{N}}$ be a sequence satisfying the conditions of Lemma~\ref{bounded-approx-3}. By definition,
		\begin{displaymath}
			\lim_{n \in \mathbb{N}} \overline{\mathcal{D}}_{n}^{d+1} \geq \overline{\mathcal{D}}^{d+1}. 
		\end{displaymath}
		On the other hand, the decreasing sequence $\lbrace \vartheta_{\overline{\mathcal{D}}_n} \rbrace_{n \in \mathbb{N}}$ converges pointwise to $\vartheta_{\overline{\mathcal{D}}}$. By hypothesis, each $ \vartheta_{\overline{\mathcal{D}}_n}$ is bounded below. Therefore, the monotone convergence theorem gives
		\begin{displaymath}
			\lim_{n \in \mathbb{N}} \overline{\mathcal{D}}_{n}^{d+1} = \lim_{n \in \mathbb{N}} (d+1)! \int_{\Delta_D} \vartheta_{\overline{\mathcal{D}}_n} \, \textup{dVol}_M =  (d+1)! \int_{\Delta_D} \vartheta_{\overline{\mathcal{D}}} \, \textup{dVol}_M.
		\end{displaymath}
		We conclude that
		\begin{displaymath}
			\overline{\mathcal{D}}^{d+1} =  (d+1)! \int_{\Delta_D} \vartheta_{\overline{\mathcal{D}}} \, \textup{dVol}_M.
		\end{displaymath}
		The next lemma finishes the proof of this result.
	\end{proof}
	\begin{lem}\label{interchanging-limits-2}
		Given a semipositive toric adelic divisor $\overline{\mathcal{D}}$ on $\mathcal{U}_{S}/\mathcal{O}_K$ whose global roof function $\vartheta_{\overline{\mathcal{D}}} \colon \Delta_D \rightarrow \mathbb{R}_{-\infty}$ exists, we have
		\begin{displaymath}
			\int_{\Delta_D} \vartheta_{\overline{\mathcal{D}}} \, \textup{dVol}_M = \sum_{v \in \mathfrak{M}(K)} \delta_v \cdot \int_{\Delta_D} \vartheta_{\overline{\mathcal{D}}_v} \, \textup{dVol}_M,
		\end{displaymath}
		where $\vartheta_{\overline{\mathcal{D}}_v}$ is the $v$-adic roof function of $\overline{\mathcal{D}}$ at the place $v$. The above integral is not necessarily finite.
	\end{lem}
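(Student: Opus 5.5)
The proof is an interchange of a countable sum with an integral, carried out by splitting the places into a finite "bad" set and an infinite "good" set on which the $v$-adic roof functions are non-positive. By Remark~\ref{local-roof-neg}, there is a finite set $R \subset \mathfrak{M}(K)_{\textup{fin}}$ containing $S$ such that $\vartheta_{\overline{\mathcal{D}}_v} \leq 0$ on $\Delta_D$ for every $v \in \mathfrak{M}(K)_{\textup{fin}} \setminus R$. For the remaining finitely many places $v \in R \cup \mathfrak{M}(K)_{\infty}$, each $\vartheta_{\overline{\mathcal{D}}_v}$ is a closed concave function with compact effective domain closure $\Delta_D$. It is therefore bounded above, by Proposition~A.1.11 of~\cite{Per26}, say by a constant $C$.

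I will split the global roof function as
\begin{displaymath}
\vartheta_{\overline{\mathcal{D}}} = \sum_{v \in R \cup \mathfrak{M}(K)_{\infty}} \delta_v \, \vartheta_{\overline{\mathcal{D}}_v} + \sum_{v \in \mathfrak{M}(K)_{\textup{fin}} \setminus R} \delta_v \, \vartheta_{\overline{\mathcal{D}}_v},
\end{displaymath}
pointwise on $\Delta_D$, with values in $\mathbb{R}_{-\infty}$. This is legitimate because the first sum is finite and bounded above, and the second is a series of non-positive terms, hence a well-defined element of $\mathbb{R}_{-\infty}$.

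For the second sum, the partial sums over an exhaustion of $\mathfrak{M}(K)_{\textup{fin}} \setminus R$ by finite sets decrease pointwise. I will apply the monotone convergence theorem to their negatives, which are non-negative and increasing (Lemma~\ref{interchanging-limits-1} in integrated form). This gives
\begin{displaymath}
\int_{\Delta_D} \sum_{v \notin R} \delta_v \vartheta_{\overline{\mathcal{D}}_v}\, \textup{dVol}_M = \sum_{v\notin R}\delta_v \int_{\Delta_D}\vartheta_{\overline{\mathcal{D}}_v}\, \textup{dVol}_M \in \mathbb{R}_{-\infty}.
\end{displaymath}

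For the finitely many places in $R \cup \mathfrak{M}(K)_{\infty}$, each $\vartheta_{\overline{\mathcal{D}}_v}$ is bounded above on a set of finite volume. Hence each $\int_{\Delta_D}\vartheta_{\overline{\mathcal{D}}_v}$ is well defined in $\mathbb{R}_{-\infty}$, and linearity of the integral holds for finitely many functions that are bounded above, even when some integrals are $-\infty$. This amounts to applying linearity to $C - \vartheta_{\overline{\mathcal{D}}_v} \geq 0$.

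Adding the two pieces yields the claimed identity. No $\infty - \infty$ ambiguity arises, since every term lies in $\mathbb{R}_{-\infty}$ and the positive parts are uniformly bounded by $C \cdot \textup{Vol}_M(\Delta_D)$. Measurability is not an issue: each $\vartheta_{\overline{\mathcal{D}}_v}$ is upper semicontinuous, and a pointwise limit of measurable functions is measurable.

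The main point requiring care is the bookkeeping of possibly infinite values. One must ensure that only finitely many terms can be positive and that those are bounded above, which is exactly what Remark~\ref{local-roof-neg} provides. One must also check that $\vartheta_{\overline{\mathcal{D}}}$ is defined on all of $\Delta_D$ by the series, not merely on $\textup{ri}(\Delta_D)$. The relative boundary has measure zero when $\dim \Delta_D = d$, so any discrepancy there does not affect the integrals. If $\dim \Delta_D < d$, both sides vanish trivially, since $\Delta_D$ is then $\textup{Vol}_M$-null.
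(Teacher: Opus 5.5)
Your proof is correct, but it takes a different route from the paper.

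\textbf{The paper's route.} The paper does not split the places by sign. It regularizes each local roof function, setting $\vartheta_{n,v}=(\vartheta_{\overline{\mathcal{D}}_v}\boxplus\vartheta_{\varepsilon_n\cdot\overline{\mathcal{B}}_v})|_{\Delta_D}$, where $\overline{\mathcal{B}}$ is the boundary divisor of Example~\ref{bdry-div-torus-OKS}. These functions have three properties:
\begin{itemize}
\item they are continuous, hence bounded, on $\Delta_D$;
\item they decrease to $\vartheta_{\overline{\mathcal{D}}_v}$ as $n\to\infty$;
\item for fixed $n$ they equal $\iota_{\Delta_D}$ outside a finite set $R(\varepsilon_n)$ of places.
\end{itemize}
So at each level $n$ the sum over places and the integral commute trivially. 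The paper then passes to the limit twice: by monotone convergence on the integral side, and by Lemma~\ref{interchanging-limits-1} on the sum over places.

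\textbf{Your route.} You replace this double limit by a single use of monotone convergence (Tonelli for series of non-positive measurable functions) on the tail. The only inputs are the sign information $\sup\vartheta_{\overline{\mathcal{D}}_v}=0$ for almost all finite $v$ from Remark~\ref{local-roof-neg}, and the upper bound on the finitely many remaining terms. Your bookkeeping in $\mathbb{R}_{-\infty}$ and your measurability remark (upper semicontinuity) are exactly what this needs.

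\textbf{What each buys.} Your argument is more economical. It needs no boundary divisor, no sup-convolutions, and no finite-sum argument in the style of Lemma~\ref{finite-sum}. It also shows that the hypothesis $\vartheta_{\overline{\mathcal{D}}}\not\equiv-\infty$ plays no role: the identity holds in $\mathbb{R}_{-\infty}$ for the series $\sum_v\delta_v\vartheta_{\overline{\mathcal{D}}_v}$ of any semipositive toric adelic divisor. The paper's argument instead produces a decreasing sequence of continuous functions $\vartheta_n=\sum_v\delta_v\vartheta_{n,v}$, each with only finitely many non-trivial local terms, converging to $\vartheta_{\overline{\mathcal{D}}}$. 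This is the same kind of bounded approximation used in Lemma~\ref{bounded-approx-3} and in the proof of Theorem~\ref{global-toric-height-OKS}, so it fits the surrounding framework. For this lemma alone, however, it is more machinery than is needed.
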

	\begin{proof}
		Let $\lbrace \varepsilon_n \rbrace_{n \in \mathbb{N}}$ be a decreasing sequence of rational numbers converging to $0$. Denote by $\overline{\mathcal{B}}$ the boundary divisor in Example~\ref{bdry-div-torus-OKS} and by $\vartheta_{\overline{\mathcal{B}}_{v}} \colon \Delta_B \rightarrow \mathbb{R}$ the $v$-adic roof function of $\overline{\mathcal{B}}$ at the place $v$. Then, for each $n \in \mathbb{N}$ and $v \in \mathfrak{M}(K)$, let $\vartheta_{n,v} \colon \Delta_D \rightarrow \mathbb{R}$ be the concave function given by
		\begin{displaymath}
			\vartheta_{n,v} = (\vartheta_{\overline{\mathcal{D}}_v} \boxplus \vartheta_{\varepsilon_n \cdot \overline{\mathcal{B}}_{v}})|_{\Delta_D}.
		\end{displaymath}
		Arguing as in Remark~\ref{bounded-approx-3}, we have the following properties:
		\begin{enumerate}[label=(\roman*)]
			\item For each $n \in \mathbb{N}$ and $v \in \mathfrak{M}(K)$, the function $\vartheta_{n,v}$ is continuous on $\Delta_D$, hence bounded. Therefore, the integral
			\begin{displaymath}
				\int_{\Delta_D} \vartheta_{n,v} \, \textup{dVol}_M 
			\end{displaymath}
			is finite.
			\item For each $v \in \mathfrak{M}(K)$, the sequence $\lbrace \vartheta_{n,v} \rbrace_{n \in \mathbb{N}}$ is decreasing and converges pointwise to $\vartheta_{\overline{\mathcal{D}}_v}$. By the monotone convergence theorem,
			\begin{displaymath}
				\lim_{n \in \mathbb{N}} \int_{\Delta_D} \vartheta_{n,v} \, \textup{dVol}_M  = \int_{\Delta_D} \vartheta_{\overline{\mathcal{D}}_v} \, \textup{dVol}_M.
			\end{displaymath}
			\item For each $n \in \mathbb{N}$, there exist a finite set $R(\varepsilon_n) \subset \mathfrak{M}(K)_{\textup{fin}}$ containing $S$ and such that $\vartheta_{n,v} = \iota_{\Delta_D}$ for all $v \in  \mathfrak{M}(K)_{\textup{fin}} \setminus R(\varepsilon_n)$. Therefore, the sum
			\begin{displaymath}
				\vartheta_n = \sum_{v \in \mathfrak{M}(K)}  \delta_v \cdot \vartheta_{n,v}
			\end{displaymath}
			defines a continuous concave function on $\Delta_D$. Moreover, by Lemma~\ref{interchanging-limits-1}, the sequence $\lbrace \vartheta_n \rbrace_{n \in \mathbb{N}}$ is decreasing and converges pointwise to $\vartheta_{\overline{\mathcal{D}}}$.
		\end{enumerate}
		By properties (i) and (iii), we have
		\begin{displaymath}
			\sum_{v \in \mathfrak{M}(K)} \delta_v \cdot \int_{\Delta_D} \vartheta_{n,v} \, \textup{dVol}_M =  \int_{\Delta_D} \left( \sum_{v \in \mathfrak{M}(K)}  \delta_v \cdot \vartheta_{n,v} \right) \, \textup{dVol}_M = \int_{\Delta_D} \vartheta_n \, \textup{dVol}_M
		\end{displaymath}
		Indeed, each sum is non-zero for at most a finite number of terms. Now, by property (iii) and the monotone convergence theorem, we get
		\begin{displaymath}
			\lim_{n \in \mathbb{N}}  \int_{\Delta_D} \vartheta_n \, \textup{dVol}_M =  \int_{\Delta_D} \left(\lim_{n \in \mathbb{N}} \vartheta_n \right) \, \textup{dVol}_M = \int_{\Delta_D} \vartheta_{\overline{\mathcal{D}}} \, \textup{dVol}_M.
		\end{displaymath}
		We have shown
		\begin{displaymath}
			\lim_{n \in \mathbb{N}} \sum_{v \in \mathfrak{M}(K)} \delta_v \cdot \int_{\Delta_D} \vartheta_{n,v} \, \textup{dVol}_M = \int_{\Delta_D} \vartheta_{\overline{\mathcal{D}}} \, \textup{dVol}_M.
		\end{displaymath}
		On the other hand, define the function $h \colon \mathfrak{M}(K) \times \mathbb{N} \rightarrow \mathbb{R}$ given by
		\begin{displaymath}
			h(v,n) \coloneqq \delta_v \cdot \int_{\Delta_{D}} \vartheta_{n,v} \, \textup{dVol}_M.
		\end{displaymath}
		Monotonicity of $\lbrace \vartheta_{n,v} \rbrace_{n \in \mathbb{N}}$ shows that the function $h(v,n)$ is decreasing on the second variable. Thus, we apply Lemma~\ref{interchanging-limits-1} and property (ii) to obtain
		\begin{align*}
			\lim_{n \in \mathbb{N}} \sum_{v \in \mathfrak{M}(K)} \delta_v \cdot \int_{\Delta_{D}} \vartheta_{n,v} \, \textup{dVol}_M &= \sum_{v \in \mathfrak{M}(K)} \lim_{n \in \mathbb{N}} \left( \delta_v \cdot \int_{\Delta_{D}} \vartheta_{n,v} \, \textup{dVol}_M \right) \\
			& =  \sum_{v \in \mathfrak{M}(K)} \delta_v \cdot \int_{\Delta_D} \vartheta_{\overline{\mathcal{D}}_v} \, \textup{dVol}_M.
		\end{align*}
		Therefore, we have the following identity
		\begin{displaymath}
			\int_{\Delta_D} \vartheta_{\overline{\mathcal{D}}} \, \textup{dVol}_M = \lim_{n \in \mathbb{N}} \sum_{v \in \mathfrak{M}(K)} \delta_v \cdot \int_{\Delta_{D}} \vartheta_{n,v} \, \textup{dVol}_M = \sum_{v \in \mathfrak{M}(K)} \delta_v \cdot \int_{\Delta_D} \vartheta_{\overline{\mathcal{D}}_v} \, \textup{dVol}_M.
		\end{displaymath}
	\end{proof}
	We state a mixed version of Theorem~\ref{global-toric-height-OKS}. We remind the reader that $\textup{MI}_M$ denotes the mixed integral, introduced in Definition~\ref{mixed-integral}.
	\begin{cor}\label{global-mixed}
		For each $0 \leq i \leq d$, let $\overline{\mathcal{E}}_i$ be a semipositive toric adelic divisor such that its global roof function $\vartheta_{i} \colon \Delta_{i} \rightarrow \mathbb{R}_{-\infty}$ exists. Then:
		\begin{enumerate}[label=(\roman*)]
			\item The following inequality holds
			\begin{displaymath}
				\overline{\mathcal{E}}_0 \cdot \ldots \cdot \overline{\mathcal{E}}_d \geq \sum_{v \in \mathfrak{M}(K)} \delta_v \cdot \textup{MI}_{M}(\vartheta_{0,v}, \ldots ,\vartheta_{d,v}),
			\end{displaymath}
			where $\vartheta_{i,v}$ is the $v$-adic roof function of $\overline{\mathcal{E}}_i$ at the place $v$.
			\item For each $I \subset \lbrace 0,1, \ldots, d \rbrace$, denote
			\begin{displaymath}
				\overline{\mathcal{E}}_I \coloneqq \sum_{i \in I} \overline{\mathcal{E}}_i, \quad \Delta_I \coloneqq \sum_{i \in I} \Delta_{E_i}, \quad \vartheta_{I,v} \coloneqq \boxplus_{i \in I} \, \vartheta_{i,v}, \quad \vartheta_{I} := \vartheta_{\overline{\mathcal{E}}_I} = \sum_{v \in \mathfrak{M}(K)} \delta_v \cdot \vartheta_{I,v}.
			\end{displaymath}
			Suppose that, for each $I$, we have $\vartheta_I \in L^{1}(\Delta_I)$. Then, we have the following formula
			\begin{displaymath}
				\overline{\mathcal{E}}_0 \cdot \ldots \cdot \overline{\mathcal{E}}_d = \sum_{I \subset \lbrace 0,1, \ldots, d \rbrace} (-1)^{d-|I|} \, \overline{\mathcal{E}}_{I}^{d+1}  = \sum_{v \in \mathfrak{M}(K)} \delta_v \cdot \textup{MI}_{M}(\vartheta_{0,v}, \ldots ,\vartheta_{d,v}) > - \infty.
			\end{displaymath}
		\end{enumerate}
	\end{cor}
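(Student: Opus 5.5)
Part (i) will come from the definition of the intersection number as an infimum (Definition~\ref{global-intersection-numbers}.(ii)), run through the same comparison used in the unbounded case of Theorem~\ref{global-toric-height-OKS}. Let $(\overline{\mathcal{F}}_0,\ldots,\overline{\mathcal{F}}_d)$ be any tuple satisfying $(\star)$. Each $\overline{\mathcal{F}}_i$ is then semipositive, lies above $\overline{\mathcal{E}}_i$, has the same generic restriction $E_i$, and has a bounded global roof function. For such bounded tuples I first establish the mixed formula
\begin{displaymath}
\overline{\mathcal{F}}_0 \cdot \ldots \cdot \overline{\mathcal{F}}_d = \sum_{v} \delta_v \cdot \textup{MI}_M(\vartheta_{\overline{\mathcal{F}}_{0,v}},\ldots,\vartheta_{\overline{\mathcal{F}}_{d,v}}).
\end{displaymath}
To prove it, I choose decreasing model approximations as in Lemma~\ref{bounded-approx-2}. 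Theorem~\ref{global-toric-h-proj} gives the formula at model level. I then pass to the limit term by term in the polarization expansion of $\textup{MI}_M$, using the bounded-case argument of Theorem~\ref{global-toric-height-OKS}: Beer's theorem controls the shrinking domains, and Lemma~\ref{interchanging-limits-2} lets the sum over $v$ be exchanged with the limit.

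Next comes monotonicity. Since $\overline{\mathcal{F}}_i\geq\overline{\mathcal{E}}_i$ and the polytopes agree, we have $\vartheta_{\overline{\mathcal{F}}_{i,v}}\geq\vartheta_{i,v}$ for every $v$. I want to conclude that $\textup{MI}_M$ is monotone in each argument. This follows by writing $\textup{MI}_M$ as an integral against mixed Monge--Ampère-type measures (the local statement used in Corollary~4.3.10 of~\cite{Per26}). Summing over $v$ and taking the infimum over all tuples satisfying $(\star)$ gives (i). The main obstacle is this monotonicity: it must hold for possibly unbounded closed concave functions, where both sides of the inequality could be $-\infty$. I would prove it first for bounded functions and then pass to the unbounded case by decreasing truncations $\vartheta_{i,v}\boxplus\vartheta_{\varepsilon_n\overline{\mathcal{B}}_v}$ restricted to $\Delta_{E_i}$, as in Lemma~\ref{interchanging-limits-2}, together with monotone convergence.

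Part (ii) has two equalities. The first, the polarization identity, follows from multilinearity of the intersection pairing, which holds by the linear extension in Definition~\ref{global-intersection-numbers}. I also need $\overline{\mathcal{E}}_I$ to be semipositive with $v$-adic roof functions $\vartheta_{I,v}$. This holds because Legendre--Fenchel duality turns sums of concave $\gamma$'s into sup-convolutions of the $\vartheta$'s. By the assumption $\vartheta_I\in L^1(\Delta_I)$, each $\overline{\mathcal{E}}_I^{d+1}$ is finite, and by Theorem~\ref{global-toric-height-OKS} it equals $(d+1)!\int_{\Delta_I}\vartheta_I$.

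The second equality follows by substituting these values, rewriting each integral as $\sum_v\delta_v\int_{\Delta_I}\vartheta_{I,v}$ via Lemma~\ref{interchanging-limits-2}, and exchanging the finite sum over $I$ with the sum over $v$. This exchange is legitimate because every series involved converges absolutely, by the $L^1$ hypothesis. Regrouping by $v$ gives exactly Definition~\ref{mixed-integral}, up to the normalization $\int_{\Delta_I}$ versus $(d+1)!$, which I would match by checking the diagonal case. Finiteness, and hence $>-\infty$, follows from the $L^1$ assumptions.
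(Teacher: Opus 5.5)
Your part (i) follows the paper's route: the bounded mixed formula for $(\star)$-tuples, monotonicity of $\textup{MI}_M$ in each argument, then the infimum. Your direct limit argument for the bounded mixed formula and your truncation argument for monotonicity are fine.

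The gap is the first equality in (ii). You derive it from ``multilinearity of the intersection pairing, which holds by the linear extension in Definition~\ref{global-intersection-numbers}''. That definition gives no such thing. As soon as some roof function is unbounded, $\overline{\mathcal{E}}_0 \cdot \ldots \cdot \overline{\mathcal{E}}_d$ is an infimum over $(\star)$-tuples, and an infimum has no reason to be additive in each slot: a $(\star)$-tuple dominating $\overline{\mathcal{E}}_I$ need not split as a sum of $(\star)$-tuples for the $\overline{\mathcal{E}}_i$. The clause about extending to differences by linearity presupposes additivity on the semipositive cone; it does not prove it. That the infimum agrees with the polarization of the self-intersections under the $L^1$ hypotheses is exactly what (ii) asserts, so your argument is circular at this point. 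What your computation does establish is
\begin{displaymath}
\frac{1}{(d+1)!}\sum_{I \subset \lbrace 0, \ldots, d \rbrace} (-1)^{d+1-|I|}\, \overline{\mathcal{E}}_{I}^{d+1} = \sum_{v \in \mathfrak{M}(K)} \delta_v \cdot \textup{MI}_{M}(\vartheta_{0,v}, \ldots ,\vartheta_{d,v}).
\end{displaymath}
This is also the correctly normalized form of the displayed identity: the sign is $(-1)^{d+1-|I|}$ and the factor $1/(d+1)!$ is needed.

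The missing ingredient is an upper bound matching (i).
\begin{itemize}
\item For each $i$, take the decreasing sequence $\lbrace \overline{\mathcal{E}}_{i,n} \rbrace_{n}$ of Lemma~\ref{bounded-approx-3}. Every tuple $(\overline{\mathcal{E}}_{0,n}, \ldots, \overline{\mathcal{E}}_{d,n})$ satisfies $(\star)$, so the bounded mixed formula gives
\begin{displaymath}
\overline{\mathcal{E}}_0 \cdot \ldots \cdot \overline{\mathcal{E}}_d \leq \sum_{\emptyset \neq I \subset \lbrace 0, \ldots, d \rbrace} (-1)^{d+1-|I|} \int_{\Delta_I} \vartheta_{I,n} \, \textup{dVol}_M ,
\end{displaymath}
where $\vartheta_{I,n}$ is the global roof function of $\sum_{i \in I} \overline{\mathcal{E}}_{i,n}$.
\item At each place, sup-convolutions of decreasing upper semicontinuous functions with compact domains decrease to the sup-convolution of the limits. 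Lemma~\ref{interchanging-limits-1} carries this through the sum over $v$, so $\vartheta_{I,n} \downarrow \vartheta_I$. By monotone convergence, $\int_{\Delta_I} \vartheta_{I,n} \downarrow \int_{\Delta_I} \vartheta_I$.
\item These limits are finite by hypothesis, so you may pass to the limit in the alternating sum. This gives the reverse inequality to (i). This is precisely where $\vartheta_I \in L^1(\Delta_I)$ for every $I$ is used, and only after this does the polarization identity follow from Theorem~\ref{global-toric-height-OKS}.
\end{itemize}

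A secondary point: your regrouping by $v$ needs $\vartheta_{I,v} \in L^1(\Delta_I)$ for each place $v$, not just the global integrability. This holds because the other $v$-adic roof functions are bounded above and almost all are $\leq 0$ (Remark~\ref{local-roof-neg}). The paper's proof makes this step explicit before invoking the local argument of Corollary~4.3.14 of~\cite{Per26}.
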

	\begin{proof}
		The equality for the bounded case was shown in the proof of Theorem~\ref{global-toric-height-OKS}. Then, the bound in part \textit{(i)} follows immediately from the definition.
		
		For part \textit{(ii)}, note that $\vartheta_I \in L^{1}(\Delta_I)$ implies that $\vartheta_{I,v} \in  L^{1}(\Delta_I)$ for every place $v$. Indeed, let $v \in \mathfrak{M}(K)$. By Remark~\ref{local-roof-neg}, there exists a place $w \neq v$ such that $\vartheta_{I,w} \leq 0$. This implies $\vartheta_{I,v} (y) \geq \vartheta_I$, hence $\vartheta_{I,v} \in  L^{1}(\Delta_I)$. Then,  copy the proof of Corollary~4.3.14~of~\cite{Per26} to show the formula.
	\end{proof}
	
	\subsection{Comparison between arithmetic intersection pairings} We finish this article by relating our results with the intersection pairing of Yuan and Zhang~\cite{Y-Z} in the nef case (resp. integrable case) and its generalization by Burgos and Kramer~\cite{BK24}.
	\begin{rem}[Comparison with Yuan-Zhang's pairing]\label{almost-finale}
		Let $S \subset \mathfrak{M}(K)_{\textup{fin}}$ be a finite set, $\mathcal{U}_{S}$ be the $d$-dimensional torus over $\textup{Spec}(\mathcal{O}_{K,S})$ and $\overline{\mathcal{D}}$ be a nef toric adelic divisor on $\mathcal{U}_{S}/\mathcal{O}_K$. By Proposition~\ref{global-roof-OKS}, the global roof function $\vartheta_{\overline{\mathcal{D}}} \colon \Delta_D \rightarrow \mathbb{R}_{-\infty}$ exists and is bounded. Then, by Theorem~\ref{global-toric-height-OKS}, the following identity holds
		\begin{displaymath}
			\overline{\mathcal{D}}^{d+1} = (d+1)! \int_{\Delta_D} \vartheta_{\overline{\mathcal{D}}} \, \textup{dVol}_M = (d+1)! \sum_{v \in \mathfrak{M}(K)} \delta_v \cdot \int_{\Delta_D} \vartheta_{\overline{\mathcal{D}}_v} \, \textup{dVol}_M.
		\end{displaymath}
		Moreover, the formula for the arithmetic intersection numbers in Corollary~\ref{global-mixed} induces a symmetric multilinear pairing
		\begin{displaymath}
			\overline{\textup{Div}}_{\mathbb{T}}^{\textup{int}}(\mathcal{U}_{S}/\mathcal{O}_{K} )^{d+1} \longrightarrow \mathbb{R}.
		\end{displaymath}
		By Definition~\ref{global-intersection-numbers}, this pairing coincides with the arithmetic intersection pairing in Proposition~4.1.1 of Yuan and Zhang's~\cite{Y-Z} (see Theorem~\ref{global-intersection}). Observe that the $v$-adic roof functions $\vartheta_{\overline{\mathcal{D}}_v}$ of a nef toric adelic divisor $\overline{\mathcal{D}}$ are bounded by some constant $\beta$ independent of the place $v \in \mathfrak{M}(K)$. On the other hand, Theorem~\ref{global-toric-height-OKS} is valid for semipositive toric adelic divisors with unbounded $v$-adic roof functions. Therefore, by restricting to the toric case, we obtained more general results than Yuan and Zhang.
	\end{rem}
	\begin{rem}[Comparison with Burgos-Kramer's pairing]\label{finale}
		In Definition~4.6~of~\cite{BK24}, Burgos and Kramer extended the arithmetic intersection pairing of Yuan and Zhang using the notion of mixed relative energy, introduced by Darvas, Di Nezza, and Lu (see~4B~of~\cite{Darvas}). We sketch this construction:
		\begin{enumerate}[label=(\arabic*)]
			\item Start with a regular projective arithmetic variety $\mathcal{X}/\mathbb{Z}$ of relative dimension $d$ and fix an open subscheme $\mathcal{U}$ of $\mathcal{X}$. Then, let $\overline{\mathcal{B}} \in \overline{\textup{Div}}(\mathcal{X})_{\mathbb{Q}}$ be a nef boundary divisor (which, shrinking $\mathcal{U}$ if necessary, always exist).
			\item For each $0 \leq i \leq d$, let $\overline{\mathcal{D}}_{i} = (\mathcal{D}_i, g_i)$ and $\overline{\mathcal{D}}_{i}^{\prime}=  (\mathcal{D}_i, g_{i}^{\prime})$	be semipositive adelic divisors of $\mathcal{U}/\mathbb{Z}$, with $\overline{\mathcal{D}}_{i}^{\prime}$ nef. Observe that the divisors $ \overline{\mathcal{D}}_{i}$ and $\overline{\mathcal{D}}_{i}^{\prime}$ have the same divisorial part, and the Green's functions $g_{i}$ and $g_{i}^{\prime}$ are of psh-type and of relative full mass.
			\item Choose model divisors $\overline{\mathcal{E}}_i = (\mathcal{E}_i , g_{E_i}) \in \overline{\textup{Div}}(\mathcal{X})_{\mathbb{Q}}$ such that $\overline{\mathcal{E}}_i \geq \overline{\mathcal{B}}$ and $\overline{\mathcal{E}}_i  \geq \overline{\mathcal{D}}_i + 2 \cdot \overline{\mathcal{B}}$. This forces the functions $\varphi_i \coloneqq g_i - g_{E_i}$ and $\varphi^{\prime} \coloneqq  g_{i}^{\prime} - g_{E_i}$ to be $\omega_i$-plurisubharmonic, where $\omega_i \coloneqq \omega_{E_i}(g_i)$. Then, denote $\varphi \coloneqq (\varphi_0, \ldots , \varphi_d)$ and $ \varphi^{\prime} \coloneqq (\varphi_{0}^{\prime}, \ldots , \varphi_{d}^{\prime})$.
			\item The \textit{mixed relative energy} of $\varphi$ with respect to $\varphi^{\prime}$ is given by
			\begin{align*}
				I_{\varphi^{\prime}}(\varphi) = \sum_{i=0}^{d} \int_{\mathcal{X}(\mathbb{C})} (\varphi_i - \varphi_{i}^{\prime}) \langle & (\textup{dd}^{\textup{c}}\varphi_{0} + \omega_{0}) \wedge \ldots \wedge (\textup{dd}^{\textup{c}}\varphi_{i-1} + \omega_{i-1}) \wedge \\
				& \wedge (\textup{dd}^{\textup{c}}\varphi_{i+1}^{\prime} + \omega_{i+1}) \wedge \ldots \wedge (\textup{dd}^{\textup{c}}\varphi_{d}^{\prime} + \omega_{d})\rangle \wedge \delta_{\mathcal{X}(\mathbb{C})},
			\end{align*}
			where $\langle - \wedge \ldots \wedge - \rangle$ denotes the non-pluripolar product of positive $(1,1)$-currents, introduced in~\cite{BEGZ10}. The mixed relative energy is defined~in~2.48, and the above integral formula is Theorem~2.52~of~\cite{BK24}.
			\item The \textit{generalized arithmetic intersection product} $\overline{\mathcal{D}}_0 \cdot \ldots \cdot \overline{\mathcal{D}}_d$ is defined as
			\begin{displaymath}
				\overline{\mathcal{D}}_0 \cdot \ldots \cdot \overline{\mathcal{D}}_d \coloneqq \overline{\mathcal{D}}_{0}^{\prime} \cdot \ldots \cdot \overline{\mathcal{D}}_{d}^{\prime} + I_{\varphi^{\prime}}(\varphi),
			\end{displaymath}
			where the intersection product $\overline{\mathcal{D}}_{0}^{\prime} \cdot \ldots \cdot \overline{\mathcal{D}}_{d}^{\prime}$ is the one defined by Yuan and Zhang.
		\end{enumerate}
		Now, we show the relation with Theorem~\ref{global-toric-height-OKS}. Suppose that $\mathcal{X}$ is a toric scheme over $\mathbb{Z}$. By Lemma~\ref{generic-canonical}, there is an open embedding of a torus $\mathcal{U}_{S} \rightarrow \mathcal{X}$ for some finite set $S$ of maximal ideals of $\mathbb{Z}$. Assume for simplicity that the semipositive divisors $\overline{\mathcal{D}}_0, \ldots, \overline{\mathcal{D}}_{d}$ are all toric and equal to $\overline{\mathcal{D}}$. Similarly, assume that $\overline{\mathcal{D}}_{0}^{\prime}, \ldots, \overline{\mathcal{D}}_{d}^{\prime}$ are all toric and equal to $\overline{\mathcal{D}}^{\prime}$. Then, we have the following observations:
		\begin{enumerate}[label=(\arabic*')]
			\item The divisor $\overline{\mathcal{D}}^{\prime}$ is nef and has the same divisorial part as $\overline{\mathcal{D}}$. Then, for each finite place $v$ of $\mathbb{Q}$, we get the following equality of $v$-adic roof functions $\vartheta_{\overline{\mathcal{D}}_v} = \vartheta_{\overline{\mathcal{D}}_{v}^{\prime}}$. By Lemma~\ref{global-roof-OKS}, the $v$-adic roof functions $\vartheta_{\overline{\mathcal{D}}_{v}^{\prime}}$ are bounded uniformly on $v$. Hence, the only $v$-adic roof function that can be unbounded is $\vartheta_{\overline{\mathcal{D}}_{\infty}}$.
			\item Applying the formula in Theorem~\ref{global-toric-height-OKS}, we get
			\begin{displaymath}
				\overline{\mathcal{D}}^{d+1} = (d+1)! \sum_{v \in \mathfrak{M}(\mathbb{Q})} \delta_v \cdot \int_{\Delta_D} \vartheta_{\overline{\mathcal{D}}_v} \, \textup{dVol}_M.
			\end{displaymath}
			Similarly for $\overline{\mathcal{D}}^{\prime}$. 
			\item Since $\vartheta_{\overline{\mathcal{D}}_v} = \vartheta_{\overline{\mathcal{D}}_{v}^{\prime}}$ for all finite places $v$, we get
			\begin{displaymath}
				\overline{\mathcal{D}}^{d+1} =  (\overline{\mathcal{D}}^{\prime})^{d+1} + \delta_{\infty} \, (d+1)! \int_{\Delta_D} (\vartheta_{\overline{\mathcal{D}}_v} - \vartheta_{\overline{\mathcal{D}}^{\prime}_{v}}) \, \textup{dVol}_M.
			\end{displaymath}
			The integral gives the difference between local toric heights at the infinite place.
		\end{enumerate}
		Finally, Lemma 4.1.13~of~\cite{Per26} establishes a link between the local toric height at the infinite place and the mixed relative energy. Indeed, compare~(4) with the identity below
		\begin{displaymath}
			\textup{h}^{\textup{tor}}(X_{\Sigma}; \overline{\mathcal{D}}_{\infty}) = \sum_{j=0}^{d} \int_{X_{\Sigma}^{\textup{an}}} (g_{D,\infty} - g_{D,\textup{can}}) \langle \omega_{D}(g_{D,\infty})^{\wedge j} \wedge \omega_{D}(g_{D,\textup{can}})^{\wedge (d-j)} \rangle \wedge \delta_{X_{\Sigma}},
		\end{displaymath}
		where $X_{\Sigma}/\mathbb{Q}$ is the generic fiber of $\mathcal{X}/\mathbb{Z}$. This shows that the formulas in (5) and (4') coincide by choosing $\overline{\mathcal{D}}^{\prime}_{\infty} = (D, g_{D,\textup{can}})$, that is, the toric divisor $D$ equipped with its canonical Green's function. This shows that the generalized arithmetic intersection numbers defined by Burgos and Kramer coincide with those defined in this article. Note that Theorem~\ref{global-toric-height-OKS} is not implied by their results: By restricting to the toric case, we only need to fix the generic fiber $D$ of the toric adelic divisor $\overline{\mathcal{D}}$. Therefore, we are allowed to modify every local Green's function $g_{D,v}$ at the same time, instead of only the Green's function $g_{D,\infty}$ over the archimedean place (here we used the notation of Proposition~\ref{global-analytification}).
	\end{rem}
	We now use the concrete convex-analytic descriptions developed in this article to construct the following key examples. These illustrate the differences between the aforementioned theories and our improvements upon them.
	\begin{ex}
		Our first example is a nef toric adelic divisor $\overline{\mathcal{D}}$ which is not a model divisor. Let $K = \mathbb{Q}$, $S = \emptyset$, $d=1$ and choose an enumeration $v \colon \mathbb{N} \rightarrow \mathfrak{M}(\mathbb{Q})$ such that $v(0)$ is the archimedean place $\infty$. For each $n \in \mathbb{N}$, consider the closed concave function $\vartheta_{v(n)} \colon [0,1] \rightarrow \mathbb{R}$ given by
		\begin{displaymath}
			\vartheta_{v(0)} (y) \coloneqq 1 \quad \textup{and} \quad \vartheta_{v(n)} (y) \coloneqq \begin{cases} y - 2^{-n}, & 0 \leq y \leq 2^{-n} \\ 0, & 2^{-n} \leq y \leq 1. \end{cases}
		\end{displaymath}
		We claim that the $v$-tuple of concave functions $(\vartheta_{v(n)})$ determines a semipositive toric adelic divisor $\overline{\mathcal{D}}$ on the torus $\mathbb{G}_m / \mathbb{Z}$. Choose an integer $k >0$. Then, for each $n\geq k$ and $y \in [0,1]$, a quick calculation shows that $\sup \vartheta_{v(n)} = 0$ and $(\vartheta_{v(n)} \boxplus \iota_{[-2^{-k},2^{-k}]})(y) =0$, where $\iota_{\Delta}$ is the indicator function of the set $\Delta$. By Corollary~\ref{semipositive-OKS-roofs}, the claim is true. The toric adelic divisor $\overline{\mathcal{D}}$ is not a model divisor; we have $\vartheta_{\overline{\mathcal{D}}_{v(n)}} \neq \iota_{[0,1]}$ for every $n \in \mathbb{N}$. Let us compute its roof function $\vartheta_{\overline{\mathcal{D}}} \colon [0,1] \rightarrow \mathbb{R}$. At $y = 0$, we have
		\begin{displaymath}
			\vartheta_{\overline{\mathcal{D}}}(0) = \sum_{n=0}^{\infty} \vartheta_{v(n)}(0) = 1 - \sum_{n=1}^{\infty} 2^{-n} = 0.
		\end{displaymath}
		For each $n>0$ and $2^{-(n+1)} \leq y \leq 2^{-n}$, we have
		\begin{displaymath}
			\vartheta_{\overline{\mathcal{D}}}(y) = \sum_{k=0}^{\infty} \vartheta_{v(k)}(y) = 1 + \sum_{k=1}^{n}  y - 2^{-k} = ny + 2^{-n}.
		\end{displaymath}
		We conclude that
		\begin{displaymath}
			\vartheta_{\overline{\mathcal{D}}}(y) = \sum_{k=0}^{\infty} \vartheta_{v(k)}(y) = \begin{cases} 0, & y = 0 \\ ny + 2^{-n},  & 2^{-(n+1)} \leq y \leq 2^{-n}, \, n \in \mathbb{N}. \end{cases}
		\end{displaymath}
		Note that the series defining the global roof function is a finite sum for each point $y \in (0,1)$, as shown in Lemma~\ref{finite-sum}. The difference with the model divisor case is that the number of non-zero terms $\vartheta_{v(m)} (y)$ tends to infinity as $y$ approaches $0$. Since $\vartheta_{\overline{\mathcal{D}}}(y) \geq 0$ for all $y \in [0,1]$, Theorem~\ref{nef-torus-OKS} shows that $\overline{\mathcal{D}}$ is nef. Finally, we use Theorem~\ref{global-toric-height-OKS} to compute the arithmetic self-intersection number $\overline{\mathcal{D}}^{2}$:
		\begin{align*}
			\overline{\mathcal{D}}^{2} & = 2! \sum_{n=0}^{\infty} \int_{0}^{1} \vartheta_{v(n)} (y) \, \textup{d}y = 2 + 2 \sum_{n=1}^{\infty} \int_{0}^{2^{-n}} (y - 2^{-n}) \, \textup{d}y \\
			& = 2 +  2 \sum_{n=1}^{\infty} - 2^{-2n-1} = 2 - \sum_{n=1}^{\infty} 2^{-2n} = 2 - 1/3 = 5/3.
		\end{align*}
		Alternatively, we can use Yuan and Zhang's Theorem~\ref{global-intersection} to compute the arithmetic self-intersection number $\overline{\mathcal{D}}^{2}$. For each $n \in \mathbb{N}$, the $v$-tuple of $v$-adic roof functions $(\vartheta_{\overline{\mathcal{D}}_{n,v}})$ given by
		\begin{displaymath}
			\vartheta_{\overline{\mathcal{D}}_{n,v(k)}} \coloneqq \begin{cases} \vartheta_{k}, & k \leq n \\ \iota_{[0,1]}, & k > n \end{cases}
		\end{displaymath}
		determines a nef toric model divisor $\overline{\mathcal{D}}_n$. The sequence $\lbrace \overline{\mathcal{D}}_n \rbrace_{n \in \mathbb{N}}$ is decreasing and converges in the boundary topology to $\overline{\mathcal{D}}$. By a similar calculation, for each $n \in \mathbb{N}$, the self-intersection number $\overline{\mathcal{D}}_{n}^{2}$ is given by
		\begin{displaymath}
			\overline{\mathcal{D}}_{n}^{2} =  2 \sum_{k=0}^{n} \int_{0}^{1} \vartheta_{v(k)} (y) \, \textup{d}y.
		\end{displaymath}
		The arithmetic self-intersection number $\overline{\mathcal{D}}^2$ is given by the limit of the sequence $\lbrace \overline{\mathcal{D}}^{2}_{n} \rbrace_{n \in \mathbb{N}}$. This is the same limit we computed earlier. 
	\end{ex}
	\begin{ex}
		The second example is a semipositive toric adelic divisor $\overline{\mathcal{D}}$ which is not nef nor integrable and has finite arithmetic self-intersection number $\overline{\mathcal{D}}^2$. As in the previous example, we let $K = \mathbb{Q}$, $S = \emptyset$ and $d=1$.  Consider a real number $\alpha < 0$ and a family $(\vartheta_{v})$ of concave functions $\vartheta_{v} \colon [0,1] \rightarrow \mathbb{R}_{-\infty}$ given by
		\begin{displaymath}
			\vartheta_{\infty} (y) \coloneqq  \begin{cases} -\infty, & y=0 \\ 1-y^{\alpha} , & 0 < y \leq 1 \end{cases} \quad \textup{and} \quad \vartheta_v = \iota_{[0,1]} \textup{ if } v \neq \infty.
		\end{displaymath}
		By Corollary~\ref{semipositive-OKS-roofs}, the family $(\vartheta_{v})$ of concave functions determines a semipositive toric adelic divisor $\overline{\mathcal{D}}$. Its global roof function $\vartheta_{\overline{\mathcal{D}}} \colon [0,1] \rightarrow \mathbb{R}_{-\infty}$ is given by $\vartheta_{\overline{\mathcal{D}}} = \vartheta_{\infty}$, which is negative and unbounded. Therefore, Proposition~\ref{global-roof-OKS} and Corollary~\ref{integrable-bounded} imply that the semipositive toric adelic divisor $\overline{\mathcal{D}}$ is not nef nor integrable. Now, we show that the number $\overline{\mathcal{D}}^2$ is finite for appropriate choices of $\alpha$. Indeed, Theorem~\ref{global-toric-height-OKS} gives
		\begin{displaymath}
			\overline{\mathcal{D}}^{2}  = 2! \sum_{v \in \mathfrak{M}(\mathbb{Q})} \int_{0}^{1}\vartheta_{v} (y) \, \textup{d}y = 2 \int_{0}^{1}\vartheta_{\infty} (y) \, \textup{d}y =  2 \int_{0}^{1} (1-y^{\alpha})  \, \textup{d}y.
		\end{displaymath}
		An elementary calculation shows that
		\begin{displaymath}
			\int_{0}^{1} (1-y^{\alpha})  \, \textup{d}y = \begin{cases} \frac{\alpha}{\alpha +1}, &  -1 < \alpha < 0 \\ - \infty,  & \alpha \leq -1. \end{cases}
		\end{displaymath}
		Therefore, we conclude that
		\begin{displaymath}
			\overline{\mathcal{D}}^{2} =  \begin{cases} \frac{2 \alpha}{\alpha +1}, &  -1 < \alpha < 0 \\ - \infty,  & \alpha \leq -1. \end{cases}
		\end{displaymath}
		By Remark~\ref{finale}, the arithmetic self-intersection number $\overline{\mathcal{D}}^2$ can be computed using the Burgos-Kramer method. Indeed, the divisor $\overline{\mathcal{D}}$ can be obtained by modifying the archimedean Green's function of the nef toric model arithmetic divisor $\overline{\mathcal{D}}^{\prime}$ whose $v$-adic roof functions are given by the indicator function $\iota_{[0,1]}$ at all places. This means that $\overline{\mathcal{D}}^{\prime}$ is endowed with the canonical Green's function at all places.
	\end{ex}
	\begin{ex}\label{adelic-ex-3}
		The third example is a semipositive toric adelic divisor $\overline{\mathcal{D}}$ whose $v$-adic roof functions $\vartheta_{\overline{\mathcal{D}}_v}$ are all unbounded, and the arithmetic self-intersection number $\overline{\mathcal{D}}^{2}$ is finite. By Proposition~\ref{global-roof-OKS} and Corollary~\ref{integrable-bounded}, the semipositive toric adelic divisor $\overline{\mathcal{D}}$ cannot be nef nor integrable. Moreover, the fact that every $v$-adic roof function $\vartheta_{\overline{\mathcal{D}}_v}$ is unbounded implies that the number $\overline{\mathcal{D}}^{2}$ cannot be computed using the Burgos-Kramer theory (see~(1')~in~Remark~\ref{finale}). We combine the ideas in the first two examples: Let $K = \mathbb{Q}$, $S = \emptyset$, $d=1$, $\alpha \in (-1,0)$ and choose an enumeration $v \colon \mathbb{N} \rightarrow \mathfrak{M}(\mathbb{Q})$ such that $v(0)$ is the archimedean place $\infty$. Then, for each $n \in \mathbb{N}$, consider the closed concave function $\vartheta_{v(n)} \colon [0,1] \rightarrow \mathbb{R}_{-\infty}$ given by
		\begin{displaymath}
			\vartheta_{v(n)} (y) \coloneqq  \begin{cases} -\infty, & y=0 \\ 1-2^{n \alpha} y^{\alpha}, & 0 < y \leq 2^{-n}  \\ 0 & 2^{-n} \leq y \leq 1. \end{cases}
		\end{displaymath}
		We claim that the $v$-tuple of concave functions $(\vartheta_{v(n)})$ determines a semipositive toric adelic divisor $\overline{\mathcal{D}}$ on the torus $\mathbb{G}_m / \mathbb{Z}$. Choose an integer $k >0$. Then, for each $n\geq k$ and $y \in [0,1]$, a quick calculation shows that $\sup \vartheta_{v(n)} = 0$ and $(\vartheta_{v(n)} \boxplus \iota_{[-2^{-k},2^{-k}]})(y) =0$, where $\iota_{\Delta}$ is the indicator function of the set $\Delta$. By Corollary~\ref{semipositive-OKS-roofs}, the claim is true. Finally, we use Theorem~\ref{global-toric-height-OKS} to compute its arithmetic self-intersection number $\overline{\mathcal{D}}^2$:
		\begin{displaymath}
			\overline{\mathcal{D}}^2 = 2! \sum_{n=0}^{\infty} \int_{0}^{2^{-n}} (1-2^{n \alpha} y^{\alpha} ) \, \textup{d}y.
		\end{displaymath}
		By a similar calculation to the one in the previous example, we get the following identity
		\begin{displaymath}
			\int_{0}^{2^{-n}} (1-2^{n \alpha} y^{\alpha} ) \, \textup{d}y = \frac{\alpha}{2^n (\alpha+1)}.
		\end{displaymath}
		After substitution, we obtain
		\begin{displaymath}
			\overline{\mathcal{D}}^2 = 2 \sum_{n=0}^{\infty} \frac{\alpha}{2^n (\alpha+1)} =  \frac{4 \alpha }{\alpha +1}.
		\end{displaymath}
	\end{ex}
	\begin{rem}
		None of the toric adelic divisors in the previous three examples are model divisors. However, these divisors have the same associated toric compactified geometric divisor $D$, which is a model divisor. Indeed, it corresponds to the toric divisor $(\infty) \in \mathbb{P}^{1}_{\mathbb{Q}}$, whose associated rational convex polytope is $[0,1]$. The next step is to give an example of a semipositive toric adelic divisor $\overline{\mathcal{D}}$ on a split $d$-dimensional torus $\mathcal{U}/\mathbb{Z}$ whose associated toric compactified geometric divisor $D$ is not a model divisor, all of its $v$-adic roof functions $\vartheta_{\overline{\mathcal{D}}_v}$ are unbounded, and its arithmetic self-intersection number $\overline{\mathcal{D}}^{d+1}$ is finite. From these properties, we get the following observations:
		\begin{enumerate}[label=(\arabic*)]
			\item As in Example~\ref{adelic-ex-3}, the toric adelic divisor $\overline{\mathcal{D}}$ is not nef nor integrable. In particular, the number $\overline{\mathcal{D}}^{d+1}$ cannot be computed with the methods of Yuan-Zhang (see Theorem~\ref{global-intersection} or the article~\cite{Y-Z}).
			\item The toric compactified geometric divisor $D$ induced by $\overline{\mathcal{D}}$ is not a model divisor. Then, its associated convex set $\Delta_D$ is not a polytope. Therefore, the induced Hermitian singular metric $\| \cdot \|_{\infty}$ on the trivial line bundle $\mathcal{O}_{\mathcal{U}}$ does not satisfy Chern-Weil theory. In particular, the arithmetic intersection number $\overline{\mathcal{D}}^{d+1}$ cannot be computed using the Burgos-Kramer-Kühn theory (see~\cite{BKK5}~and~\cite{BKK7}).
			\item All of the $v$-adic roof functions $\vartheta_{\overline{\mathcal{D}}_v}$ are unbounded. Therefore, the number $\overline{\mathcal{D}}^{d+1}$ cannot be computed using the Burgos-Kramer theory (see~(1')~in~Remark~\ref{finale} or the article~\cite{BK24}).
		\end{enumerate}
		To construct such an example, we must consider a split torus $\mathcal{U}/\mathbb{Z}$ of relative dimension at least two. This is done below.
	\end{rem}
	\begin{ex}\label{worst-case}
		Our fourth example is a semipositive toric adelic divisor $\overline{\mathcal{D}}$ on the split torus $\mathbb{G}^{2}_{m} / \mathbb{Z}$ whose associated toric compactified geometric divisor $D$ is not a model divisor, all of its $v$-adic roof functions $\vartheta_{\overline{\mathcal{D}}_v}$ are unbounded, and the arithmetic self-intersection number $\overline{\mathcal{D}}^{3}$ is finite. Again, we let $\alpha \in (0,1)$ and choose an enumeration $v \colon \mathbb{N} \rightarrow \mathfrak{M}(\mathbb{Q})$ such that $v(0)$ is the archimedean place $\infty$. The idea is to modify the $v$-adic roof functions in Example~\ref{adelic-ex-3} to produce closed concave functions on the closed ball $\overline{\textup{B}}(0,1) \subset \mathbb{R}^2$ which, in polar coordinates, depend only on the radius. Thus, for each $n \in \mathbb{N}$, we define the closed concave function $\vartheta_{v(n)} \colon \overline{\textup{B}}(0,1) \rightarrow \mathbb{R}_{-\infty}$ given by
		\begin{displaymath}
			\vartheta_{v(n)}(r , \varphi) \coloneqq \begin{cases} 0, &  0 \leq r \leq 1 - 2^{-n} \\ 1-2^{n\alpha}(1 - r)^{\alpha}, & 1 - 2^{-n} \leq r < 1 \\
				- \infty, & r =1, \end{cases} \quad \textup{ where} \quad  \varphi \in [0,2\pi].
		\end{displaymath}
		We claim that the $v$-tuple of concave functions $(\vartheta_{v(n)})$ determines a semipositive toric adelic divisor $\overline{\mathcal{D}}$ on the torus $\mathbb{G}^{2}_m / \mathbb{Z}$. Choose an integer $k >0$. Then, for each $n\geq k$ and $y \in \overline{\textup{B}}(0,1)$, a quick calculation shows that
		\begin{displaymath}
			\sup \vartheta_{v(n)} = 0 \quad \textup{and} \quad (\vartheta_{v(n)} \boxplus \iota_{\overline{\textup{B}}(0,2^{-k})})(y)  =0,
		\end{displaymath}
		where $\iota_{\Delta}$ is the indicator function of the set $\Delta$.  Therefore, Corollary~\ref{semipositive-OKS-roofs} shows the claim. Since the compact convex set $\overline{\textup{B}}(0,1)$ is not a polytope, the associated toric compactified geometric divisor $D$ is a model divisor. Finally, we use Theorem~\ref{global-toric-height-OKS} to compute its arithmetic self-intersection number $\overline{\mathcal{D}}^3$:
		\begin{displaymath}
			\overline{\mathcal{D}}^3 = 3! \sum_{n=0}^{\infty} \int_{\overline{\textup{B}}(0,1)} \vartheta_{v(n)}(y) \, \textup{d}y = 6 \sum_{n=0}^{\infty} \int_{1-2^{-n}}^{1} \int_{0}^{2\pi}   (1-2^{n\alpha}(1 - r)^{\alpha})  \, \textup{d}\varphi \, \textup{d}r.
		\end{displaymath}
		By a similar calculation to the one in Example~\ref{adelic-ex-3}, we get the following identity
		\begin{displaymath}
			\int_{1-2^{-n}}^{1} \int_{0}^{2\pi}   (1-2^{n\alpha}(1 - r)^{\alpha})  \, \textup{d}\varphi \, \textup{d}r = 2\pi \int_{1-2^{-n}}^{1}  (1-2^{n\alpha}(1 - r)^{\alpha}) \,  \textup{d}r  =  \frac{2 \pi \alpha}{2^n (\alpha+1)}.
		\end{displaymath}
		After substitution, we obtain
		\begin{displaymath}
			\overline{\mathcal{D}}^3 = 6 \sum_{n=0}^{\infty} \frac{2 \pi \alpha}{2^n (\alpha+1)} = \frac{24 \pi \alpha }{\alpha +1}.
		\end{displaymath}
	\end{ex}
	\begin{ex}
		Our final example is a global version of Example~4.3.13~in~\cite{Per26}. That is, we construct semipositive toric adelic divisors $\overline{\mathcal{D}}_1$ and $\overline{\mathcal{D}}_2$ on the split torus $\mathbb{G}^{2}_{m} / \mathbb{Z}$ such that the arithmetic self-intersection numbers $\overline{\mathcal{D}}_{1}^{3}$ and $\overline{\mathcal{D}}_{2}^{3}$ are finite, but $(\overline{\mathcal{D}}_1 + \overline{\mathcal{D}}_{2})^{3}$ is not. This shows that the height inequality in Corollary~\ref{global-mixed} is not always an equality. Consider the compact convex sets
		\begin{displaymath}
			\Delta_1 \coloneqq \lbrace (x,y) \, \vert \, x \geq 0, \, y \geq 0 \textup{ and } 1 \geq x+y \rbrace \quad \textup{and} \quad \Delta_2 = [0,1] \times \lbrace 0 \rbrace,
		\end{displaymath}
		the standard $2$-dimensional simplex and an embedding of the unit interval, respectively. Then, consider the closed concave functions $\vartheta_{1,\infty} \colon \Delta_1 \rightarrow \mathbb{R}_{-\infty}$ and $\vartheta_{2,\infty} \colon \Delta_2 \rightarrow \mathbb{R}_{-\infty}$, given by
		\begin{displaymath}
			\vartheta_{1,\infty} (x,y) \coloneqq -(1-y)^{-1} \quad \textup{ and } \quad \vartheta_{2,\infty} (x,y) \coloneqq 0.
		\end{displaymath}
		Then, declare $\vartheta_{\overline{\mathcal{D}}_{i,v}} = \iota_{\Delta_{i}}$ for all places $v \neq \infty$. By Corollary~\ref{semipositive-OKS-roofs}, the family of closed concave functions $(\vartheta_{\overline{\mathcal{D}}_{i,v}})$ determines a semipositive toric adelic divisor $\overline{\mathcal{D}}_i$ on the split torus $\mathbb{G}^{2}_{m} / \mathbb{Z}$. Then, Theorem~\ref{global-toric-height-OKS} gives
		\begin{displaymath}
			\overline{\mathcal{D}}_{i}^{3} = 3! \sum_{v \in \mathfrak{M}(\mathbb{Q})}\int_{\Delta_i} \vartheta_{\overline{\mathcal{D}}_{i,v}} \, \textup{dVol} = 6 \int_{\Delta_i} \vartheta_{\overline{\mathcal{D}}_{i, \infty}} \, \textup{dVol}.
		\end{displaymath}
		Following the calculation in Example~4.3.13~of~\cite{Per26}, we obtain
		\begin{displaymath}
			\overline{\mathcal{D}}_{i}^{3} = \begin{cases} -6, & i=1 \\ 0, & i=2. \end{cases}
		\end{displaymath}
		On the other hand, the semipositive toric adelic divisor $\overline{\mathcal{D}}_3 = \overline{\mathcal{D}}_1 + \overline{\mathcal{D}}_2$ has corresponding compact convex set
		\begin{displaymath}
			\Delta_3 = \Delta_1 + \Delta_2 = \lbrace (x,y) \, \vert \, x \geq 0, \, 1 \geq y \geq 0, \textup{ and } 2 \geq x+y \rbrace.
		\end{displaymath}
		By Proposition A.1.9~of~\cite{Per26}, the $v$-adic roof function $\vartheta_{\overline{\mathcal{D}}_{3,v}} \colon \Delta_3 \rightarrow \mathbb{R}_{-\infty}$ is given by the sup-convolution $\vartheta_{\overline{\mathcal{D}}_{3,v}} = \vartheta_{\overline{\mathcal{D}}_{1,v}} \boxplus \vartheta_{\overline{\mathcal{D}}_{2,v}}$. A quick calculation shows that
		\begin{displaymath}
			\vartheta_{\overline{\mathcal{D}}_{3, \infty}}(x,y) =  -(1-y)^{-1} \quad \textup{and} \quad \vartheta_{\overline{\mathcal{D}}_{3,v}} = \iota_{\Delta_3} \textup{ if } v \neq \infty.
		\end{displaymath}
		By Theorem~\ref{global-toric-height-OKS}, we know that
		\begin{displaymath}
			\overline{\mathcal{D}}_{3}^{3} = 3! \sum_{v \in \mathfrak{M}(\mathbb{Q})}\int_{\Delta_i} \vartheta_{\overline{\mathcal{D}}_{3,v}} \, \textup{dVol} = 6 \int_{\Delta_3} \vartheta_{\overline{\mathcal{D}}_{3, \infty}} \, \textup{dVol}.
		\end{displaymath}
		Since the function $\vartheta_{\overline{\mathcal{D}}_{3, \infty}}$ is negative and $[0,1]^2$ is contained in $\Delta_3$, we get the bound
		\begin{displaymath}
			\int_{\Delta_3} \vartheta_{\overline{\mathcal{D}}_{3, \infty}} \, \textup{dVol} \leq  \int_{[0,1]^2} \vartheta_{\overline{\mathcal{D}}_{3, \infty}} \, \textup{dVol} = -\int_{0}^{1} \int_{0}^{1} (1-y)^{-1} \, \textup{d}x \, \textup{d}y = - \int_{0}^{1} (1-y)^{-1} \, \textup{d}y = -\infty.
		\end{displaymath}
		Therefore, we get $\overline{\mathcal{D}}_{3}^{3}  = (\overline{\mathcal{D}}_1 + \overline{\mathcal{D}}_{2})^{3} = -\infty$.
	\end{ex}


\begin{thebibliography}{X}
		\bibitem[Ar74]{Ar} \textsc{S. J. Arakelov}. \textit{An intersection theory for divisors on an arithmetic surface}. Math. USSR Izv. 8 (1974), 1167-1180.
		\bibitem[Beer74]{Beer} \textsc{G. A. Beer}. \textit{The Hausdorff metric and convergence in measure}. Michigan Math. J. 21(1): 63-64, 1974.
		\bibitem[Ber90]{Ber} \textsc{V. G. Berkovich}. \textit{Spectral theory and analytic geometry over non-Archimedean fields}. Math. Surveys Monogr., vol.33, Amer. Math. Soc., 1990.
		\bibitem[BLR90]{BLR90} \textsc{S. Bosch, W. Lütkebohmert, M. Raynaud}. \textit{Néron models}. Ergebnisse der Mathematik und ihrer Grenzgebiete. 3. Folge. Springer-Verlag Berlin Heidelberg, 1990.
		\bibitem[Bo98]{B98} \textsc{J.-B. Bost}. \textit{Intersection theory on arithmetic surfaces and} $L_{1}^{2}$ \textit{metrics}. Letter dated March 6th, 1998.
		\bibitem[BGS94]{BGS}\textsc{J.-B. Bost, H. Gillet, C. Soul\'{e}}. \textit{Heights of projective varieties and positive Green forms}.  J. Amer. Math. Soc. 7 (1994), 903–1027.
		\bibitem[Bot19]{Bot19}\textsc{A. M. Botero}. \textit{Intersection theory of toric b-divisors in toric varieties}.  J. Algebr. Geom. 28, No. 2, 291-388 (2019).
		\bibitem[BBHdJ21]{BBHdJ21} \textsc{A. M. Botero, J. I. Burgos Gil, D. Holmes, R. de Jong}. \textit{Chern-Weil and Hilbert-Samuel formulae for singular hermitian line bundles}. Documenta Mathematica 27 (2022) 2563–2623.
		\bibitem[BBK07]{BBK07} \textsc{J. Bruinier, J. I. Burgos Gil, U. Kühn}. \textit{Borcherds products and arithmetic intersection theory on Hilbert modular surfaces}. Duke Math. J., 139(1):1–88, 2007.
		\bibitem[BEGZ10]{BEGZ10} \textsc{S. Boucksom, P. Eyssidieux, V. Guedj, A. Zeriahi}. \textit{Monge-Amp\`{e}re equations in big cohomology classes}. Acta Math., 205(2): 199–262, 2010.
		\bibitem[BFJ16]{BFJ16} \textsc{S . Boucksom, C. Favre, M. Jonsson}. \textit{Singular semipositive metrics in non- Archimedean geometry}. J. Algebraic Geom., 25(1):77–139, 2016.
		\bibitem[BGJK20]{BGJK20} \textsc{J.I. Burgos Gil, W. Gubler, P. Jell, K. Künnemann}. \textit{A comparison of positivity in complex and tropical toric geometry.} Mathematische Zeitschrift, 299 (3), (2021), 1199-1255..
		\bibitem[BGJK21]{BGJK21} \textsc{J.I. Burgos Gil, W. Gubler, P. Jell, K. Künnemann}. \textit{Pluripotential theory for tropical toric varieties and non-Archimedean Monge-Amp\`{e}re equations.} Kyoto J. Math. 65 (1), (2025), 55-152.
		\bibitem[BK24]{BK24}  \textsc{J. I. Burgos Gil, J. Kramer}. \textit{On the height of the universal abelian variety.} arXiv:2403.11745, 2024.
		\bibitem [BKK05]{BKK5} \textsc{J. I. Burgos Gil, J. Kramer, U. Kühn}. \textit{Arithmetic characteristic classes of automorphic vector bundles}, Documenta Math. 10 (2005), 619–716.
		\bibitem[BKK07]{BKK7} \textsc{J. I. Burgos Gil, J. Kramer, U. Kühn}. \textit{Cohomological arithmetic Chow rings}. J. of the Inst. of Math. Jussieu (2007) 6(1), 1–172.
		\bibitem[BKK16]{BKK16} \textsc{J. I. Burgos Gil, J. Kramer, U. Kühn}. \textit{The singularities of the invariant metric on the Jacobi line bundle}. “Recent Advances In Hodge Theory: Period Domains, Algebraic Cycles, and Arithmetic”. LMS Lecture Notes Series, vol. 427, 45-77. Cambridge University Press, 2016.
		\bibitem[BMPS16]{BMPS}\textsc{ J. I. Burgos Gil, A. Moriwaki, P. Philippon, M. Sombra}. \textit{Arithmetic positivity on toric varieties}. J. Algebr. Geom. 25, 201-272 (2016).
		\bibitem[BPS11]{BPS} \textsc{ J. I. Burgos Gil, P. Philippon, M. Sombra}. \textit{Arithmetic geometry of toric varieties. Metrics, measures and heights}. Ast\'{e}risque, 360:vi+222, 2014.
		\bibitem[CG24]{CGub24} \textsc{Y. Cai, W. Gubler}. \textit{Abstract divisorial spaces and arithmetic intersection numbers}. arXiv:2409.00611, 2024.
		\bibitem[CD12]{CD12} \textsc{A. Chambert-Loir, A. Ducros}. \textit{Formes diff\'{e}rentielles r\'{e}elles et courants sur les espaces de Berkovich}. arXiv:1204.6277, 2012.
		\bibitem[CGSZ18]{CGSZ} \textsc{D. Coman, V. Guedj, S. Sahin, A. Zeriahi}. \textit{Toric pluripotential theory}. arXiv:1804.03387, 2018.
		\bibitem[CLS11]{CLS}\textsc{D. A. Cox, J. D. Little, H. K. Schenck}. \textit{Toric varieties}. Grad. Stud. Math., vol. 124, Amer. Math. Soc., 2011.
		\bibitem[DDNL18]{Darvas}\textsc{T. Darvas, E. Di Nezza, C. H. Lu}. \textit{Monotonicity of nonpluripolar products and complex Monge–Amp\`{e}re equations with prescribed singularity}. Analysis \& PDE Vol 11, No. 8, 2018.
		\bibitem[Dem12]{Dem12} \textsc{J.-P. Demailly}. \textit{Complex analytic and differential geometry}. eBook, Version June 21st, 2012.
		\bibitem[Dmz70]{Dmz70} \textsc{M. Demazure}. \textit{Sous-groupes alge\'{e}briques de rang maximum du groupe de Cremona}. Ann. Sci. \'{E}cole Norm. Sup. (4) 3 (1970), 507–588.
		\bibitem[DN15]{DN15} \textsc{E.  Di Nezza}. \textit{Stability of Monge-Amp\`{e}re energy classes}. J. Geom. Anal., Vol. 25 (2015), no. 4, 2565-2589.
		\bibitem[DGH21]{DGH21} \textsc{V. Dimitrov, Z. Gao, P. Habegger}. \textit{Uniformity in Mordell-Lang for curves}. Ann. of Math. (2) 194 (2021), no. 1, 237–298.
		\bibitem[Fal83]{Fal83} \textsc{G. Faltings}. \textit{Endlichkeitssätze für abelsche Varietäten über Zahlkörpern}. Invent. Math. 73 (1983), no. 3, 349–366.
		\bibitem[FalCh90]{FC90} \textsc{G. Faltings, C. L. Chai}. \textit{Degeneration of abelian varieties}. Vol. 22 of Ergebnisse der Mathematik und ihrer Grenzgebiete (3) [Results in Mathematics and Related Areas (3)]. Springer-Verlag, Berlin, 1990. With an appendix by D. Mumford.
		\bibitem[Ful84]{Ful} \textsc{W. Fulton}. \textit{Intersection theory}. Ergebnisse der Mathematik und ihrer Grenzgebiete, Volume 3 (Springer, 1984).
		\bibitem[GS90]{GS} \textsc{H. Gillet, C. Soul\'{e}}. \textit{Arithmetic intersection theory}. Inst. Hautes \'{E}tudes Sci. Publ. Math. 72 (1990), 94-174.
		\bibitem[GW20]{GW20} \textsc{U. Görtz, T. Wedhorn}. \textit{Algebraic Geometry I: Schemes}. Springer Studium Mathematik - Master. Springer Spektrum Wiesbaden, Second edition, 2020.
		\bibitem[Gua14]{Gualdi} \textsc{R. Gualdi}. \href{https://roberto-gualdi.staff.upc.edu/documents/Cox_Rings_for_a_particular_class_of_Toric_Schemes.pdf}{\textit{Cox Rings for a particular class of toric schemes}}. Master thesis, Universit\'{e} de Bordeaux  (2014).
		\bibitem[Gub98]{Gub98} \textsc{W. Gubler}. \textit{Local heights of subvarieties over non-archimedean fields}. J. reine angew. Math. 498 (1998), 61-113.
		\bibitem[Gub02]{Gub02} \textsc{W. Gubler}. \textit{Basic properties of heights of subvarieties}. Habilitation thesis, ETH Zürich, 2002. 
		\bibitem[Gub03]{Gub03} \textsc{W. Gubler}. \textit{Local and canonical heights of subvarieties}. Ann. Sc. Norm. Super. Pisa Cl. Sci. (5) 2 (2003), 711-760.
		\bibitem[GZ07]{GZ07} \textsc{V. Guedj, A. Zeriahi}. \textit{The weighted Monge-Amp\`{e}re energy of quasiplurisubharmonic functions}. J. Funct. Anal., 250(2):442–482, 2007.
		\bibitem[Har77]{Har77}\textsc{R. Hartshorne}. \textit{Algebraic Geometry}. Graduate Texts in Mathematics, vol. 53, Springer New York, 1977.
		\bibitem[JvP22]{JvP22}\textsc{B. Jung and A.-M. von Pippich}. \textit{The arithmetic volume of the moduli space of abelian surfaces}. arXiv:2205.11864 [math.AG], 2022.
		\bibitem[KKMS73]{KKMS} \textsc{G. Kempf, F. Knudsen, D. Mumford, B. Saint-Donat}. \textit{Toroidal Embeddings 1}. (LNM, volume 339), Springer-Verlag Berlin Heidelberg 1973.
		\bibitem[KRY06]{KRY} \textsc{S. Kudla, S. M. Rapoport, T. Yang}. \textit{Modular forms and special cycles on Shimura curves}. volume 161 of Annals of Mathematics Studies. Princeton University Press, Princeton, NJ, 2006.
		\bibitem[Ku01]{K01} \textsc{U. Kühn}. \textit{Generalized arithmetic intersection numbers}. J. Reine Angew. Math. 534 (2001), 209–236.
		\bibitem[Kuh21]{Kuh21} \textsc{L. Kühne}. \textit{Equidistribution in Families of Abelian Varieties and Uniformity}. arXiv:2101.10272v4, 2021.
		\bibitem[Kue98]{Kue98} \textsc{K. Künnemann}. \textit{Projective regular models for abelian varieties, semistable reduction, and the height pairing}. Duke Mathematical Journal, Vol. 95, No. 1. 1998.
		
		\bibitem[LP24]{LP24} \textsc{T. Lemanissier, J. Poineau}. \textit{Espaces de Berkovich globaux : cat\'{e}gorie, topologie, cohomologie}.  arXiv:2010.08858v2, 2024.
		\bibitem[Liu02]{Liu} \textsc{Q. Liu}. \textit{Algebraic Geometry and Arithmetic Curves}. Oxford Graduate Texts in Mathematics 6, Oxford University Press, 2002.
		\bibitem[Mun00]{Mun00} \textsc{J. Munkres}. \textit{Topology}. Pearson College Div, 2nd edition, 2000.
		\bibitem[Neu99]{Neu} \textsc{J. Neukirch}. \textit{Algebraic number theory}. Grundlehren der mathematischen Wissenschaften, Springer Berlin, Heidelberg, 1999.
		\bibitem[Pay08]{Payne} \textsc{S. Payne}. \textit{Analytification is the limit of all tropicalizations}. Mathematical Research Letters 16 (2009), 543–556.
		\bibitem[PA26]{Per26} \textsc{G. Y. Peralta Alvarez}. \textit{Heights on toric varieties for singular metrics: Local theory}. arXiv:2601.14167, 2026.
		\bibitem[Roc70]{Roc} \textsc{R. T. Rockafellar}. \textit{Convex analysis}. Princeton Math. Series, vol. 28, Princeton Univ. Press, 1970.
		\bibitem[Roh10]{Roh10} \textsc{F. Rohrer}. \textit{Toric schemes}. Dissertation, Universität Zürich, 2010.
		\bibitem[Sie36]{Sie36} \textsc{C. L. Siegel}. \textit{The volume of the fundamental domain for some infinite groups}. Trans. Amer. Math. Soc., 39(2):209–218, 1936.
		\bibitem[Son24]{Son24} \textsc{Y. Song}. \textit{Norm-equivariant metrized divisors on Berkovich spaces}. arXiv:2406.19912, 2025.
		\bibitem[V91]{Vojta} \textsc{P. Vojta}. \textit{Siegel’s theorem in the compact case}. Ann. Math. \textbf{133} (1991), 509–548.
		\bibitem[Stacks]{Stacks} \textsc{The Stacks Project Authors}.  \href{https://stacks.math.columbia.edu}{\textit{The Stacks Project}}. Online open source textbook, 2024.
		\bibitem[Yua26]{Yua26} \textsc{X. Yuan}. \textit{Arithmetic bigness and a uniform Bogomolov-type result}. Ann. of Math. (2) 203(1): 15-119 (2026). 
		\bibitem[YZ26]{Y-Z} \textsc{X. Yuan, S.-W. Zhang}. \textit{Adelic line bundles over quasi-projective varieties}. Annals of Mathematics Studies, Princeton University Press, 2026.
		\bibitem[Zha95]{Z95} \textsc{S.-W. Zhang}. \textit{Small points and adelic metrics}. J. Algebraic Geom. 4 (1995), 281-300.
	\end{thebibliography}
\end{document}